\newtheorem{Thm}{Theorem}[section]
\newtheorem{Prop}[Thm]{Proposition}
\newtheorem{Cor}[Thm]{Corollary}
\newtheorem{Def}[Thm]{Definition}
\theoremstyle{remark}
\newtheorem*{Ack}{Acknowledgments}
\numberwithin{equation}{section}
\newcommand{\Order}{\mathcal{O}}
\newcommand{\into}{\hookrightarrow}
\newcommand{\onto}{\twoheadrightarrow}
\newcommand{\isomto}{\overset{\sim}{\to}}
\newcommand{\compose}{\mathbin{\circ}}
\newcommand{\tensor}{\mathbin{\otimes}}
\newcommand{\closure}[1]{\overline{#1}}
\newcommand{\N}{\mathbb{N}}
\newcommand{\Z}{\mathbb{Z}}
\newcommand{\Q}{\mathbb{Q}}
\newcommand{\F}{\mathbb{F}}
\newcommand{\et}{\mathrm{et}}
\newcommand{\Et}{\mathrm{Et}}
\newcommand{\fppf}{\mathrm{fppf}}
\newcommand{\Gm}{\mathbf{G}_{m}}
\newcommand{\Ga}{\mathbf{G}_{a}}
\newcommand{\tor}{\mathrm{tor}}
\newcommand{\divis}{\mathrm{div}}
\newcommand{\dirlim}{\varinjlim}
\newcommand{\invlim}{\varprojlim}
\mathchardef\mhyphen="2D
\newcommand{\rat}{\mathrm{rat}}
\newcommand{\ind}{\mathrm{ind}}
\newcommand{\pro}{\mathrm{pro}}
\newcommand{\perf}{\mathrm{perf}}
\newcommand{\perar}{\mathrm{perar}}
\newcommand{\Alg}{\mathrm{Alg}}
\newcommand{\Ind}{\mathrm{I}}
\newcommand{\Pro}{\mathrm{P}}
\newcommand{\alg}[1]{\mathbf{#1}}
\newcommand{\var}{\;\cdot\;}
\newcommand{\Ch}{\mathrm{Ch}}
\newcommand{\ideal}[1]{\mathfrak{#1}}
\newcommand{\ctensor}{\mathbin{\Hat{\otimes}}}
\newcommand{\algebrize}{\Acute{\mathsf{a}}}
\newcommand{\varalg}[1]{\bm{#1}}
\newcommand{\genby}[1]{\langle #1 \rangle}
\newcommand{\RP}{\mathrm{RP}}
\newcommand{\RPS}{\mathrm{RPS}}
\newcommand{\cs}{\mathrm{cs}}
\newcommand{\Fin}{\mathrm{Fin}}
\newcommand{\sAb}{\mathrm{sAb}}
\DeclareMathOperator{\Gal}{Gal}
\DeclareMathOperator{\Hom}{Hom}
\DeclareMathOperator{\Ker}{Ker}
\DeclareMathOperator{\Spec}{Spec}
\DeclareMathOperator{\Ab}{Ab}
\DeclareMathOperator{\Pic}{Pic}
\let\Im\relax
\DeclareMathOperator{\Im}{Im}
\DeclareMathOperator{\sheafhom}{\alg{Hom}}
\DeclareMathOperator{\sheafext}{\alg{Ext}}
\DeclareMathOperator{\gr}{gr}
\DeclareMathOperator{\Br}{Br}
\DeclareMathOperator{\Weil}{\mathfrak{R}}
\title[Class field theory, Hasse principles, Pic-Br]
	{Class field theory, Hasse principles and Picard-Brauer duality for two-dimensional local rings}
\author{Takashi Suzuki}
\address{
	Department of Mathematics, Chuo University,
	1-13-27 Kasuga, Bunkyo-ku, Tokyo 112-8551, Japan
}
\email{tsuzuki@gug.math.chuo-u.ac.jp}
\date{August 1, 2023}
\subjclass[2010]{11G45 (Primary) 14F20, 19F05, 11S25 (Secondary)}
\keywords{Arithmetic duality; Two-dimensional local rings; Grothendieck topologies; $p$-adic nearby cycles}
\begin{document}

\begin{abstract}
	We draw concrete consequences from our arithmetic duality
	for two-dimensional local rings with perfect residue field.
	These consequences include class field theory, Hasse principles for coverings and $K_{2}$
	and a duality between divisor class groups and Brauer groups.
	To obtain these,
	we analyze the ind-pro-algebraic group structures on arithmetic cohomology obtained earlier
	and prove some finiteness properties about them.
\end{abstract}

\maketitle

\tableofcontents


\section{Introduction}


\subsection{Aim of the paper}

Let $A$ be a two-dimensional normal complete noetherian local ring
of mixed characteristic $(0, p)$ with perfect residue field $F$.
In \cite{Sai86, Sai87},
Saito gives arithmetic duality and class field theory for $A$ when $F$ is finite.
We refined this theory in our recent work \cite{Suz24} when $F$ is a general perfect field,
by giving ind-pro-algebraic group structures for arithmetic cohomology attached $A$
and taking these structures into account.
However, our duality was on the derived category level
and we could not deduce concrete consequences
due to lack of precise understanding of the structure of each cohomology object.

In this paper, we will prove some finiteness (or admissibility) properties of
our ind-pro-algebraic group structures.
This allows us to deduce concrete duality statements
for each cohomology object.
The statements we deduce include class field theory,
Hasse principles for coverings and $K_{2}$
and a duality between divisor class groups and Brauer groups.
The key inputs are the relatively perfect unipotent group structures
on $p$-adic nearby cycle sheaves \cite{BS20},
the finite generation of the pro-$p$ fundamental group of the punctured spectrum \cite{Suz23}
and Lipman's group scheme structure for the divisor class group \cite{Lip69, Lip76}.

Along the way, we also prove a duality for relatively perfect unipotent groups (especially, wound ones)
over fields with finite $p$-bases,
which may be of independent interest.
When $F$ is finite, we explain how to recover Saito's duality
and its local counterpart, namely Kato's two-dimensional local class field theory \cite{Kat79}.
Our duality actually gives locally compact (and, in particular, Hausdorff!)\ topologies for
the groups appearing in these theories.


\subsection{Main results}

We formulate our results.
Let $F$ be a perfect field of characteristic $p > 0$.
Let $\Alg / F$ be the category of perfections (inverse limits along Frobenius)
of commutative algebraic groups over $F$
with group scheme morphisms (\cite{Ser60}).
Let $\Alg_{u} / F \subset \Alg / F$ be the full subcategory
of perfections of unipotent groups.
Let $\Pro \Alg_{u} / F$ and $\Ind \Alg_{u} / F$ be its pro-category and ind-category, respectively.
Let $\Ind \Pro \Alg_{u} / F$ be the ind-category of $\Pro \Alg_{u} / F$.
For $G \in \Alg / F$, denote its identity component by $G^{0}$
and set $\pi_{0} G = G / G^{0}$.
The functors $G \mapsto G^{0}, \pi_{0} G$ naturally extend to $\Ind \Pro \Alg_{u} / F$.
An object $G \in \Ind \Pro \Alg_{u} / F$ is said to be connected
if $\pi_{0} G = 0$.
As in \cite[Definition 3.1.1]{Suz24},
we define the following class of objects of $\Ind \Pro \Alg_{u} / F$
with nice finiteness (or admissibility) properties:

\begin{Def}
	Define $\mathcal{W}_{F} \subset \Ind \Pro \Alg_{u} / F$ to be
	the full subcategory of objects $G$
	admitting a filtration $G \supset G^{0} \supset G' \supset 0$ such that:
	\begin{itemize}
		\item
			$G / G^{0} = \pi_{0} G$ is finite \'etale ($p$-primary),
		\item
			$G^{0} / G'$ can be written as a direct limit $\dirlim_{n \ge 1} G''_{n}$,
			where each $G''_{n} \in \Alg_{u} / F$ is connected
			and each transition morphism $G''_{n} \to G''_{n + 1}$ is injective,
		\item
			$G'$ can be written as an inverse limit $\invlim_{n \ge 1} G'_{n}$,
			where each $G'_{n} \in \Alg_{u} / F$ is connected
			and each transition morphism $G'_{n + 1} \to G'_{n}$ is surjective
			with connected kernel.
	\end{itemize}
\end{Def}

What is nice about $\mathcal{W}_{F}$ is that
the full subcategory of $\mathcal{W}_{F}$ of connected groups
has a canonical contravariant autoequivalence (\cite[Proposition 3.1.7]{Suz24})
that maps the perfection of the additive group to itself.
We call it Serre duality for connected groups in $\mathcal{W}_{F}$.

Let $A$ be a two-dimensional normal complete noetherian local ring of mixed characteristic
with maximal ideal $\ideal{m}$ whose residue field is the above $F$.
Let $K$ be its fraction field.
Let $P$ be the set of height one prime ideals of $A$.
Set $X = \Spec A \setminus \{\ideal{m}\}$.
Let $U$ be a dense open subscheme of $X$.
Let $\Z / p^{n} \Z(r) \in D(U_{\et})$ be the Bloch cycle complex mod $p^{n}$
in the \'etale topology if $r \ge 0$
and let $\Z / p^{n} \Z(r) \in D(U_{\et})$ be the extension-by-zero of the usual Tate twist $\Z / p^{n} \Z(r)$
on $U \cap \Spec A[1 / p]$ if $r < 0$.
Denote the \'etale cohomology functor by $H^{q}(U, \var)$
and let $H^{q}_{c}(U, \var) = H^{q}(X, j_{!} \var)$,
where $j \colon U \into X$ is the inclusion.
By \cite[Theorem 1.3.1]{Suz24},
we have canonical objects
$\alg{H}^{q}(\alg{U}, \Z / p^{n} \Z(r))$
and $\alg{H}^{q}_{c}(\alg{U}, \Z / p^{n} \Z(r))$ of $\Ind \Pro \Alg_{u} / F$
for each integers $n \ge 1$, $q$ and $r$,
which are algebraic structures on $H^{q}(U, \Z / p^{n} \Z(r))$ and $H^{q}_{c}(U, \Z / p^{n} \Z(r))$.
Note that when $U = X$, there is no difference between
$\alg{H}^{q}(\alg{U}, \Z / p^{n} \Z(r))$
and $\alg{H}^{q}_{c}(\alg{U}, \Z / p^{n} \Z(r))$.
Here is the structure result for these groups:

\begin{Thm} \label{0077}
	We have $\alg{H}^{q}(\alg{U}, \Z / p^{n} \Z(r)),
	\alg{H}^{q}_{c}(\alg{U}, \Z / p^{n} \Z(r)) \in \mathcal{W}_{F}$.
	They are zero unless $0 \le q \le 3$.
	Their structures are described as follows.
	\begin{enumerate}
		\item
			Assume $U = X$.
			Then:
				\[
					\begin{array}{c|ccc}
							\alg{H}^{q}(\alg{X}, \Z / p^{n} \Z(r))
						&
							r = 0
						&
							r = 1
						&
							r = 2
						\\ \hline
							q = 0
						&
							\Z / p^{n} \Z
						&
							\text{finite}
						&
							\text{finite}
						\\
							q = 1
						&
							\text{finite}
						&
							\text{pro-alg}
						&
							\text{pro-alg}
						\\
							q = 2
						&
							\text{ind-alg}
						&
							\text{alg}
						&
							\text{pro-alg}
						\\
							q = 3
						&
							\text{ind-alg}
						&
							\text{ind-alg}
						&
							\Z / p^{n} \Z
					\end{array}
				\]
			Here finite means finite \'etale,
			alg means in $\Alg_{u} / F$,
			pro-alg means in $\Pro \Alg_{u} / F$
			and ind-alg means in $\Ind \Alg_{u} / F$.
		\item
			Assume $U \subset \Spec A[1 / p]$.
			Then:
				\[
					\begin{array}{c|c}
							\alg{H}^{q}(\alg{U}, \Z / p^{n} \Z(r))
						&
							\forall r
						\\ \hline
							q = 0
						&
							\text{finite}
						\\
							q = 1
						&
							\text{pro-alg}
						\\
							q = 2
						&
							\text{pro-alg}
						\\
							q = 3
						&
							0
					\end{array}
					\qquad
					\begin{array}{c|c}
							\alg{H}^{q}_{c}(\alg{U}, \Z / p^{n} \Z(r))
						&
							\forall r
						\\ \hline
							q = 0
						&
							0
						\\
							q = 1
						&
							\text{finite}
						\\
							q = 2
						&
							\text{ind-alg}
						\\
							q = 3
						&
							\text{ind-alg}
					\end{array}
				\]
		\item
			In general:
				\[
					\begin{array}{c|ccc}
							\alg{H}^{q}(\alg{U}, \Z / p^{n} \Z(r))
						&
							r \le 0
						&
							r = 1
						&
							r \ge 2
						\\ \hline
							q = 0
						&
							\text{finite}
						&
							\text{finite}
						&
							\text{finite}
						\\
							q = 1
						&
							\text{pro-alg}
						&
							\text{pro-alg}
						&
							\text{pro-alg}
						\\
							q = 2
						&
							\text{general}
						&
							\text{pro-alg}
						&
							\text{pro-alg}
						\\
							q = 3
						&
							\text{ind-alg}
						&
							\text{ind-alg}
						&
							\text{finite}
					\end{array}
				\]
				\[
					\begin{array}{c|ccc}
							\alg{H}^{q}_{c}(\alg{U}, \Z / p^{n} \Z(r))
						&
							r \le 0
						&
							r = 1
						&
							r \ge 2
						\\ \hline
							q = 0
						&
							\text{finite}
						&
							\text{finite}
						&
							\text{finite}
						\\
							q = 1
						&
							\text{finite}
						&
							\text{pro-alg}
						&
							\text{pro-alg}
						\\
							q = 2
						&
							\text{ind-alg}
						&
							\text{ind-alg}
						&
							\text{general}
						\\
							q = 3
						&
							\text{ind-alg}
						&
							\text{ind-alg}
						&
							\text{ind-alg}
					\end{array}
				\]
			Here general means a general object of $\mathcal{W}_{F}$.
	\end{enumerate}
\end{Thm}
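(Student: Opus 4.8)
The plan is to deduce everything from the derived-category-level duality and spectral sequence machinery of \cite{Suz24}, together with the three external inputs advertised in the introduction, by a careful bookkeeping argument organized around the excision triangle for the pair $(X, U)$ and the Leray/localization spectral sequences for $U$. First I would treat the case $U = X$, which is the backbone: here the relevant cohomology is built from the complex $R\Gamma(X, \Z/p^n\Z(r))$, and $X = \Spec A \setminus \{\ideal m\}$ has cohomological dimension controlling the vanishing range $0 \le q \le 3$ (the two-dimensionality of $A$ gives $\dim X = 1$, plus one for the arithmetic direction; the mixed-characteristic contribution is bounded using that $\Z/p^n\Z(r)$ has amplitude governed by $p$-adic nearby cycles). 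For each individual $(q, r)$ I would identify $\alg H^q(\alg X, \Z/p^n\Z(r))$ via the nearby-cycle/vanishing-cycle decomposition of $\Z/p^n\Z(r)$ into its characteristic-$0$ part over $\Spec A[1/p]$ and its characteristic-$p$ part over the special fiber; the former contributes the classical $\mu_{p^n}^{\otimes r}$-type cohomology of a punctured spectrum (finite or pro-algebraic depending on $q$), and the latter contributes logarithmic de Rham–Witt cohomology, whose algebraic structure is exactly the relatively perfect unipotent one furnished by \cite{BS20}. The entries $\Z/p^n\Z$ in the corners $q = 0, r = 0$ and $q = 3, r = 2$ come from $H^0(X, \Z/p^n\Z) = \Z/p^n\Z$ and, dually, from the trace map realizing the top cohomology; the ``finite'' entries on the edges are where $\pi_1^{(p)}$ of the punctured spectrum enters, and its finite generation \cite{Suz23} is what forces finiteness (rather than merely pro-finiteness) of $H^1(X, \Z/p^n\Z)$ and, by duality, of the top-degree terms. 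I would package the verification that all these objects land in $\mathcal W_F$ by checking the three-step filtration of the Definition degree-by-degree: the $\pi_0$-part is finite étale because it is a subquotient of the $p$-part of $\pi_1^{(p)}$; the ind-algebraic and pro-algebraic pieces come directly from the direct/inverse limit presentations of nearby-cycle cohomology and of $\Pic$ via \cite{Lip69, Lip76}.

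Next I would handle the case $U \subset \Spec A[1/p]$ (part (2)). Here $U$ lives entirely in the generic fiber, so $\Z/p^n\Z(r)$ is (the extension by zero of) the ordinary Tate twist and the cohomology is genuinely $\ell$-adic-style étale cohomology of a one-dimensional scheme over a $p$-adic-type base; the vanishing $H^3 = 0$ and $H^0_c = 0$ are the expected boundary vanishings for an affine curve over such a base, and the pro-algebraic structure in degrees $1, 2$ comes from the pro-algebraic structure on the cohomology of the local field $K$ (Galois cohomology of $\Gamma_K$), transported through the spectral sequence for $U \to \Spec \Order_K$. The compactly-supported side is obtained from the open--closed triangle $j_! \to \Rf_* \to i_* i^* \Rf_*$ on $X$ together with the $U = X$ computation.

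Finally, part (3) is the general case, obtained by gluing: write $X = U \cup (X \setminus U)$ with $X \setminus U$ a finite set of height-one points, giving the localization triangle
\[
  R\Gamma_c(U, \Z/p^n\Z(r)) \longrightarrow R\Gamma(X, \Z/p^n\Z(r)) \longrightarrow \bigoplus_{\ideal p \in P \setminus U} R\Gamma_{\ideal p}(X, \Z/p^n\Z(r)),
\]
and dually a triangle relating $R\Gamma(U,-)$ to $R\Gamma(X,-)$ and the local cohomology at the removed primes. The local cohomology terms $R\Gamma_{\ideal p}$ are, by purity, shifted cohomology of the henselian local ring at $\ideal p$ --- a two-dimensional complete local ring with $1$-dimensional residue field --- whose arithmetic cohomology is again in $\mathcal W_F$ by \cite{Suz24}, and one reads off that they are ``ind-algebraic'' (resp.\ ``general'') in the appropriate degrees. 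Taking the long exact sequence in $\Ind\Pro\Alg_u/F$ and using that $\mathcal W_F$ is closed under extensions and under kernels/cokernels of morphisms between its objects (a formal consequence of the filtration in the Definition, since finite étale, ``ind-alg'' and ``pro-alg'' classes are each extension-closed and the mixed terms in degree $2$ absorb into ``general''), one obtains the stated table. The entries labeled ``general'' in degree $q = 2$ are precisely where an ind-algebraic boundary piece and a pro-algebraic piece coexist in a single extension, and no finer description is available without further hypotheses; this is consistent with the known shape of two-dimensional local duality.

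The main obstacle I anticipate is not any single computation but the uniform verification that every object produced by these triangles and spectral sequences actually lies in $\mathcal W_F$ --- that is, that the delicate three-step filtration (finite étale $\pi_0$, an ascending union of connected algebraic groups, then a descending limit of connected algebraic groups with connected kernels) is stable under the operations we perform. The spectral sequences a priori only produce subquotients and extensions, and one must rule out pathologies such as a non-finitely-generated $\pi_0$ or an inverse limit whose transition maps fail to be surjective with connected kernel; controlling the $\pi_0$-part is exactly where the finite generation of $\pi_1^{(p)}$ from \cite{Suz23} is indispensable, and controlling the pro-part in degree $1$ (for $r \ge 1$) requires the relatively perfect unipotent structure of \cite{BS20} to guarantee the kernels are connected. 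A secondary but real difficulty is keeping the duality between the $H^q(\alg U, -)$ table and the $H^q_c(\alg U, -)$ table manifestly compatible with Serre duality on the connected parts, so that the symmetry $q \leftrightarrow 3 - q$, $r \leftrightarrow 2 - r$ visible in the tables is not merely coincidental but is the shadow of the derived duality of \cite{Suz24}; I would set this up once, abstractly, and then let it propagate.
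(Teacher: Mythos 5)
The proposal is in the right spirit and correctly identifies the key inputs (\cite{BS20}, \cite{Suz23}, \cite{Lip69}, the localization triangles), but it has one substantive error and several places where the real difficulties are swept under the rug rather than solved.

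The central error is the claim that $\mathcal{W}_{F}$ is ``closed under extensions and under kernels/cokernels of morphisms between its objects (a formal consequence of the filtration in the Definition).'' This is false, and it is precisely the point on which the whole argument turns. The category $\mathcal{W}_{F}$ is not abelian --- the paper explicitly flags this in the Outline (``A simple d\'evissage on $n$ does not work either since $\mathcal{W}_{F}$ is not an abelian category''). For example, if $\varphi \colon G \to H$ has $G$ pro-algebraic and $H$ ind-algebraic, the kernel of $\varphi$ need not have the required inverse-limit presentation with surjective transition maps and connected kernels, and its $\pi_{0}$ need not be finite. What the paper actually proves is a suite of conditional closure properties (Propositions \ref{0024}, \ref{0041}, \ref{0047}, \ref{0048}, \ref{0049}, \ref{0089}), each requiring an extra hypothesis (``source pro-algebraic,'' ``target ind-algebraic,'' etc.), and the ``somewhat heavy homological algebra'' of Section \ref{0115} consists exactly of checking that those hypotheses hold at every step of the long exact sequences. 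Your bookkeeping argument would run into exactly the pathology you worry about in your final paragraph; the formal closure you hope for is not there to save you.

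Two further gaps. First, the logical order is wrong: your treatment of $U = X$ via the nearby-cycle decomposition into a characteristic-$0$ part over $\Spec A[1/p]$ and a characteristic-$p$ part already requires knowing that $\alg{H}^{q}(\alg{U}_{S}, \Lambda_{n}(r))$ is pro-algebraic for $U_{S} = \Spec A[1/p]$ (the paper's Proposition \ref{0040}), which you propose to establish only afterward, and not by the right method --- there is no morphism $U \to \Spec \Order_{K}$ here and no associated Leray spectral sequence, since $U$ is a one-dimensional scheme over the two-dimensional ring $A$, not a curve over a one-dimensional local ring; the paper instead takes a resolution of singularities and covers the exceptional fiber by tubular neighborhoods and completed local rings. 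In the paper's Proposition \ref{0043}, the key move for $\alg{H}^{1}(\alg{X}, \Lambda_{n})$ is a ``sandwich'' argument: one shows it is \emph{both} ind-algebraic (from one triangle) and pro-algebraic (from the dual triangle with the $U \subset \Spec A[1/p]$ result), hence in $\Alg_{u}/F$; only then does \cite{Suz23} upgrade this to finite \'etale. Invoking \cite{Suz23} alone only controls the $\closure{F}$-points, not the algebraic-group structure. Second, for $n \geq 2$ the graded pieces of $R^{q}\Psi\Lambda_{n}(r)$ are not explicitly determined, so ``citing \cite{BS20}'' is not enough; what the paper actually needs, and develops across Sections \ref{0113} and \ref{0007}, is a duality theory for arbitrary relatively perfect unipotent groups over fields with a finite $p$-basis (using N\'eron models, Greenberg transforms, and Kato's duality), so that $R^{q}\Psi\Lambda_{n}(r)$ can be treated abstractly without knowing its filtration. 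That infrastructure is essential and is missing from the proposal.
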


Note that the apparently missing group
$\alg{H}^{q}(\alg{X}, \Z / p^{n} \Z(r))$ for $r < 0$ (resp.\ $r > 2$) is isomorphic to
$\alg{H}^{q}_{c}(\alg{U}, \Z / p^{n} \Z(r))$
(resp.\ $\alg{H}^{q}(\alg{U}, \Z / p^{n} \Z(r))$)
with $U = \Spec A[1 / p]$,
so it is also covered in the tables.

In particular, the group $\alg{H}^{1}(\alg{X}, \Z / p^{n} \Z)$ is finite \'etale.
We will first prove that this group is in $\Alg_{u} / F$.
That it is finite \'etale then follows from the result of \cite{Suz23}.
This implies the above structure result $\alg{H}^{3}(\alg{X}, \Z / p^{n} \Z(2)) \cong \Z / p^{n} \Z$
by duality (Theorem \ref{0088} below).
The group $H^{3}(X, \Z / p^{n} \Z(2))$ is the group ``$SK_{1}(X)$''
(\cite{Blo81}, \cite{Sai86}) mod $p^{n}$ when $F$ is algebraically closed.
Hence we obtain:

\begin{Cor}
	Assume $F$ is algebraically closed.
	Then the sequence
		\[
				K_{2}(K) / p^{n} K_{2}(K)
			\to
				\bigoplus_{\ideal{p} \in P}
					\kappa(\ideal{p})^{\times} / \kappa(\ideal{p})^{\times p^{n}}
			\to
				\Z / p^{n} \Z
			\to
				0
		\]
	is exact,
	where the first map is the tame symbols to the residue fields at $\ideal{p}$
	and the second map is the sum of the normalized valuation maps.
\end{Cor}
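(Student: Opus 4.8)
The plan is to deduce the Corollary from Theorem \ref{0077} by way of Bloch's group $SK_{1}(X)$. Three facts are combined. For $F$ algebraically closed, $H^{3}(X, \Z / p^{n} \Z(2))$ is $SK_{1}(X) / p^{n} SK_{1}(X)$ (\cite{Blo81}, \cite{Sai86}). By Bloch's definition, $SK_{1}(X) = \Coker\!\bigl(K_{2}(K) \to \bigoplus_{\ideal{p} \in P} \kappa(\ideal{p})^{\times}\bigr)$ with the arrow the product of tame symbols. And Theorem \ref{0077} gives $\alg{H}^{3}(\alg{X}, \Z / p^{n} \Z(2)) \cong \Z / p^{n} \Z$. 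Since $\var \tensor \Z / p^{n} \Z$ is right exact, the first two facts yield $H^{3}(X, \Z / p^{n} \Z(2)) = \Coker\!\bigl(K_{2}(K) / p^{n} K_{2}(K) \to \bigoplus_{\ideal{p}} \kappa(\ideal{p})^{\times} / \kappa(\ideal{p})^{\times p^{n}}\bigr)$, and with the third fact one gets exactness of
\[
	K_{2}(K) / p^{n} K_{2}(K)
	\to
	\bigoplus_{\ideal{p} \in P} \kappa(\ideal{p})^{\times} / \kappa(\ideal{p})^{\times p^{n}}
	\to
	\Z / p^{n} \Z
	\to
	0
\]
for \emph{some} surjection onto $\Z / p^{n} \Z$ whose kernel is the image of the tame symbols. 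It then remains only to identify this surjection with the sum of the normalized valuation maps.

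For the identification $H^{3}(X, \Z / p^{n} \Z(2)) \cong SK_{1}(X) / p^{n}$ I would run the localization (coniveau) spectral sequence for the \'etale cycle complex on $X$, which is regular of dimension one, with generic point $\Spec K$ and closed points the $\ideal{p} \in P$ (of residue field $\kappa(\ideal{p})$). For a dense open $V \subset X$ with finite closed complement $Z \subset P$, the purity (Gysin) triangle identifies $H^{q}_{Z}(X, \Z / p^{n} \Z(2))$ with $\bigoplus_{\ideal{p} \in Z} H^{q - 2}(\kappa(\ideal{p}), \Z / p^{n} \Z(1))$, and Bloch's $\Z(1) \simeq \Gm[-1]$ together with Hilbert's Theorem 90 gives $H^{1}(\kappa(\ideal{p}), \Z / p^{n} \Z(1)) = \kappa(\ideal{p})^{\times} / \kappa(\ideal{p})^{\times p^{n}}$. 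Passing to the limit over $V$ shrinking to the generic point, $H^{3}(V, \Z / p^{n} \Z(2))$ becomes $H^{3}(\Spec K, \Z / p^{n} \Z(2)) = \dirlim_{V \subset \Spec A[1 / p]} H^{3}(V, \Z / p^{n} \Z(2)) = 0$ by Theorem \ref{0077}(2), while $H^{2}(V, \Z / p^{n} \Z(2))$ becomes $H^{2}(K, \Z / p^{n} \Z(2)) \cong K_{2}(K) / p^{n} K_{2}(K)$ by the theorem of Merkurjev and Suslin; the resulting boundary map is the mod $p^{n}$ tame symbol by the standard compatibility of the Gersten boundary with Kato's residue homomorphisms in Milnor $K$-theory. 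Thus $H^{3}(X, \Z / p^{n} \Z(2))$ is the cokernel of the mod $p^{n}$ tame symbol, and for $F$ algebraically closed this is $SK_{1}(X) / p^{n}$. Much of the localization and purity formalism for the mixed-characteristic cycle complex on $X$ is already available in \cite{Suz24}.

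To pin down the last arrow, let $v_{\ideal{p}}$ be the valuation on $\kappa(\ideal{p}) = \operatorname{Frac}(A / \ideal{p})$ normalized via the integral closure of $A / \ideal{p}$, which is a complete discrete valuation ring because $A / \ideal{p}$ is a one-dimensional complete local domain. Each $v_{\ideal{p}} \colon \kappa(\ideal{p})^{\times} \onto \Z$ is surjective, so $\sum_{\ideal{p}} v_{\ideal{p}} \colon \bigoplus_{\ideal{p}} \kappa(\ideal{p})^{\times} / \kappa(\ideal{p})^{\times p^{n}} \to \Z / p^{n} \Z$ is surjective. The essential point is that $\sum_{\ideal{p}} v_{\ideal{p}}$ kills the image of the tame symbol, i.e.\ $\sum_{\ideal{p}} v_{\ideal{p}}(\boundary_{\ideal{p}} \{f, g\}) = 0$ for every $\{f, g\} \in K_{2}(K)$ --- the reciprocity law for the two-dimensional normal complete local ring $A$. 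I would obtain this by pulling back to a regular resolution $Y \to \Spec A$ (Lipman \cite{Lip76}), where it is the assertion that the Gersten complex of the regular surface $Y$ is a complex, and then pushing forward along $Y \to \Spec A$; alternatively one invokes the reciprocity laws of Kato and Saito directly. Hence $\sum_{\ideal{p}} v_{\ideal{p}}$ factors through $H^{3}(X, \Z / p^{n} \Z(2))$, giving a surjective endomorphism of $\Z / p^{n} \Z$, which must be an isomorphism. Therefore the quotient map in the displayed sequence may be taken to be $\sum_{\ideal{p}} v_{\ideal{p}} \bmod p^{n}$, which proves the Corollary.

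The step I expect to be the main obstacle is the reciprocity input of the third paragraph, and the fact that it is what allows one to bypass any explicit comparison between the valuation map and the normalization of the isomorphism $H^{3}(X, \Z / p^{n} \Z(2)) \cong \Z / p^{n} \Z$ furnished by Theorem \ref{0077} (equivalently by the duality trace of Theorem \ref{0088}); once it and the localization--purity bookkeeping for the cycle complex on $X$ are granted --- and both are classical or already in place --- the argument is formal.
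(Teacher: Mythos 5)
Your proof is correct and amounts to the same argument the paper compresses into a citation: both reduce the Corollary to the isomorphism $\alg{H}^3(\alg{X}, \Z/p^n\Z(2)) \cong \Z/p^n\Z$ of Theorem \ref{0077}, combined with the Bloch--Saito identification $H^3(X, \Z/p^n\Z(2)) \cong SK_1(X)/p^n$ and the observation that the surjective length (sum-of-valuations) homomorphism $SK_1(X)/p^n \to \Z/p^n\Z$ between groups of the same finite order must then be an isomorphism. What you have done is spell out the localization/purity and Merkurjev--Suslin steps behind the Bloch--Saito identification (including deducing $H^3(K,\Z/p^n\Z(2)) = 0$ from Theorem \ref{0077}(2)) and derive the reciprocity law by pulling back to a Lipman resolution, whereas the paper simply invokes \cite{Blo81}, \cite{Sai86} for the former and Srinivas's length map for the latter.
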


By Levine \cite[Theorem 2.1]{Lev88},
the group $SK_{1}(X)$ is isomorphic to
the Grothendieck group of the category of $A$-modules of finite length and finite projective dimension.
The above corollary says that the kernel of the ``length'' map
$SK_{1}(X) \to \Z$ studied by Srinivas \cite{Sri87}, \cite[Chapter 9]{Sri96} is $p$-divisible.
It is relatively easy to prove that $\Ker(SK_{1}(X) \to \Z)$ is $l$-divisible for any prime $l \ne p$.%
\footnote{
	Use the fact $H^{3}(X, \Z / l \Z(2)) \cong H^{4}_{\ideal{m}}(A, \Z / l \Z(2)) \cong \Z / l \Z$
	(\cite[Expos\'e XVII, Corollaire 3.4.1.6]{ILO14})
	in place of our result $H^{3}(X, \Z / p \Z(2)) \cong \Z / p \Z$ above.
}
Therefore this kernel is divisible.
For an equal characteristic $A$ (normal and two-dimensional),
this divisibility is proved
in Srinivas's unpublished notes \cite{Sri85} by a global method.

The part $\alg{H}^{q}(\alg{U}, \Z / p^{n} \Z(r)),
\alg{H}^{q}_{c}(\alg{U}, \Z / p^{n} \Z(r)) \in \mathcal{W}_{F}$
of Theorem \ref{0077} allows us to deduce concrete duality statements
from the abstract duality statement of \cite{Suz24}.
The result is:

\begin{Thm} \label{0088}
	We have a Serre duality
		\[
				\alg{H}^{q}(\alg{U}, \Z / p^{n} \Z(r))^{0}
			\leftrightarrow
				\alg{H}^{4 - q}_{c}(\alg{U}, \Z / p^{n} \Z(2 - r))^{0}
		\]
	of connected groups in $\mathcal{W}_{F}$
	and a Pontryagin duality
		\[
				\pi_{0}
				\alg{H}^{q}(\alg{U}, \Z / p^{n} \Z(r))
			\leftrightarrow
				\pi_{0}
				\alg{H}^{3 - q}_{c}(\alg{U}, \Z / p^{n} \Z(2 - r))
		\]
	of finite \'etale groups over $F$.
\end{Thm}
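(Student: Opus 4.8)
The plan is to combine the structure result of Theorem~\ref{0077} with the derived-category duality established in \cite{Suz24}, and then read off the cohomology objects of the dual complex. Write $R\alg{H}(\alg{U}, \Z/p^{n}\Z(r))$ and $R\alg{H}_{c}(\alg{U}, \Z/p^{n}\Z(r))$ for the complexes (in the relevant derived category over $F$) whose cohomology objects are the $\alg{H}^{q}$ and $\alg{H}^{q}_{c}$ of Theorem~\ref{0077}. The duality of \cite{Suz24} supplies a contravariant ``derived duality'' functor $\mathbb{D} = R\sheafhom(\var, \omega)$ against a dualizing object $\omega$, together with a canonical isomorphism $R\alg{H}_{c}(\alg{U}, \Z/p^{n}\Z(2-r)) \cong \mathbb{D}\,R\alg{H}(\alg{U}, \Z/p^{n}\Z(r))$, normalized so that $\mathbb{D}$ is an anti-involution and so that the numerical shifts below come out as stated.

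The main step is a local analysis of $\mathbb{D}$ on a single object $G \in \mathcal{W}_{F}$ placed in degree $0$: the complex $\mathbb{D} G$ is concentrated in (at most) two adjacent cohomological degrees, one carrying the connected Serre dual $(G^{0})^{\SDual}$ of \cite[Proposition 3.1.7]{Suz24} (again a connected object of $\mathcal{W}_{F}$), the other carrying the Pontryagin dual $(\pi_{0} G)^{\PDual}$ of the finite \'etale group $\pi_{0} G$ (again finite \'etale over $F$). One proves this by filtering $G$ through the three-step filtration $G \supset G^{0} \supset G' \supset 0$ of the definition of $\mathcal{W}_{F}$ and computing $\mathbb{D}$ on the graded pieces: the two connected pieces $G^{0}/G'$ and $G'$ get interchanged (ind $\leftrightarrow$ pro) and Serre-dualized, all contributing to one degree, whereas $\pi_{0} G$ contributes its Pontryagin dual in the neighbouring degree, reflecting that $\omega$ has homological amplitude one against finite \'etale $p$-groups but amplitude zero against connected groups in $\mathcal{W}_{F}$. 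That no cross terms survive uses that $\SDual$ preserves connectedness and that $\PDual$ preserves the class of finite \'etale $p$-groups over $F$.

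Granting this, apply $\mathbb{D}$ to $R\alg{H}(\alg{U}, \Z/p^{n}\Z(r))$, whose cohomology objects lie in $\mathcal{W}_{F}$ and vanish outside $0 \le q \le 3$ by Theorem~\ref{0077}. In the hyper-Ext spectral sequence computing $H^{\ast}\bigl(\mathbb{D}\,R\alg{H}(\alg{U}, \Z/p^{n}\Z(r))\bigr)$ from $\mathbb{D}$ of the individual cohomology objects, each total degree receives contributions from only two adjacent of the objects $\mathbb{D}\,\alg{H}^{q}$, each of which is itself spread over only two adjacent degrees by the previous step; since every differential $d_{\ell}$ ($\ell \ge 2$) moves the relevant index by $\ell$, all differentials vanish and the spectral sequence degenerates. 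Hence each
\[
	\alg{H}^{q}_{c}(\alg{U}, \Z/p^{n}\Z(2-r)) = H^{q}\bigl(\mathbb{D}\,R\alg{H}(\alg{U}, \Z/p^{n}\Z(r))\bigr)
\]
fits into a short exact sequence
\[
	0 \to (\alg{H}^{4-q}(\alg{U}, \Z/p^{n}\Z(r))^{0})^{\SDual} \to \alg{H}^{q}_{c}(\alg{U}, \Z/p^{n}\Z(2-r)) \to (\pi_{0}\alg{H}^{3-q}(\alg{U}, \Z/p^{n}\Z(r)))^{\PDual} \to 0
\]
with connected subobject and finite \'etale quotient, the degrees $4-q$ and $3-q$ being the two adjacent indices produced above once the normalization is matched.

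Finally, since in this short exact sequence the subobject is connected and the quotient is finite \'etale, it is forced to be the canonical short exact sequence $0 \to G^{0} \to G \to \pi_{0} G \to 0$ with $G = \alg{H}^{q}_{c}(\alg{U}, \Z/p^{n}\Z(2-r))$; thus $\alg{H}^{q}_{c}(\alg{U}, \Z/p^{n}\Z(2-r))^{0} \cong (\alg{H}^{4-q}(\alg{U}, \Z/p^{n}\Z(r))^{0})^{\SDual}$ and $\pi_{0}\alg{H}^{q}_{c}(\alg{U}, \Z/p^{n}\Z(2-r)) \cong (\pi_{0}\alg{H}^{3-q}(\alg{U}, \Z/p^{n}\Z(r)))^{\PDual}$, and since $\SDual$ and $\PDual$ are (anti-)involutions these isomorphisms yield perfect pairings in both directions, which is the assertion of the theorem. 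I expect the main obstacle to be the local step: determining precisely how the dualizing object $\omega$ of \cite{Suz24} interacts with the defining filtration of $\mathcal{W}_{F}$, so that the derived dual of a $\mathcal{W}_{F}$-object genuinely decomposes into a connected Serre-dual in one cohomological degree and a finite \'etale Pontryagin-dual in the adjacent one with no surviving cross terms; once this is in place the degeneration of the spectral sequence and the extraction of $(\var)^{0}$ and $\pi_{0}$ are formal, the only other delicate point being the bookkeeping of shifts that produces the indices $4-q$ and $3-q$.
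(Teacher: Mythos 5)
Your proposal is correct and follows essentially the same route as the paper. In the paper, once Theorem \ref{0077} has been established, Theorem \ref{0088} is immediate from the perfect pairing \eqref{0108} (the derived-category duality of \cite[Theorem 10.6.1]{Suz24}) together with Proposition \ref{0078}, which is itself a citation of \cite[Propositions 3.1.7 and 3.1.9]{Suz24}. What you do is unpack that black-boxed proposition: you observe that $R\sheafhom_{F^{\ind\rat}_{\pro\et}}(G,\Lambda_{\infty})$ has amplitude $[0,1]$ on objects $G\in\mathcal{W}_{F}$, with $\sheafhom(G,\Lambda_{\infty})\cong(\pi_{0}G)^{\vee}$ finite \'etale in degree $0$ and $\sheafext^{1}(G,\Lambda_{\infty})\cong(G^{0})^{\SDual}$ connected in degree $1$, then run the hyper-Ext spectral sequence, which degenerates at $E_{2}$ since $\sheafext^{s}=0$ for $s\geq 2$, and read off the short exact sequence
\[
	0 \to (\alg{H}^{4-q}(\alg{U}, \Z/p^{n}\Z(r))^{0})^{\SDual} \to \alg{H}^{q}_{c}(\alg{U}, \Z/p^{n}\Z(2-r)) \to (\pi_{0}\alg{H}^{3-q}(\alg{U}, \Z/p^{n}\Z(r)))^{\vee} \to 0.
\]
This is exactly the content of Proposition \ref{0078} applied to \eqref{0108} after the shift $\Lambda_{\infty}[-3]$, and your index bookkeeping producing $4-q$ and $3-q$ checks out (note only that your remark about the \emph{direction} of the amplitude of $\omega$ against finite vs.\ connected groups reads backwards for the standard normalization $\omega=\Lambda_{\infty}$, though this does not affect the final degrees). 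The only substantive caveat is that your proof, like the paper's, takes as input the structural Theorem \ref{0077}; in the paper the two theorems are proved together precisely because establishing membership in $\mathcal{W}_{F}$ is where the real work lies, and that part you simply assume.
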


Theorems \ref{0077} and \ref{0088} will be proved together in Section \ref{0115}.

As a particular example, we have a Pontryagin duality
	\[
			\alg{H}^{1}(\alg{X}, \Z / p^{n} \Z)
		\leftrightarrow
			\pi_{0} \alg{H}^{2}(\alg{X}, \Z / p^{n} \Z(2)),
	\]
which is unramified class field theory in this context.
The Serre duality
	\[
			\alg{H}^{1}(\alg{U}, \Z / p^{n} \Z)^{0}
		\leftrightarrow
			\alg{H}^{3}_{c}(\alg{U}, \Z / p^{n} \Z(2))^{0}
	\]
should look more similar to Saito's class field theory \cite{Sai87},
since $H^{3}_{c}(U, \Z / p^{n} \Z(2))$ is the $K_{2}$-id\`ele class group of $U$ mod $p^{n}$
when $F$ is algebraically closed.

We now state Hasse principles in this context.
Assume for the moment that $F$ is algebraically closed.
Let $H^{1}_{\cs}(X, \Z / p^{n} \Z) \subset H^{1}(X, \Z / p^{n} \Z)$
be the subgroup consisting of coverings of $X$ completely split at all closed points of $X$.
Let
	\[
			\pi_{0} \alg{H}^{2}(\alg{K}, \Z / p^{n} \Z(2))
		:=
			\dirlim_{U}
				\pi_{0} \alg{H}^{2}(\alg{U}, \Z / p^{n} \Z(2)),
	\]
where the direct limit is over all dense open subschemes $U \subset X$.
For each $\ideal{p} \in P$,
let $\Hat{K}_{\ideal{p}}$ be the fraction field
of the completion of the localization of $A$ at $\ideal{p}$.
We have a natural map
	\[
			\pi_{0} \alg{H}^{2}(\alg{K}, \Z / p^{n} \Z(2))
		\to
			\bigoplus_{\ideal{p} \in P}
				\pi_{0} \alg{H}^{2}(\Hat{\alg{K}}_{\ideal{p}}, \Z / p^{n} \Z(2))
		\cong
			\bigoplus_{\ideal{p} \in P}
				\Z / p^{n} \Z,
	\]
where $\alg{H}^{2}(\Hat{\alg{K}}_{\ideal{p}}, \Z / p^{n} \Z(2))$ is the algebraic structure on
the Galois cohomology $H^{2}(\Hat{K}_{\ideal{p}}, \Z / p^{n} \Z(2))$
(\cite[Theorem 6.4.2 (1)]{Suz24}).

\begin{Thm} \label{0109}
	Assume $F$ is algebraically closed.
	\begin{enumerate}
		\item
			The sequence
				\[
						\pi_{0} \alg{H}^{2}(\alg{K}, \Z / p^{n} \Z(2))
					\to
						\bigoplus_{\ideal{p} \in P}
							\Z / p^{n} \Z
					\stackrel{\mathrm{sum}}{\to}
						\Z / p^{n} \Z
					\to
						0
				\]
			is exact and the kernel of the first map is Pontryagin dual to
			$H^{1}_{\cs}(X, \Z / p^{n} \Z)$.
		\item
			Let $\mathfrak{X} \to \Spec A$ be a resolution of singularities
			such that $\mathfrak{X} \times_{A} A / p A \subset \mathfrak{X}$ is
			supported on a strict normal crossing divisor
			(\cite[Tag 0BIC]{Sta22}).
			Let $Y$ be the reduced part of $\mathfrak{X} \times_{A} F$.
			Then $H^{1}_{\cs}(X, \Z / p^{n} \Z) \cong H^{1}(Y, \Z / p^{n} \Z)$.
		\item
			Let $Y_{1}, \dots, Y_{m}$ be the irreducible components of $Y$.
			Let $\Gamma$ be the dual graph of $Y$.
			Then we have an exact sequence
				\[
						0
					\to
						H^{1}(\Gamma, \Z / p^{n} \Z)
					\to
						H^{1}(Y, \Z / p^{n} \Z)
					\to
						\bigoplus_{i}
							H^{1}(Y_{i}, \Z / p^{n} \Z)
					\to
						0.
				\]
	\end{enumerate}
\end{Thm}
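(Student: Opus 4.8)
The plan is to treat the three parts in order, with parts (1)--(2) being intertwined through the duality and part (3) being a purely geometric computation on the dual graph.

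For part (1), the starting point is the Pontryagin duality of Theorem \ref{0088}, applied with $q = 1$, $r = 0$ and in the limit over all dense open $U \subset X$: it gives
\[
	\pi_{0} \alg{H}^{2}(\alg{K}, \Z / p^{n} \Z(2))
	\leftrightarrow
	\invlim_{U} \alg{H}^{1}_{c}(\alg{U}, \Z / p^{n} \Z),
\]
where we have used that $\alg{H}^{1}(\alg{U}, \Z/p^n\Z)$ is already finite \'etale by Theorem \ref{0077}, so $\pi_0$ is the identity, and that $\alg{H}^{2}_{c} = \alg{H}^{2}$ has no continuous part contributing after dualizing the $r=0$ column. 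The localization sequence relating $\alg{H}^{1}_{c}(\alg{U})$ to $\alg{H}^{1}(\alg{X})$ and the local factors $\alg{H}^{1}$ at the points of $X \setminus U$ then expresses the inverse limit as an extension: for $U' \subset U$ with $U \setminus U' = \{\ideal p\}$ we get
\[
	0
	\to
	\alg{H}^{0}(\kappa(\ideal p), \Z/p^n\Z)
	\to
	\alg{H}^{1}_{c}(\alg{U}', \Z/p^n\Z)
	\to
	\alg{H}^{1}_{c}(\alg{U}, \Z/p^n\Z)
	\to
	0
\]
(the surjectivity on the right uses $\alg{H}^{1}(\kappa(\ideal p)^{h}) \to$ and the next term vanishing, since $\kappa(\ideal p)$ is a one-dimensional local field with algebraically closed residue field). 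Taking the limit, the kernel of $\invlim_U \alg{H}^{1}_c(\alg U) \to \alg{H}^1(\alg X, \Z/p^n\Z)$ is $\bigoplus_{\ideal p \in P} \Z/p^n\Z$, and the dual of this statement is precisely the claimed exact sequence with the prescribed residue/valuation maps; surjectivity onto the final $\Z/p^n\Z$ and exactness at $\bigoplus_P \Z/p^n\Z$ come out of the fact that $\alg{H}^1(\alg X, \Z/p^n\Z)$ is dual to $\pi_0 \alg{H}^2(\alg X, \Z/p^n\Z(2))$ (unramified class field theory), with the $\Z/p^n\Z$ coming from $\alg{H}^3_c = \alg{H}^3(\alg X, \Z/p^n\Z(2)) \cong \Z/p^n\Z$. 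Finally, $H^1_{\cs}(X, \Z/p^n\Z)$ is by definition the subgroup of classes in $H^1(X, \Z/p^n\Z) = \alg{H}^1(\alg X, \Z/p^n\Z)$ that vanish in every $\alg{H}^1(\kappa(x), \Z/p^n\Z)$ for $x$ a closed point of $X$, equivalently (by the above localization filtration and a passage to closed points of all $U$) the image of the dual of the kernel of the first map; matching these two descriptions gives the duality statement.

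For part (2), the idea is to compute $H^1_{\cs}(X, \Z/p^n\Z)$ via proper base change on the resolution $\mathfrak X \to \Spec A$. One has $H^1(X, \Z/p^n\Z) = \varinjlim_{U} H^1(U, \Z/p^n\Z)$, but more efficiently, since a degree-$p^n$ covering of $X$ that is completely split at all closed points extends (by normality and purity in dimension two) to a covering of $\mathfrak X$ that is completely split along the exceptional fiber and in particular along $Y$; thus $H^1_{\cs}(X, \Z/p^n\Z)$ is identified with the subgroup of $H^1(\mathfrak X, \Z/p^n\Z)$ split along $Y$, which is $H^1(\mathfrak X, \Z/p^n\Z)$ itself once one knows every such class is unramified along the exceptional locus. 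Then proper base change for the proper morphism $\mathfrak X \to \Spec A$ (using that $A$ is complete local with residue field the algebraically closed $F$, and $\Z/p^n\Z$ is torsion) gives $H^1(\mathfrak X, \Z/p^n\Z) \cong H^1(\mathfrak X_0, \Z/p^n\Z)$ where $\mathfrak X_0$ is the closed fiber $\mathfrak X \times_A F$; and $H^1$ with $\Z/p^n\Z$-coefficients only sees $\pi_1^{\mathrm{ab}}$, hence only the reduced structure, so $H^1(\mathfrak X_0, \Z/p^n\Z) \cong H^1(Y, \Z/p^n\Z)$. The one point needing care, and the place I expect the main obstacle, is matching the ``completely split at all closed points of $X$'' condition with ``trivial after restriction to $Y$'' — one must check that a covering of $\mathfrak X$ unramified over a neighborhood of $Y$ and split at all closed points of $X$ is genuinely split along all of $Y$, which requires knowing the closed points of $X$ are dense enough in $Y$ (they are: $X$ is the punctured spectrum and $Y$ is a curve proper over $F$, so the two are related through the special fiber in a way that makes the splitting conditions coincide), together with the strict-normal-crossing hypothesis to control the local structure at the singular points of $Y$.

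For part (3), this is a standard Mayer--Vietoris / normalization computation. Let $\nu \colon \tilde Y = \coprod_i \tilde Y_i \to Y$ be the normalization (here $\tilde Y_i = Y_i$ since strict normal crossings makes each component smooth), with conductor supported on the double-point locus. The exact sequence of \'etale sheaves $0 \to \Z/p^n\Z \to \nu_* \Z/p^n\Z \to \mathcal Q \to 0$, where $\mathcal Q$ is a skyscraper sheaf on the singular points whose stalk at a point where $k$ branches meet has rank $k-1$, gives a long exact cohomology sequence
\[
	0 \to H^0(Y, \Z/p^n\Z) \to \bigoplus_i H^0(Y_i, \Z/p^n\Z) \to H^0(\text{sing}, \mathcal Q) \to H^1(Y, \Z/p^n\Z) \to \bigoplus_i H^1(Y_i, \Z/p^n\Z) \to 0,
\]
the last zero because $\mathcal Q$ is a skyscraper. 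The cokernel of $H^0(Y, \Z/p^n\Z) \to \bigoplus_i H^0(Y_i, \Z/p^n\Z) \to H^0(\text{sing}, \mathcal Q)$ is exactly the simplicial $H^1$ of the dual graph $\Gamma$ with $\Z/p^n\Z$-coefficients (vertices $\leftrightarrow$ components, edges $\leftrightarrow$ intersection points, after triangulating higher multiple points — or directly since strict normal crossings means only double points occur and $\Gamma$ is an honest graph), so the connecting maps assemble into the claimed short exact sequence
\[
	0 \to H^1(\Gamma, \Z/p^n\Z) \to H^1(Y, \Z/p^n\Z) \to \bigoplus_i H^1(Y_i, \Z/p^n\Z) \to 0.
\]
This part is routine; the substance of the theorem lies in parts (1) and (2).
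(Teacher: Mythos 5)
The most serious gap is in your treatment of part (2). You assert that a covering of $X$ completely split at all closed points of $X$ ``extends (by normality and purity in dimension two) to a covering of $\mathfrak X$'' and then note that ``matching the `completely split' condition with `trivial after restriction to $Y$' '' is ``the place I expect the main obstacle.'' That obstacle is indeed where the substance of part (2) lies, and you neither supply the argument nor identify what is actually needed. Purity of branch locus extends a covering from an open subset of a regular scheme across a codimension-two locus only once you already know it is unramified (indeed trivial) over the complement of that locus; the whole question is to establish that triviality. The paper's proof does so via an explicit local lemma (Proposition \ref{0060}): for a two-dimensional regular complete local ring, it uses a Kummer-theoretic computation with the symbol filtration on $(1+\ideal m)/(1+\ideal m)^p$ to show that a covering of $\Spec A[1/p]$ split at the infinite family of height-one primes $(t^m-\pi)$, $m\ge 1$, must be trivial. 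Applying this at the complete local ring $\Hat A_x$ of $\mathfrak X$ at each smooth closed point $x$ of $Y$ away from the branches dividing $p$ shows the covering trivializes over $\Hat R_x$, hence extends over $\mathfrak X$ minus finitely many closed points, and only then does purity finish the extension. Your phrase ``knowing the closed points of $X$ are dense enough in $Y$'' also suggests a confusion: the closed points of $X$ are the height-one primes of $A$, which are not points of $Y$ at all but one-parameter arcs through $\ideal m$ whose strict transforms in $\mathfrak X$ meet $Y$ at isolated closed points. The connection between ``split at all $\ideal p\in P$'' and ``trivial along $Y$'' is exactly what Proposition \ref{0060} supplies, and your argument has no substitute for it.

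For part (1) your overall strategy is right (dualize via Theorem \ref{0088} and track what happens through localization), and you correctly identify where the terminal $\Z/p^n\Z$ comes from, but the exact sequence you write down is not the one the paper uses and your claimed surjectivity is unjustified. You assert a short exact sequence
\[
0 \to \alg{H}^{0}(\varalg\kappa(\ideal p), \Lambda_n) \to \alg{H}^{1}_{c}(\alg{U}', \Lambda_n) \to \alg{H}^{1}_{c}(\alg{U}, \Lambda_n) \to 0
\]
and justify the surjectivity on the right by a garbled appeal to the cohomological dimension of $\kappa(\ideal p)$. In fact the localization sequence continues with $\alg{H}^1_c(\alg U, \Lambda_n) \to \alg{H}^1(\varalg\kappa(\ideal p), \Lambda_n)$, and $\alg{H}^1(\varalg\kappa(\ideal p), \Lambda_n)$ is far from zero (for $\ideal p\nmid p$ it is a huge Artin--Schreier--Witt group); that the map from the finite group $\alg{H}^1_c(\alg U,\Lambda_n)$ into it vanishes is not automatic. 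The paper instead works with the distinguished triangle
\[
R\alg\Gamma_c(\alg U_S,\Lambda_n) \to R\alg\Gamma(\alg X,\Lambda_n) \to \bigoplus_{\ideal p\in S}R\alg\Gamma(\Hat{\alg A}_\ideal p,\Lambda_n),
\]
whose long exact sequence gives the four-term sequence
\[
0\to\Lambda_n\to\bigoplus_{\ideal p\in S}\Lambda_n\to\alg H^1_c(\alg U_S,\Lambda_n)\to\alg H^1(\alg X,\Lambda_n)\to\bigoplus_{\ideal p\in S}\alg H^1(\varalg\kappa(\ideal p),\Lambda_n)
\]
directly, and the inverse limit over $S$ then sits between $\prod_P\Lambda_n$ and $\alg H^1_{\cs}$ (defined as $\alg H^1(\alg Y,\Lambda_n)$, identified by part (2) with completely split classes). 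This removes the spurious surjectivity claim. For the cokernel $\Z/p^n\Z$ and the exactness at $\bigoplus_P\Z/p^n\Z$, the paper applies $\pi_0$ to the exact sequence $\alg H^2(\alg U_S,\mathfrak T_n(2)) \to \bigoplus_S\alg H^3_c(\Hat{\alg A}_\ideal p,\mathfrak T_n(2)) \to \alg H^3(\alg X,\mathfrak T_n(2))\to 0$ together with the identifications $\pi_0\alg H^3_c(\Hat{\alg A}_\ideal p,\mathfrak T_n(2))\cong\Lambda_n$ and $\alg H^3(\alg X,\mathfrak T_n(2))\cong\Lambda_n$; your sketch points in this direction but does not pin it down.

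Part (3) is correct and matches the paper's argument: the normalization/singular-points sequence produces exactly the claimed extension of $H^1(\Gamma,\Lambda_n)$ by $\bigoplus_i H^1(Y_i,\Lambda_n)$.
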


Saito gives this type of Hasse principle for finite $F$ in \cite{Sai87}.
We will actually prove the above theorem for not necessarily algebraically closed $F$,
where the Galois actions will be taken into account.
The finite \'etale property of $\alg{H}^{1}(\alg{X}, \Z / p^{n} \Z)$
is crucial in deducing Theorem \ref{0109} from Theorem \ref{0088}.

We return to not necessarily algebraically closed $F$
to state a duality between the divisor class group and the Brauer group,
which is a refinement of Saito's duality \cite{Sai86}.
Let $\alg{Pic}_{X} \in \Alg / F$ be the perfection of Lipman's group scheme structure on $\Pic(X)$
with identity component $\alg{Pic}^{0}_{X}$.
Let $\alg{Pic}_{X, \sAb} \subset \alg{Pic}_{X}$ be the maximal semi-abelian part
and set $\alg{Pic}^{0}_{X} / \sAb = \alg{Pic}_{X}^{0} / \alg{Pic}_{X, \sAb}$.
Define
	\[
			\alg{Br}_{X}[p^{\infty}]
		:=
			\dirlim_{n \ge 1}
				\alg{H}^{2}(\alg{X}, \Z / p^{n} \Z(1)).
	\]
Let $\closure{A}$ be the completed unramified extension of $A$ corresponding to $\closure{F}$
and set $\closure{X} = X \times_{A} \closure{A}$.

\begin{Thm} \label{0083} \mbox{}
	\begin{enumerate}
		\item
			The group of $\closure{F}$-points of $\alg{Br}_{X}[p^{\infty}]$ is $\Br(\closure{X})[p^{\infty}]$.
		\item
			We have $\alg{Br}_{X}[p^{\infty}]^{0} \in \Alg_{u} / F$
			and the object $\pi_{0}(\alg{Br}_{X}[p^{\infty}])$ is of cofinite type.
		\item
			We have a Serre duality
				\[
						\alg{Pic}^{0}_{X} / \sAb
					\leftrightarrow
						\alg{Br}_{X}[p^{\infty}]^{0}
				\]
			of connected groups in $\Alg_{u} / F$ and a Pontryagin duality
				\[
						T_{p} \alg{Pic}^{0}_{X, \sAb}
					\leftrightarrow
						\pi_{0}(\alg{Br}_{X}[p^{\infty}]),
				\]
			where $T_{p}$ denotes the $p$-adic Tate module.
	\end{enumerate}
\end{Thm}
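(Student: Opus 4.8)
The plan is to deduce this from Theorem \ref{0088} (specialized to $q = 2$, $r = 1$) by passing to the limit over $n$ and identifying the two sides with the classical objects. First I would prove part (1). The object $\alg{H}^{2}(\alg{X}, \Z/p^n\Z(1))$ is, by construction, an algebraic structure on $H^2(X, \Z/p^n\Z(1))$; since $\mathbf{G}_m$-cohomology gives $H^2(X, \Z/p^n\Z(1)) \cong \Br(X)[p^n]$ (via the Kummer sequence on $X$, using that $\Pic(X)$ is what it is and $H^3$ vanishes suitably), taking the direct limit over $n$ yields $\Br(X)[p^\infty]$ on $\closure F$-points after base change to $\closure A$; the flatness/compatibility of the algebraic structures with unramified base change (from \cite{Suz24}) gives the identification over $\closure F$. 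The connectedness claim $\alg{Br}_X[p^\infty]^0 \in \Alg_u/F$ in part (2) should follow from the $r = 1$ column of the table in Theorem \ref{0077}(1): $\alg{H}^2(\alg{X}, \Z/p^n\Z(1))$ is "alg", i.e.\ in $\Alg_u/F$, and a filtered colimit along injections of groups in $\Alg_u/F$ whose identity components stabilize (which I must check, using that the Brauer group is torsion and the $p^n$-torsion subgroups form an increasing chain) has identity component again in $\Alg_u/F$; the cofinite-type statement for $\pi_0$ is dual to a finite-generation statement that I address next.

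The core of the theorem is part (3). Apply the Serre duality half of Theorem \ref{0088} with $q = 2$, $r = 1$: it gives
\[
	\alg{H}^2(\alg{X}, \Z/p^n\Z(1))^0 \leftrightarrow \alg{H}^2(\alg{X}, \Z/p^n\Z(1))^0
\]
(here $U = X$, so $H$ and $H_c$ coincide, $4 - q = 2$, $2 - r = 1$), i.e.\ this connected group is \emph{Serre self-dual} up to the identifications. That is not yet the statement; the point is to split $\alg{H}^2(\alg{X}, \Z/p^n\Z(1))$ compatibly with duality into a piece coming from $\Pic$ and a piece accounting for the "new" Brauer classes. Concretely, I would use the Kummer sequence $0 \to \Pic(X)/p^n \to H^2(X, \Z/p^n\Z(1)) \to \Br(X)[p^n] \to 0$ at the level of $\Ind\Pro\Alg_u/F$ (Lipman's scheme structure \cite{Lip69, Lip76} on $\Pic(X)$ makes $\alg{Pic}_X/p^n$ an object of $\Alg/F$ whose unipotent quotient is what enters). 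Taking the limit over $n$: the system $\alg{Pic}_X/p^n$ has inverse limit governed by the Tate module, and one extracts the semi-abelian part $\alg{Pic}_{X,\sAb}$ (whose $T_p$ is a $\Z_p$-module of finite rank, giving a finite \'etale $\pi_0$ object dual side) and the "purely unipotent" quotient $\alg{Pic}^0_X/\sAb$, which is a connected object of $\Alg_u/F$ since $\alg{Pic}^0_X$ is an extension of an abelian variety (perfection) by a unipotent group and we quotient out the semi-abelian part. Matching the filtration $G \supset G^0 \supset G' \supset 0$ defining $\mathcal W_F$ against the Serre self-duality and the Kummer filtration is what produces the two asserted dualities: the connected-unipotent part pairs $\alg{Pic}^0_X/\sAb$ against $\alg{Br}_X[p^\infty]^0$, and the finite-\'etale part pairs $T_p\alg{Pic}^0_{X,\sAb}$ against $\pi_0(\alg{Br}_X[p^\infty])$ (Pontryagin duality of finite \'etale groups, as in Theorem \ref{0088}).

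The step I expect to be the main obstacle is making the passage to the limit over $n$ compatible with the structure of $\mathcal W_F$ and with Serre duality \emph{simultaneously}: one must show the inverse system $n \mapsto \alg{H}^2(\alg{X}, \Z/p^n\Z(1))$ (with its transition maps) has a well-behaved limit, isolate the semi-abelian versus unipotent parts of $\alg{Pic}_X$ intrinsically, and verify that the Kummer-sequence splitting of the dual side is the Serre-dual of the splitting on the source side — i.e.\ that "$T_p$ of the semi-abelian part of $\Pic$" and "$\pi_0$ of the $p$-divisible Brauer object" are genuinely exchanged, not merely abstractly isomorphic in size. This requires keeping track of the autoequivalence of \cite[Proposition 3.1.7]{Suz24} on the nose: that Serre duality fixes the additive group, so the unipotent-to-unipotent pairing is forced, while the reduction of everything involving abelian varieties to their Tate modules on the \'etale side is what gives the Pontryagin half. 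A secondary technical point is checking the finiteness input: that $T_p\alg{Pic}^0_{X,\sAb}$ is finitely generated over $\Z_p$ (so its dual is of cofinite type), which one gets from Lipman's finiteness for $\Pic(X)$ together with the perfectness of $F$; and that the colimit defining $\alg{Br}_X[p^\infty]$ stabilizes on identity components, which follows from the "alg" entry in Theorem \ref{0077} once one knows the $p^n$-torsion Brauer groups form an eventually-stable increasing system on the connected part.
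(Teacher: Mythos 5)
The overall architecture you sketch---Kummer filtration, passage to the limit over $n$, identifying semi-abelian vs.\ unipotent parts of $\alg{Pic}_X$, and matching these against the duality of Theorem \ref{0088} at $q=2$, $r=1$---is the correct shape, and it tracks the paper's argument. Parts (1) and the first half of (2) are genuinely routine once the $h$-acyclicity of $R\pi_{\alg{X},\ast}\Gm$ is in hand (the paper verifies this carefully, since the passage between $\mathfrak{T}_n(1)$-cohomology and $\Gm$-cohomology after $\algebrize$ requires commuting $Rh_\ast$ with the colimit over $n$; you elide this, but it is a real technical step, not an immediate consequence). For part (3) the paper takes a slightly more disciplined route than you propose: instead of decomposing a self-dual object on both sides, it proves a single canonical isomorphism $\alg{H}^2(\alg{X},\Gm)[p^\infty]\cong\sheafext^1_{F^{\ind\rat}_{\pro\et}}(\alg{H}^1(\alg{X},\Gm),\Lambda_\infty)$ and then reads off the two asserted dualities from the known structure of $\sheafext^1(G,\Lambda_\infty)$ for $G\in\Alg/F$. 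Your approach and the paper's are morally equivalent here.

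The genuine gap is the statement you label a ``secondary technical point'': that $\alg{Br}_X[p^\infty]^0\in\Alg_u/F$, i.e.\ that the colimit defining the $p^\infty$-torsion Brauer object has stabilized identity component. This does \emph{not} follow formally from the ``alg'' entry in Theorem \ref{0077}, nor from the Brauer group being torsion. Knowing that each $\alg{H}^2(\alg{X},\Lambda_n(1))$ is in $\Alg_u/F$ gives no control over whether the connected components of the $p^n$-torsion of $\alg{Br}_X$ grow without bound as $n\to\infty$; stabilization is exactly what has to be proved, and it is the main content of the theorem, not an input. The paper's proof of this (its Proposition on $\alg{H}^2(\alg{X},\Gm)^0\in\Alg_u/F$) requires a global argument of a different character: one works over a resolution of singularities, uses an explicit Hasse principle for Brauer groups of $p$-adic tubular neighborhoods of the components of the special fiber (computed via filtrations on $K_2$ of two-dimensional local fields), a trace/localization compatibility showing that certain local contributions land in the image of $\Pic(X)\tensor\Lambda$, purity for Brauer groups, and finally Tsen's theorem ($\Br(Y)=0$ for the exceptional curve $Y$ over $\closure{F}$) to kill the maximal divisible connected part. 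None of this appears in your proposal. Without that vanishing, the cofinite-type assertion in part (2) and the identification of $\pi_0(\alg{Br}_X[p^\infty])$ with a finite dual in part (3) both fail to be established, so the argument as written does not close.
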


In particular, the dimension of $\alg{Br}_{X}[p^{\infty}]$ is equal to
the dimension of the unipotent part of $\alg{Pic}_{X}$.

We give consequences for finite $F$.
In this case,
by \cite[Proposition (10.3) (b)]{Suz20},
the group of $F$-valued points of any object of (the bounded derived category of) $\Ind \Pro \Alg_{u} / F$
can be equipped with a canonical structure as an ind-profinite group.
Applying this process to (the derived categorical versions of)
$\alg{H}^{q}(\alg{U}, \Z / p^{n} \Z(r))$
and $\alg{H}^{q}_{c}(\alg{U}, \Z / p^{n} \Z(r))$,
we obtain ind-profinite group structures on
$H^{q}(U, \Z / p^{n} \Z(r))$ and $H^{q}_{c}(U, \Z / p^{n} \Z(r))$.

\begin{Thm} \label{0112}
	Assume $F$ is finite.
	\begin{enumerate}
		\item
			The groups $H^{q}(U, \Z / p^{n} \Z(r))$ and $H^{q}_{c}(U, \Z / p^{n} \Z(r))$
			are locally compact (and, in particular, Hausdorff).
		\item
			We have a Pontryagin duality
				\[
						H^{q}(U, \Z / p^{n} \Z(r))
					\leftrightarrow
						H^{4 - q}_{c}(U, \Z / p^{n} \Z(2 - r)).
				\]
	\end{enumerate}
\end{Thm}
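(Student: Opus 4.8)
The plan is to feed Theorem~\ref{0077} into the machinery of \cite[Proposition (10.3)(b)]{Suz20} and to transport the Serre--Pontryagin duality of Theorem~\ref{0088} through it.

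\emph{Part (1).} By Theorem~\ref{0077} it suffices to show that for $F$ finite and $G\in\mathcal{W}_{F}$ the ind-profinite group $G(F)$ is locally compact. Fix a filtration $G\supset G^{0}\supset G'\supset 0$ as in the definition of $\mathcal{W}_{F}$. Since $F=\F_{q}$ is finite and each $G'_{n}$, each $G''_{n}$, and each transition kernel is a connected (perfection of a) unipotent algebraic group, Lang's theorem gives $H^{1}(F,-)=0$ for all of them; hence the sequences of $F$-points remain exact and the maps $G'_{n+1}(F)\onto G'_{n}(F)$ stay surjective. Therefore $G'(F)=\invlim_{n}G'_{n}(F)$ is profinite, $(G^{0}/G')(F)=\dirlim_{n}G''_{n}(F)$ is a discrete $p$-primary torsion group, $\pi_{0}G(F)$ is finite, and $G'(F)$ is a compact \emph{open} subgroup of $G(F)$ (its quotient $G(F)/G'(F)$, an extension of $\pi_{0}G(F)$ by $(G^{0}/G')(F)$, being discrete). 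A topological group containing a compact open subgroup is locally compact and Hausdorff, and applying this to $\alg{H}^{q}(\alg{U},\Z/p^{n}\Z(r))$ and $\alg{H}^{q}_{c}(\alg{U},\Z/p^{n}\Z(r))$ gives~(1). The same description exhibits these groups as extensions of discrete $p$-primary torsion groups by profinite groups, so in any complex representing the relevant derived object the differentials are strict (strictness can be read off the filtrations, maps of profinite groups having closed image); this is needed below.

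\emph{Part (2).} Theorem~\ref{0088} is deduced from an isomorphism in $D^{b}(\mathcal{W}_{F})$ of the shape
\[
	R\alg{H}^{\bullet}_{c}(\alg{U},\Z/p^{n}\Z(2-r))
	\;\cong\;
	\mathbb{D}\bigl(R\alg{H}^{\bullet}(\alg{U},\Z/p^{n}\Z(r))\bigr),
\]
up to the customary twist and shift, where $\mathbb{D}$ is Serre--Pontryagin duality on $D^{b}(\mathcal{W}_{F})$ (restricting to Serre duality on connected objects and to Pontryagin duality on finite \'etale objects, with the usual degree-$1$ offset between the two). For $F$ finite, applying the $F$-points process of \cite{Suz20} termwise yields a functor from $D^{b}(\mathcal{W}_{F})$ to the derived category of locally compact abelian groups (the target being locally compact by Part~(1), with strict differentials so that cohomology is again locally compact). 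The point to establish is that this functor carries $\mathbb{D}$ to Pontryagin duality of locally compact abelian groups, with a shift by $1$ accounting for $\mathrm{cd}(\F_{q})=1$; the two shifts are arranged so that, setting $R\Gamma(U):=R\alg{H}^{\bullet}(\alg{U},\Z/p^{n}\Z(r))(F)$ and $R\Gamma_{c}(U):=R\alg{H}^{\bullet}_{c}(\alg{U},\Z/p^{n}\Z(2-r))(F)$, one gets $R\Gamma_{c}(U)\cong R\Gamma(U)^{\vee}[-4]$. Since Pontryagin duality is an exact contravariant autoequivalence that commutes with cohomology of strict complexes, this gives $H^{4-q}_{c}(U,\Z/p^{n}\Z(2-r))\cong H^{q}(U,\Z/p^{n}\Z(r))^{\vee}$, which is~(2). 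Conceptually the two shifts reconcile the two halves of Theorem~\ref{0088}: over $\F_{q}$, Serre duality of connected groups induces Pontryagin duality of $F$-points with no shift (already in degree $4-q$), while the Pontryagin duality $\pi_{0}\alg{H}^{q}\leftrightarrow\pi_{0}\alg{H}^{3-q}_{c}$ induces, via the duality of the finite field $\F_{q}$, a relation on $F$-points carrying a degree shift of $1$, moving the $3-q$ to the $4-q$ seen above.

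\emph{Main obstacle.} The substantive step is the claim that the $F$-points functor over a finite field intertwines Serre--Pontryagin duality with Pontryagin duality of locally compact abelian groups, with the correct shift. By the filtration defining $\mathcal{W}_{F}$ and d\'evissage this reduces to two cases: (i) for connected objects, the basic case is $G=\Ga$ (self-dual under Serre duality) and its Witt-vector analogues, where the induced pairing on $F$-points is the perfect trace pairing $\F_{q}\times\F_{q}\to\F_{p}\into\Q/\Z$, the general connected case following by induction along composition series using $H^{1}(\F_{q},-)=0$; and (ii) for finite \'etale objects, the statement is local duality for the finite field $\F_{q}$, namely $R\Gamma(\F_{q},M^{\vee})\cong R\Gamma(\F_{q},M)^{\vee}[-1]$, arising from the fundamental class $H^{1}(\F_{q},\Q_{p}/\Z_{p})\cong\Q_{p}/\Z_{p}$. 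The delicate part is the bookkeeping that matches the degree-$1$ offset between the connected and finite-\'etale parts of $\mathbb{D}$ with the degree-$1$ offset between (i) and (ii), so that the two combine coherently on an arbitrary object of $D^{b}(\mathcal{W}_{F})$; I expect this to be available already in \cite{Suz20}, needing only to be combined with Theorem~\ref{0077}.
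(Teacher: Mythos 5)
Your proposal is correct and follows essentially the route the paper takes. Part (1) of your argument — using Lang's theorem to see that the transition maps stay surjective so that $G'(F)$ is profinite and compact open, hence $G(F)$ is locally compact Hausdorff — is precisely the content of the paper's Propositions \ref{0105} and \ref{0106}, where the target category $\mathcal{W}_{0}$ of profinite-by-indfinite $p$-groups is introduced and shown to carry an honest Pontryagin duality. Your Part (2) is the same in spirit: the paper applies the $R\Gamma(F,\var)$ functor (over $\ast_{\pro\et}$) to the perfect pairing $R\alg{\Gamma}\tensor^{L}R\alg{\Gamma}_{c}\to\Lambda_{\infty}[-3]$, citing \cite[Proposition~4.2.1]{SuzCurve} for exactly the ``duality transfer with degree shift by $\mathrm{cd}(\F_{q})=1$'' step you flag as the main obstacle, and then lands in $\Lambda_{\infty}[-4]$ with cohomologies in $\mathcal{W}_{0}$. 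Your d\'evissage sketch into the connected case (the trace pairing $\F_{q}\times\F_{q}\to\F_{p}$ for $\Ga$, plus Lang's theorem for the induction) and the finite-\'etale case (local duality for $\F_{q}$ from the fundamental class) is a reasonable account of what the cited proposition encapsulates; the exact sequence $0\to H^{1}(F,\pi_{0}(H^{q-1}G))\to H^{q}(F,G)\to\Gamma(F,H^{q}G)\to 0$ in Proposition \ref{0106} is the precise form of the degree bookkeeping you describe. You are right to lean on the literature for the transfer step rather than reproving it; the only thing you leave implicit, but which the paper makes explicit, is that this transfer is stated as a property of the functor $R\Gamma(F,\var)\colon D^{b}(F^{\ind\rat}_{\pro\et})\to D(\ast_{\pro\et})$ rather than as a termwise statement on complexes with strict differentials, which avoids having to justify strictness directly.
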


We also prove a variant of this for local fields $\Hat{K}_{\ideal{p}}$
at height one primes $\ideal{p} \in P$.
This gives a locally compact group structure on
$H^{q}(\Hat{K}_{\ideal{p}}, \Z / p^{n} \Z(r))$ and shows its duality.
The Hausdorff property of
$H^{2}(\Hat{K}_{\ideal{p}}, \Z / p^{n} \Z(2)) \cong K_{2}(\Hat{K}_{\ideal{p}}) / p^{n} K_{2}(\Hat{K}_{\ideal{p}})$
is claimed in \cite[Section 4.3]{Fes01}
(see also \cite[Theorem 4.7]{Fes01} and \cite[Section 6.6, Theorem 3]{Fes00}).
But these references omit the relevant calculations of Vostokov's pairing,
and this pairing is even undefined for the case $p = 2$.
We do not use the results of \cite{Fes00, Fes01}.


\subsection{Outline of proof}

To prove Theorems \ref{0077} and \ref{0088},
we first need to prove corresponding statements for
the local field $\Hat{K}_{\ideal{p}}$ (and its ring of integers) at each $\ideal{p} \in P$.
We have nothing to add to \cite[Section 7]{Suz24} if $\ideal{p}$ does not divide $p$.
For $\ideal{p}$ dividing $p$,
what we need is already given in
\cite[Proposition 6.4.1 and the proof of Proposition 6.5.2]{Suz24}
if $n = 1$.
The same proof does not work for $n \ge 2$
since the graded pieces of $p$-adic nearby cycles $R^{q} \Psi \Z / p^{n} \Z(r)$ are not completely determined in this case.
A simple d\'evissage on $n$ does not work either
since $\mathcal{W}_{F}$ is not an abelian category.
What we know is that $R^{q} \Psi \Z / p^{n} \Z(r)$ has a structure
as a relatively perfect unipotent group over $\kappa(\ideal{p})$ (\cite{BS20}).
For a relatively perfect unipotent group $G$, in Section \ref{0007},
we show that $H^{s}(\kappa(\ideal{p}), G)$ for each $s \in \Z$ has a structure
as an object of $\mathcal{W}_{F}$.
To prove this statement for $s = 0$,
we use N\'eron models of wound unipotent groups and their Greenberg transforms.
The statement for $s = 1$ is reduced to the statement for $s = 0$
by the duality for relatively perfect unipotent groups given in Sections \ref{0113} and \ref{0007}.
Based on this result, we prove the desired result for $\Hat{K}_{\ideal{p}}$
in Section \ref{0070}.

We then prove that $\alg{H}^{q}(\alg{U}, \Z / p^{n} \Z(r)) \in \Pro \Alg_{u} / F$
and $\alg{H}^{q}_{c}(\alg{U}, \Z / p^{n} \Z(r)) \in \Ind \Alg_{u} / F$
for $U \subset \Spec A[1 / p]$
in Section \ref{0086}.
These statements do not immediately imply that these groups are in $\mathcal{W}_{F}$,
but they are the best we can have at this point.
To prove these statements,
we show the corresponding statements for
curves, tubular neighborhoods and two-dimensional regular local rings
in Sections \ref{0059} and \ref{0086}.
Applying them to a resolution of singularities of $A$,
we get the desired statements for $U$.

Now the above results for $\Hat{K}_{\ideal{p}}$ and $U \subset \Spec A[1 / p]$
are sufficient, in Section \ref{0115}, to deduce Theorems \ref{0077} and \ref{0088} for general $U$
by somewhat heavy homological algebra in $\Ind \Pro \Alg_{u} / F$.
The lemmas needed for this homological algebra are provided in Section \ref{0114}.

This being done, it is relatively straightforward
to deduce Theorem \ref{0109} in Section \ref{0116}.
The proof of Theorem \ref{0083}, on the other hand, is a bit more complicated
since we need to deal with the non-torsion coefficient sheaf $\Gm$
and direct/inverse limits in $n$.
Sections \ref{0117} and \ref{0118} are devoted for this.

Finally, in Section \ref{0119},
we deduce consequences for the case of finite $F$.
In this case, for any connected group $G$ in $\mathcal{W}_{F}$,
the group of rational points $G(F)$ has a canonical structure
as a locally compact group.
We use the pro-\'etale site of a point $\ast_{\pro\et}$ for showing this,
just as we did in \cite[Section 10]{Suz20} and \cite[Section 4.2]{SuzCurve}.
This proves Theorem \ref{0112}.

\begin{Ack}
	The author would like to thank Kazuya Kato for his suggestion of this project,
	Ivan Fesenko for answering questions about Vostokov's pairing
	and Vasudevan Srinivas for sharing his notes \cite{Sri85}
	(now available from his webpage) with the author.
\end{Ack}


\section{Notation and homological algebra of ind-pro-algebraic groups}
\label{0114}

Throughout this paper, let $F$ be a perfect field of characteristic $p > 0$.
We generally follow the notation of \cite{Suz24},
especially \cite[Section 2]{Suz24}.
We first briefly recall some of the notation there.
We then give several facts on homological algebra of ind-pro-algebraic groups.
They will be used later on to show that
the ind-pro-algebraic groups at hand belong to the category $\mathcal{W}_{F}$.

All sites in this paper are defined by given pretopologies.
For a site $S$, let $\Ab(S)$ be the category of sheaves of abelian groups on $S$,
$\Ch(S)$ the category of complexes in $\Ab(S)$ (in the cohomological grading)
and $D(S)$ its derived category.
Let
	\[
			\Lambda_{n}
		=
			\Z / p^{n} \Z,
		\quad
			\Lambda_{\infty}
		=
			\Q_{p} / \Z_{p},
		\quad
			\Lambda
		=
			\Lambda_{1}
		=
			\Z / p \Z.
	\]
Denote $\tensor = \tensor_{\Z}$.
Let $\sheafhom_{S}$ be the sheaf-Hom functor for $\Ab(S)$
with right derived functors $\sheafext_{S}^{q}$ and $R \sheafhom_{S}$.
For objects $G, H, K \in D(S)$,
we say that a morphism $G \tensor^{L} H \to K$ is a perfect pairing
if $G \to R \sheafhom_{S}(H, K)$ and $H \to R \sheafhom_{S}(G, K)$ are
both isomorphisms.
A premorphism of sites $f \colon S' \to S$ is a functor $f^{-1}$
from the underlying category of $S$ to the underlying category of $S'$
that sends covering families to covering families
such that $f^{-1}(Y \times_{X} Z) \isomto f^{-1} Y \times_{f^{-1} X} f^{-1} Z$
whenever $Y \to X$ appears in a covering family.
Let $f^{\ast} \colon \Ab(S) \to \Ab(S')$ be the pullback functor for sheaves of abelian groups
and $L f^{\ast} \colon D(S) \to D(S')$ its left derived functor.
We say that an object $G \in D(S)$ is $f$-acyclic
if $G \isomto R f_{\ast} L f^{\ast} G$.

We recall the ind-rational pro-\'etale site $\Spec F^{\ind\rat}_{\pro\et}$ from \cite{Suz20}.
An $F$-algebra is said to be rational if it is a finite direct product of
perfections (direct limits along Frobenius) of finitely generated fields over $F$.
An ind-rational $F$-algebra is a direct limit of a filtered direct system consisting of
rational $F$-algebras.
Let $F^{\ind\rat}$ be the category of ind-rational $F$-algebras
with $F$-algebra homomorphisms.
Then $\Spec F^{\ind\rat}_{\pro\et}$ is (the opposite category of) the category $F^{\ind\rat}$
equipped with the pro-\'etale topology.
We denote $\Ab(F^{\ind\rat}_{\pro\et}) = \Ab(\Spec F^{\ind\rat}_{\pro\et})$
and use similar such notation as $\sheafhom_{F^{\ind\rat}_{\pro\et}}$.

We recall the perfect artinian \'etale site $\Spec F^{\perar}_{\et}$
from \cite{Suz21Imp}.
Let $F^{\perar}$ be the category of perfect artinian $F$-algebras
(or, equivalently, finite direct products of perfect field extensions of $F$)
with $F$-algebra homomorphisms.
Then $\Spec F^{\perar}_{\et}$ is the category $F^{\perar}$ endowed with the \'etale topology.

We recall the perfect pro-fppf site $\Spec F^{\perf}_{\pro\fppf}$ \cite{Suz22}.
A perfect $F$-algebra homomorphism $R \to S$ is said to be
flat of finite presentation (in the perfect algebra sense)
if $S$ is the perfection of a flat $R$-algebra of finite presentation.
It is said to be flat of ind-finite presentation
if $S$ is the direct limit of a filtered direct system of perfect $R$-algebras
flat of finite presentation.
Define $\Spec F^{\perf}_{\pro\fppf}$ to be the category of perfect $F$-algebras
where a covering of an object $R$ is a finite family $\{R_{i}\}$ of $R$-algebras
flat of ind-finite presentation such that $\prod_{i} R_{i}$ is faithfully flat over $R$.

In any of these sites, if $F_{0}$ is a finite extension of $F$,
let $\Weil_{F_{0} / F}$ be the Weil restriction functor for $F_{0} / F$
(which is the pushforward functor by the morphism of sites
defined by the base change functor for $\Spec F_{0} \to \Spec F$).

We recall the functor
$\algebrize \colon D(F^{\perar}_{\et}) \to D(F^{\ind\rat}_{\pro\et})$
\cite{Suz21Imp}.
Let
	\begin{equation} \label{0080}
			\Spec F^{\perf}_{\pro\fppf}
		\stackrel{f}{\to}
			\Spec F^{\ind\rat}_{\pro\et}
		\stackrel{g}{\to}
			\Spec F^{\perar}_{\et}
	\end{equation}
be the premorphisms defined by the inclusion functors on the underlying categories.
Let $h = g \compose f$.
Then $\algebrize = R f_{\ast} L h^{\ast}$.

Recall that the Yoneda functors
$\mathcal{W}_{F} \to \Ab(F^{\perar}_{\et})$
and $\Alg / F \to \Ab(F^{\perar}_{\et})$ are
fully faithful (\cite[Proposition 7.1]{Suz21Imp}).
As in \cite[Definition 3.1.3]{Suz24},
define $\genby{\mathcal{W}_{F}}_{F^{\perar}_{\et}}$
to be the smallest full triangulated subcategory of $D(F^{\perar}_{\et})$
closed under direct summands containing objects of $\mathcal{W}_{F}$ placed in degree zero.
We say that an object $G \in \Ind \Pro \Alg_{u} / F$ is pro-algebraic or ind-algebraic
if it is in the subcategory $\Pro \Alg_{u} / F$ or $\Ind \Alg_{u} / F$, respectively.

\begin{Prop} \label{0015}
	Let $q \in \Z$ and $G \in \genby{\mathcal{W}_{F}}_{F^{\perar}_{\et}}$.
	Set $H = \algebrize G$.
	Assume that $H^{q} G \in \mathcal{W}_{F}$ and $H^{q} H \in \mathcal{W}_{F}$.
	Then $H^{q} G \cong H^{q} H$.
\end{Prop}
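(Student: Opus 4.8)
The plan is to exploit the functor $\algebrize = Rf_{\ast} L h^{\ast}$ together with the interplay between the three sites in \eqref{0080}. The key structural fact I would use is that $\algebrize$ restricts to the identity on $\mathcal{W}_{F}$: concretely, for $G' \in \mathcal{W}_{F}$ placed in degree zero, viewed as an object of $\Ab(F^{\perar}_{\et})$ via the Yoneda embedding, one should have $\algebrize G' \cong G'$ (after transporting $G'$ to $D(F^{\ind\rat}_{\pro\et})$ via the appropriate Yoneda functor), because $G'$ is already ``algebraic'' and the defining diagram of sites is compatible with such objects. This is the analogue of the statements recalled from \cite{Suz21Imp} and \cite{Suz24} just before the proposition. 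Granting this, any object $G \in \genby{\mathcal{W}_{F}}_{F^{\perar}_{\et}}$ built out of shifts and cones of objects of $\mathcal{W}_{F}$ is sent by $\algebrize$ to an object that is, degreewise in a suitable sense, equivalent to $G$ — but only up to the extension problems caused by the fact that $\mathcal{W}_{F}$ is not abelian, which is exactly why the conclusion is phrased as an isomorphism $H^{q}G \cong H^{q}H$ rather than $G \cong H$.

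First I would reduce to understanding the unit map $\eta\colon G \to Rg_{\ast} \algebrize G = Rg_{\ast} H$ of the adjunction attached to $g$ (or rather $h$), where $g$ is the premorphism $\Spec F^{\ind\rat}_{\pro\et} \to \Spec F^{\perar}_{\et}$. The point is that for the generators of $\genby{\mathcal{W}_{F}}_{F^{\perar}_{\et}}$, namely objects of $\mathcal{W}_{F}$ in degree zero, this unit is an isomorphism: this is precisely the $g$-acyclicity (in the terminology of the excerpt) of objects of $\mathcal{W}_{F}$, which again is among the facts recalled from \cite{Suz21Imp}. Since $g$-acyclicity is stable under shifts, cones and direct summands, it propagates to all of $\genby{\mathcal{W}_{F}}_{F^{\perar}_{\et}}$, so $\eta\colon G \isomto Rg_{\ast} H$ for our given $G$.

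Next I would combine this with the hypotheses $H^{q}G, H^{q}H \in \mathcal{W}_{F}$. On the one hand, $H^{q}(Rg_{\ast} H)$ receives a map from $H^{q}G$ which, by the previous step, is an isomorphism. On the other hand, I want to identify $H^{q}(Rg_{\ast}H)$ with $H^{q}H$ itself. For this I would use that $g_{\ast}$ restricted to $\mathcal{W}_{F}$-valued sheaves is exact and fully faithful — more precisely, that the pushforward along $g$ takes the representable sheaf attached to an object of $\mathcal{W}_{F}$ on $F^{\ind\rat}_{\pro\et}$ to the representable sheaf attached to the same object on $F^{\perar}_{\et}$, and that the higher direct images $R^{i}g_{\ast}$ vanish on such sheaves (again a recalled acyclicity property). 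Applying the hyper-cohomology spectral sequence for $Rg_{\ast}$ to $H$, whose relevant cohomology sheaf $H^{q}H$ lies in $\mathcal{W}_{F}$ by hypothesis, I get $H^{q}(Rg_{\ast}H) \cong g_{\ast}(H^{q}H) \cong H^{q}H$ provided the spectral sequence degenerates in the relevant range; here I would lean on the fact that the off-diagonal $R^{i}g_{\ast}$ terms contributing to total degree $q$ vanish because the cohomology sheaves of $H$ in the neighboring degrees are also forced to be $\mathcal{W}_{F}$-objects (they are cohomology of an object of $\genby{\mathcal{W}_{F}}_{F^{\perar}_{\et}}$ pulled through $\algebrize$, and one can appeal to the structural description of such objects). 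Composing the two isomorphisms yields $H^{q}G \cong H^{q}H$.

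The main obstacle I anticipate is the last degeneration step: $\mathcal{W}_{F}$ is not abelian, so I cannot freely run spectral-sequence arguments sheaf by sheaf, and I must be careful that the hypothesis is imposed only in the single degree $q$, not in all degrees. The honest way around this is probably to avoid spectral sequences and instead argue with a truncation: apply $\tau_{\le q}$ and $\tau_{\ge q}$ to $G$ inside $D(F^{\perar}_{\et})$, observe that $\algebrize$ and $Rg_{\ast}$ commute with the relevant truncations when the cohomology object in degree $q$ is in $\mathcal{W}_{F}$ (using $g$-acyclicity of $\mathcal{W}_{F}$-objects to control $\tau_{\ge q+1}$ and $\tau_{\le q-1}$ only to the extent of their image in degree $q$), and extract the isomorphism on $H^{q}$ directly. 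I expect this truncation bookkeeping, rather than any single deep input, to be where the real work lies; all the inputs about $\algebrize$, $g$-acyclicity, and full faithfulness of the Yoneda functors are quoted from \cite{Suz20, Suz21Imp, Suz24}.
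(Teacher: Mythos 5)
Your proposal and the paper's proof diverge fundamentally, and yours has a genuine gap in the step you yourself flag as the "main obstacle."

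The paper's argument is pointwise and avoids all derived-category truncation bookkeeping. It invokes \cite[Proposition 3.1.4]{Suz24} to get $R\Gamma(F'_{\et}, G) \cong R\Gamma(F'_{\pro\et}, H)$ for \emph{every} $F' \in F^{\perar}$. Evaluating over an algebraic closure $\closure{F'}$ — where the small (pro-)\'etale site has no higher cohomology, so $R\Gamma$ is just sections — this gives $(H^q G)(\closure{F'}) \cong (H^q H)(\closure{F'})$ compatibly with the Galois action, no spectral sequence needed. Then one only needs that both $H^q G$ (being in $\mathcal{W}_F$) and $H^q H$ (being in $\Ind\Pro\Alg_u/F$, which is automatic by \cite[Proposition 3.1.5]{Suz24}) have the Galois-descent property $G(F') = G(\closure{F'})^{\Gal(\closure{F'}/F')}$. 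This determines both sheaves from their $\closure{F'}$-points, and full faithfulness of the Yoneda embedding of $\mathcal{W}_F$ into $\Ab(F^{\perar}_{\et})$ gives the isomorphism.

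Your proposal is fine up through the claim that $G \isomto Rh_*Lh^*G$, i.e.\ that the generators of $\genby{\mathcal{W}_F}_{F^{\perar}_{\et}}$ are $h$-acyclic and this propagates through shifts, cones and summands. The gap is the next step. You want to deduce $H^q(Rg_*H) \cong H^q H$, and for that you need the off-diagonal terms $R^i g_* H^j H$ (with $i > 0$, $i + j \le q$) to vanish. But the hypotheses of the proposition only give $H^q H \in \mathcal{W}_F$ in the \emph{single} degree $q$; in the other degrees you only have $H^j H \in \Ind\Pro\Alg_u/F$, which is not enough to conclude vanishing of $R^i g_*$. Your parenthetical assertion that "the cohomology sheaves of $H$ in the neighboring degrees are also forced to be $\mathcal{W}_F$-objects" is precisely what is \emph{not} given, and indeed it is false in general — if it were automatic, the proposition would not need to impose $H^q H \in \mathcal{W}_F$ as a separate hypothesis. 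The truncation workaround you sketch at the end faces the same problem: $\tau_{\le q}$ and $\tau_{\ge q}$ do not commute with $Rg_*$ without knowing something about the other degrees, which is exactly what you lack. The paper's evaluation-over-$\closure{F'}$ device is what makes a single-degree hypothesis sufficient, because taking $\closure{F'}$-points is exact.
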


\begin{proof}
	By \cite[Proposition 3.1.4]{Suz24},
	we have $R \Gamma(F'_{\et}, G) \cong R \Gamma(F'_{\pro\et}, H)$
	for all $F' \in F^{\perar}$.
	In particular, we have $(H^{q} G)(\closure{F'}) \cong (H^{q} H)(\closure{F'})$
	for any field $F' \in F^{\perar}$ and its algebraic closure $\closure{F'}$
	compatible with the actions of $\Gal(\closure{F'} / F')$.
	The assumption $H^{q} G \in \mathcal{W}_{F}$ implies that
	the $\Gal(\closure{F'} / F')$-invariant part of $(H^{q} G)(\closure{F'})$ is
	$(H^{q} G)(F')$.
	The assumption $G \in \genby{\mathcal{W}_{F}}_{F^{\perar}_{\et}}$ implies that
	$H \in D^{b}(\Ind \Pro \Alg_{u} / F)$ by \cite[Proposition 3.1.5]{Suz24}
	and hence $H^{q} H \in \Ind \Pro \Alg_{u} / F$.
	Therefore the $\Gal(\closure{F'} / F')$-invariant part of $(H^{q} H)(\closure{F'})$ is
	$(H^{q} H)(F')$.
	Thus $(H^{q} G)(F') \cong (H^{q} H)(F')$ for all $F' \in F^{\perar}$.
	Since $\mathcal{W}_{F}$ is a full subcategory of $\Ab(F^{\perar}_{\et})$,
	we obtain an isomorphism $H^{q} G \cong H^{q} H$.
\end{proof}

\begin{Prop} \label{0024}
	Let $G \in \Ind \Alg_{u} / F$.
	Then any subobject and quotient object of $G$ in $\Ind \Pro \Alg_{u} / F$
	is in $\Ind \Alg_{u} / F$.
\end{Prop}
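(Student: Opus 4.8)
The plan is to deduce the quotient case from the subobject case, and to reduce the latter in turn to a single basic claim about the pro-category. Throughout I use the following soft facts (compare \cite[Section~2]{Suz24} and the references there): $\Alg_{u}/F$ is an abelian category in which every object has finite length, so it is both noetherian and artinian; $\Pro\Alg_{u}/F$ and $\Ind\Pro\Alg_{u}/F$ are abelian, with $\Alg_{u}/F \into \Pro\Alg_{u}/F \into \Ind\Pro\Alg_{u}/F$ fully faithful and exact; filtered colimits in $\Ind\Pro\Alg_{u}/F$ are exact and commute with finite limits; and $\Ind\Alg_{u}/F$ is closed under filtered colimits formed in $\Ind\Pro\Alg_{u}/F$.

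The basic claim is: a subobject $H$ of an object $C \in \Alg_{u}/F$ taken in $\Pro\Alg_{u}/F$ already lies in $\Alg_{u}/F$. To prove it, write $H = \invlim_{j} D_{j}$ with $D_{j} \in \Alg_{u}/F$; since $\Alg_{u}/F$ is artinian this presentation may be taken strict, so that all transition maps, and hence all projections $H \onto D_{j}$, are epimorphisms. By the defining colimit formula for Hom-sets into an object of $\Alg_{u}/F$, the monomorphism $H \into C$ factors through a projection $H \onto D_{j_{0}}$ followed by a morphism $\phi \colon D_{j_{0}} \to C$ in $\Alg_{u}/F$; factoring $\phi$ through its image $C_{0} = \phi(D_{j_{0}}) \in \Alg_{u}/F$, the resulting map $H \to C_{0}$ is a composite of epimorphisms, while its composite with $C_{0} \into C$ is the monomorphism $H \into C$. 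Hence $H \to C_{0}$ is both epic and monic in the abelian category $\Pro\Alg_{u}/F$, so it is an isomorphism, and $H \cong C_{0} \in \Alg_{u}/F$.

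Granting the basic claim, the subobject case follows in two steps. First, if $H$ is a subobject of $C \in \Alg_{u}/F$ taken in $\Ind\Pro\Alg_{u}/F$, write $H = \dirlim_{j} H_{j}$ with $H_{j} \in \Pro\Alg_{u}/F$; the images $\overline{H}_{j}$ of the maps $H_{j} \to C$, formed in $\Pro\Alg_{u}/F$, lie in $\Alg_{u}/F$ by the claim, and exactness of filtered colimits identifies $H$ with $\dirlim_{j} \overline{H}_{j} \in \Ind\Alg_{u}/F$. Second, for general $G = \dirlim_{i} C_{i}$ with $C_{i} \in \Alg_{u}/F$ and a subobject $G' \into G$, each $G'_{i} := G' \times_{G} C_{i}$ is a subobject of $C_{i}$, hence lies in $\Ind\Alg_{u}/F$ by the first step; since filtered colimits commute with fibre products over $G$, we get $G' = G' \times_{G} G = \dirlim_{i} G'_{i} \in \Ind\Alg_{u}/F$.

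For the quotient case, let $G \onto Q$ with $G \in \Ind\Alg_{u}/F$ and put $N = \Ker(G \onto Q)$; then $N \in \Ind\Alg_{u}/F$ by the subobject case. Using the standard description of morphisms in ind-categories, the inclusion $N \into G$ is the colimit of a morphism of filtered systems $(N_{j} \to C_{j})$ with $N_{j}, C_{j} \in \Alg_{u}/F$, and after replacing each $N_{j}$ by its image in $C_{j}$ (which does not change the colimit $N$) we may assume each $N_{j} \into C_{j}$ is a monomorphism. Then $Q = \Coker(N \into G) = \dirlim_{j} \Coker(N_{j} \into C_{j}) = \dirlim_{j}(C_{j}/N_{j})$ with each $C_{j}/N_{j} \in \Alg_{u}/F$, so $Q \in \Ind\Alg_{u}/F$. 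I expect the main obstacle to be the basic claim of the second paragraph: it is precisely the step where finiteness of $\Alg_{u}/F$ (artinianness, via strict pro-presentations) is indispensable, and everything else is routine diagram-chasing in the abelian categories $\Pro\Alg_{u}/F$ and $\Ind\Pro\Alg_{u}/F$.
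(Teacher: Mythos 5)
Your proposal is correct, but it takes a genuinely different route from the paper. The paper's proof is a one-liner built on the formal fact that any morphism from an object of $\Pro\Alg_u/F$ to an object of $\Ind\Alg_u/F$ factors through an object of $\Alg_u/F$ — this is a purely categorical consequence of the Hom formulas for pro- and ind-categories and needs no finiteness of $\Alg_u/F$ at all. Given that, for $H = \dirlim_\lambda H_\lambda \into G$ one sees directly that each $\Im(H_\lambda \to H) \cong \Im(H_\lambda \to G)$ lands in $\Alg_u/F$, so $H \in \Ind\Alg_u/F$, and the quotient case follows because the exact, fully faithful embedding $\Ind\Alg_u/F \into \Ind\Pro\Alg_u/F$ preserves cokernels. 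By contrast, you reduce everything to the ``basic claim'' that a subobject of $C \in \Alg_u/F$ in $\Pro\Alg_u/F$ is already in $\Alg_u/F$, and your proof of that claim essentially re-derives the relevant piece of the pro-to-ind Hom formula via strict pro-presentations, at the cost of invoking artinianness of $\Alg_u/F$. Both approaches work; yours is somewhat longer and invokes a finiteness hypothesis the paper manages to avoid, while the paper's categorical shortcut hides the same image-chasing inside one Hom computation.

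One of your preliminary ``soft facts'' is false as stated and worth flagging, even though your argument never actually uses it: $\Alg_u/F$ is artinian but \emph{not} noetherian, and in particular does not have finite length. For instance, in $\mathbb{G}_a^{\perf}$ over $F$ the kernels of the iterated Artin--Schreier maps give a strictly increasing chain of finite \'etale subgroups $\Ker(\wp) \subsetneq \Ker(\wp^2) \subsetneq \cdots$. What you really need — and correctly use — is only artinianness, which does hold (a descending chain of subobjects eventually has constant dimension, hence constant identity component, and then the component groups are finite). So the proof stands, but the justification you gave for artinianness should be replaced: it is not a consequence of finite length, which fails.
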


\begin{proof}
	Let $H \into G$ be a subobject in $\Ind \Pro \Alg_{u} / F$.
	Write $H = \dirlim_{\lambda} H_{\lambda}$ with $H_{\lambda} \in \Pro \Alg_{u} / F$.
	Then for any $\lambda$,
	the composite morphism $H_{\lambda} \to H \into G$ is a morphism
	from an object of $\Pro \Alg_{u} / F$ to an object of $\Ind \Alg_{u} / F$.
	Hence it factors through an object of $\Alg_{u} / F$.
	Hence $H_{\lambda} \to H$ factors through an object of $\Alg_{u} / F$.
	This implies $H \in \Ind \Alg_{u} / F$,
	thus $G / H \in \Ind \Alg_{u} / F$.
\end{proof}

\begin{Prop} \label{0047}
	Let $G \onto H$ be a surjection in $\Ind \Pro \Alg_{u} / F$.
	Assume that $G \in \mathcal{W}_{F}$ and $H \in \Pro \Alg_{u} / F$.
	Then $H \in \mathcal{W}_{F}$.
\end{Prop}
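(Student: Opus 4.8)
The plan is to exploit the defining filtration of $\mathcal{W}_F$ on $G$ and push it forward along the surjection $G \onto H$, then check that the resulting filtration on $H$ satisfies the three bullet points in the definition of $\mathcal{W}_F$. Write $G \supset G^0 \supset G' \supset 0$ for a filtration witnessing $G \in \mathcal{W}_F$, and let $H \supset H^0 \supset H'$ be obtained by taking $H^0$ to be the image of $G^0$ (which is the identity component of $H$, since images of connected objects are connected and $\pi_0$ is right exact) and $H'$ to be the image of $G'$. Since $H \in \Pro\Alg_u/F$ is already assumed, $H$ is automatically connected, so $\pi_0 H = 0$ is trivially finite \'etale and the first bullet is immediate; in fact $H = H^0$. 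This reduces the problem to analyzing $H' = \mathrm{image}(G')$ and $H^0/H' = \mathrm{image}(G^0/G')$.

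First I would handle the middle bullet: $G^0/G'$ is a filtered direct limit $\dirlim_n G''_n$ of connected objects of $\Alg_u/F$ along injective transitions. Its image in $H$ (more precisely, $H^0/H'$, a quotient of $G^0/G'$) is then a quotient of an ind-algebraic unipotent group, hence lies in $\Ind\Alg_u/F$ by Proposition \ref{0024}. But we need more: we need it to be expressible as a direct limit of \emph{connected} groups in $\Alg_u/F$ along \emph{injective} transitions. Here I would use that $H^0/H'$ is simultaneously a subquotient of $H \in \Pro\Alg_u/F$, hence pro-algebraic; an object that is both ind-algebraic and pro-algebraic and connected should in fact be in $\Alg_u/F$ itself (a pro-algebraic group has no strictly increasing chain of algebraic subgroups exhausting it unless it is already algebraic). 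So $H^0/H'$ is a single connected object of $\Alg_u/F$, which trivially satisfies the middle bullet (take the constant direct system). Dually, for the third bullet, $H'$ is a subobject of $H \in \Pro\Alg_u/F$, hence $H' \in \Pro\Alg_u/F$; write $H' = \invlim_n H'_n$ with $H'_n \in \Alg_u/F$ and surjective transitions, and after replacing $H'_n$ by the image of $H'$ we may assume each transition has connected kernel and each $H'_n$ is connected (using that $H'$ is connected as a quotient of $G'$).

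The main obstacle I anticipate is the claim that a connected object of $\Ind\Pro\Alg_u/F$ that lies in both $\Ind\Alg_u/F$ and $\Pro\Alg_u/F$ is actually in $\Alg_u/F$ — this is the crux that collapses the middle layer. The argument: if $G^0/G' \onto H^0/H'$ with $H^0/H' = \invlim_m Q_m$ pro-algebraic, then for each $m$ the composite $G^0/G' \to Q_m$, being a map from an ind-algebraic group to an algebraic group, factors through some $G''_n$; taking the image, $Q_m$ is a quotient of a single connected algebraic group, and these images form a surjective inverse system of connected algebraic groups whose limit is $H^0/H'$, bounded in dimension, so the system stabilizes and $H^0/H' \in \Alg_u/F$. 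Once this collapsing is in hand, one assembles the filtration $H = H^0 \supset H' \supset 0$, checks the three bullets as above, and concludes $H \in \mathcal{W}_F$. A secondary technical point to be careful about is that taking images in the quasi-abelian category $\Ind\Pro\Alg_u/F$ behaves well with respect to $(-)^0$ and $\pi_0$, which follows from their exactness properties recorded before Definition \ref{0077}'s surrounding discussion; I would cite the relevant structural facts from \cite{Suz24} rather than reprove them.
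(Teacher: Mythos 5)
Your overall strategy --- pushing the defining filtration of $G$ forward to $H$ --- is genuinely different from the paper's. The paper instead reduces, via a compactness-type argument, to the case where $G$ is pro-algebraic connected, writes $G = \invlim G_n$ (surjective, connected kernels), and explicitly constructs $H_n = G_n / \Im(K \to G_n)$ where $K = \Ker(G \onto H)$; it then verifies these $H_n$ have connected kernels and that $H = \invlim H_n$. Both routes are reasonable, and both secretly lean on the same basic structural fact about $\Ind \Pro \Alg_u / F$. However, your proposal has a real gap at exactly the place you flag as the crux, plus two smaller slips.

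The main gap is the justification of ``a connected object that is both ind-algebraic and pro-algebraic lies in $\Alg_u/F$.'' The conclusion is correct (the paper relies on it, e.g.\ in the proof of Proposition \ref{0041}: $F^n$ is shown to lie in $\Ind \Alg_u/F$ and in $\Pro \Alg_u/F$, ``hence in $\Alg_u/F$''), but your argument for it is not. You write that each $Q_m$ is a quotient of a single $G''_{n(m)}$ and that the resulting system is ``bounded in dimension''; there is no reason for that. The stabilization index $n(m)$ such that $\Im(G''_{n(m)} \to Q_m) = Q_m$ is non-decreasing in $m$ and can be unbounded, so $\dim Q_m$ can grow without bound. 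What actually works is compactness: any object of $\Pro \Alg_u/F$ is a compact object of $\Ind \Pro \Alg_u/F$ (this is built into the definition of the ind-category). After replacing each $G''_n$ by its image in $H^0/H'$, one has $H^0/H' = \dirlim_n \Im(G''_n)$ with each $\Im(G''_n) \in \Alg_u/F$ a subobject; compactness of $H^0/H'$ forces the identity to factor through some $\Im(G''_N)$, whence $\Im(G''_N) \into H^0/H'$ is a split epi and a mono, so an isomorphism, and $H^0/H' \in \Alg_u/F$. Dimension counting is a red herring here.

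Two smaller points. First, it is false that $H \in \Pro \Alg_u/F$ forces $H$ connected: $\Alg_u/F$ contains finite \'etale $p$-groups, so $\Pro \Alg_u/F$ contains, e.g., $\Z_p$, which is totally disconnected. This particular slip is harmless, since $\pi_0 H$ is a quotient of $\pi_0 G$ and hence finite \'etale, but you should say that instead. Second, your treatment of the bottom layer $H'$ is too loose: ``after replacing $H'_n$ by the image of $H'$ we may assume each transition has connected kernel'' does not follow just from $H'$ being connected and pro-algebraic --- an abstract presentation of $H'$ as $\invlim H'_n$ can have disconnected transition kernels even if all $H'_n$ are connected. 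One should instead transport the given presentation of $G'$ as in the paper: set $H'_n = G'_n / \Im(G' \cap K \to G'_n)$ and check that the kernel of $H'_{n+1} \onto H'_n$ is a quotient of the connected group $\Ker(G'_{n+1} \onto G'_n)$, hence connected.
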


\begin{proof}
	First assume that $G$ is pro-algebraic.
	We may assume $G$ connected.
	Let $K$ be the kernel of $G \onto H$.
	Write $G$ as the inverse limit of a surjective system of connected groups $G_{n} \in \Alg_{u} / F$
	with connected $\Ker(G_{n + 1} \onto G_{n})$.
	Let $H_{n}$ be the cokernel of the composite $K \into G \onto G_{n}$.
	Then $H$ is the inverse limit of the surjective system of the connected unipotent groups $H_{n}$
	with connected $\Ker(H_{n + 1} \onto H_{n})$.
	
	For the general case, write $G$ is a filtered union of proalgebraic subgroups $G_{m} \in \mathcal{W}_{F}$.
	Since $H \in \Pro \Alg_{u} / F$,
	we know that the composite $G_{m} \into G \onto H$ is surjective for some $m$,
	which reduces the statement to the first case.
\end{proof}

\begin{Prop} \label{0089}
	Let $\varphi \colon G \to H$ be a morphism in $\Ind \Pro \Alg_{u} / F$.
	Assume $G, H \in \mathcal{W}_{F}$.
	Then $\Im(\varphi) \in \mathcal{W}_{F}$.
\end{Prop}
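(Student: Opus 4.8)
My plan is to reduce to the case where $G$ and $H$ are connected, and then to write down by hand a filtration of $\Im(\varphi)$ of the kind required by the definition of $\mathcal{W}_{F}$, invoking Proposition \ref{0047} for the pro-algebraic piece.

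First I would reduce to the case $G$, $H$ connected. Write $I = \Im(\varphi)$, so $G \onto I \into H$. The composite $G \onto I \onto \pi_{0} I$ kills $G^{0}$, since its image is a connected subobject of $\pi_{0} I$, hence $0$; so it factors through a surjection $\pi_{0} G \onto \pi_{0} I$, and since $\pi_{0} G$ is finite \'etale $p$-primary and any quotient of such an object in $\Ind \Pro \Alg_{u} / F$ is again finite \'etale $p$-primary, so is $\pi_{0} I$. Moreover $\Im(G^{0} \to I)$ is connected, hence contained in $I^{0}$, while $I / \Im(G^{0} \to I)$ is a quotient of $\pi_{0} G$, hence finite \'etale; its connected subobject $I^{0} / \Im(G^{0} \to I)$ is therefore $0$, so $I^{0} = \Im(G^{0} \to I) = \Im(G^{0} \to H^{0})$. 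Since an object of $\Ind \Pro \Alg_{u} / F$ lies in $\mathcal{W}_{F}$ as soon as its component group is finite \'etale $p$-primary and its identity component lies in $\mathcal{W}_{F}$, it suffices to treat the morphism $G^{0} \to H^{0}$; as $G^{0}, H^{0} \in \mathcal{W}_{F}$, I may assume $G$ and $H$ connected.

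Assume now $G$, $H$ connected, and fix a filtration $G \supset G' \supset 0$ as in the definition of $\mathcal{W}_{F}$, so that $G'$ is a connected pro-algebraic group of the distinguished inverse-limit form and $G / G'$ is connected ind-algebraic. Set $I' := \Im(G' \to I)$, which is connected. The induced morphism $G / G' \onto I / I'$ is surjective, so $I / I'$ is a quotient of the connected ind-algebraic group $G / G'$, hence connected and ind-algebraic by Proposition \ref{0024}; and a connected ind-algebraic group is automatically a direct limit of connected groups in $\Alg_{u} / F$ with injective transition maps (pass to the identity components of the images of a presentation). Thus $I / I'$ has the form demanded of the middle graded piece. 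Granting that $I'$ is pro-algebraic, Proposition \ref{0047} applied to the surjection $G' \onto I'$ of connected groups gives $I' \in \mathcal{W}_{F}$, and then, $I'$ being connected and pro-algebraic, it is an inverse limit of connected groups in $\Alg_{u} / F$ with surjective transitions and connected kernels. Hence the filtration $I \supset I' \supset 0$ exhibits $I \in \mathcal{W}_{F}$.

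The hard part will be showing that $I'$ is pro-algebraic. Here I would use that $G'$ is a compact object of $\Ind \Pro \Alg_{u} / F$: the composite $G' \to H \onto H / H'$, being a morphism from a pro-algebraic group to an ind-algebraic one, factors through some group in $\Alg_{u} / F$, and in fact through the quotient $G'_{n}$ of $G'$ for some $n$ in a chosen presentation $G' = \invlim_{n} G'_{n}$. Then $K_{n} := \Ker(G' \onto G'_{n})$ is again connected pro-algebraic of the distinguished form, and the image of $K_{n}$ in $H$ is contained in $H'$; consequently $\Im(K_{n} \to H) = \Im(K_{n} \to H')$ is the image of a morphism between two objects of $\Pro \Alg_{u} / F$, hence again in $\Pro \Alg_{u} / F$. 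Since $I' / \Im(K_{n} \to H)$ is a quotient of $G' / K_{n} = G'_{n} \in \Alg_{u} / F$, the group $I'$ is an extension of an algebraic group by a pro-algebraic group, hence pro-algebraic, as wanted. The points I expect to need the most care are the compactness and factorization assertions in $\Ind \Pro \Alg_{u} / F$, and the fact that the image of a morphism between pro-objects of $\Alg_{u} / F$ is again a pro-object; the latter I would handle by computing the image termwise and observing that the transition maps of the resulting inverse system are surjective, so the limit surjects onto each term.
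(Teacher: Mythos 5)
Your proof is correct and follows essentially the same route as the paper's: reduce to the connected case, show that the image of the pro-algebraic part is pro-algebraic by factoring its composite to the ind-algebraic quotient $H/H'$ through an algebraic group, invoke Proposition \ref{0047} to put that image in $\mathcal{W}_F$, and observe the ind-algebraic part of the image has the required form. The only organizational difference is that the paper exhausts $G$ by an increasing $\N$-indexed family of pro-algebraic subgroups $G_n \in \mathcal{W}_F$ and applies Proposition \ref{0047} to each $\varphi(G_n)$, whereas you work directly with the single distinguished pro-algebraic term $G'$ of the filtration; the two formulations carry the same content, and the delicate points you flag (compactness/factorization in $\Ind\Pro\Alg_u/F$ and images between pro-objects) are precisely the ones the paper leaves implicit.
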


\begin{proof}
	We may assume that $G$ and $H$ are connected.
	Write $G$ as a direct limit $\dirlim_{n} G_{n}$
	with $G_{n} \in \mathcal{W}_{F}$ connected pro-algebraic
	and $G_{n + 1} / G_{n} \in \Alg_{u} / F$.
	The restriction of $\varphi$ to $G_{n}$ factors through some pro-algebraic subgroup of $H$.
	Hence $\varphi(G_{n}) \in \Pro \Alg_{u} / F$,
	so $\varphi(G_{n}) \in \mathcal{W}_{F}$ by Proposition \ref{0047}.
	It is connected.
	The group $\varphi(G_{n + 1}) / \varphi(G_{n})$ is a quotient of $G_{n + 1} / G_{n} \in \Alg_{u} / F$,
	hence $\varphi(G_{n + 1}) / \varphi(G_{n}) \in \Alg_{u} / F$.
	Thus $\Im(\varphi) = \dirlim_{n} \varphi(G_{n})$ is in $\mathcal{W}_{F}$.
\end{proof}

\begin{Prop} \label{0048}
	Let $H \into G$ be an injection in $\Ind \Alg_{u} / F$.
	If $G \in \mathcal{W}_{F}$, then $H^{0} \in \mathcal{W}_{F}$.
	Hence if $G \in \mathcal{W}_{F}$ and $\pi_{0}(H)$ is finite,
	then $H \in \mathcal{W}_{F}$.
\end{Prop}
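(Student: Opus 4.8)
The plan is to reduce to the case where $H$ is connected and then exhibit $H$ as a filtered colimit of connected algebraic unipotent groups along \emph{injective} transition maps --- precisely the presentation needed to place $H$ in $\mathcal{W}_{F}$.

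First I would dispose of $\pi_{0}$. Since $H$ lies in $\Ind \Alg_{u} / F$, so does $H^{0}$, and the composite $H^{0} \into H \into G \onto \pi_{0} G$ vanishes (a connected group has no nonzero morphism to a finite \'etale group, as any such morphism factors through $\pi_{0} H^{0} = 0$); hence $H^{0} \into G$ factors through $G^{0}$, which again lies in $\mathcal{W}_{F}$. Replacing $(H, G)$ by $(H^{0}, G^{0})$, it remains to treat: \emph{if $H$ is a connected object of $\Ind \Alg_{u} / F$ with an injection $H \into G$ and $G \in \mathcal{W}_{F}$, then $H \in \mathcal{W}_{F}$}. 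Granting this, the second assertion follows formally: when $\pi_{0} H$ is finite it is finite \'etale $p$-primary (as $H \in \Ind \Alg_{u} / F$), and prepending it to the $\mathcal{W}_{F}$-filtration of $H^{0}$ yields one for $H$.

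For the reduced statement I would write $H = \dirlim_{\lambda} A_{\lambda}$ with $A_{\lambda} \in \Alg_{u} / F$ and set $B_{\lambda} = \Im(A_{\lambda} \to G)$. Since $A_{\lambda}$ and $G$ both lie in $\mathcal{W}_{F}$, Proposition \ref{0089} gives $B_{\lambda} \in \mathcal{W}_{F}$; and since $B_{\lambda}$ is a quotient of $A_{\lambda} \in \Ind \Alg_{u} / F$, Proposition \ref{0024} gives $B_{\lambda} \in \Ind \Alg_{u} / F$. The crucial step --- and the one I expect to be the main obstacle --- is to upgrade this to $B_{\lambda} \in \Alg_{u} / F$. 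As $\pi_{0} B_{\lambda}$ is finite \'etale, it suffices to treat $B_{\lambda}^{0}$, which equals $\Im(A_{\lambda}^{0} \to G)$, hence is a quotient of the algebraic group $A_{\lambda}^{0}$ while also lying in $\mathcal{W}_{F}$. Using the defining filtration of a connected object of $\mathcal{W}_{F}$, I would write $B_{\lambda}^{0}$ as a filtered union $\bigcup_{n} E_{n}$ of subobjects $E_{n} \in \Pro \Alg_{u} / F$ (each $E_{n}$ an extension of a connected algebraic group by the pro-algebraic ``$G'$''-part, hence pro-algebraic); since $A_{\lambda}^{0}$ is a compact object of $\Ind \Pro \Alg_{u} / F$, the surjection $A_{\lambda}^{0} \onto B_{\lambda}^{0} = \dirlim_{n} E_{n}$ must factor through some $E_{n}$, forcing $E_{n} = B_{\lambda}^{0}$. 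Thus $B_{\lambda}^{0} \in \Pro \Alg_{u} / F \cap \Ind \Alg_{u} / F$; and since the objects of $\Pro \Alg_{u} / F$ are exactly the compact objects of $\Ind \Pro \Alg_{u} / F$, such a group is a retract of an algebraic group, so $B_{\lambda}^{0} \in \Alg_{u} / F$, whence $B_{\lambda} \in \Alg_{u} / F$.

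It then remains to assemble $H$. The $B_{\lambda}$ form a filtered system of subobjects of $G$ compatibly with the maps $A_{\lambda} \to A_{\lambda'}$, so the induced surjection $H = \dirlim_{\lambda} A_{\lambda} \onto \bigcup_{\lambda} B_{\lambda} \subseteq G$, being the injection $H \into G$, identifies $H$ with $\dirlim_{\lambda} B_{\lambda}$, the transition maps now being injective. As $H$ is connected, $0 = \pi_{0} H = \dirlim_{\lambda} \pi_{0} B_{\lambda}$, and since each $\pi_{0} B_{\lambda}$ is finite, for every $\lambda$ there is $\lambda' \ge \lambda$ with $B_{\lambda} \subseteq B_{\lambda'}^{0}$. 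Hence $H = \bigcup_{\lambda} B_{\lambda}^{0} = \dirlim_{\lambda} B_{\lambda}^{0}$ is a filtered colimit of connected objects of $\Alg_{u} / F$ along injective transition maps, so the filtration $H \supset H \supset 0 \supset 0$ exhibits $H \in \mathcal{W}_{F}$. The remaining points are routine and concern the standard exactness formalism of $\Ind \Pro \Alg_{u} / F$ (exactness of filtered colimits, compatibility of $(-)^{0}$ and $\pi_{0}$ with them, and the identification of its compact objects with $\Pro \Alg_{u} / F$).
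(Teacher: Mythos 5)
Your argument is correct, but it takes a genuinely different route from the paper's and is considerably more elaborate. The paper's proof first treats the base case $G \in \Alg_u/F$: the images $\Im(H_\lambda \to G)$ are algebraic subgroups of the algebraic group $G$, and this ascending chain must stabilize (the Noetherian property of $G$), so $H$ coincides with one of them and is itself algebraic. The general case is then handled by exhibiting the connected ind-algebraic $\mathcal{W}_F$-object $G$ as an injective direct limit of connected $G_n \in \Alg_u/F$ --- possible because the pro-algebraic part $G'$ in the $\mathcal{W}_F$-filtration, being a pro-algebraic subobject of the ind-algebraic $G$, is forced to be algebraic --- and applying the base case to each $(H \cap G_n)^0 \into G_n$. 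Your argument instead works with general $G \in \mathcal{W}_F$ throughout, establishing $B_\lambda \in \Alg_u/F$ via Propositions \ref{0089} and \ref{0024} plus two compactness arguments (one to force $B_\lambda^0 \in \Pro\Alg_u/F$ from the pro-algebraic exhaustion of a connected $\mathcal{W}_F$-object, and one to extract $\Pro\Alg_u/F \cap \Ind\Alg_u/F = \Alg_u/F$). Where the paper relies on the stationary-chain property of a single algebraic group, you rely on the identification of $\Pro\Alg_u/F$ with the compact objects of $\Ind\Pro\Alg_u/F$, a standard but genuine additional input that the paper's proof avoids. Both routes are sound; the paper's reduction to the algebraic case is shorter, while your approach has the modest conceptual benefit of deducing the algebraicity of each image $B_\lambda$ directly from the $\mathcal{W}_F$-structure of the ambient $G$ rather than from a separate base case.
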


\begin{proof}
	We may assume that $G$ and $H$ are connected.
	First assume that $G \in \Alg_{u} / F$.
	Write $H$ as a filtered direct limit of connected $H_{\lambda} \in \Alg_{u} / F$.
	Then $\Im(H_{\lambda} \to H) \cong \Im(H_{\lambda} \to G)$ is connected in $\Alg_{u} / F$.
	As $G \in \Alg_{u} / F$, this implies that
	the system of subgroups $\Im(H_{\lambda} \to H)$ of $G$ is stationary,
	meaning that $\Im(H_{\lambda} \to H) = H$ for some $\lambda$.
	Hence $H \in \Alg_{u} / F$.
	
	For the general case, write $G$ as the direct limit of an injective system of
	connected groups $G_{n} \in \Alg_{u} / F$.
	Then $(H \cap G_{n})^{0} \in \Alg_{u} / F$ by the first case.
	Hence $H$ is the direct limit of the injective system of
	the connected groups $(H \cap G_{n})^{0} \in \Alg_{u} / F$.
\end{proof}

\begin{Prop} \label{0049}
	Let $0 \to K \to G \to H \to 0$ be an exact sequence in $\Ind \Pro \Alg_{u} / F$.
	Assume that $G \in \mathcal{W}_{F}$ and $H$ is ind-algebraic in $\mathcal{W}_{F}$.
	Then $K^{0} \in \mathcal{W}_{F}$ and $\pi_{0}(K)$ is \'etale
	(that is, in $\Ind \Alg_{u} / F$).
\end{Prop}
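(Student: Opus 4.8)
The plan is to cut $K$ along the pro-algebraic part $G'$ of $G$: analyze $K \cap G'$ by hand, analyze $K/(K \cap G')$ with Propositions \ref{0024} and \ref{0048}, and then reassemble. Write the filtration of $G$ from the definition of $\mathcal{W}_{F}$ as $G \supset G^{0} \supset G' \supset 0$, with $G' = \invlim_{n} G'_{n}$, each $G'_{n} \in \Alg_{u}/F$ connected and each transition surjective with connected kernel. Since $H$ is ind-algebraic, the morphism $G' \to H$ factors through some $G'_{n}$ and some algebraic subgroup of $H$; as $G' \onto G'_{n}$ is surjective, the image $C := \Im(G' \to H)$ is a connected unipotent algebraic group and $G' \onto C$ factors as $G' \onto G'_{n} \onto C$. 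Hence $K \cap G' = \Ker(G' \onto C)$ sits in
\[
    0 \to \invlim_{m \ge n} \Ker(G'_{m} \onto G'_{n}) \to K \cap G' \to \Ker(G'_{n} \onto C) \to 0 ,
\]
where, by the snake lemma, the left-hand inverse system again has surjective transitions with connected kernels --- a connected group of the third type in the definition of $\mathcal{W}_{F}$ --- and $\Ker(G'_{n} \onto C)$ is unipotent algebraic with finite \'etale $\pi_{0}$. Thus $(K \cap G')^{0}$ is an extension of a connected algebraic group by that inverse limit, hence again one; in particular $K \cap G'$ is a pro-algebraic object of $\mathcal{W}_{F}$ whose $\pi_{0}$ is finite \'etale and whose identity component is of the third type.

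Next, the $3 \times 3$ lemma applied to $0 \to K \cap G' \to G' \to C \to 0$ inside $0 \to K \to G \to H \to 0$ yields $0 \to K/(K \cap G') \to G/G' \to H/C \to 0$. Here $G/G'$ lies in $\mathcal{W}_{F}$ (filtration $G/G' \supset G^{0}/G' \supset 0$) and is ind-algebraic ($G^{0}/G'$ is, and an extension by the finite \'etale $\pi_{0} G$ remains so); so by Proposition \ref{0024} the subobject $K/(K \cap G')$ is ind-algebraic, by Proposition \ref{0048} its identity component lies in $\mathcal{W}_{F}$, and $\pi_{0}(K/(K \cap G'))$ is \'etale, being $\pi_{0}$ of an ind-algebraic group.

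Finally I reassemble from $0 \to N \to K \to K/N \to 0$ with $N = K \cap G'$. By right exactness of $\pi_{0}$, $\pi_{0} K$ is an extension of the \'etale group $\pi_{0}(K/N)$ by a quotient of the finite \'etale group $\pi_{0} N$, hence \'etale. For the identity component, put $M = \Im(K^{0} \to K/N)$, a connected subgroup of $(K/N)^{0} \in \mathcal{W}_{F}$, so $M \in \mathcal{W}_{F}$ by Proposition \ref{0048}; then $0 \to K^{0} \cap N \to K^{0} \to M \to 0$ with $N^{0} \subseteq K^{0} \cap N \subseteq N$, so $(K^{0} \cap N)^{0} = N^{0}$ and $\pi_{0}(K^{0} \cap N) = (K^{0} \cap N)/N^{0} \subseteq \pi_{0} N$ is finite \'etale. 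Dividing by $N^{0}$ gives $0 \to \pi_{0}(K^{0} \cap N) \to K^{0}/N^{0} \to M \to 0$, exhibiting $K^{0}/N^{0}$ --- a quotient of the connected group $K^{0}$ --- as a connected ind-algebraic group; since $N^{0}$ is of the third type, the filtration $K^{0} \supset N^{0} \supset 0$ shows $K^{0} \in \mathcal{W}_{F}$.

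The genuine obstacle is precisely this last reassembly: because $\mathcal{W}_{F}$ is not abelian one cannot simply invoke closure under extensions, and must instead track identity components and $\pi_{0}$'s separately and verify that the ``connected ind-algebraic'' and ``third-type pro-algebraic'' shapes persist. The small auxiliary facts used along the way --- stability of $\Ind \Alg_{u}/F$, and of connected ind-algebraic groups, under extension by finite \'etale groups --- are routine, following from finite presentation of algebraic groups as objects of these ind/pro-categories.
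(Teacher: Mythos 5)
Your proposal is correct and follows essentially the same route as the paper's proof: slice $K$ along the pro-algebraic piece $G'$ of $G$, show $K \cap G'$ is pro-algebraic in $\mathcal{W}_{F}$ (the kernel of the surjection $G' \onto \Im(G' \to H) \in \Alg_u/F$), embed $K/(K \cap G')$ in an ind-algebraic $\mathcal{W}_{F}$-group and invoke Propositions \ref{0024} and \ref{0048}. The only substantive difference is that you carry out the final reassembly step explicitly --- tracking $\pi_0$'s and identity components separately and noting that $K^{0}/N^{0}$ inherits the ``second-type'' shape --- where the paper's proof stops after analyzing the two pieces; your remark about why closure under extensions alone does not suffice is accurate and worth making.
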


\begin{proof}
	Let $G' \subset G^{0}$ be connected pro-algebraic in $\mathcal{W}_{F}$
	such that $G^{0} / G' \in \mathcal{W}_{F}$ is ind-algebraic.
	Then the image of $G'$ in $H$ is in $\Alg_{u} / F$.
	Therefore $G' \cap K$ is pro-algebraic in $\mathcal{W}_{F}$.
	In the exact sequence
		\[
				0
			\to
				K / (G' \cap K)
			\to
				G / (G' \cap K)
			\to
				H
			\to
				0,
		\]
	the second and third terms are ind-algebraic in $\mathcal{W}_{F}$.
	Hence the first term $K / (G' \cap K)$ is ind-algebraic
	and its identity component is in $\mathcal{W}_{F}$
	by Proposition \ref{0048}.
\end{proof}

\begin{Prop} \label{0041}
	Let
		\begin{equation} \label{0023}
				\cdots
			\to
				C^{n - 1}
			\to
				A^{n}
			\to
				B^{n}
			\to
				C^{n}
			\to
				A^{n + 1}
			\to
				\cdots
		\end{equation}
	be a long exact sequence in $\Ind \Pro \Alg_{u} / F$.
	Assume that $A^{n} \in \Pro \Alg_{u} / F$,
	$B^{n} \in \mathcal{W}_{F}$ and
	$C^{n} \in \Ind \Alg_{u} / F$ for all $n$.
	Then any term of \eqref{0023} and the image of any morphism in \eqref{0023} are in $\mathcal{W}_{F}$.
	The images of $A^{n} \to B^{n}$, $B^{n} \to C^{n}$ and $C^{n} \to A^{n + 1}$
	are pro-algebraic, ind-algebraic and in $\Alg_{u} / F$, respectively, for all $n$.
\end{Prop}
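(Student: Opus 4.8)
The plan is to work with the truncated pieces of the long exact sequence and feed them into the previously established Propositions \ref{0047}, \ref{0048}, \ref{0049} in a bootstrapping fashion. First I would factor each of the three types of maps through its image: write $I^{n}_{A} = \Im(A^{n} \to B^{n})$, $I^{n}_{B} = \Im(B^{n} \to C^{n})$ and $I^{n}_{C} = \Im(C^{n} \to A^{n+1})$, so that \eqref{0023} breaks into short exact sequences $0 \to I^{n-1}_{C} \to A^{n} \to I^{n}_{A} \to 0$, $0 \to I^{n}_{A} \to B^{n} \to I^{n}_{B} \to 0$ and $0 \to I^{n}_{B} \to C^{n} \to I^{n}_{C} \to 0$. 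The goal is to show all six families of objects lie in $\mathcal{W}_{F}$, with $I^{n}_{A}$ pro-algebraic, $I^{n}_{C}$ in $\Alg_{u}/F$, and $I^{n}_{B}$ ind-algebraic.

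The first substantive step is to locate $I^{n}_{A}$. Since $A^{n} \in \Pro\Alg_{u}/F$, its quotient $I^{n}_{A}$ is pro-algebraic; as a subobject of $B^{n} \in \mathcal{W}_{F}$ that is pro-algebraic, Proposition \ref{0047} (applied to the surjection $A^{n} \onto I^{n}_{A}$ — wait, I need $A^{n} \in \mathcal{W}_{F}$ first) forces a small preliminary observation: an object of $\Pro\Alg_{u}/F$ need not be in $\mathcal{W}_{F}$, so I cannot apply \ref{0047} directly to $A^{n} \onto I^{n}_{A}$. Instead I would use that $I^{n}_{A} \into B^{n}$ is a pro-algebraic subobject of a $\mathcal{W}_{F}$-object; here I invoke the connected-group analysis inside Proposition \ref{0047}'s proof, or more cleanly, apply Proposition \ref{0049} to $0 \to I^{n}_{A} \to B^{n} \to I^{n}_{B} \to 0$ once I know $I^{n}_{B}$ is ind-algebraic in $\mathcal{W}_{F}$. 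So the real order is: (i) $I^{n}_{B}$ is a subobject of $C^{n} \in \Ind\Alg_{u}/F$, hence by Proposition \ref{0024} it is ind-algebraic, and dually $I^{n}_{C}$ is a quotient of $C^{n}$, hence also ind-algebraic and in particular in $\Ind\Alg_{u}/F$; since $I^{n}_{C}$ is simultaneously a subobject of $A^{n+1} \in \Pro\Alg_{u}/F$, being both ind-algebraic and pro-algebraic it lies in $\Alg_{u}/F$. (ii) Now $0 \to I^{n}_{B} \to C^{n} \to I^{n}_{C} \to 0$ with $C^{n} \in \mathcal{W}_{F}$ (wait — $C^{n}$ is only assumed ind-algebraic; but an ind-algebraic object satisfying the filtration conditions... actually I should re-read: $C^{n} \in \Ind\Alg_{u}/F$ is given, and I want $C^{n} \in \mathcal{W}_{F}$, which is part of the conclusion, so I must derive it). This circularity is resolved by going through $B^{n}$: apply Proposition \ref{0049} to $0 \to I^{n}_{A} \to B^{n} \to I^{n}_{B} \to 0$, using $B^{n} \in \mathcal{W}_{F}$ and $I^{n}_{B}$ ind-algebraic, to conclude $I^{n}_{A}$ has identity component in $\mathcal{W}_{F}$ and étale $\pi_{0}$; combined with $I^{n}_{A}$ being pro-algebraic, Proposition \ref{0048} (or a direct argument) gives $I^{n}_{A} \in \mathcal{W}_{F}$.

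From there I would propagate: $0 \to I^{n-1}_{C} \to A^{n} \to I^{n}_{A} \to 0$ with $I^{n-1}_{C} \in \Alg_{u}/F \subset \mathcal{W}_{F}$ and $I^{n}_{A} \in \mathcal{W}_{F}$ pro-algebraic shows $A^{n} \in \mathcal{W}_{F}$ (extensions of pro-algebraic $\mathcal{W}_{F}$-objects by algebraic ones stay in $\mathcal{W}_{F}$ — a direct filtration check). Then $0 \to I^{n}_{B} \to C^{n} \to I^{n}_{C} \to 0$ with $I^{n}_{B}$ ind-algebraic in $\mathcal{W}_{F}$ (shown as a subobject of $C^{n}$ via Proposition \ref{0024}, then placed in $\mathcal{W}_{F}$ by the analog of \ref{0048} for ind-algebraic subobjects, using that it is also a quotient of $B^{n}$ via Proposition \ref{0089} applied to $B^{n} \to C^{n}$) and $I^{n}_{C} \in \Alg_{u}/F$ shows $C^{n} \in \mathcal{W}_{F}$. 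The image of $B^{n} \to C^{n}$ being in $\mathcal{W}_{F}$ and ind-algebraic also follows cleanly from Proposition \ref{0089} once both $B^{n}, C^{n} \in \mathcal{W}_{F}$, and its ind-algebraicity from Proposition \ref{0024}. The main obstacle is the bootstrapping: none of $A^{n}, B^{n}, C^{n}$ (well, $B^{n}$ is given) is a priori in $\mathcal{W}_{F}$, and $\mathcal{W}_{F}$ is not abelian, so one must carefully sequence the applications of \ref{0047}, \ref{0048}, \ref{0049}, \ref{0089} so that at each stage the hypotheses are already available — concretely, the anchor is that $B^{n} \in \mathcal{W}_{F}$ and $C^{n}$ is ind-algebraic, which together pin down $I^{n}_{B}$ first, then $I^{n}_{A}$, then everything else. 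I expect the only genuinely delicate point to be verifying that an extension of a pro-algebraic $\mathcal{W}_{F}$-object by an object of $\Alg_{u}/F$ (and dually, an extension of an object of $\Alg_{u}/F$ by an ind-algebraic $\mathcal{W}_{F}$-object) again lies in $\mathcal{W}_{F}$; this is a routine but slightly fiddly splicing of the three-step filtrations $G \supset G^{0} \supset G' \supset 0$ from the definition of $\mathcal{W}_{F}$, which I would spell out once and reuse.
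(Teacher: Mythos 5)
Your plan has a genuine gap that the paper's proof resolves with a key choice you never make. The paper picks, for each $n$, a pro-algebraic subgroup $B'^{n} \in \mathcal{W}_{F}$ of $B^{n}$ such that $B^{n}/B'^{n} \in \mathcal{W}_{F}$ is ind-algebraic \emph{and the morphism $A^{n} \to B^{n}$ factors through $B'^{n}$}; such a $B'^{n}$ exists because $B^{n} \in \mathcal{W}_{F}$ carries the defining three-step filtration and $A^{n}$ is pro-algebraic. From this one choice everything unravels linearly and non-circularly: $D^{n} := \Im(A^{n}\to B^{n}) = \Im(A^{n}\to B'^{n}) \in \Pro\Alg_{u}/F$ immediately; $E^{n} := \Im(B^{n}\to C^{n})$ sits in an extension $0 \to B'^{n}/D^{n} \to E^{n} \to B^{n}/B'^{n} \to 0$ whose left end is in $\Alg_{u}/F$ (Proposition \ref{0024}) and whose right end is in $\mathcal{W}_{F}$, so $E^{n} \in \mathcal{W}_{F}$; then $D^{n} \in \mathcal{W}_{F}$ because it is a pro-algebraic subgroup of $B'^{n}\in\mathcal{W}_{F}$ with algebraic quotient; $F^{n} := \Im(C^{n}\to A^{n+1}) = \Ker(A^{n+1}\onto D^{n+1})$ is a kernel in the abelian category $\Pro\Alg_{u}/F$, hence pro-algebraic, and also ind-algebraic by Proposition \ref{0024}, hence in $\Alg_{u}/F$; finally $A^{n}, C^{n}$ follow by extension-closedness of $\mathcal{W}_{F}$. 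Your bootstrap, by contrast, never exits the cycle: to put $I^{n}_{A}$ in $\mathcal{W}_{F}$ via Proposition \ref{0049} you need $I^{n}_{B}$ ind-algebraic \emph{in $\mathcal{W}_{F}$}, not merely ind-algebraic; to get $I^{n}_{B}\in\mathcal{W}_{F}$ via Proposition \ref{0048} or \ref{0089} you need $C^{n}\in\mathcal{W}_{F}$; and to get $C^{n}\in\mathcal{W}_{F}$ you need $I^{n}_{B},I^{n}_{C}\in\mathcal{W}_{F}$. You flag this circularity yourself but never actually break it.

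A second, independent problem: your justification that $I^{n}_{C}$ is pro-algebraic because it is ``a subobject of $A^{n+1}\in\Pro\Alg_{u}/F$'' is false reasoning. Subobjects of pro-algebraic objects inside $\Ind\Pro\Alg_{u}/F$ need not be pro-algebraic — the inclusion $\bigoplus_{n\ge 1}\Ga\into\prod_{n\ge 1}\Ga$ is a monomorphism from an ind-algebraic (not pro-algebraic) object into a pro-algebraic one, and adding the hypothesis that the subobject is ind-algebraic does not rescue it, as this example shows. The correct statement, which the paper uses, is that $I^{n}_{C}=\Ker(A^{n+1}\onto I^{n+1}_{A})$ with both $A^{n+1}$ and $I^{n+1}_{A}$ already known to lie in the abelian subcategory $\Pro\Alg_{u}/F$, so the kernel does too — but note that this in turn relies on having already identified $I^{n+1}_{A}$ as pro-algebraic, which is again a consequence of the choice of $B'^{n+1}$ that your plan is missing.
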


\begin{proof}
	Let $D^{n}$, $E^{n}$ and $F^{n}$ be the images of
	$A^{n} \to B^{n}$, $B^{n} \to C^{n}$ and $C^{n} \to A^{n + 1}$, respectively.
	Choose a pro-algebraic subgroup  $B'^{n} \in \mathcal{W}_{F}$ of $B^{n}$
	such that $B^{n} / B'^{n} \in \mathcal{W}_{F}$ is ind-algebraic
	and the morphism $A^{n} \to B^{n}$ factors through $B'^{n}$.
	Then $D^{n}$ is the image of $A^{n} \to B'^{n}$, so it is in $\Pro \Alg_{u} / F$.
	We have $E^{n}, F^{n} \in \Ind \Alg_{u} / F$
	by Proposition \ref{0024}.
	The exact sequence $0 \to F^{n} \to A^{n + 1} \to D^{n + 1} \to 0$
	shows that $F^{n}$ is also in $\Pro \Alg_{u} / F$, hence in $\Alg_{u} / F$.
	We have an exact sequence
		\[
				0
			\to
				B'^{n} / D^{n}
			\to
				E^{n}
			\to
				B^{n} / B'^{n}
			\to
				0.
		\]
	Hence $B'^{n} / D^{n} \in \Alg_{u} / F$ by Proposition \ref{0024}.
	As $B^{n} / B'^{n} \in \mathcal{W}_{F}$, we have $E^{n} \in \mathcal{W}_{F}$.
	As $B'^{n} \in \mathcal{W}_{F}$, we have $D^{n} \in \mathcal{W}_{F}$.
	The rest follows since $\mathcal{W}_{F}$ is closed under extensions.
\end{proof}

A perfect pairing on the derived category level gives a duality for each cohomology objects
as soon as the cohomology objects are shown to belong to $\mathcal{W}_{F}$:

\begin{Prop} \label{0078}
	Let $G \tensor^{L} H \to \Lambda_{\infty}$ be a perfect pairing in
	$D(F^{\ind\rat}_{\pro\et})$ or $D(F^{\perar}_{\et})$.
	Assume that $H^{q} G$ are in $\mathcal{W}_{F}$ for all $q$
	and zero for all but finitely many $q$.
	Then $H^{q} H$ are in $\mathcal{W}_{F}$ for all $q$
	and zero for all but finitely many $q$,
	and the pairing induces a Serre duality
		\[
				(H^{q} G)^{0}
			\leftrightarrow
				(H^{1 - q} H)^{0}
		\]
	of connected groups in $\mathcal{W}_{F}$ and a Pontryagin duality
		\[
				\pi_{0}(H^{q} G)
			\leftrightarrow
				\pi_{0}(H^{- q} H)
		\]
	of finite \'etale groups over $F$ for all $q$.
\end{Prop}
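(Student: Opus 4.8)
The plan is to identify $H$ with $R\sheafhom_{S}(G, \Lambda_{\infty})$, where $S$ denotes whichever of the two sites we work in; this is legitimate because the definition of a perfect pairing says exactly that $H \to R\sheafhom_{S}(G, \Lambda_{\infty})$ is an isomorphism. Everything then reduces to reading off the cohomology sheaves of $R\sheafhom_{S}(G, \Lambda_{\infty})$ from those of $G$, for which I would use the hyperext spectral sequence
\[
	E_{2}^{s, t} = \sheafext_{S}^{s}(H^{-t} G, \Lambda_{\infty}) \Longrightarrow H^{s + t} H .
\]

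The first and main step is to compute $\sheafext_{S}^{\bullet}(M, \Lambda_{\infty})$ for an arbitrary $M \in \mathcal{W}_{F}$, using $0 \to M^{0} \to M \to \pi_{0} M \to 0$. The finite \'etale $p$-primary group $\pi_{0} M$ has $\sheafhom_{S}(\pi_{0} M, \Lambda_{\infty}) = \pi_{0}(M)^{\vee}$ (finite \'etale) and $\sheafext_{S}^{s}(\pi_{0} M, \Lambda_{\infty}) = 0$ for $s \ge 1$ (a standard d\'evissage reduces this to $\Lambda_{n}$, where it follows from $p$-divisibility of $\Lambda_{\infty}$ and $\sheafext_{S}^{s}(\Z, \Lambda_{\infty}) = 0$ for $s \ge 1$). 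The connected group $M^{0} \in \mathcal{W}_{F}$ has $\sheafhom_{S}(M^{0}, \Lambda_{\infty}) = 0$, $\sheafext_{S}^{1}(M^{0}, \Lambda_{\infty}) \cong (M^{0})^{\SDual}$ connected in $\mathcal{W}_{F}$, and $\sheafext_{S}^{s}(M^{0}, \Lambda_{\infty}) = 0$ for $s \ge 2$; this last package is precisely the statement that the Serre duality autoequivalence of the connected part of $\mathcal{W}_{F}$ (\cite[Proposition 3.1.7]{Suz24}) is computed by $R\sheafhom_{S}(\var, \Lambda_{\infty})[1]$. Hence $\sheafext_{S}^{s}(M, \Lambda_{\infty}) = 0$ for $s \notin \{0, 1\}$, with $\sheafext_{S}^{0}(M, \Lambda_{\infty}) = \pi_{0}(M)^{\vee}$ and $\sheafext_{S}^{1}(M, \Lambda_{\infty}) = (M^{0})^{\SDual}$.

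Feeding this into the spectral sequence, only the rows $s = 0$ and $s = 1$ are nonzero, so all higher differentials vanish, $E_{2} = E_{\infty}$, and for each $n$ we obtain a short exact sequence
\[
	0
	\to
	\bigl( (H^{1 - n} G)^{0} \bigr)^{\SDual}
	\to
	H^{n} H
	\to
	\pi_{0}(H^{-n} G)^{\vee}
	\to
	0 .
\]
Since $G$ is bounded this vanishes for all but finitely many $n$; since $\mathcal{W}_{F}$ is closed under extensions (as already used in the proof of Proposition \ref{0041}), and since the sub is connected while the quotient is finite \'etale, we get $H^{n} H \in \mathcal{W}_{F}$ with $(H^{n} H)^{0} = \bigl( (H^{1 - n} G)^{0} \bigr)^{\SDual}$ and $\pi_{0}(H^{n} H) = \pi_{0}(H^{-n} G)^{\vee}$. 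Setting $n = 1 - q$ and using $\SDual \circ \SDual \cong \id$ on connected groups in $\mathcal{W}_{F}$ gives the asserted Serre duality $(H^{q} G)^{0} \leftrightarrow (H^{1 - q} H)^{0}$; setting $n = -q$ gives the asserted Pontryagin duality $\pi_{0}(H^{q} G) \leftrightarrow \pi_{0}(H^{-q} H)$. Functoriality of $H \cong R\sheafhom_{S}(G, \Lambda_{\infty})$ and of the spectral sequence shows these are the dualities induced by the given pairing; the only point to check is that the edge maps send the connected (resp.\ \'etale) part of $H^{q} G$ to exactly the pieces paired above, which is immediate from the degreewise description of $\sheafext_{S}^{\bullet}(\var, \Lambda_{\infty})$ in the first step.

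The argument runs identically for $S = F^{\ind\rat}_{\pro\et}$ and $S = F^{\perar}_{\et}$, all inputs (fully faithfulness of $\mathcal{W}_{F}$ in the sheaf category, the $\sheafext$ computations, closure under extensions) being available for both. I expect the only real obstacle to be the first step --- verifying $\sheafext_{S}^{s}(M^{0}, \Lambda_{\infty}) = 0$ for $s \ge 2$ and the identification of $\sheafext_{S}^{1}(M^{0}, \Lambda_{\infty})$ with the Serre dual for an arbitrary connected $M^{0} \in \mathcal{W}_{F}$, since this requires controlling $\sheafext$ along the defining pro- and ind-systems of $\mathcal{W}_{F}$ rather than merely for honest algebraic groups. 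Once that structural input is in place, the remainder is the formal two-row spectral-sequence bookkeeping above.
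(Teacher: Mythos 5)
Your proof is correct and, modulo one's tolerance for what counts as ``the same argument,'' it is exactly what the paper does: the paper proves Proposition \ref{0078} by citing \cite[Propositions 3.1.7 and 3.1.9]{Suz24}, where Proposition 3.1.7 is precisely the structural computation of $R\sheafhom_{S}(\var,\Lambda_{\infty})$ on $\mathcal{W}_{F}$ that you single out as ``the only real obstacle,'' and Proposition 3.1.9 is the two-row spectral-sequence bookkeeping you carry out explicitly to extract the degreewise dualities from the derived pairing. In other words, your write-up is a correct unpacking of the paper's one-line citation, with the same key lemma as input.
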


\begin{proof}
	This follows from \cite[Propositions 3.1.7 and 3.1.9]{Suz24}.
\end{proof}


\section{Unipotent groups in positive characteristic}
\label{0113}

In this section, we show that
the derived category level duality statement by Kato \cite[Theorem 4.3 (ii)]{Kat86}
has a counterpart for each cohomology objects
when the base scheme is a field with a finite $p$-basis.
The cohomology objects turn out to be relatively perfect unipotent groups
studied in \cite[Section 8]{BS20}.
Hence Kato's duality in this case can be interpreted
as a duality for relatively perfect unipotent groups.
Under this duality, split groups correspond to split groups
and wound groups correspond to wound groups.

Let $k$ be a field of characteristic $p > 0$
such that $[k : k^{p}] = p^{r}$ for some finite $r \ge 0$.
Let $\Spec k_{\RP}$ be the relatively perfect site of $k$
(\cite[Section 2]{Kat86}, \cite[Section 2]{KS19}, \cite[Definition 8.2]{BS20}).
It is the category of relatively perfect $k$-schemes
(where relatively perfect means that
the relative Frobenius $Y \to Y^{(p)}$ over $k$ is an isomorphism)
with $k$-scheme morphisms endowed with the \'etale topology.
The inclusion functor from the category of relatively perfect $k$-schemes
to the category of all $k$-schemes admits a right adjoint (\cite[Definition 1.8]{Kat86}),
called the relative perfection functor and denoted by $Y \mapsto Y^{\RP}$.
Let $\Alg^{\RP} / k \subset \Ab(k_{\RP})$ be the full subcategory
consisting of relative perfections of quasi-compact smooth group schemes over $k$.
It is an abelian subcategory closed under extensions
by \cite[Propositions 8.7 and 8.12]{BS20}.

Recall from \cite[Chapter IV, Section 2, Definition 2.1]{DG70b} that
a (commutative) group scheme $G$ over $k$ (not necessarily of finite type)
is said to be \emph{unipotent}
if it is affine and for any closed subgroup scheme $0 \ne H \subset G$,
there is a non-zero morphism $H \to \Ga$ of group schemes over $k$.

\begin{Prop}
	An object of $\Alg^{\RP} / k$ is unipotent
	if and only if it is the relative perfection of a quasi-compact smooth unipotent group scheme over $k$.
	Denote by $\Alg^{\RP}_{u} / k$ the full subcategory of $\Alg^{\RP} / k$
	consisting of objects satisfying these equivalent conditions.
	It is an abelian subcategory closed under extensions.
\end{Prop}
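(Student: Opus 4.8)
The plan is to match the abstract unipotence criterion of \cite{DG70b} with the concrete description of objects of $\Alg^{\RP}/k$ as relative perfections of quasi-compact smooth $k$-group schemes. The inputs I would use are, from \cite[Section~8]{BS20} and \cite{Kat86}: the functor $(-)^{\RP}$ from quasi-compact smooth $k$-groups to $\Alg^{\RP}/k$ is exact and essentially surjective; it carries closed immersions to closed immersions and faithfully flat surjections to faithfully flat surjections; a short exact sequence in $\Alg^{\RP}/k$ is a short exact sequence of affine $k$-group schemes; every subobject (resp.\ quotient object) in $\Alg^{\RP}/k$ is the relative perfection of a smooth closed subgroup (resp.\ smooth quotient) of a Frobenius twist of a quasi-compact smooth $k$-group; and $\Ga^{\RP}$ is affine. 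From \cite[Chapter~IV, \S2]{DG70b}: the class of unipotent affine $k$-group schemes is stable under closed subgroups, quotients, extensions and finite products, and contains no nonzero subgroup of multiplicative type. I would also note at the outset that $\Ga^{\RP}$ is unipotent: it is the cofiltered limit, along affine transition maps, of finite powers of $\Ga$ (a short computation from the definition of relative perfection via a $p$-basis of $k$, or \cite{BS20}), hence affine; and any nonzero closed subgroup $H$ of such a limit maps nontrivially to one of the factors --- otherwise the closed immersion $H \into \invlim_{n} \Ga^{N_{n}}$, viewed as the element of $\invlim_{n} \Hom(H, \Ga^{N_{n}})$ with all components zero, would be the zero morphism --- and composing with a coordinate projection gives a nonzero morphism $H \to \Ga$.

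For the ``if'' direction, let $G$ be a quasi-compact smooth unipotent $k$-group. Embedding $G^{0}$ as a closed subgroup of a group $\mathbb{U}_{n}$ of strictly upper triangular matrices and combining the resulting filtration with the sequence $0 \to G^{0} \to G \to \pi_{0}(G) \to 0$, we get a finite filtration of $G$ by closed $k$-subgroups whose successive quotients are either the finite \'etale $p$-group $\pi_{0}(G)$ or closed $k$-subgroups of finite powers of $\Ga$. Applying the exact functor $(-)^{\RP}$ produces a finite filtration of $G^{\RP}$ whose successive quotients are either $\pi_{0}(G)$ (which is unipotent) or closed subgroups of finite powers of $\Ga^{\RP}$. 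Since $\Ga^{\RP}$ is unipotent and unipotent affine $k$-groups are stable under closed subgroups, finite products and extensions \cite{DG70b}, it follows that $G^{\RP}$ is unipotent.

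For the ``only if'' direction, suppose $G^{\RP}$ is unipotent. Since $\pi_{0}(G) = \pi_{0}(G^{\RP})$ is a quotient of the unipotent group $G^{\RP}$, it is a finite $p$-group, so we may pass to the identity component and assume $G$ smooth connected commutative. If $G$ were not unipotent, its maximal subtorus $T \subseteq G$ would be nonzero, and then $T^{\RP}$ would be a nonzero closed subgroup of $G^{\RP}$ of multiplicative type, contradicting the unipotence of $G^{\RP}$; hence $G$ is unipotent. This proves the equivalence. For the last assertion: $\Alg^{\RP}/k$ is abelian and closed under extensions (\cite[Propositions~8.7 and 8.12]{BS20}), so it suffices to show the unipotent objects are stable under kernels, cokernels, finite biproducts and extensions formed in $\Alg^{\RP}/k$. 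A kernel (resp.\ cokernel) of a morphism between $G^{\RP}$ and $H^{\RP}$ with $G, H$ unipotent is, by the structural facts above, the relative perfection of a smooth closed subgroup of a twist of $G$ (resp.\ a smooth quotient of a twist of $H$), which is unipotent by \cite{DG70b}, hence has unipotent relative perfection by the ``if'' direction; finite biproducts of unipotent groups are unipotent; and an extension in $\Alg^{\RP}/k$ of unipotent objects is an extension of unipotent affine $k$-group schemes, hence unipotent by \cite{DG70b}.

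The main obstacle is not a single hard step but the careful handling of relative perfection that underlies everything: one needs that $(-)^{\RP}$ is exact and preserves closed immersions and surjections, so that sub- and quotient objects in $\Alg^{\RP}/k$ stay of geometric origin, and that exactness in $\Alg^{\RP}/k$ agrees with exactness of the underlying affine group schemes --- all of which I would draw from \cite{BS20}. The only point requiring an actual, if short, computation is the unipotence of $\Ga^{\RP}$, i.e.\ that the relative perfection of $\Ga$ introduces no multiplicative-type or obstructive infinitesimal part; once that is in place the proposition follows formally from the stability properties of unipotent groups in \cite{DG70b}.
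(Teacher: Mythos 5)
Your ``if'' direction is sound, though more elaborate than the paper's: you filter $G_0$ through an embedding of $G_0^{0}$ into upper triangular matrices, apply the exact functor $(-)^{\RP}$, and reduce to the unipotence of $\Ga^{\RP}$, which you verify via its inverse-limit presentation. The paper applies the same inverse-limit reasoning directly to the whole group, writing $G_0^{\RP} = \invlim_n \Weil_{\Frob^n} G_0$ and observing that Weil restrictions and inverse limits along affine transition maps preserve unipotence, so the filtration step is unnecessary.

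The ``only if'' direction contains a genuine error. You assert that for a nonzero subtorus $T \subseteq G$, its relative perfection $T^{\RP}$ is a closed subgroup of $G^{\RP}$ \emph{of multiplicative type}. When $k$ is imperfect --- the case of interest, since $[k:k^{p}] = p^{r}$ with $r$ allowed to be positive --- this is false: $T^{\RP}$ is an inverse limit of the Weil restrictions $\Weil_{\Frob^{n}}T$, and the kernel of the faithfully flat surjection $T^{\RP} \onto T$ is pro-unipotent, so $T^{\RP}$ has nonzero morphisms to $\Ga$ and is not of multiplicative type. What you actually need, and what is true, is the weaker assertion that $T^{\RP}$ is \emph{not unipotent}. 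The clean way to see this is precisely the paper's mechanism: $T^{\RP} \to T$ is faithfully flat by \cite[Proposition 8.13]{BS20}, and a faithfully flat image of a unipotent group is unipotent by \cite[Ch.~IV, \S 2, Prop.~2.3\,(a)]{DG70b}, so $T^{\RP}$ unipotent would force $T$ unipotent. But once you invoke these two facts you may as well apply them directly to $G^{\RP} \to G$, which is exactly the paper's one-line proof of this direction and makes the detour through $\pi_{0}$ and the maximal subtorus superfluous. There is also a second, smaller gap: the implication ``$G$ smooth connected commutative with trivial maximal subtorus $\Rightarrow$ $G$ unipotent'' holds only for \emph{affine} $G$ (it fails for abelian varieties), and you never establish that $G$ is affine; this again follows from affineness of $G^{\RP}$ together with faithful flatness and descent, but has to be said before the torus argument can even get started.
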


\begin{proof}
	Let $G \in \Alg^{\RP} / k$ be unipotent.
	Write it as the relative perfection of a quasi-compact smooth group scheme $G_{0}$ over $k$.
	The natural morphism $G \to G_{0}$ is faithfully flat
	by \cite[Proposition 8.13]{BS20}.
	Hence $G_{0}$ is unipotent by \cite[Chapter IV, Section 2, Proposition 2.3 (a)]{DG70b}.
	Conversely, let $G_{0}$ be a quasi-compact smooth unipotent group scheme over $k$.
	Then $G = G_{0}^{\RP}$ can be written as the inverse limit of a system
	$\cdots \to G_{2} \to G_{1} \to G_{0}$ of groups schemes over $k$,
	where each $G_{n}$ is the Weil restriction
	along the $p^{n}$-th power Frobenius $\Spec k \to \Spec k$ of $G_{0}$
	(\cite[Section 2]{BS20}).
	Since Weil restrictions of unipotent groups are unipotent,
	we know that $G_{n}$ is unipotent.
	Hence $G = \invlim_{n} G_{n}$ is unipotent.
	The second statement follows from the fact that
	unipotent group schemes are closed under kernels, cokernels and extensions.
\end{proof}

Let $D_{0}(k_{\RP}) \subset D(k_{\RP})$ be the smallest triangulated full subcategory
containing $\Ga^{\RP}$ (\cite[Definition 4.2.3]{Kat86}).

\begin{Prop} \label{0000}
	$D_{0}(k_{\RP}) \subset D(k_{\RP})$ agrees with the full subcategory of $D^{b}(k_{\RP})$
	consisting of objects $G$ with $H^{q} G \in \Alg^{\RP}_{u} / k$ for all $q$.
\end{Prop}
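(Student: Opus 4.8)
I would prove the two inclusions separately: the inclusion $D_{0}(k_{\RP})\subseteq\{\,G:H^{q}G\in\Alg^{\RP}_{u}/k\ \forall q\,\}$ is formal, while the reverse one rests on a dévissage to a structure statement about smooth unipotent groups over $k$. For the first inclusion, note that every object of $D_{0}(k_{\RP})$ is bounded, being obtained from $\Ga^{\RP}$ (placed in degree zero) by finitely many shifts and cones. Moreover, the full subcategory of $D(k_{\RP})$ consisting of objects $G$ with $H^{q}G\in\Alg^{\RP}_{u}/k$ for all $q$ is triangulated: it is visibly stable under shifts, and it is stable under cones because, by the preceding proposition, $\Alg^{\RP}_{u}/k$ is an abelian subcategory of $\Ab(k_{\RP})$ closed under extensions, so the long exact cohomology sequence of a distinguished triangle keeps us inside it. Since this subcategory contains $\Ga^{\RP}$, it contains $D_{0}(k_{\RP})$.

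For the reverse inclusion, let $G\in D^{b}(k_{\RP})$ have all cohomology objects in $\Alg^{\RP}_{u}/k$. Using the canonical truncation triangles $\tau_{\le q-1}G\to\tau_{\le q}G\to H^{q}G[-q]\to$ and inducting on the number of nonzero cohomology objects, and using that $D_{0}(k_{\RP})$ is a triangulated subcategory (hence stable under shifts and extensions), it suffices to show that $V\in D_{0}(k_{\RP})$ for each $V\in\Alg^{\RP}_{u}/k$ placed in degree zero. Write $V=V_{0}^{\RP}$ for a quasi-compact smooth unipotent $k$-group $V_{0}$. The exact sequence $0\to V_{0}^{0}\to V_{0}\to\pi_{0}(V_{0})\to 0$, with $V_{0}^{0}$ smooth connected unipotent and $\pi_{0}(V_{0})$ a finite étale $p$-group, stays exact after relative perfection --- the terms lie in the abelian category $\Alg^{\RP}/k$, and relative perfection preserves surjections of quasi-compact smooth $k$-groups by \cite[Section 8]{BS20} --- so by stability of $D_{0}(k_{\RP})$ under extensions we are reduced to two cases: (a) $V_{0}$ smooth connected unipotent; (b) $V_{0}$ a finite étale $p$-group.

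In case (a), the structure theory of unipotent groups (\cite[Chapter IV]{DG70b}, and Russell's classification of one-dimensional forms) provides a finite filtration of $V_{0}$ by smooth connected closed subgroups with one-dimensional successive quotients, i.e.\ $k$-forms of $\Ga$; by stability under extensions it suffices to treat a single such form $W$. If $W\cong\Ga$ this is the definition of $D_{0}(k_{\RP})$; otherwise $W$ is the kernel of a separable additive polynomial map $\psi\colon\Ga^{2}\to\Ga$, which is faithfully flat since its differential is nowhere vanishing, so $0\to W\to\Ga^{2}\xrightarrow{\psi}\Ga\to 0$ is exact, and applying relative perfection (exact on such sequences of smooth $k$-groups, by \cite[Section 8]{BS20}) yields a distinguished triangle $W^{\RP}\to(\Ga^{\RP})^{2}\to\Ga^{\RP}\to W^{\RP}[1]$, whence $W^{\RP}\in D_{0}(k_{\RP})$ and so $V^{\RP}=V_{0}^{\RP}\in D_{0}(k_{\RP})$. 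In case (b), let $p^{n}$ kill $V_{0}$, put $M=V_{0}(k^{\sep})$ with its action $\rho\colon\Gal(k^{\sep}/k)\to\Aut(M)$, write $M\cong\bigoplus_{i}\Z/p^{n_{i}}\Z$, and let $\mathcal{W}_{M}=\prod_{i}W_{n_{i}}$ be the corresponding product of truncated Witt group schemes, a smooth connected unipotent $\F_{p}$-group. The ring $\mathrm{End}(M)$ embeds, over $\F_{p}$, into the endomorphisms of $\mathcal{W}_{M}$ commuting with $F-1$ (via the multiplication, Verschiebung and truncation maps of Witt vectors); since these endomorphisms are defined over $\F_{p}$, the cocycle $\rho$ defines a $k$-form $\mathcal{W}$ of $\mathcal{W}_{M}$, still smooth connected unipotent, together with an exact sequence $0\to V_{0}\to\mathcal{W}\xrightarrow{F-1}\mathcal{W}\to 0$ of $k$-group schemes. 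By case (a), $\mathcal{W}^{\RP}\in D_{0}(k_{\RP})$, and the distinguished triangle attached to this sequence gives $V^{\RP}=V_{0}^{\RP}\in D_{0}(k_{\RP})$, which completes the proof.

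The one nonformal ingredient --- and the main obstacle --- is the passage, in the reverse inclusion, from an abstract object of $\Alg^{\RP}_{u}/k$ to an explicit presentation in terms of $\Ga^{\RP}$: over the imperfect field $k$ this forces one to invoke the classification of one-dimensional (possibly wound) unipotent groups for the connected part and a Galois-twisted Artin--Schreier--Witt presentation for the étale part, and then to know that relative perfection is exact on the resulting short exact sequences of smooth $k$-groups --- precisely the exactness that \cite[Section 8]{BS20} supplies. Should \cite{BS20} already record that $\Alg^{\RP}_{u}/k$ is generated under extensions by $\Ga^{\RP}$, cases (a) and (b) can be replaced by a direct citation.
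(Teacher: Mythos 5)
Your overall architecture matches the paper's: the first inclusion is formal once one knows the target subcategory is triangulated, and the reverse inclusion reduces via truncation triangles to showing $\Alg^{\RP}_{u}/k \subset D_{0}(k_{\RP})$, then splits into the connected and the finite-\'etale piece using the identity-component sequence. Your case (b), the Galois-twisted Artin--Schreier--Witt resolution of $\pi_{0}(V_{0})$ by a form $\mathcal{W}$ of a product of truncated Witt groups, is a correct and helpful fleshing-out of the paper's terse ``The Artin--Schreier sequence shows that $\Lambda\in D_{0}(k_{\RP})$ and hence $\pi_{0}(G)\in D_{0}(k_{\RP})$.''

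The problem is case (a). You assert that a smooth connected unipotent $V_{0}$ over $k$ admits a finite filtration by smooth connected closed subgroups with one-dimensional quotients, citing \cite[Chapter IV]{DG70b} and Russell. Neither reference provides this over an imperfect field with finite $p$-basis: DG70b gives composition series with $\Ga$-quotients essentially only in the split/perfect setting, and Russell classifies one-dimensional forms of $\Ga$ but says nothing about whether a higher-dimensional wound group contains one. I do not believe this filtration is available in general for wound commutative unipotent groups over imperfect fields, and I cannot find it stated anywhere in the structure theory the paper relies on. The paper's own proof deliberately sidesteps this: after reducing to the connected case it applies \cite[Corollary B.3.3]{CGP15} to filter $G_{0}$ by pieces killed by $p$, and then for a $p$-torsion smooth connected commutative unipotent group invokes \cite[B.1.13]{CGP15}, which produces the exact sequence $0\to G_{0}\to\Ga^{\dim G_{0}+1}\to\Ga\to 0$ directly. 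That two-term $\Ga$-resolution is exactly what you need after relative perfection, and it avoids any appeal to one-dimensional subgroups. You should replace your one-dimensional filtration argument by this $p$-torsion d\'evissage plus B.1.13.

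A small additional slip: ``faithfully flat since its differential is nowhere vanishing'' is not the right justification --- the relative Frobenius $\Ga\to\Ga$ is faithfully flat with identically vanishing differential. For a nonzero additive-polynomial map $\Ga^{n}\to\Ga$ between smooth affine groups, surjectivity (hence faithful flatness, both groups being smooth) is automatic; separability is neither needed nor the relevant point. The conclusion you draw is fine, but the stated reason is misleading.
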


\begin{proof}
	The latter category is triangulated by \cite[Tag 06UQ]{Sta22}
	and hence contains $D_{0}(k_{\RP})$.
	For the opposite inclusion, it is enough to show that
	$\Alg^{\RP}_{u} / k \subset D_{0}(k_{\RP})$.
	Let $G \in \Alg^{\RP}_{u} / k$ be the relative perfection of
	a quasi-compact smooth unipotent group scheme $G_{0}$.
	By \cite[Propositions 8.13 and 8.14]{BS20},
	we know that $\pi_{0}(G) \cong \pi_{0}(G_{0})$ is a finite \'etale group scheme over $k$.
	The Artin-Schreier sequence shows that $\Lambda \in D_{0}(k_{\RP})$
	and hence $\pi_{0}(G) \in D_{0}(k_{\RP})$.
	Therefore we may assume that $G$ (and hence $G_{0}$) is connected.
	Then $G_{0}$ admits a finite filtration whose graded pieces are
	smooth connected group schemes killed by $p$ by \cite[Corollary B.3.3]{CGP15}.
	A short exact sequence of smooth group schemes remains exact after relative perfection
	by \cite[Proposition 8.8]{BS20}.
	Hence we may assume that $G_{0}$ is killed by $p$.
	Then there exists an exact sequence
	$0 \to G_{0} \to \Ga^{\dim G_{0} + 1} \to \Ga \to 0$
	by \cite[B.1.13]{CGP15}.
	Hence $G \in D_{0}(k_{\RP})$.
\end{proof}

We say that $G \in \Alg^{\RP}_{u} / k$ is \emph{wound}
if any morphism $\Ga^{\RP} \to G$ is zero.
This is equivalent that $G$ be the relative perfection
of a wound smooth unipotent group scheme $G_{0}$ over $k$
in the sense that any morphism $\Ga \to G_{0}$ be zero (\cite[Definition B.2.1]{CGP15}).
Here we allow disconnected groups.
Any $p$-primary finite \'etale group is wound in our sense.
We say that $G \in \Alg^{\RP}_{u} / k$ is \emph{split}
if it admits a finite filtration whose graded pieces are isomorphic to $\Ga^{\RP}$.
\newcommand{\spl}{\mathrm{split}}
By \cite[Theorem B.3.4]{CGP15},
any $G \in \Alg^{\RP}_{u} / k$ admits a unique maximal split subgroup $G_{\spl}$
and $G / G_{\spl}$ is wound.

For $n \ge 1$, let $\nu_{n}(r) \in \Ab(k_{\RP})$ be 
the dlog part of the Hodge-Witt sheaf $W_{n} \Omega_{k}^{r}$ (\cite[Section 4]{Kat86}).
Let $\nu_{\infty}(r)$ be the direct limit of $\nu_{n}(r)$ over $n \ge 1$.
We have
	\[
			R \sheafhom_{k_{\RP}}(\var, \nu_{\infty}(r))
		\cong
			\dirlim_{n}
			R \sheafhom_{k_{\RP}}(\var, \nu_{n}(r))
	\]
on $D_{0}(k_{\RP})$
by the same proof as \cite[Proposition 8.10]{BS20}.
Recall from \cite[Theorem 4.3]{Kat86} that
the endofunctor $R \sheafhom_{k_{\RP}}(\var, \nu_{\infty}(r))$ on $D(k_{\RP})$
restricts to a contravariant autoequivalence on the subcategory $D_{0}(k_{\RP})$
with inverse itself.

\begin{Prop} \label{0002}
	Let $G \in \Alg^{\RP}_{u} / k$.
	Then $\sheafhom_{k_{\RP}}(G, \nu_{\infty}(r)) \in \Alg^{\RP}_{u} / k$ is wound,
	$\sheafext_{k_{\RP}}^{1}(G, \nu_{\infty}(r)) \in \Alg^{\RP}_{u} / k$ is split
	and $\sheafext_{k_{\RP}}^{q}(G, \nu_{\infty}(r)) = 0$ for all $q \ge 2$.
\end{Prop}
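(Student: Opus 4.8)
The plan is to reduce, via a dévissage to groups killed by $p$, to the single computation $R\sheafhom_{k_{\RP}}(\Ga^{\RP}, \nu_{\infty}(r)) \cong \Ga^{\RP}[-1]$, using Kato's duality on $D_{0}(k_{\RP})$ as the essential input. Since $G \in \Alg^{\RP}_{u}/k \subset D_{0}(k_{\RP})$, Kato's Theorem~4.3 gives $R\sheafhom_{k_{\RP}}(G, \nu_{\infty}(r)) \in D_{0}(k_{\RP})$, so by Proposition~\ref{0000} this complex is bounded with every cohomology object in $\Alg^{\RP}_{u}/k$; thus membership in $\Alg^{\RP}_{u}/k$ is automatic, and only woundness of $\sheafhom$, splitness of $\sheafext^{1}$, and vanishing for $q \ge 2$ remain. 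The key input I would isolate first is that $R\sheafhom_{k_{\RP}}(\Lambda, \nu_{\infty}(r)) \cong \nu_{1}(r)[0]$ with $\nu_{1}(r)$ wound, and $R\sheafhom_{k_{\RP}}(\Ga^{\RP}, \nu_{\infty}(r)) \cong \Ga^{\RP}[-1]$; the first follows from $0 \to \Z \xrightarrow{p} \Z \to \Lambda \to 0$ and the $p$-divisibility of $\nu_{\infty}(r)$ with $p$-torsion $\nu_{1}(r)$ (see \cite[Section~8]{BS20}, \cite[Section~4]{Kat86}), and the second can be obtained from the Artin--Schreier sequence $0 \to \Lambda \to \Ga^{\RP} \xrightarrow{F - 1} \Ga^{\RP} \to 0$ by identifying the resulting cone, or read off from Kato's analysis of the Cartier operator on $k_{\RP}$. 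In particular $\sheafhom_{k_{\RP}}(\Ga^{\RP}, \nu_{\infty}(r)) = 0$, $\sheafext^{1}_{k_{\RP}}(\Ga^{\RP}, \nu_{\infty}(r)) \cong \Ga^{\RP}$, and $\sheafext^{q}_{k_{\RP}}(\Ga^{\RP}, \nu_{\infty}(r)) = 0$ for $q \ge 2$.

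For the dévissage, note that every $G \in \Alg^{\RP}_{u}/k$ is killed by some $p^{N}$, the filtration by the subgroups $p^{i} G$ has graded pieces killed by $p$, and a group killed by $p$ is an extension of a $p$-primary finite \'etale group by a connected group killed by $p$. The assertion is stable under an extension $0 \to G' \to G \to G'' \to 0$: this is formal from the long exact sequence of $\sheafext_{k_{\RP}}(\var, \nu_{\infty}(r))$, using that wound groups are stable under subobjects and extensions (being detected by $\sheafhom_{k_{\RP}}(\Ga^{\RP}, \var) = 0$), that split groups are stable under quotients and extensions (a quotient of the one-dimensional group $\Ga^{\RP}$ is again $\Ga^{\RP}$ or $0$), and that $\sheafext^{\ge 2}_{k_{\RP}}(\var, \nu_{\infty}(r)) = 0$ passes from the two outer terms to $G$. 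So it suffices to treat the two base cases.

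If $G$ is finite \'etale $p$-primary killed by $p$, then $\sheafhom_{k_{\RP}}(G, \nu_{\infty}(r)) = \sheafhom_{k_{\RP}}(G, \nu_{1}(r))$ becomes a power of $\nu_{1}(r)$ after a finite separable extension splitting $G$, hence is wound (woundness is preserved by, and descends along, separable extensions), while $\sheafext^{1}_{k_{\RP}}(G, \nu_{\infty}(r)) = 0$ by $p$-divisibility and $\sheafext^{q}_{k_{\RP}}(G, \nu_{\infty}(r)) = 0$ for $q \ge 2$. If $G$ is connected killed by $p$ with $d = \dim G$, I would use the exact sequence $0 \to G \to (\Ga^{\RP})^{d + 1} \to \Ga^{\RP} \to 0$ of \cite[B.1.13]{CGP15} and \cite[Proposition~8.8]{BS20}; feeding it into the long exact sequence and using the computation above produces a morphism $\psi \colon \Ga^{\RP} \to (\Ga^{\RP})^{d + 1}$ with $\sheafhom_{k_{\RP}}(G, \nu_{\infty}(r)) \cong \Ker \psi$, $\sheafext^{1}_{k_{\RP}}(G, \nu_{\infty}(r)) \cong \Coker \psi$, and $\sheafext^{q}_{k_{\RP}}(G, \nu_{\infty}(r)) = 0$ for $q \ge 2$. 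Then $\Coker \psi$ is a quotient of the split group $(\Ga^{\RP})^{d + 1}$, hence split; and if $\psi$ were zero, then $R\sheafhom_{k_{\RP}}(G, \nu_{\infty}(r)) \cong \Ga^{\RP}[0] \oplus (\Ga^{\RP})^{d + 1}[-1]$, whose Kato dual is $\Ga^{\RP}[-1] \oplus (\Ga^{\RP})^{d + 1}[0]$, which has nonzero $H^{1}$ — contradicting that biduality identifies this dual with $G[0]$. Hence $\psi \ne 0$, so $\Ker \psi$ is a proper, necessarily finite, subgroup of $\Ga^{\RP}$, hence finite \'etale $p$-primary, hence wound.

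The main obstacle is the input of the first paragraph: pinning down $R\sheafhom_{k_{\RP}}(\Ga^{\RP}, \nu_{\infty}(r))$ — in particular the vanishing of $\sheafhom$ and the identification of $\sheafext^{1}$ with $\Ga^{\RP}$ itself — which amounts to a careful treatment of the Cartier operator and the level structure of the logarithmic de Rham--Witt sheaves on the relatively perfect site (equivalently, to unpacking Kato's proof of Theorem~4.3). Once that is available, the rest is a formal play with long exact sequences, Kato's biduality, and the elementary structure theory of split and wound unipotent groups; within that formal part, the one slightly delicate step is the biduality argument forcing $\psi \ne 0$ in the connected base case.
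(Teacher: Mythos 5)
Your proof is correct and rests on the same core inputs as the paper's — Kato's Theorem~4.3 placing $R\sheafhom_{k_{\RP}}(G,\nu_{\infty}(r))$ in $D_{0}(k_{\RP})$, Proposition~\ref{0000} to land the cohomologies in $\Alg^{\RP}_{u}/k$, the computation $\sheafhom_{k_{\RP}}(\Ga^{\RP},\nu_{\infty}(r))=0$, $\sheafext^{1}_{k_{\RP}}(\Ga^{\RP},\nu_{\infty}(r))\cong\Ga^{\RP}$, $\sheafext^{\ge 2}=0$, and the exact sequence $0\to G\to(\Ga^{\RP})^{d+1}\to\Ga^{\RP}\to 0$ for connected $G$ killed by $p$. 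Your treatment of the vanishing in degrees $\ge 2$ and of the splitness of $\sheafext^{1}$ is essentially the same as the paper's.

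For woundness, though, you take a genuinely longer route than necessary. You run a full d\'evissage down to two base cases and, in the connected base case, must rule out $\psi=0$ by a biduality argument. The paper dispatches woundness in one line, for arbitrary $G$, without any d\'evissage: by tensor--Hom adjunction,
\[
\Hom\bigl(\Ga^{\RP},\,\sheafhom_{k_{\RP}}(G,\nu_{\infty}(r))\bigr)
\cong
\Hom\bigl(G,\,\sheafhom_{k_{\RP}}(\Ga^{\RP},\nu_{\infty}(r))\bigr)
= 0,
\]
which is precisely the definition of wound. This makes the finite \'etale base case, the connected base case, the biduality step, and the stability-under-extension bookkeeping for woundness all unnecessary. (Your biduality step does work — even though $R\sheafhom_{k_{\RP}}(G,\nu_{\infty}(r))$ need not literally split as $\Ga^{\RP}[0]\oplus(\Ga^{\RP})^{d+1}[-1]$ when $\psi=0$, the distinguished triangle still forces $H^{1}$ of the bidual to be nonzero, giving the contradiction — but it's delicate compared to the one-line adjunction.) The remaining d\'evissage is still needed for splitness, and there your argument and the paper's coincide: the paper also reduces to $pG=0$, uses the three-term sequence, and extracts a surjection $(\Ga^{\RP})^{d+1}\onto\sheafext^{1}_{k_{\RP}}(G,\nu_{\infty}(r))$, i.e.\ your $\Coker\psi$. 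So: correct, but you could have used the adjunction to cut the woundness argument to a single line and avoided both the case split and the biduality.
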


\begin{proof}
	We have $R \sheafhom_{k_{\RP}}(G, \nu_{\infty}(r)) \in D_{0}(k_{\RP})$
	by \cite[Theorem 4.3 (ii)]{Kat86}.
	Hence $\sheafext_{k_{\RP}}^{q}(G, \nu_{\infty}(r)) \in \Alg^{\RP}_{u} / k$
	for all $q$ by Proposition \ref{0000}.
	Also $\sheafext_{k_{\RP}}^{q}(G, \nu_{\infty}(r)) = 0$ for all $q \ge 2$
	by \cite[Theorem 3.2]{Kat86}.
	Since $\sheafhom_{k_{\RP}}(\Ga^{\RP}, \nu_{\infty}(r)) = 0$
	by \cite[Theorem 3.2 (ii)]{Kat86},
	any morphism $\Ga^{\RP} \to \sheafhom_{k_{\RP}}(G, \nu_{\infty}(r))$ is zero.
	Therefore $\sheafhom_{k_{\RP}}(G, \nu_{\infty}(r))$ is wound.
	To show the splitness of $\sheafext_{k_{\RP}}^{1}(G, \nu_{\infty}(r)) \in \Alg^{\RP}_{u} / k$,
	note that a quotient of a split group and an extension of split groups are split.
	Hence again by \cite[Corollary B.3.3]{CGP15},
	we may assume that $p G = 0$.
	Take an exact sequence $0 \to G_{0} \to \Ga^{\dim G_{0} + 1} \to \Ga \to 0$ as above.
	We have $\sheafext_{k_{\RP}}^{1}(\Ga^{\RP}, \nu_{\infty}(r)) \cong \Omega_{k}^{r} \cong \Ga^{\RP}$
	by \cite[Theorem 3.2 (ii)]{Kat86}.
	Therefore applying $\sheafext_{k_{\RP}}^{1}(\var, \nu_{\infty}(r))$ to $G_{0} \into \Ga^{\dim G_{0} + 1}$ gives
	a surjection from $(\Ga^{\RP})^{\dim G_{0} + 1}$ to $\sheafext_{k_{\RP}}^{1}(G, \nu_{\infty}(r))$.
	Thus $\sheafext_{k_{\RP}}^{1}(G, \nu_{\infty}(r))$ is split.
\end{proof}

\begin{Prop} \label{0079}
	Let $G \in \Alg^{\RP}_{u} / k$.
	Then $\sheafhom_{k_{\RP}}(G, \nu_{\infty}(r)) = 0$ if $G$ is split
	and $\sheafext_{k_{\RP}}^{1}(G, \nu_{\infty}(r)) = 0$ if $G$ is wound.
\end{Prop}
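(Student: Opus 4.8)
The plan is to handle the two vanishing statements separately, reducing each to the Artin--Schreier-type building blocks for which the relevant $\sheafhom$ and $\sheafext^{1}$ groups are already computed in \cite[Theorem 3.2]{Kat86}. For the first statement, suppose $G \in \Alg^{\RP}_{u} / k$ is split. By definition $G$ admits a finite filtration whose graded pieces are isomorphic to $\Ga^{\RP}$. Applying the left-exact functor $\sheafhom_{k_{\RP}}(\var, \nu_{\infty}(r))$ and using the long exact sequence for $\sheafext^{\bullet}$ together with $\sheafhom_{k_{\RP}}(\Ga^{\RP}, \nu_{\infty}(r)) = 0$ from \cite[Theorem 3.2 (ii)]{Kat86}, an induction on the length of the filtration shows $\sheafhom_{k_{\RP}}(G, \nu_{\infty}(r)) = 0$: if $0 \to G' \to G \to \Ga^{\RP} \to 0$ with $G'$ split of shorter length, then $\sheafhom_{k_{\RP}}(G, \nu_{\infty}(r))$ sits between $\sheafhom_{k_{\RP}}(\Ga^{\RP}, \nu_{\infty}(r)) = 0$ and $\sheafhom_{k_{\RP}}(G', \nu_{\infty}(r)) = 0$, so it vanishes.

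For the second statement, suppose $G \in \Alg^{\RP}_{u} / k$ is wound; we must show $\sheafext^{1}_{k_{\RP}}(G, \nu_{\infty}(r)) = 0$. The natural route is to exploit the autoduality: by \cite[Theorem 4.3 (ii)]{Kat86}, $D := R\sheafhom_{k_{\RP}}(G, \nu_{\infty}(r))$ lies in $D_0(k_{\RP})$ and $R\sheafhom_{k_{\RP}}(D, \nu_{\infty}(r)) \cong G$. By Proposition \ref{0002}, $D$ has cohomology concentrated in degrees $0$ and $1$, with $H^0 D = \sheafhom_{k_{\RP}}(G, \nu_{\infty}(r))$ wound and $H^1 D = \sheafext^1_{k_{\RP}}(G, \nu_{\infty}(r))$ split. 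Now I would apply $R\sheafhom_{k_{\RP}}(\var, \nu_{\infty}(r))$ to the distinguished triangle $H^0 D \to D \to H^1 D[-1] \to$. Using the first statement of this very proposition, $R\sheafhom_{k_{\RP}}(H^1 D, \nu_{\infty}(r))$ has $H^0 = \sheafhom_{k_{\RP}}(H^1 D, \nu_{\infty}(r)) = 0$ since $H^1 D$ is split; and by Proposition \ref{0002} applied to the wound group $H^0 D$, $R\sheafhom_{k_{\RP}}(H^0 D, \nu_{\infty}(r))$ has $H^1 = \sheafext^1_{k_{\RP}}(H^0 D, \nu_{\infty}(r))$ split and $H^0$ wound. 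Chasing the resulting long exact sequence and comparing with $R\sheafhom_{k_{\RP}}(D, \nu_{\infty}(r)) \cong G$, which is wound (so has no split quotient in degree $0$), forces the split contribution $\sheafext^1_{k_{\RP}}(H^0 D, \nu_{\infty}(r))$ to meet $G$ trivially; feeding this back, one extracts that $H^1 D$ must itself be zero. The cleanest form of this argument: since $G$ is wound, $G$ has no nonzero split subquotient; but the triangle forces a split piece to appear inside $G$ unless $\sheafext^1_{k_{\RP}}(G,\nu_\infty(r)) = 0$.

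Alternatively, and perhaps more directly, I would reduce to $pG = 0$ as in the proof of Proposition \ref{0002} (noting that an extension of wound groups is wound and a subgroup of a wound group is wound, by \cite[Corollary B.3.3]{CGP15} and \cite[Theorem B.3.4]{CGP15}), then embed $G \into \Ga^{\dim G + 1}$ with split cokernel $Q$. The long exact sequence gives $\sheafext^1_{k_{\RP}}(\Ga^{\RP,\,\dim G+1}, \nu_{\infty}(r)) \to \sheafext^1_{k_{\RP}}(G, \nu_{\infty}(r)) \to \sheafext^2_{k_{\RP}}(Q, \nu_{\infty}(r)) = 0$, where the last vanishing is Proposition \ref{0002}; so $\sheafext^1_{k_{\RP}}(G, \nu_{\infty}(r))$ is a quotient of $\sheafext^1_{k_{\RP}}(\Ga^{\RP}, \nu_{\infty}(r))^{\dim G+1} \cong (\Ga^{\RP})^{\dim G+1}$, hence split. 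But it is also split-free: Proposition \ref{0002} only tells us it is split, so to conclude vanishing I need the extra input that it is simultaneously wound, which will come from the duality. Concretely, $\sheafext^1_{k_{\RP}}(G, \nu_\infty(r)) = H^1 R\sheafhom_{k_{\RP}}(G,\nu_\infty(r))$, and applying $R\sheafhom_{k_{\RP}}(\var,\nu_\infty(r))$ once more and using Proposition \ref{0079}'s first half on the split group, its dual contributes nothing in degree $0$, so reconstructing $G$ via autoduality shows the $H^1$ term cannot be nonzero without producing a split quotient of $G$.

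The main obstacle is the second half: $\sheafext^1$ being split is not by itself enough for vanishing, and one genuinely needs to combine the splitness output of Proposition \ref{0002} with the biduality of \cite[Theorem 4.3 (ii)]{Kat86} and the woundedness of $G$ to pin it down to zero. Bookkeeping the degrees in the distinguished triangles, and making precise the statement that "a wound group has no nonzero split subquotient" (which follows from the uniqueness of the maximal split subgroup in \cite[Theorem B.3.4]{CGP15}), is where the care is required; everything else is formal.
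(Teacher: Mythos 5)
Your first half is correct and essentially an elaboration of the paper's direct appeal to \cite[Theorem 3.2 (ii)]{Kat86}: the d\'evissage along the split filtration, using $\sheafhom_{k_{\RP}}(\Ga^{\RP},\nu_\infty(r))=0$, is fine.

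For the second half you have correctly identified the paper's strategy (apply biduality to $D = R\sheafhom_{k_{\RP}}(G,\nu_\infty(r))$ and chase the truncation triangle), but your chase has an index slip and leaves the closing implication unproved. Applying $R\sheafhom_{k_{\RP}}(\var,\nu_\infty(r))$ to $H^0 D \to D \to H^1 D[-1]$ gives in degree zero the short exact sequence
\[
0 \to \sheafext^1_{k_{\RP}}(H^1 D, \nu_\infty(r)) \to G \to \sheafhom_{k_{\RP}}(H^0 D, \nu_\infty(r)) \to 0,
\]
while $\sheafext^1_{k_{\RP}}(H^0 D, \nu_\infty(r)) \cong H^1 R\sheafhom_{k_{\RP}}(D,\nu_\infty(r)) = 0$ is the degree-one output. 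You conflate these: the split term meeting $G$ is $\sheafext^1(H^1 D,\nu_\infty(r))$, not $\sheafext^1(H^0 D,\nu_\infty(r))$. That term is split by Proposition \ref{0002} and is a \emph{subgroup} of the wound $G$; since wound means no nonzero split subgroup (you write ``no split quotient,'' which is a different and not-obviously-true assertion), it vanishes.

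What you still need to make explicit is why $\sheafext^1(\sheafext^1(G,\nu_\infty(r)),\nu_\infty(r)) = 0$ forces $\sheafext^1(G,\nu_\infty(r)) = 0$. Set $H = \sheafext^1(G,\nu_\infty(r))$, split by Proposition \ref{0002}. By the first half, $\sheafhom(H,\nu_\infty(r)) = 0$; and you have just shown $\sheafext^1(H,\nu_\infty(r)) = 0$; together with $\sheafext^{\ge 2} = 0$ this gives $R\sheafhom_{k_{\RP}}(H,\nu_\infty(r)) = 0$, so biduality from \cite[Theorem 4.3 (ii)]{Kat86} yields $H \cong R\sheafhom_{k_{\RP}}(0,\nu_\infty(r)) = 0$. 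This is the one-line content hiding behind the paper's ``This implies.'' Your alternative route (embed $G \into \Ga^{\RP,n}$ with split cokernel) only reproves that $\sheafext^1(G,\nu_\infty(r))$ is split, which is already Proposition \ref{0002} and, as you yourself note, does not by itself close the argument; it can be dropped.
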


\begin{proof}
	The first statement is \cite[Theorem 3.2 (ii)]{Kat86}.
	Assume that $G$ is wound.
	By Proposition \ref{0002}, the isomorphism
		\[
				G
			\isomto
				R \sheafhom_{k_{\RP}} \bigl(
					R \sheafhom_{k_{\RP}}(G, \nu_{\infty}(r)),
					\nu_{\infty}(r)
				\bigr)
		\]
	induces an exact sequence
		\[
				0
			\to
				\sheafext_{k_{\RP}}^{1} \bigl(
					\sheafext_{k_{\RP}}^{1}(G, \nu_{\infty}(r)),
					\nu_{\infty}(r)
				\bigr)
			\to
				G
			\to
				\sheafhom_{k_{\RP}} \bigl(
					\sheafhom_{k_{\RP}}(G, \nu_{\infty}(r)),
					\nu_{\infty}(r)
				\bigr)
			\to
				0.
		\]
	The first term in this sequence is split
	by Proposition \ref{0002} and hence zero.
	This implies that the split group
	$\sheafext_{k_{\RP}}^{1}(G, \nu_{\infty}(r))$ is zero.
\end{proof}

\begin{Prop}
	Let $G \in \Alg_{u}^{\RP} / k$.
	If $G$ is wound,
	then $\sheafhom_{k_{\RP}}(G, \nu_{\infty}(r)) \in \Alg_{u}^{\RP} / k$ is wound
	and
		\[
				G
			\cong
				\sheafhom_{k_{\RP}} \bigl(
					\sheafhom_{k_{\RP}}(G, \nu_{\infty}(r)),
					\nu_{\infty}(r)
				\bigr).
		\]
	If $G$ is split,
	then $\sheafext_{k_{\RP}}^{1}(G, \nu_{\infty}(r)) \in \Alg_{u}^{\RP} / k$ is split
	and
		\[
				G
			\cong
				\sheafext_{k_{\RP}}^{1} \bigl(
					\sheafext_{k_{\RP}}^{1}(G, \nu_{\infty}(r)),
					\nu_{\infty}(r)
				\bigr).
		\]
\end{Prop}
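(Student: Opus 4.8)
The plan is to combine Kato's contravariant autoequivalence $R \sheafhom_{k_{\RP}}(\var, \nu_{\infty}(r))$ on $D_{0}(k_{\RP})$ (\cite[Theorem 4.3 (ii)]{Kat86}) with the degree-concentration information supplied by Propositions \ref{0002} and \ref{0079}. The key observation is that on a \emph{wound} group this derived sheaf-Hom is concentrated in degree zero, while on a \emph{split} group it is concentrated in degree one; so in each case the abstract derived biduality isomorphism descends to an honest statement about $\sheafhom_{k_{\RP}}$ or $\sheafext_{k_{\RP}}^{1}$ after accounting for a shift, and woundness (resp.\ splitness) of the dual is inherited from Proposition \ref{0002}.

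First I would treat the wound case. Suppose $G$ is wound. By Proposition \ref{0079} we have $\sheafext_{k_{\RP}}^{1}(G, \nu_{\infty}(r)) = 0$, and by Proposition \ref{0002} we have $\sheafext_{k_{\RP}}^{q}(G, \nu_{\infty}(r)) = 0$ for $q \ge 2$; hence $R \sheafhom_{k_{\RP}}(G, \nu_{\infty}(r))$ is concentrated in degree zero, equal to $H := \sheafhom_{k_{\RP}}(G, \nu_{\infty}(r))$, which is wound by Proposition \ref{0002}. Running the same argument with $H$ in place of $G$ shows $R \sheafhom_{k_{\RP}}(H, \nu_{\infty}(r)) = \sheafhom_{k_{\RP}}(H, \nu_{\infty}(r))$ in degree zero. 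Feeding this into Kato's biduality isomorphism $G \isomto R \sheafhom_{k_{\RP}}(R \sheafhom_{k_{\RP}}(G, \nu_{\infty}(r)), \nu_{\infty}(r))$ yields $G \cong \sheafhom_{k_{\RP}}(\sheafhom_{k_{\RP}}(G, \nu_{\infty}(r)), \nu_{\infty}(r))$, as wanted.

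Next I would treat the split case. Suppose $G$ is split. By Proposition \ref{0079} we have $\sheafhom_{k_{\RP}}(G, \nu_{\infty}(r)) = 0$, and by Proposition \ref{0002} we have $\sheafext_{k_{\RP}}^{q}(G, \nu_{\infty}(r)) = 0$ for $q \ge 2$; hence $R \sheafhom_{k_{\RP}}(G, \nu_{\infty}(r)) \cong H[-1]$ in $D_{0}(k_{\RP})$, where $H := \sheafext_{k_{\RP}}^{1}(G, \nu_{\infty}(r))$ is split by Proposition \ref{0002}. Applying the same reasoning to $H$ gives $R \sheafhom_{k_{\RP}}(H, \nu_{\infty}(r)) \cong \sheafext_{k_{\RP}}^{1}(H, \nu_{\infty}(r))[-1]$, whence $R \sheafhom_{k_{\RP}}(H[-1], \nu_{\infty}(r)) \cong R \sheafhom_{k_{\RP}}(H, \nu_{\infty}(r))[1] \cong \sheafext_{k_{\RP}}^{1}(H, \nu_{\infty}(r))$, concentrated in degree zero. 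Feeding this into Kato's biduality isomorphism gives $G \cong \sheafext_{k_{\RP}}^{1}(\sheafext_{k_{\RP}}^{1}(G, \nu_{\infty}(r)), \nu_{\infty}(r))$.

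I do not expect a genuine obstacle: the argument is entirely formal once Propositions \ref{0002} and \ref{0079} are in hand, the only point to watch being the bookkeeping of the $[-1]$ shift in the split case (and the corresponding matching of the degree in which each side is concentrated). The substantive input — the vanishing of higher sheaf-Ext's and the preservation of woundness/splitness — has already been established above, so this proposition is essentially a clean packaging of those results together with Kato's autoduality.
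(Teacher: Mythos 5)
Your proof is correct and is exactly the unpacking of the paper's terse proof, which simply cites Propositions \ref{0002} and \ref{0079}: the woundness/splitness statements come from Proposition \ref{0002}, the concentration in a single degree comes from Propositions \ref{0002} and \ref{0079}, and the isomorphism is then Kato's biduality with the shift bookkeeping you carried out.
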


\begin{proof}
	This follows from Propositions \ref{0002} and \ref{0079}.
\end{proof}

This gives a duality for would groups and for split groups in $\Alg_{u}^{\RP} / k$.
For example, for any $n \ge 1$ and $0 \le s \le r$,
the groups $\nu_{n}(s)$ and $\nu_{n}(r - s)$ are wound groups dual to each other
by \cite[Theorem 4.3]{Kat86}.
Note that $\nu_{n}(0) = \Z / p^{n} \Z$ is discrete,
while one can check that its dual $\nu_{n}(r)$ is connected if $r > 0$.
On the other hand, for a finite-dimensional $k$-vector group $V$,
its dual in the above sense (that is, $\sheafext_{k_{\RP}}^{1}(V, \nu_{\infty}(r))$)
is given by the vector space dual $V^{\ast}$ tensored with $\Omega_{k}^{r}$
by \cite[Theorem 3.2 (ii)]{Kat86}.


\section{Duality for unipotent groups over local fields of positive characteristic}
\label{0007}

Recall that $F$ is a fixed perfect field of characteristic $p > 0$.
Let $k$ be a complete discrete valuation field of characteristic $p$ with residue field $F$.
In this section, we give a duality for cohomology of $k$
with coefficients in relatively perfect unipotent groups.
Compare it with Rosengarten's duality \cite{Ros18}.

Let $\Order_{k}$ be the ring of integers of $k$
with maximal ideal $\ideal{p}_{k}$.
The functor
	\[
			F'
		\mapsto
			\alg{k}(F')
		=
			(F' \ctensor_{F} \Order_{k}) \tensor_{\Order_{k}} k
	\]
(where $F' \ctensor_{F} \Order_{k}$ means
the inverse limit of $F' \tensor_{F} \Order_{k} / \ideal{p}_{k}^{n}$ over $n$)
defines a premorphism of sites
	\[
			\pi_{k, \RP}
		\colon
			\Spec k_{\RP}
		\to
			\Spec F^{\perar}_{\et}.
	\]

\begin{Prop} \label{0005}
	Let $G \in \Alg^{\RP}_{u} / k$.
	Then $\pi_{k, \RP, \ast} G \in \mathcal{W}_{F}$.
	It is pro-algebraic if $G$ is wound.
\end{Prop}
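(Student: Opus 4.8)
The plan is to peel $G$ apart, using that $\mathcal{W}_{F}$ is closed under extensions (as in the proof of Proposition \ref{0041}) and that $\Pro \Alg_{u} / F$ is closed under extensions, so as to reduce to two base cases: $G = \Ga^{\RP}$ and $G$ wound. First I would invoke the canonical exact sequence $0 \to G_{\spl} \to G \to G / G_{\spl} \to 0$ with $G_{\spl}$ split (hence connected) and $G / G_{\spl}$ wound (\cite[Theorem B.3.4]{CGP15}, recalled above). Evaluating at $F' \in F^{\perar}$, the scheme $\alg{k}(F')$ is relatively perfect over $k$, so $G(\alg{k}(F')) = G_{0}(\alg{k}(F'))$ for the underlying quasi-compact smooth unipotent $k$-group $G_{0}$, and the sequence stays exact after $\pi_{k, \RP, \ast}$: the obstruction to lifting an $\alg{k}(F')$-point of $G / G_{\spl}$ lies in $H^{1}$ of the split unipotent kernel over the (finite product of) field(s) $\alg{k}(F')$, which vanishes because split unipotent groups are iterated extensions of $\Ga$ and $H^{1}_{\fppf}$ of $\Ga$ over a field is zero (a small $\invlim$-argument passes from $\Ga$ to $\Ga^{\RP}$). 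Thus $\pi_{k, \RP, \ast} G$ is an extension of $\pi_{k, \RP, \ast}(G / G_{\spl})$ by $\pi_{k, \RP, \ast} G_{\spl}$. For a wound group, disconnectedness adds only a $p$-primary finite \'etale $F$-group on top (the functor $F' \mapsto (\pi_{0} G)(\alg{k}(F'))$ is the finite \'etale $F$-group of inertia-invariants of the corresponding Galois module), which is harmless for both claims since finite \'etale $p$-groups lie in $\Alg_{u} / F \subset \Pro \Alg_{u} / F$; so I may assume $G$ connected there as well.

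For the split case, $\pi_{k, \RP, \ast}$ is exact along a filtration of $G$ with graded pieces $\Ga^{\RP}$ by the same $H^{1}$-vanishing, so it suffices to treat $G = \Ga^{\RP}$. Fixing an isomorphism $\Order_{k} \cong F[[\pi_{k}]]$ (equal characteristic, $F$ perfect), $\pi_{k, \RP, \ast} \Ga^{\RP}$ is the functor $F' \mapsto \alg{k}(F') = F'((\pi_{k})) = \dirlim_{m \ge 0} \invlim_{n \ge 1} \pi_{k}^{-m} F'[[\pi_{k}]] / \pi_{k}^{n} F'[[\pi_{k}]]$, a filtered colimit of perfections of finite-dimensional $F$-vector groups. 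Taking the subobject $G' = F'[[\pi_{k}]] = \invlim_{n} F'[[\pi_{k}]] / \pi_{k}^{n}$ (pro-algebraic, connected, with connected-kernel surjective transition maps) and the quotient $G^{0} / G' = \dirlim_{m} \pi_{k}^{-m} F'[[\pi_{k}]] / F'[[\pi_{k}]]$ (ind-algebraic, connected, with injective transition maps), together with $\pi_{0} = 0$, exhibits $\pi_{k, \RP, \ast} \Ga^{\RP} \in \mathcal{W}_{F}$.

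For the wound case, write $G = G_{0}^{\RP}$ with $G_{0}$ a connected wound quasi-compact smooth unipotent $k$-group. Being wound, $G_{0}$ admits a N\'eron model $\mathcal{N}$ of finite type over $\Order_{k}$, smooth with finite-type special fiber $\mathcal{N}_{0}$ which is unipotent (the torus and abelian-variety parts of $\mathcal{N}_{0}$ would lift, by smoothness and formal lifting of homomorphisms, to a nonzero homomorphism into $G_{0}$ on the generic fiber, contradicting unipotence), and formation of $\mathcal{N}$ commutes with the unramified base changes $F[[\pi_{k}]] \to F'[[\pi_{k}]]$. By the N\'eron mapping property, $G_{0}(\alg{k}(F')) = G_{0}(F'((\pi_{k}))) = \mathcal{N}(F'[[\pi_{k}]]) = \mathrm{Gr}(\mathcal{N})(F')$, where $\mathrm{Gr}(\mathcal{N}) = \invlim_{n} \mathrm{Gr}_{n}(\mathcal{N})$ is the Greenberg transform; each $\mathrm{Gr}_{n}(\mathcal{N})$ is a finite-type smooth unipotent $F$-group and $\mathrm{Gr}_{n + 1}(\mathcal{N}) \onto \mathrm{Gr}_{n}(\mathcal{N})$ is surjective with connected kernel $\cong \Lie(\mathcal{N}_{0}) \tensor_{F} \Ga$. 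Hence $\pi_{k, \RP, \ast} G \cong \mathrm{Gr}(\mathcal{N})^{\perf} \in \Pro \Alg_{u} / F$, with identity component $\invlim_{n} \mathrm{Gr}_{n}(\mathcal{N})^{0, \perf}$ and finite \'etale $\pi_{0} \cong \pi_{0}(\mathcal{N}_{0})^{\perf}$; in particular it lies in $\mathcal{W}_{F}$. Combining this with the split case and the reductions above proves that $\pi_{k, \RP, \ast} G \in \mathcal{W}_{F}$, and the wound case moreover gives the pro-algebraicity statement.

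The main obstacle is this last step: assembling the N\'eron-model input in exactly this generality --- a finite-type N\'eron model of a wound unipotent group over $\Order_{k}$ with unipotent finite-type special fiber, its compatibility with the possibly infinite unramified base changes arising from arbitrary $F' \in F^{\perar}$, and the identification of the Greenberg transform's functor of points with $F' \mapsto G_{0}(\alg{k}(F'))$ as sheaves on $\Spec F^{\perar}_{\et}$ (here relative perfectness of $\alg{k}(F')$ and perfectness of $F'$ let one pass freely between $G_{0}$ and $G_{0}^{\RP}$ and between $\mathrm{Gr}(\mathcal{N})$ and its perfection). The $H^{1}$-vanishing for split relatively perfect unipotent groups over the fields $\alg{k}(F')$, used for the exactness of $\pi_{k, \RP, \ast}$, is routine but needs to be stated with the $\invlim$-care indicated above.
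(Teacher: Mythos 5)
Your proof follows essentially the same route as the paper's: reduce via the canonical exact sequence to the split and wound cases, compute $\pi_{k, \RP, \ast} \Ga^{\RP} \cong \alg{k}$ for the split case, and invoke the finite-type N\'eron model together with the Greenberg transform for the wound case. The paper cites \cite[Proposition 11.1]{BGA18Green} to get the pro-algebraicity and the $\mathcal{W}_F$-structure of the kernel of the reduction to the special fiber, whereas you reconstruct this by hand from the tower $\mathrm{Gr}_{n}(\mathcal{N})$; both amount to the same thing.

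One local slip worth flagging: your argument that the special fiber $\mathcal{N}_{0}$ is unipotent does not quite work for the abelian-variety part. An abelian variety appears as a \emph{quotient} of $\mathcal{N}_{0}^{0}$ (Chevalley), not a subgroup, so there is no obvious homomorphism to lift across the smooth morphism $\mathcal{N} \to \Spec \Order_{k}$; formal smoothness lets you lift sub-tori, but not quotient abelian varieties. The conclusion is nevertheless correct, and there is a cleaner proof: a connected wound unipotent $k$-group $G_{0}$ is killed by $p^{n}$ for some $n$, so $p^{n}$ vanishes on the generic fiber of $\mathcal{N}$; since $\mathcal{N}$ is flat and separated over $\Order_{k}$, the kernel of $p^{n} \colon \mathcal{N} \to \mathcal{N}$ is a flat closed subgroup containing the generic fiber, hence equals $\mathcal{N}$, so $\mathcal{N}_{0}$ is a smooth commutative $p$-power-torsion group over a perfect field and therefore unipotent. (This unipotence of $\mathcal{N}_{0}$ is indeed needed to land in $\mathcal{W}_{F} \subset \Ind \Pro \Alg_{u} / F$, and the paper uses it tacitly; you were right to notice that it requires an argument.)
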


\begin{proof}
	We have $R^{q} \pi_{k, \RP, \ast} G_{\spl} = 0$ for $q \ge 1$.
	Hence we have an exact sequence
		\[
				0
			\to
				\pi_{k, \RP, \ast} G_{\spl}
			\to
				\pi_{k, \RP, \ast} G
			\to
				\pi_{k, \RP, \ast} (G / G_{\spl})
			\to
				0
		\]
	in $\Ab(F^{\perar}_{\et})$.
	Since $\pi_{k, \RP, \ast} \Ga^{\RP} \cong \alg{k} \cong \Ga^{\N} \oplus \Ga^{\oplus \N} \in \mathcal{W}_{F}$,
	we have $\pi_{k, \RP, \ast} G_{\spl} \in \mathcal{W}_{F}$.
	Let $G' = G / G_{\spl}$.
	Since it is wound, it admits a N\'eron finite-type model $\mathcal{G}'$ over $\Order_{k}$
	by \cite[Section 10.2, Theorem 1]{BLR90}.
	For any $F' \in F^{\perar}$, we have
		\[
				(\pi_{k, \RP, \ast} G')(F')
			\cong
				\mathcal{G}'(F' \ctensor_{F} \Order_{k}).
		\]
	Therefore $\pi_{k, \RP, \ast} G'$ is represented by the perfect Greenberg transform of $\mathcal{G}'$
	\cite[Section 14, Equation (74)]{BGA18Green}.
	By \cite[Proposition 11.1]{BGA18Green}, the natural morphism
	$\pi_{k, \RP, \ast} G' \to \mathcal{G}'_{F}$ to the special fiber of $\mathcal{G}'$
	is surjective and its kernel is pro-algebraic in $\mathcal{W}_{F}$.
	Thus $\pi_{k, \RP, \ast} G' \in \mathcal{W}_{F}$.
	Hence $\pi_{k, \RP, \ast} G \in \mathcal{W}_{F}$.
\end{proof}

\begin{Prop} \label{0001}
	Let $G \in \Ind \Pro \Alg_{u} / F$ be such that $\pi_{0}(G)$ is \'etale.
	Assume that
		\[
				\dirlim_{\lambda} G(F'_{\lambda})
			\isomto
				G(\dirlim_{\lambda} F'_{\lambda})
		\]
	for any filtered direct system $\{F'_{\lambda}\}$ of perfect fields over $F$.
	Then $G \in \Ind \Alg_{u} / F$.
\end{Prop}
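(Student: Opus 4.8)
The plan is to reduce $G\in\Ind\Alg_{u}/F$ to a transcendence‑degree argument against a well‑chosen filtered system of fields, using the structure of $G$ as follows. Recall the standard facts that an object $H\in\Ind\Pro\Alg_{u}/F$ lies in $\Ind\Alg_{u}/F$ iff every pro‑algebraic subobject of $H$ is algebraic (write $H=\dirlim_{\lambda}H_{\lambda}$ and replace $H_{\lambda}$ by its image in $H$), and that a pro‑algebraic object is compact in $\Ind\Pro\Alg_{u}/F$, so that a morphism from a pro‑algebraic object to an ind‑algebraic one factors through an algebraic group; in particular $\Pro\Alg_{u}/F\cap\Ind\Alg_{u}/F=\Alg_{u}/F$. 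Thus it suffices to fix a pro‑algebraic subobject $P\subseteq G$ and show that $P$ is algebraic, and for this I will prove that $P^{0}$ is algebraic and $\pi_{0}(P)$ is finite; $P$ is then an extension of a finite \'etale group by an algebraic group, hence algebraic.

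First, $P^{0}$ is algebraic. Any subobject $H'\subseteq H$ of an object satisfying the colimit hypothesis again satisfies it: if $\{F'_{\lambda}\}$ has limit $\Omega$ and $g\in H'(\Omega)\subseteq H(\Omega)=\dirlim_{\lambda}H(F'_{\lambda})$, lift $g$ to $\tilde g\in H(F'_{\lambda})$; since the image of $\tilde g$ in $(H/H')(F'_{\lambda})$ dies in $(H/H')(\Omega)$, and sections of objects of $\Ind\Pro\Alg_{u}/F$ over a field extension are injective while the $F'_{\lambda}$‑section functor is left exact, $\tilde g\in H'(F'_{\lambda})$; hence $H'(\Omega)=\dirlim_{\lambda}H'(F'_{\lambda})$. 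Applying this, $P^{0}$ (a subobject of $G$) satisfies the colimit hypothesis; writing $P^{0}=\dirlim_{i}P_{i}$ with $P_{i}\in\Pro\Alg_{u}/F$, replacing $P_{i}$ by the image of $P_{i}^{0}$ in $P^{0}$, and using $\dirlim_{i}\pi_{0}(P_{i})=\pi_{0}(P^{0})=0$, we may take the $P_{i}$ to be connected pro‑algebraic subobjects of $P^{0}$; it is enough to show each $P_{i}$ is algebraic. If $P_{i}$ is not algebraic, write $P_{i}=\invlim_{n}P_{i,n}$ with $P_{i,n}\in\Alg_{u}/F$ connected and transitions surjective, and arrange that $\Ker(P_{i,n}\onto P_{i,n-1})\neq0$ for all $n$; then $P_{i,n}\to P_{i,n-1}$ has positive relative dimension or is a connected finite \'etale cover of degree $\ge2$, so the perfected function field $L_{n}:=F(P_{i,n})^{\perf}$ strictly contains $L_{n-1}$. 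Let $g_{n}\in P_{i,n}(L_{n})$ be the (perfection of the) generic point; then $g_{n}\mapsto g_{n-1}$, so $g:=(g_{n})\in\invlim_{n}P_{i,n}(\Omega)=P_{i}(\Omega)$ where $\Omega:=\dirlim_{n}L_{n}$ is a perfect field. But $g$ lies in the image of $P_{i}(L_{m})$ for no $m$: otherwise $g_{m+1}$ would be defined over $L_{m}$, forcing $F(P_{i,m+1})\subseteq L_{m}$ and hence $L_{m+1}\subseteq L_{m}$, a contradiction. This contradicts $P_{i}(\Omega)=\dirlim_{m}P_{i}(L_{m})$, so $P_{i}$, and therefore $P^{0}$, is algebraic.

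Second, $\pi_{0}(P)$ is finite; this is where the hypothesis on $\pi_{0}(G)$ is used. The quotient $P/(P\cap G^{0})$ embeds in $\pi_{0}(G)\in\Ind\Alg_{u}/F$ and is pro‑algebraic, hence lies in $\Pro\Alg_{u}/F\cap\Ind\Alg_{u}/F=\Alg_{u}/F$ and is finite \'etale. It remains to see that $\pi_{0}(P\cap G^{0})=(P\cap G^{0})/P^{0}$ is finite; this is a pro‑(finite \'etale) subobject of the connected object $G^{0}/P^{0}$. Now $G^{0}\subseteq G$ satisfies the colimit hypothesis by the first paragraph, and so does the quotient $G^{0}/P^{0}$, by a five‑lemma comparison of the long exact cohomology sequences of $0\to P^{0}\to G^{0}\to G^{0}/P^{0}\to0$ (using that $P^{0}$ is algebraic and that cohomology of $F$ with coefficients in objects of $\Ind\Pro\Alg_{u}/F$ commutes with filtered direct limits of perfect fields). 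Since $G^{0}/P^{0}$ is connected, the argument of the second paragraph gives $G^{0}/P^{0}\in\Ind\Alg_{u}/F$; its pro‑algebraic subobject $(P\cap G^{0})/P^{0}$ therefore again lies in $\Pro\Alg_{u}/F\cap\Ind\Alg_{u}/F=\Alg_{u}/F$ and is finite. Hence $\pi_{0}(P)$, an extension of $P/(P\cap G^{0})$ by $(P\cap G^{0})/P^{0}$, is finite, which completes the proof.

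The main obstacle is twofold. The core positive input is the generic‑point construction, where one must check that when $\Ker(P_{i,n}\onto P_{i,n-1})$ is finite \'etale the (connected) cover $P_{i,n}\to P_{i,n-1}$ genuinely enlarges the function field, and carry the whole construction out with perfections of function fields so as to stay within sheaves on $\Spec F^{\perar}_{\et}$. The subtler point is that the colimit hypothesis alone does \emph{not} bound $\pi_{0}(P)$: a pro‑(finite \'etale) group such as $\Z_{p}$ does satisfy $\dirlim_{\lambda}G(F'_{\lambda})\isomto G(\dirlim_{\lambda}F'_{\lambda})$. This is precisely why the assumption that $\pi_{0}(G)$ is \'etale must be invoked, and why the finiteness of $\pi_{0}(P\cap G^{0})$ has to be extracted indirectly, through the already‑established ind‑algebraicity of the connected group $G^{0}/P^{0}$.
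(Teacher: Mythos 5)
Your proof is correct, and its crux — the generic-point construction, extracting a contradiction from the strictly increasing tower of perfected function fields of a non-algebraic connected pro-algebraic group — is the same mechanism the paper invokes by citing \cite[Lemma (3.4.4)]{Suz20}. The organization differs: the paper first reduces to $G$ connected (since \'etale objects satisfy both hypothesis and conclusion, the sequence $0 \to G^{0} \to G \to \pi_{0}(G) \to 0$ leaves only $G^{0}$ to treat) and then only needs to show morphisms from \emph{connected} pro-algebraic groups factor through $\Alg_{u}/F$; you keep $G$ general, reduce to showing every pro-algebraic subobject $P$ is algebraic, and split $P$ into $P^{0}$ (handled by the generic point) and $\pi_{0}(P)$ (handled by a second application of the ind-algebraicity conclusion to $G^{0}/P^{0}$). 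Your route costs more bookkeeping — the lemma that subobjects inherit the colimit hypothesis, the comparison showing $G^{0}/P^{0}$ inherits it too (which in fact reduces to the vanishing of $H^{1}$ for connected unipotent groups over perfect fields), and the intersection fact $\Pro\Alg_{u}/F \cap \Ind\Alg_{u}/F = \Alg_{u}/F$ — but it has the merit of isolating exactly where the hypothesis on $\pi_{0}(G)$ is needed, namely to bound the component group of $P$ (as you correctly observe, $\Z_{p}$ satisfies the colimit hypothesis); the paper's upfront reduction absorbs this silently.
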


\begin{proof}
	The assumption and conclusion are satisfied if $G$ is \'etale.
	Hence we may assume that $G$ is connected.
	Then $G$ can be written as the direct limit of a filtered direct system $\{G_{\mu}\}$
	of connected groups in $\Pro \Alg_{u} / F$.
	Hence we need to show that any morphism $H \to G$
	from a connected group $H \in \Pro \Alg_{u} / F$
	factors through an object of $\Alg_{u} / F$.
	Let $\xi_{H}$ be the generic point of $H$.
	The same argument as the proof of \cite[Lemma (3.4.4)]{Suz20} applied to this situation
	gives the result.
\end{proof}

\begin{Prop} \label{0004}
	Let $G \in \Alg^{\RP}_{u} / k$.
	Then $R^{1} \pi_{k, \RP, \ast} G \in \Ind \Alg_{u} / F$ is connected
	and $R^{q} \pi_{k, \RP, \ast} G = 0$ for $q \ge 2$.
\end{Prop}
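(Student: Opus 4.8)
The plan is to reduce everything to the two cases $G$ split and $G$ wound, using the canonical filtration $0 \to G_{\spl} \to G \to G/G_{\spl} \to 0$ with $G/G_{\spl}$ wound, together with the long exact sequence for $R\pi_{k,\RP,\ast}$. First I would dispose of the split case: since $\Ga^{\RP}$ is $\pi_{k,\RP}$-acyclic (one has $R^q\pi_{k,\RP,\ast}\Ga^{\RP} = 0$ for $q \ge 1$, as already used in the proof of Proposition \ref{0005}), and a split group admits a finite filtration with graded pieces $\Ga^{\RP}$, dévissage gives $R^q\pi_{k,\RP,\ast}G_{\spl} = 0$ for all $q \ge 1$. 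So the long exact sequence immediately yields $R^q\pi_{k,\RP,\ast}G \cong R^q\pi_{k,\RP,\ast}(G/G_{\spl})$ for $q \ge 1$, and it suffices to treat wound $G$.

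For wound $G$, the vanishing $R^q\pi_{k,\RP,\ast}G = 0$ for $q \ge 2$ should follow from a cohomological dimension bound: $k$ is a complete discrete valuation field with perfect residue field, so after passing to any perfect field extension $F'$ of $F$ the field $\alg{k}(F')$ has the relevant $p$-cohomological dimension $\le 1$ for the coefficient groups in $\Alg^{\RP}_u/k$; alternatively one invokes Kato's duality (Theorem \ref{0002} above, via Proposition \ref{0002}) to rewrite $R\pi_{k,\RP,\ast}G$ as the dual of something supported in degrees $[0,1]$. Concretely, I expect to use that $R\sheafhom_{k_{\RP}}(G,\nu_\infty(r))$ lives in $D_0(k_{\RP})$ with cohomology in degrees $0$ and $1$ only, and combine this with a trace/duality identity relating $R\pi_{k,\RP,\ast}$ to $R\Gamma$ over $F$ of the Serre-dual group, so that the degree-$\ge 2$ part vanishes for dimension reasons. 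The surjectivity/acyclicity facts about Néron models and Greenberg transforms from the proof of Proposition \ref{0005} feed in here to control $R^1\pi_{k,\RP,\ast}$.

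It then remains to show $R^1\pi_{k,\RP,\ast}G \in \Ind\Alg_u/F$ and that it is connected. For membership in $\Ind\Alg_u/F$ I plan to apply Proposition \ref{0001}: $\pi_0(R^1\pi_{k,\RP,\ast}G)$ is étale because $R^1\pi_{k,\RP,\ast}G$ already is known (from the derived-category formalism, or from Proposition \ref{0005} applied to the dual) to lie in $\Ind\Pro\Alg_u/F$ with étale $\pi_0$, and the commutation with filtered direct limits of perfect fields $\dirlim_\lambda (R^1\pi_{k,\RP,\ast}G)(F'_\lambda) \isomto (R^1\pi_{k,\RP,\ast}G)(\dirlim_\lambda F'_\lambda)$ follows because $\alg{k}(\var)$ commutes with filtered colimits of perfect $F$-algebras and cohomology commutes with filtered colimits of coefficient-independent base. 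Connectedness I would extract from Serre duality: under Kato's duality the wound group $G$ is dual to a wound group, and $R^1\pi_{k,\RP,\ast}$ of a wound group is Pontryagin/Serre dual to $\pi_{k,\RP,\ast}$ of the dual wound group, which by Proposition \ref{0005} is pro-algebraic in $\mathcal{W}_F$; a connected pro-algebraic group dualizes to a connected ind-algebraic group.

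The main obstacle will be the degree-$\ge 2$ vanishing and the precise duality bookkeeping identifying $R^1\pi_{k,\RP,\ast}$ of a wound group with the Serre/Pontryagin dual of $\pi_{k,\RP,\ast}$ of its Kato dual — i.e.\ pinning down a clean local duality statement of the form $R\pi_{k,\RP,\ast}R\sheafhom_{k_{\RP}}(G,\nu_\infty(r)) \cong$ (dual of $R\pi_{k,\RP,\ast}G$) shifted appropriately. Once that identity is in hand, all three assertions (vanishing for $q \ge 2$, $\Ind\Alg_u/F$-membership, connectedness) drop out, so I would invest most of the effort there, likely by comparing the two premorphisms $\Spec k_{\RP} \to \Spec F^{\perar}_{\et}$ and $\Spec k_{\RP} \to \Spec F^{\ind\rat}_{\pro\et}$ and transporting Kato's autoequivalence of $D_0(k_{\RP})$ through them.
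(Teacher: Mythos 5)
Your reduction to the wound case via $G_{\spl}$ and the observation that $\Ga^{\RP}$ is $\pi_{k,\RP}$-acyclic is correct and matches the paper. The plan to apply Proposition~\ref{0001} at the end, with the filtered-colimit verification, is also the route the paper takes. However, there are two serious problems with the middle of your argument.

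First, the connectedness and $\Ind\Pro\Alg_{u}/F$-membership via duality is circular. The degree-wise form of the local duality (Proposition~\ref{0006}), which identifies $R^{1}\pi_{k,\RP,\ast}G$ with $\sheafext^{1}_{F^{\perar}_{\et}}((\pi_{k,\RP,\ast}H)^{0},\Lambda_{\infty})$, is itself proved using the connectedness of $R^{1}\pi_{k,\RP,\ast}H$ --- that is, using Proposition~\ref{0004} applied to the Kato dual $H$. Likewise, unpacking the derived-level perfect pairing of Proposition~\ref{0099} into statements about individual cohomology objects requires knowing that those objects lie in $\mathcal{W}_{F}$ (so that the $\sheafext^{q}$-formulas from \cite[Proposition 3.1.7]{Suz24} apply), and $R^{1}\pi_{k,\RP,\ast}H\in\mathcal{W}_{F}$ is exactly Proposition~\ref{0009}, which sits downstream of Propositions~\ref{0006} and~\ref{0004}. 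So ``invest effort in the duality bookkeeping and everything drops out'' cannot work here: the bookkeeping presupposes the very statement you are proving. The paper avoids this entirely by taking the explicit resolution $0\to G\to \Ga^{\RP n}\to\Ga^{\RP}\to 0$ (valid for wound $G$ as in the proof of Proposition~\ref{0000}), from which the four-term exact sequence $0\to\pi_{k,\RP,\ast}G\to\alg{k}^{n}\to\alg{k}\to R^{1}\pi_{k,\RP,\ast}G\to 0$ gives both the vanishing for $q\ge 2$ and, since $\alg{k}$ is connected, connectedness of $R^{1}\pi_{k,\RP,\ast}G$ immediately; membership in $\Ind\Pro\Alg_{u}/F$ follows by realising the sheaf cokernels as cokernels in the abelian category $\Ind\Pro\Alg_{u}/F$ (via Propositions~\ref{0005}, \ref{0041} and \cite[Proposition 3.1.2]{Suz24}). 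This is strictly upstream of the duality and breaks the cycle.

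Second, your justification for the filtered-colimit compatibility needed in Proposition~\ref{0001} is not right as stated. The functor $F'\mapsto\alg{k}(F')$ is a completed tensor, and it does \emph{not} commute with filtered colimits: $\dirlim_{\lambda}\alg{k}(F'_{\lambda})$ is a henselian but generally non-complete discrete valuation field whose completion is $\alg{k}(\dirlim_{\lambda}F'_{\lambda})$. The agreement of $H^{1}$ of the henselian field with $H^{1}$ of its completion with coefficients in a smooth group scheme is a nontrivial input --- this is precisely what \cite[Proposition 3.5.3 (2)]{GGMB14} supplies, and it is what the paper cites. The second isomorphism in your chain is therefore hiding the real content. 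The conclusion is correct, but the reason you give for it would not survive scrutiny.

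Finally, your handling of the $q\ge 2$ vanishing via ``cohomological dimension $\le 1$ for the relevant coefficients'' is vague: the coefficients $G\in\Alg^{\RP}_{u}/k$ are not torsion (over the imperfect field $k$), so the usual Galois cohomological dimension bounds do not directly apply. The correct mechanism is again the explicit resolution by $\Ga^{\RP}$'s together with the acyclicity $R^{q}\pi_{k,\RP,\ast}\Ga^{\RP}=0$ for $q\ge 1$; phrased that way, the argument is clean and non-circular.
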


\begin{proof}
	As in the proof of Proposition \ref{0000},
	we may assume that $G$ is wound and there exists an exact sequence
	$0 \to G \to \Ga^{\RP n} \to \Ga^{\RP} \to 0$, where $n = \dim(G) + 1$.
	We have $R^{q} \pi_{k, \RP, \ast} \Ga^{\RP} = 0$ for $q \ge 1$.
	Hence $R^{q} \pi_{k, \RP, \ast} G = 0$ for $q \ge 2$,
	and we have an exact sequence
		\[
				0
			\to
				\pi_{k, \RP, \ast} G
			\to
				\alg{k}^{n}
			\to
				\alg{k}
			\to
				R^{1} \pi_{k, \RP, \ast} G
			\to
				0
		\]
	in $\Ab(F^{\perar}_{\et})$.
	The morphism $\pi_{k, \RP, \ast} G \to \alg{k}^{n}$ is injective in $\Ind \Pro \Alg_{u} / F$.
	Let $H$ be its cokernel in $\Ind \Pro \Alg_{u} / F$,
	which is in $\mathcal{W}_{F}$ by Propositions \ref{0005} and \ref{0041}.
	Then, since the pro-\'etale cohomology with coefficients in an object of $\mathcal{W}_{F}$
	is isomorphic to the \'etale cohomology by \cite[Proposition 3.1.2]{Suz24},
	this cokernel $H$ represents the cokernel of $\pi_{k, \RP, \ast} G \to \alg{k}^{n}$
	in $\Ab(F^{\perar}_{\et})$.
	The same argument applied to $H \into \alg{k}$ shows that
	$R^{1} \pi_{k, \RP, \ast} G \in \Ind \Pro \Alg_{u} / F$.
	It is connected since $\alg{k}$ is connected.
	For any filtered direct system $\{F'_{\lambda}\}$ of perfect fields over $F$,
	we have
		\[
				\dirlim_{\lambda}
					H^{1}(\alg{k}(F'_{\lambda}), G)
			\isomto
				H^{1} \bigr(
					\dirlim_{\lambda}
						\alg{k}(F'_{\lambda}),
					G
				\bigr)
			\isomto
				H^{1} \bigl(
					\alg{k}(\dirlim_{\lambda} F'_{\lambda}),
					G
				\bigr)
		\]
	by \cite[Proposition 3.5.3 (2)]{GGMB14}.
	This implies that
	$R^{1} \pi_{k, \RP, \ast} G \in \Ind \Pro \Alg_{u} / F$ satisfies
	the assumption of Proposition \ref{0001}.
	Thus $R^{1} \pi_{k, \RP, \ast} G \in \Ind \Alg_{u} / F$.
\end{proof}

We have
	\[
			R^{q} \pi_{k, \RP, \ast} \nu_{\infty}(1)
		\cong
			\begin{cases}
					\alg{k}^{\times} \tensor_{\Z} \Lambda_{\infty}
				&	\text{if }
					q = 0,
				\\
					0
				&	\text{otherwise}
			\end{cases}
	\]
by \cite[Proposition 6.1.6]{Suz24}.
With the valuation map $\alg{k}^{\times} \onto \Z$,
this gives a canonical morphism
	\[
			R \pi_{k, \RP, \ast} \nu_{\infty}(1)
		\to
			\Lambda_{\infty}
	\]
in $D(F^{\perar}_{\et})$.

\begin{Prop} \label{0099}
	Let $G \in D_{0}(k_{\RP})$ and set $H = R \sheafhom_{k_{\RP}}(G, \nu_{\infty}(1))$.
	Then the composite
		\begin{equation} \label{0003}
					R \pi_{k, \RP, \ast} G
				\tensor^{L}
					R \pi_{k, \RP, \ast} H
			\to
				R \pi_{k, \RP, \ast} \nu_{\infty}(1)
			\to
				\Lambda_{\infty}
		\end{equation}
	is a perfect pairing in $D(F^{\perar}_{\et})$.
\end{Prop}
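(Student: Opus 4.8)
The plan is to deduce the perfectness of the pairing \eqref{0003} from Kato's derived-category duality on $D_0(k_{\RP})$ together with the ``going-down'' compatibility of $R\pi_{k,\RP,\ast}$ with $R\sheafhom$. First I would rewrite the target: since $H = R\sheafhom_{k_{\RP}}(G, \nu_\infty(1))$ and Kato's theorem \cite[Theorem 4.3 (ii)]{Kat86} tells us that $R\sheafhom_{k_{\RP}}(\var, \nu_\infty(1))$ is a contravariant autoequivalence of $D_0(k_{\RP})$ with square the identity, we automatically have $G \cong R\sheafhom_{k_{\RP}}(H, \nu_\infty(1))$ as well, so the roles of $G$ and $H$ are symmetric. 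Thus it suffices to show that one of the two adjunction morphisms
\[
    R\pi_{k,\RP,\ast} G \to R\sheafhom_{F^{\perar}_{\et}}\bigl(R\pi_{k,\RP,\ast} H, \Lambda_\infty\bigr)
\]
is an isomorphism in $D(F^{\perar}_{\et})$.

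The key step is a projection-type formula: I would establish a canonical isomorphism
\[
    R\pi_{k,\RP,\ast} R\sheafhom_{k_{\RP}}(G, \nu_\infty(1))
    \cong
    R\sheafhom_{F^{\perar}_{\et}}\bigl(R\pi_{k,\RP,\ast} G, R\pi_{k,\RP,\ast}\nu_\infty(1)\bigr)
\]
for $G \in D_0(k_{\RP})$, functorially in $G$, and then post-compose with the valuation morphism $R\pi_{k,\RP,\ast}\nu_\infty(1) \to \Lambda_\infty$. Since $D_0(k_{\RP})$ is generated as a triangulated category by $\Ga^{\RP}$ (by definition, \cite[Definition 4.2.3]{Kat86}), and both sides are exact functors in $G$ that commute with the relevant shifts, a dévissage reduces the claim to the single case $G = \Ga^{\RP}$. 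In that case $R\sheafhom_{k_{\RP}}(\Ga^{\RP}, \nu_\infty(1)) \cong \Omega^1_k \cong \Ga^{\RP}$ placed in degree $1$ by \cite[Theorem 3.2 (ii)]{Kat86}, so the left-hand side is $\alg{k}[-1]$ by \cite[Proposition 6.1.6]{Suz24} (computing $R^q\pi_{k,\RP,\ast}\Ga^{\RP} = 0$ for $q \geq 1$); the right-hand side is $R\sheafhom_{F^{\perar}_{\et}}(\alg{k}, \alg{k}^\times \tensor \Lambda_\infty)$ composed with the valuation, and the required identification is precisely the self-duality of $\alg{k}$ in the residual direction, which I expect to extract from the local-field computations already in \cite[Sections 6.1, 6.4]{Suz24}. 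Once perfectness holds for $G = \Ga^{\RP}$, the five lemma propagates it through all of $D_0(k_{\RP})$.

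The main obstacle will be the base case $G = \Ga^{\RP}$: one must check that the explicitly-defined pairing \eqref{0003}, passing through $R\pi_{k,\RP,\ast}\nu_\infty(1) \to \Lambda_\infty$, really induces the self-duality isomorphism $\alg{k} \isomto R\sheafhom_{F^{\perar}_{\et}}(\alg{k}, \Lambda_\infty[1])$ and not merely some abstract isomorphism — that is, that the valuation-normalized pairing is nondegenerate at the level of perfect-artinian test algebras $F'$, where it amounts to a residue pairing on $\alg{k}(F') = (F' \ctensor_F \Order_k) \tensor_{\Order_k} k$ valued in $\Lambda_\infty$. I would verify this by reducing to the residue symbol of \cite[Section 6.1]{Suz24}: the composite sends $(a, \omega) \in \alg{k}^\times \times \Omega^1_{\alg{k}}$ to (the class of) $\mathrm{res}(\tfrac{da}{a}\wedge\omega)$ read through $\nu_\infty(1)$, and its perfectness is exactly the statement that $\Omega^1_{\alg{k}}$ with the residue pairing is the topological dual of $\alg{k}$, which is the relatively-perfect incarnation of classical local duality already available in that reference. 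Granting this and the dévissage, the proposition follows.
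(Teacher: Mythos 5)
Your dévissage strategy — reduce to the generator $G = \Ga^{\RP}$ of $D_0(k_{\RP})$ and verify perfectness there via a residue pairing — is exactly the approach the paper takes. But the base-case verification is wrong as you have written it. The formula $\mathrm{res}(\tfrac{da}{a}\wedge\omega)$ vanishes identically: $\tfrac{da}{a}$ and $\omega$ are both one-forms on $k$, and $\Omega_k^1$ is one-dimensional over $k$ (here $k$ is a complete discrete valuation field of characteristic $p$ with perfect residue field, so $[k : k^p] = p$), hence their wedge is a two-form and therefore zero. The correct pairing of Tate vector spaces, as the paper states, is the multiplication $k \times \Omega_k^1 \to \Omega_k^1$ followed by the residue $\Omega_k^1 \onto F$, i.e.\ $(a,\omega)\mapsto\mathrm{res}(a\omega)$.

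Moreover, even with the correct residue pairing in hand, you leave a genuine gap: that pairing is $F$-valued (a perfect $\Ga$-valued pairing of Tate vector spaces), whereas the proposition asserts a perfect $\Lambda_\infty$-valued pairing in $D(F^{\perar}_{\et})$. You acknowledge this must become ``the relatively-perfect incarnation of classical local duality'' but do not say how. The paper does this by invoking \cite[Propositions 3.4.5 and 3.4.4]{Suz24}, which convert a perfect $\Ga$-valued Tate-vector-space pairing into the required $\Lambda_\infty$-valued perfect pairing on $\Spec F^{\perar}_{\et}$; this conversion is where the real content of the base case lies, and your proposal leaves it unaddressed. (Your ``projection-type formula'' is also a detour: it is not in general an isomorphism — only a canonical morphism in that direction exists, via the counit $L\pi^\ast R\pi_\ast G \to G$ — and it is unnecessary, since once both sides of the perfectness claim are seen to be exact triangulated functors of $G \in D_0(k_{\RP})$, checking $G = \Ga^{\RP}$ directly suffices, as the paper does.)
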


\begin{proof}
	We may assume that $G = \Ga^{\RP}$ and hence $H = \Omega_{k}^{1}[-1]$.
	This case can be proved in the same way as the proof of \cite[Proposition (5.2.2.4)]{Suz20}
	(or is reduced to it).
	More explicitly, we have $R \pi_{k, \RP, \ast} \Ga^{\RP} \cong \alg{k}$
	and $R \pi_{k, \RP, \ast} \Omega_{k}^{1} \cong \Omega_{\alg{k}}^{1}$,
	where $\Omega_{\alg{k}}^{1} = \Omega_{k}^{1} \tensor_{k} \alg{k}$.
	The multiplication pairing $k \times \Omega_{k}^{1} \to \Omega_{k}^{1}$
	followed by the residue map $\Omega_{k}^{1} \onto F$ is a perfect pairing of Tate vector spaces over $F$.
	Hence \cite[Propositions 3.4.5 and 3.4.4]{Suz24} imply the result.
\end{proof}

\begin{Prop} \label{0006}
	Let $G \in \Alg^{\RP}_{u} / k$ be wound
	and set $H = \sheafhom_{k_{\RP}}(G, \nu_{\infty}(1))$.
	Then the paring \eqref{0003} induces isomorphisms
		\begin{gather*}
					\pi_{0}(\pi_{k, \RP, \ast} G)
				\cong
					\sheafhom_{F^{\perar}_{\et}}(
						\pi_{0}(\pi_{k, \RP, \ast} H),
						\Lambda_{\infty}
					),
			\\
					R^{1} \pi_{k, \RP, \ast} G
				\cong
					\sheafext_{F^{\perar}_{\et}}^{1}(
						(\pi_{k, \RP, \ast} H)^{0},
						\Lambda_{\infty}
					).
		\end{gather*}
\end{Prop}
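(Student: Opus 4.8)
The plan is to deduce both isomorphisms from the perfect pairing of Proposition \ref{0099}, after placing the relevant cohomology objects in $\mathcal{W}_{F}$ so that Proposition \ref{0078} can be applied.

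First I would reduce to a pairing in which only $\sheafhom_{k_{\RP}}(\var, \nu_{\infty}(1))$, not the higher $\sheafext$-groups, enters on the second factor. Since $G$ is wound, $\sheafext^{1}_{k_{\RP}}(G, \nu_{\infty}(1)) = 0$ by Proposition \ref{0079}, and $\sheafext^{q}_{k_{\RP}}(G, \nu_{\infty}(1)) = 0$ for $q \ge 2$ by Proposition \ref{0002}; hence $R \sheafhom_{k_{\RP}}(G, \nu_{\infty}(1))$ is concentrated in degree $0$ and equals $H = \sheafhom_{k_{\RP}}(G, \nu_{\infty}(1))$, which is moreover again a wound object of $\Alg^{\RP}_{u} / k$ by Proposition \ref{0002}. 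Applying Proposition \ref{0099} to this $G$, the composite \eqref{0003} becomes a perfect pairing
	\[
			R \pi_{k, \RP, \ast} G
		\tensor^{L}
			R \pi_{k, \RP, \ast} H
		\to
			\Lambda_{\infty}
	\]
in $D(F^{\perar}_{\et})$, equivalently an isomorphism $R \pi_{k, \RP, \ast} G \cong R \sheafhom_{F^{\perar}_{\et}}(R \pi_{k, \RP, \ast} H, \Lambda_{\infty})$.

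Next I would record the cohomology of the two complexes. Since $G$ is wound, Proposition \ref{0005} gives that $H^{0}(R \pi_{k, \RP, \ast} G) = \pi_{k, \RP, \ast} G$ is pro-algebraic and in $\mathcal{W}_{F}$, Proposition \ref{0004} gives that $H^{1}(R \pi_{k, \RP, \ast} G) = R^{1} \pi_{k, \RP, \ast} G$ is connected and ind-algebraic, hence in $\mathcal{W}_{F}$ (a connected ind-algebraic group being the filtered union of its connected finite-type subgroups), and $H^{q}(R \pi_{k, \RP, \ast} G) = 0$ for $q \notin \{0, 1\}$. The same statements hold for $H$, which is also wound. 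Thus $R \pi_{k, \RP, \ast} G$ (and likewise $R \pi_{k, \RP, \ast} H$) has cohomology in $\mathcal{W}_{F}$ vanishing outside a finite range, so Proposition \ref{0078} applies to the perfect pairing above.

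Finally I would read off the two dualities. Proposition \ref{0078} yields, for every $q$, a Serre duality $(H^{q}(R \pi_{k, \RP, \ast} G))^{0} \leftrightarrow (H^{1 - q}(R \pi_{k, \RP, \ast} H))^{0}$ of connected groups in $\mathcal{W}_{F}$ and a Pontryagin duality $\pi_{0}(H^{q}(R \pi_{k, \RP, \ast} G)) \leftrightarrow \pi_{0}(H^{- q}(R \pi_{k, \RP, \ast} H))$ of finite \'etale groups over $F$. Taking $q = 0$ in the Pontryagin statement gives $\pi_{0}(\pi_{k, \RP, \ast} G) \cong \sheafhom_{F^{\perar}_{\et}}(\pi_{0}(\pi_{k, \RP, \ast} H), \Lambda_{\infty})$, the first isomorphism. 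Taking $q = 1$ in the Serre statement, and using that $R^{1} \pi_{k, \RP, \ast} G$ is connected so equals its identity component, gives $R^{1} \pi_{k, \RP, \ast} G \cong \sheafext^{1}_{F^{\perar}_{\et}}((\pi_{k, \RP, \ast} H)^{0}, \Lambda_{\infty})$, once one invokes that the Serre-duality autoequivalence of connected groups in $\mathcal{W}_{F}$ is the one computed by $\sheafext^{1}_{F^{\perar}_{\et}}(\var, \Lambda_{\infty})$ (\cite[Proposition 3.1.7]{Suz24}); this is the second isomorphism. The step needing the most care is precisely this bookkeeping against \cite{Suz24}: that $R^{1} \pi_{k, \RP, \ast} G$ really lies in $\mathcal{W}_{F}$ (Proposition \ref{0004} only records connected ind-algebraicity) and that the abstract Serre pairing of Proposition \ref{0078} is induced by $R \sheafhom_{F^{\perar}_{\et}}(\var, \Lambda_{\infty})$, so that the dual is literally $\sheafext^{1}_{F^{\perar}_{\et}}(\var, \Lambda_{\infty})$ and the indices line up.
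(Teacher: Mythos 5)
You correctly identify the reduction: for wound $G$, Propositions \ref{0002} and \ref{0079} collapse $R\sheafhom_{k_{\RP}}(G,\nu_{\infty}(1))$ to $H$ in degree zero, so Proposition \ref{0099} gives a perfect pairing $R\pi_{k,\RP,\ast}G\otimes^{L}R\pi_{k,\RP,\ast}H\to\Lambda_{\infty}$ in $D(F^{\perar}_{\et})$, and your read-off of the $\pi_{0}$ and $R^{1}$ dualities from the formalism of Proposition \ref{0078} is the right bookkeeping. However, the route through Proposition \ref{0078} requires every $H^{q}(R\pi_{k,\RP,\ast}G)$ to lie in $\mathcal{W}_{F}$, and that is where your argument has a genuine gap. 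You assert that $R^{1}\pi_{k,\RP,\ast}G\in\mathcal{W}_{F}$ because a connected ind-algebraic group is ``the filtered union of its connected finite-type subgroups.'' This is not enough: the definition of $\mathcal{W}_{F}$ requires the direct system defining the ind-algebraic part to be indexed by $n\ge 1$, i.e.\ by the naturals with injective transitions. A general connected object of $\Ind\Alg_{u}/F$ is a filtered union of connected algebraic subgroups, but there is no reason for that filtered poset to have countable cofinality. Proposition \ref{0004} records only that $R^{1}\pi_{k,\RP,\ast}G$ is connected and in $\Ind\Alg_{u}/F$; it does not by itself give membership in $\mathcal{W}_{F}$.

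This matters more than a technicality, because $R^{1}\pi_{k,\RP,\ast}G\in\mathcal{W}_{F}$ is precisely Proposition \ref{0009}, which is proved \emph{after} Proposition \ref{0006} and whose proof explicitly invokes it. Your argument therefore inverts the paper's logical order, and without a correct independent proof that $R^{1}\pi_{k,\RP,\ast}G\in\mathcal{W}_{F}$ (which is possible using the explicit presentation $\alg{k}\cong\Ga^{\N}\oplus\Ga^{\oplus\N}$ and the quotient description of $R^{1}\pi_{k,\RP,\ast}G$ from the proof of Proposition \ref{0004}, but is not supplied by your blanket claim about ind-algebraic groups), the invocation of Proposition \ref{0078} is unjustified. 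The paper's own proof is engineered to avoid this: it uses only the \emph{connectedness} of $R^{1}\pi_{k,\RP,\ast}H$ and works directly with the truncation triangle $\pi_{k,\RP,\ast}H\to R\pi_{k,\RP,\ast}H\to R^{1}\pi_{k,\RP,\ast}H[-1]$ and its dual under $R\sheafhom_{F^{\perar}_{\et}}(\var,\Lambda_{\infty})$, using the presentation $R\pi_{k,\RP,\ast}H\simeq[\alg{k}^{m}\to\alg{k}]$ with both terms connected and in $\mathcal{W}_{F}$ to control the higher $\sheafext$'s, so that membership of $R^{1}\pi_{k,\RP,\ast}H$ in $\mathcal{W}_{F}$ is never needed as an input.
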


\begin{proof}
	This follows from the connectedness of $R^{1} \pi_{k, \RP, \ast} H$
	(Proposition \ref{0004}).
\end{proof}

\begin{Prop} \label{0009}
	Let $G \in \Alg^{\RP}_{u} / k$.
	Then $R^{1} \pi_{k, \RP, \ast} G \in \mathcal{W}_{F}$.
\end{Prop}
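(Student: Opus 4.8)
The plan is to reduce to the two extreme cases — $G$ split and $G$ wound — and handle them separately. Every $G \in \Alg^{\RP}_{u}/k$ sits in an exact sequence $0 \to G_{\spl} \to G \to G/G_{\spl} \to 0$ with $G_{\spl}$ split and $G/G_{\spl}$ wound (Theorem B.3.4 of \cite{CGP15}). Applying $R\pi_{k,\RP,\ast}$ gives a long exact sequence, so once I know $R^{1}\pi_{k,\RP,\ast}$ of both a split group and a wound group lie in $\mathcal{W}_{F}$, and I control the connecting maps into and out of them, the conclusion for $G$ follows because $\mathcal{W}_{F}$ is closed under extensions and, via Proposition \ref{0041}, under the relevant sub/quotient operations. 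Indeed Proposition \ref{0041} is tailor-made for this: the sequence $\pi_{k,\RP,\ast}G \to \pi_{k,\RP,\ast}(G/G_{\spl}) \to R^{1}\pi_{k,\RP,\ast}G_{\spl} \to R^{1}\pi_{k,\RP,\ast}G \to R^{1}\pi_{k,\RP,\ast}(G/G_{\spl}) \to 0$ (using $R^{q}\pi_{k,\RP,\ast}=0$ for $q\ge 2$ from Proposition \ref{0004}) has outer terms already known to be in $\mathcal{W}_{F}$, and I need to check the $\Pro$/$\Ind$ hypotheses of Proposition \ref{0041} hold for each term.

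First I would dispose of the split case: since $R^{1}\pi_{k,\RP,\ast}$ commutes with the finite filtration by $\Ga^{\RP}$'s, and $R^{q}\pi_{k,\RP,\ast}\Ga^{\RP}=0$ for $q\ge 1$, we get $R^{1}\pi_{k,\RP,\ast}G_{\spl}=0$ outright — so actually only the wound case carries content. For $G$ wound, Proposition \ref{0004} already tells us $R^{1}\pi_{k,\RP,\ast}G \in \Ind\Alg_{u}/F$ and is connected; I need to upgrade ``$\Ind\Alg_{u}/F$'' to ``$\mathcal{W}_{F}$''. The natural route is duality: set $H = \sheafhom_{k_{\RP}}(G,\nu_{\infty}(1))$, which is again wound by the Proposition following \ref{0079}, and invoke Proposition \ref{0006}, which identifies $R^{1}\pi_{k,\RP,\ast}G \cong \sheafext^{1}_{F^{\perar}_{\et}}((\pi_{k,\RP,\ast}H)^{0}, \Lambda_{\infty})$. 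By Proposition \ref{0005}, $\pi_{k,\RP,\ast}H \in \mathcal{W}_{F}$ and is in fact pro-algebraic since $H$ is wound; hence $(\pi_{k,\RP,\ast}H)^{0}$ is a connected pro-algebraic object of $\mathcal{W}_{F}$. It now suffices to know that the Serre/Pontryagin dual of a connected object of $\mathcal{W}_{F}$ — concretely, $\sheafext^{1}_{F^{\perar}_{\et}}(-,\Lambda_{\infty})$ applied to it — lands back in $\mathcal{W}_{F}$; this is exactly the content of Serre duality for connected groups in $\mathcal{W}_{F}$ (\cite[Proposition 3.1.7]{Suz24}), which sends connected pro-algebraic objects to connected ind-algebraic ones and preserves $\mathcal{W}_{F}$. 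Combined with connectedness from Proposition \ref{0004}, this gives $R^{1}\pi_{k,\RP,\ast}G \in \mathcal{W}_{F}$ in the wound case.

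With the split and wound cases in hand, I would assemble the general case from the filtration above using Proposition \ref{0041}, having first recorded the needed regularity of each term: $\pi_{k,\RP,\ast}(\text{anything}) \in \mathcal{W}_{F}$ by Proposition \ref{0005}, $R^{1}\pi_{k,\RP,\ast}(\text{anything}) \in \Ind\Alg_{u}/F$ by Proposition \ref{0004}, and the higher direct images vanish. The main obstacle I anticipate is not the abstract assembly but making the identification in Proposition \ref{0006} do the work cleanly — i.e.\ confirming that $\sheafext^{1}_{F^{\perar}_{\et}}(-,\Lambda_{\infty})$ of a connected pro-algebraic $\mathcal{W}_{F}$-object is genuinely an object of $\mathcal{W}_{F}$ rather than merely of $\Ind\Alg_{u}/F$; once Serre duality from \cite{Suz24} is invoked this is immediate, but one must be careful that the duality there is stated for the relevant flavour of dual and that connectedness (Proposition \ref{0004}) is what rules out an extraneous $\pi_{0}$. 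A secondary bookkeeping point is checking that the subquotients appearing in the long exact sequence satisfy the $\Pro\Alg_{u}/F$ versus $\Ind\Alg_{u}/F$ placement required by Proposition \ref{0041}; this follows from Proposition \ref{0005} (pro-algebraicity in the wound direction) and Proposition \ref{0004} (connected ind-algebraicity of the $R^{1}$ terms).
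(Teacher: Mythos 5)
Your proposal is correct and follows the same route as the paper: reduce to the wound case (since $R^{1}\pi_{k,\RP,\ast}$ of a split group vanishes, the long exact sequence gives $R^{1}\pi_{k,\RP,\ast}G \cong R^{1}\pi_{k,\RP,\ast}(G/G_{\spl})$ outright, with no need for Proposition \ref{0041}), then identify $R^{1}\pi_{k,\RP,\ast}G$ with $\sheafext^{1}_{F^{\perar}_{\et}}\bigl((\pi_{k,\RP,\ast}H)^{0},\Lambda_{\infty}\bigr)$ via Proposition \ref{0006}, and conclude from Proposition \ref{0005} together with the $\mathcal{W}_{F}$-stability of Serre duality \cite[Proposition 3.1.7 (2)]{Suz24}. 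This is precisely the paper's argument; the only difference is that you spell out the reduction to the wound case and pad it with an unnecessary appeal to Proposition \ref{0041}.
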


\begin{proof}
	We may assume that $G$ is wound.
	Let $H = \sheafhom_{k_{\RP}}(G, \nu_{\infty}(1))$.
	Since $\pi_{k, \RP, \ast} H \in \mathcal{W}_{F}$ by
	Proposition \ref{0005},
	we have
		\[
				\sheafext_{F^{\perar}_{\et}}^{1}(
					(\pi_{k, \RP, \ast} H)^{0},
					\Lambda_{\infty}
				)
			\in
				\mathcal{W}_{F}
		\]
	by \cite[Proposition 3.1.7 (2)]{Suz24}.
	With Proposition \ref{0006}, we get the result.
\end{proof}

For $G \in \Alg_{u}^{\RP}$, set
	\[
			R \alg{\Gamma}(\alg{k}, G)
		=
			\algebrize
			R \pi_{k, \RP, \ast} G
		\in
			D(F^{\ind\rat}_{\pro\et})
	\]
and $\alg{H}^{q} = H^{q} R \alg{\Gamma}$ for $q \in \Z$
and $\alg{\Gamma} = \alg{H}^{0}$.

\begin{Prop}
	Let $G \in \Alg^{\RP}_{u} / k$.
	\begin{enumerate}
		\item
			Assume $G$ is split.
			Set $H = \sheafext^{1}_{k_{\RP}}(G, \nu_{\infty}(1))$.
			Then $\alg{H}^{q}(\alg{k}, G) = 0$ unless $q = 0$.
			We have a Serre duality
				\[
						\alg{\Gamma}(\alg{k}, G)
					\leftrightarrow
						\alg{\Gamma}(\alg{k}, H)
				\]
			of connected groups in $\mathcal{W}_{F}$.
		\item
			Assume $G$ is wound.
			Set $H = \sheafhom_{k_{\RP}}(G, \nu_{\infty}(1))$.
			Then $\alg{H}^{q}(\alg{k}, G) = 0$ unless $0 \le q \le 1$.
			We have a Pontryagin duality
				\begin{equation} \label{0097}
						\pi_{0}
						\alg{\Gamma}(\alg{k}, G)
					\leftrightarrow
						\pi_{0}
						\alg{\Gamma}(\alg{k}, H)
				\end{equation}
			of finite \'etale groups over $F$ and a Serre duality
				\begin{equation} \label{0098}
						\alg{H}^{1}(\alg{k}, G)
					\leftrightarrow
						\alg{\Gamma}(\alg{k}, H)^{0}
				\end{equation}
			of connected groups in $\mathcal{W}_{F}$.
	\end{enumerate}
\end{Prop}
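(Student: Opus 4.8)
The plan is to leverage the derived-category duality already in hand, namely Proposition~\ref{0099}, and then decompose it cohomology-object by cohomology-object using the finiteness facts established in Propositions~\ref{0005}, \ref{0004} and \ref{0009}. First I would record the vanishing ranges. In case (1), $G$ split means $R\pi_{k,\RP,\ast}G = \pi_{k,\RP,\ast}G$ is concentrated in degree $0$ (the higher direct images vanish since $\Ga^{\RP}$ is acyclic and $G$ is an iterated extension of copies of $\Ga^{\RP}$), so $\alg{H}^{q}(\alg{k},G)=0$ for $q\ne0$; dually $H=\sheafext^{1}_{k_{\RP}}(G,\nu_{\infty}(1))$ is split by Proposition~\ref{0002}, so the same applies to $H$. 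In case (2), $G$ wound gives $\pi_{k,\RP,\ast}G\in\mathcal{W}_{F}$ (Proposition~\ref{0005}), $R^{1}\pi_{k,\RP,\ast}G\in\mathcal{W}_{F}$ connected (Propositions~\ref{0004} and \ref{0009}), and $R^{q}\pi_{k,\RP,\ast}G=0$ for $q\ge2$; applying $\algebrize$ (which is exact on $\genby{\mathcal{W}_{F}}$-type objects, by the cited results of \cite{Suz24}) and invoking Proposition~\ref{0015} to identify $\alg{H}^{q}(\alg{k},G)$ with $R^{q}\pi_{k,\RP,\ast}G$, we get $\alg{H}^{q}(\alg{k},G)=0$ unless $0\le q\le1$.

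Next I would feed the perfect pairing \eqref{0003} into Proposition~\ref{0078}. In case~(1), since $G$ and $H$ are both split, both $R\pi_{k,\RP,\ast}G$ and $R\pi_{k,\RP,\ast}H$ are concentrated in degree~$0$ with value in $\mathcal{W}_{F}$; Proposition~\ref{0078} then yields $(\alg{H}^{0}G)^{0}\leftrightarrow(\alg{H}^{1-0}H)^{0}=(\alg{H}^{0}H)^{0}$ as connected groups in $\mathcal{W}_{F}$ — note both indices land in the only nonzero degree — and the $\pi_{0}$-part of the Pontryagin duality is between $\pi_{0}(\alg{H}^{0}G)$ and $\pi_{0}(\alg{H}^{0}H)$, which is the statement that the étale parts are trivial (a split group has connected $\pi_{k,\RP,\ast}$, being an extension of copies of $\alg{k}$). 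So the only surviving content is the Serre duality $\alg{\Gamma}(\alg{k},G)\leftrightarrow\alg{\Gamma}(\alg{k},H)$ of connected groups.

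For case~(2), with $G$ wound and $H=\sheafhom_{k_{\RP}}(G,\nu_{\infty}(1))$ also wound by Proposition~\ref{0002}, I would apply Proposition~\ref{0078} to the pairing \eqref{0003}. The complex $R\pi_{k,\RP,\ast}G$ has $\alg{H}^{0}=\alg{\Gamma}(\alg{k},G)$ and $\alg{H}^{1}=\alg{H}^{1}(\alg{k},G)$, both in $\mathcal{W}_{F}$, and similarly for $H$. Proposition~\ref{0078} then produces the Serre duality $(\alg{H}^{q}G)^{0}\leftrightarrow(\alg{H}^{1-q}H)^{0}$: taking $q=1$ gives $\alg{H}^{1}(\alg{k},G)^{0}\leftrightarrow\alg{\Gamma}(\alg{k},H)^{0}$, and since $\alg{H}^{1}(\alg{k},G)$ is connected (established above via Proposition~\ref{0004}) this is precisely \eqref{0098}; taking $q=0$ gives the symmetric statement $\alg{\Gamma}(\alg{k},G)^{0}\leftrightarrow\alg{H}^{1}(\alg{k},H)^{0}$, which is the same duality read from the other side and needs no separate mention. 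The Pontryagin duality half of Proposition~\ref{0078} gives $\pi_{0}(\alg{H}^{q}G)\leftrightarrow\pi_{0}(\alg{H}^{-q}H)$; the only case with both sides potentially nonzero is $q=0$, giving $\pi_{0}\alg{\Gamma}(\alg{k},G)\leftrightarrow\pi_{0}\alg{\Gamma}(\alg{k},H)$, which is \eqref{0097} (the terms $\pi_{0}(\alg{H}^{1}G)$ would pair with $\pi_{0}(\alg{H}^{-1}H)=0$, consistently, since $\alg{H}^{1}(\alg{k},G)$ is connected).

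I expect the main obstacle to be bookkeeping rather than a deep new idea: one must be careful that $\algebrize$ genuinely commutes with taking cohomology in the relevant degrees (so that $\alg{H}^{q}(\alg{k},G)=H^{q}\algebrize R\pi_{k,\RP,\ast}G$ really equals $\algebrize$ applied to $R^{q}\pi_{k,\RP,\ast}G$, which is the content of Proposition~\ref{0015} combined with the fact that $R\pi_{k,\RP,\ast}G$ lies in $\genby{\mathcal{W}_{F}}_{F^{\perar}_{\et}}$ once we know its cohomology objects are in $\mathcal{W}_{F}$), and that the perfect pairing \eqref{0003} over $F^{\perar}_{\et}$ transports to one over $F^{\ind\rat}_{\pro\et}$ after applying $\algebrize$ — this last point should follow since $\algebrize$ is a tensor functor compatible with $R\sheafhom$ on the subcategories at hand, but it is worth spelling out. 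Once these compatibilities are in place, everything reduces to a mechanical application of Proposition~\ref{0078}.
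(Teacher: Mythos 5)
Your proposal is correct and follows essentially the same route as the paper's one‑line proof: feed the perfect pairing of Proposition~\ref{0099} into Proposition~\ref{0078}, using Propositions~\ref{0005}, \ref{0004} and \ref{0009} for the structural facts about $R^{q}\pi_{k,\RP,\ast}$ and Proposition~\ref{0015} to pass to $\alg{H}^{q}$; the paper's citation of Proposition~\ref{0006} is precisely the specialization you read off directly in the wound case. Two small points. First, in case (1) the $H$ appearing in Proposition~\ref{0099} is $R\sheafhom_{k_{\RP}}(G,\nu_{\infty}(1))$, which for split $G$ equals $\sheafext^{1}_{k_{\RP}}(G,\nu_{\infty}(1))[-1]$ by Propositions~\ref{0079} and \ref{0002} — concentrated in cohomological degree $1$, not $0$ — so your display $(\alg{H}^{1-0}H)^{0}=(\alg{H}^{0}H)^{0}$ is tacitly using $H$ for both the complex $R\sheafhom$ and the group of the statement; it would be cleaner to make the shift $[-1]$ explicit so that the $q=0$ index on the $G$‑side visibly pairs with the unique nonzero degree $q=1$ on the other side. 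Second, the worry you raise at the end about transporting the pairing \eqref{0003} across $\algebrize$ is not actually needed: Proposition~\ref{0078} accepts pairings in $D(F^{\perar}_{\et})$ directly, and its conclusions are about objects of $\mathcal{W}_{F}$, which you then identify with the $\alg{H}^{q}$ via Proposition~\ref{0015}.
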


\begin{proof}
	This follows from Propositions \ref{0078}, \ref{0005}, \ref{0004}, \ref{0099}, \ref{0006} and \ref{0009}.
\end{proof}

The dualities \eqref{0097} and \eqref{0098} above are unipotent group analogues
of Grothendieck's and Shafarevich's duality conjectures, respectively, for abelian varieties
proved in \cite{Suz20} after Grothendieck, Werner, B\'egueri, Bester, Bertapelle and others.


\section{Two-dimensional local fields}
\label{0070}

Let $k$ be as in Section \ref{0007}.
Let $K$ be a complete discrete valuation field of characteristic zero
with residue field $k$.
Let $\Order_{K}$ be its ring of integers.
In this section, we show that
the derived category level duality statement for $K$ in \cite[Section 6]{Suz24}
has a counterpart for each cohomology objects.
The key point is to show that the nearby cycle functor from $K$ to $k$ takes values in
relatively perfect unipotent groups,
which allows us to apply the results of Section \ref{0007} to these groups.
There are some technicalities to deal with
about the choice of sites.
Below we identify $\Z / p^{n} \Z(r)$ with the $p$-adic \'etale Tate twist $\mathfrak{T}_{n}(r)$
(\cite{Sch94, Gei04Ded, Sat07, Sat13}).

Let $\Spec k_{\RPS}$ be the relatively perfectly smooth site of $k$ (\cite[Section 2]{KS19}).
It is the category of relatively perfectly smooth $k$-schemes
(that is, $k$-schemes Zariski locally isomorphic to relative perfections of smooth $k$-schemes)
with $k$-scheme morphisms endowed with the \'etale topology.
Let $D_{0}(k_{\RPS}) \subset D(k_{\RPS})$ be
the smallest triangulated subcategory containing $\Ga^{\RP}$.
Let $\alpha \colon \Spec k_{\RP} \to \Spec k_{\RPS}$ be the premorphism
defined by the inclusion functor on the underlying categories.
Its (exact) pushforward functor gives an equivalence
$D_{0}(k_{\RP}) \isomto D_{0}(k_{\RPS})$ (\cite[Corollary 2.3]{KS19}).
We identify these equivalent categories.
For $G \in D_{0}(k_{\RP})$ (or $D_{0}(k_{\RPS})$), the natural morphism gives an isomorphism
	\[
			\alpha_{\ast}
			R \sheafhom_{k_{\RP}}(G, \nu_{\infty}(1))
		\isomto
			R \sheafhom_{k_{\RPS}}(G, \nu_{\infty}(1))
	\]
by \cite[Proposition 2.2]{KS19}.

Let $\Spec \Order_{K, \RPS}$ be relatively perfectly smooth site of $\Order_{K}$
(\cite[Section 3]{KS19}),
whose objects are flat $\Order_{K}$-algebras $R$
such that $R \tensor_{\Order_{K}} k$ is relatively perfectly smooth over $k$.
The topology of $\Spec \Order_{K, \RPS}$ is the \'etale topology.
Let $\Spec K_{\Et}$ be the big \'etale site of $K$.
By \cite[Proposition 3.2, Equation (3.1)]{KS19},
the base change functors define morphisms of sites
	\[
			\Spec K_{\Et}
		\stackrel{j_{\RPS}}{\to}
			\Spec \Order_{K, \RPS}
		\stackrel{i_{\RPS}}{\gets}
			\Spec k_{\RPS}.
	\]
Let $R \Psi_{\RPS} = i_{\RPS}^{\ast} R j_{\RPS, \ast}$ and
$R^{q} \Psi_{\RPS} = i_{\RPS}^{\ast} R^{q} j_{\RPS, \ast}$ for $q \in \Z$.
For $n \ge 1$ and $r \ge 0$, we have a canonical morphism
	\[
			R^{r} \Psi_{\RPS} \Lambda_{n}(r)
		\to
			\nu_{n}(r - 1)
	\]
in $\Ab(k_{\RPS})$ by \cite[Proposition 5.1]{KS19}
(where we set $\nu_{n}(-1) = 0$).
Hence we have a canonical morphism
	\begin{equation} \label{0107}
			\tau_{\le r}
			R j_{\RPS, \ast} \Lambda_{n}(r)
		\to
			i_{\RPS, \ast} \nu_{n}(r - 1)
	\end{equation}
in $D(\Order_{K, \RPS})$.
Let $\mathfrak{T}_{n}(r) \in D(\Order_{K, \RPS})$ be its canonical mapping fiber.
For $r < 0$, let $\mathfrak{T}_{n}(r) = j_{\RPS, !} \Lambda_{n}(r)$.
We have a canonical natural transformation
	\[
			i_{\RPS}^{\ast} \to i_{\RPS}^{\ast} R j_{\RPS, \ast} j_{\RPS}^{\ast}
		\colon
			D(\Order_{K, \RPS})
		\to
			D(k_{\RPS}).
	\]
Since $j_{\RPS}^{\ast}$ sends K-injectives to K-injectives,
we have its canonical mapping fiber, which we denote by $R i_{\RPS}^{!}$,
so that we have a canonical distinguished triangle
	\[
			R i_{\RPS}^{!} G
		\to
			i_{\RPS}^{\ast} G
		\to
			R \Psi_{\RPS} j_{\RPS}^{\ast} G
	\]
in $D(k_{\RPS})$ for $G \in D(\Order_{K, \RPS})$ functorially.
In particular, we have a distinguished triangle
	\[
			i_{\RPS}^{\ast} \mathfrak{T}_{n}(r)
		\to
			R \Psi_{\RPS} \Lambda_{n}(r)
		\to
			R i_{\RPS}^{!} \mathfrak{T}_{n}(r)[1].
	\]

\begin{Prop} \label{0008}
	Let $n \ge 1$ and $r \in \Z$.
	Then the objects $R \Psi_{\RPS} \Lambda_{n}(r)$, $i_{\RPS}^{\ast} \mathfrak{T}_{n}(r)$ and
	$R i_{\RPS}^{!} \mathfrak{T}_{n}(r)$ are in $D_{0}(k_{\RPS})$.
\end{Prop}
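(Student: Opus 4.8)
The plan is to exploit the explicit description of the target category. Via the equivalence $D_{0}(k_{\RP}) \isomto D_{0}(k_{\RPS})$ recalled above and Proposition~\ref{0000}, the subcategory $D_{0}(k_{\RPS})$ of $D(k_{\RPS})$ is precisely the full subcategory of $D^{b}(k_{\RPS})$ of objects all of whose cohomology sheaves lie in $\Alg^{\RP}_{u} / k$ (regarded inside $\Ab(k_{\RPS})$ through $\alpha_{\ast}$). Since $\Alg^{\RP}_{u} / k$ is an abelian subcategory of $\Ab(k_{\RP})$ closed under extensions, $D_{0}(k_{\RPS})$ is triangulated and closed under the truncation functors $\tau_{\le m}$ and under direct summands. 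It therefore suffices to establish the claim for $R \Psi_{\RPS} \Lambda_{n}(r)$ and for $i_{\RPS}^{\ast} \mathfrak{T}_{n}(r)$: the object $R i_{\RPS}^{!} \mathfrak{T}_{n}(r)$ will then lie in $D_{0}(k_{\RPS})$ because it fits, up to shift, into the distinguished triangle $i_{\RPS}^{\ast} \mathfrak{T}_{n}(r) \to R \Psi_{\RPS} \Lambda_{n}(r) \to R i_{\RPS}^{!} \mathfrak{T}_{n}(r)[1]$ recorded above, and $D_{0}(k_{\RPS})$ is closed under cones.

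First I would treat $R \Psi_{\RPS} \Lambda_{n}(r) = i_{\RPS}^{\ast} R j_{\RPS, \ast} \Lambda_{n}(r)$. This complex is bounded: the stalks of $R^{q} j_{\RPS, \ast} \Lambda_{n}(r)$ are Galois cohomology groups of henselian discrete valuation fields of characteristic $0$ whose residue fields are relatively perfect over $k$, and these fields have uniformly bounded $p$-cohomological dimension (as $[k : k^{p}]$ is finite). Its cohomology sheaves $R^{q} \Psi_{\RPS} \Lambda_{n}(r)$ lie in $\Alg^{\RP}_{u} / k$ for every $q$; this is the essential input and is provided, for every $n \ge 1$, by the relatively perfect unipotent group structures on $p$-adic nearby cycle sheaves of \cite{BS20}, transported to the relatively perfectly smooth site via \cite{KS19}. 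Hence $R \Psi_{\RPS} \Lambda_{n}(r) \in D_{0}(k_{\RPS})$.

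Next, for $r \ge 0$, the object $i_{\RPS}^{\ast} \mathfrak{T}_{n}(r)$ is, by the definition of $\mathfrak{T}_{n}(r)$ as the mapping fiber of~\eqref{0107}, by exactness of $i_{\RPS}^{\ast}$ and by the identity $i_{\RPS}^{\ast} i_{\RPS, \ast} = \id$, the mapping fiber of a morphism $\tau_{\le r} R \Psi_{\RPS} \Lambda_{n}(r) \to \nu_{n}(r - 1)$ in $D(k_{\RPS})$. The source lies in $D_{0}(k_{\RPS})$ by the previous paragraph and closure under truncation, and $\nu_{n}(r - 1) \in \Alg^{\RP}_{u} / k \subset D_{0}(k_{\RPS})$ by \cite[Section~4]{Kat86} and \cite[Section~8]{BS20} (with $\nu_{n}(-1) = 0$ when $r = 0$); hence their mapping fiber is again in $D_{0}(k_{\RPS})$. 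For $r < 0$ we have $\mathfrak{T}_{n}(r) = j_{\RPS, !} \Lambda_{n}(r)$, so $i_{\RPS}^{\ast} \mathfrak{T}_{n}(r) = 0 \in D_{0}(k_{\RPS})$ trivially, while $R \Psi_{\RPS} \Lambda_{n}(r) = i_{\RPS}^{\ast} R j_{\RPS, \ast} \Lambda_{n}(r)$ is again bounded with cohomology sheaves in $\Alg^{\RP}_{u} / k$ by the same references (Sato's construction of $\mathfrak{T}_{n}(r)$ for $r < 0$ together with \cite{BS20}). In either case $R i_{\RPS}^{!} \mathfrak{T}_{n}(r) \in D_{0}(k_{\RPS})$ follows from the triangle above.

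The main obstacle is the input itself: that the $p$-adic nearby cycle sheaves $R^{q} \Psi_{\RPS} \Lambda_{n}(r)$ are relatively perfect unipotent groups over $k$. For $n = 1$ this is accessible through the explicit Bloch--Kato-type description of the graded pieces of $R \Psi$ (in terms of logarithmic de Rham--Witt sheaves $\nu_{1}(\var)$ and $k$-vector groups), but for $n \ge 2$ those graded pieces are no longer fully determined, and one genuinely needs the general structural results of \cite{BS20}; the remaining work is the bookkeeping of distinguished triangles and truncations inside $D_{0}(k_{\RPS})$, together with the compatibility between the sites $\Spec k_{\RP}$, $\Spec k_{\RPS}$ and $\Spec \Order_{K, \RPS}$ supplied by \cite{KS19}.
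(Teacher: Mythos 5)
Your proof is correct and arrives at the same conclusion, but the route differs from the paper's in two instructive ways. First, the paper does \emph{not} invoke the general result of \cite{BS20} for all $n$: it reduces to $n = 1$ (using that $D_0(k_{\RPS})$ is triangulated and that $\Lambda_{n+1}$, $\mathfrak{T}_{n+1}(r)$ sit in distinguished triangles over $\Lambda_n$, $\Lambda_1$ and $\mathfrak{T}_n(r)$, $\mathfrak{T}_1(r)$) and then to the case $\zeta_p \in K$, so that all twists can be identified with $\Lambda$. It then cites \cite[Proposition~6.1]{KS19}, which gives an explicit finite filtration of $R^q \Psi_{\RPS} \Lambda$ with graded pieces $\Ga^{\RP}$, $\Omega_k^1$, $\Ga^{\RP}/(\Ga^{\RP})^p$, $\Lambda$ or $\nu(1)$ — a fully concrete, self-contained verification. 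Your version skips both reductions and instead asserts the membership $R^q \Psi_{\RPS}\Lambda_n(r) \in \Alg^{\RP}_u/k$ for all $n$ directly from \cite{BS20}, ``transported'' via \cite{KS19}; this is consistent with what the introduction attributes to \cite{BS20}, but is a coarser citation that leaves the mechanism less transparent. Second, for $i_{\RPS}^{\ast}\mathfrak{T}_n(r)$ you use closure of $D_0(k_{\RPS})$ under truncation and cones (a clean structural argument), whereas the paper writes out $H^q i_{\RPS}^{\ast}\mathfrak{T}(r)$ explicitly as $R^q\Psi_{\RPS}\Lambda$ for $q<r$, the kernel of $R^r\Psi_{\RPS}\Lambda \onto \nu(r-1)$ for $q=r$, and $0$ for $q>r$. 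Your structural route is tidier; the paper's explicit form is useful downstream (it feeds into the later propositions). Both are valid; the paper's $n=1$, $\zeta_p\in K$ reduction is worth keeping in mind as a general-purpose d\'evissage device that works inside any triangulated subcategory, even when (as remarked in the introduction) the same d\'evissage fails at the level of $\mathcal{W}_F$ because that category is not abelian.
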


\begin{proof}
	It is enough to treat $R \Psi_{\RPS} \Lambda_{n}(r)$ and $i_{\RPS}^{\ast} \mathfrak{T}_{n}(r)$.
	We may assume that $n = 1$.
	We may also assume that $K$ contains a fixed primitive $p$-th root of unity $\zeta_{p}$.
	We identify all the twists $\Lambda(r)$ with $\Lambda$ using $\zeta_{p}$.
	Then for any $q \in \Z$, the sheaf $R^{q} \Psi_{\RPS} \Lambda$ has a finite filtration
	whose graded pieces are either $\Ga^{\RP}$, $\Omega_{k}^{1}$, $\Ga^{\RP} / (\Ga^{\RP})^{p}$, $\Lambda$ or $\nu(1)$
	by \cite[Proposition 6.1]{KS19}.
	All these graded pieces are in $\Alg^{\RP}_{u} / k$.
	Hence $R^{q} \Psi_{\RPS} \Lambda \in \Alg^{\RP}_{u} / k$
	and so $R \Psi_{\RPS} \Lambda \in D_{0}(\Alg^{\RP}_{u} / k)$.
	We have
		\[
				H^{q} i_{\RPS}^{\ast} \mathfrak{T}(r)
			\cong
				\begin{cases}
						R^{q} \Psi_{\RPS} \Lambda
					&	\text{if }
						q < r,
					\\
						\Ker \bigl(
							R^{r} \Psi_{\RPS} \Lambda \onto \nu(r - 1)
						\bigr)
					&	\text{if }
						q = r,
					\\
						0
					&	\text{if }
						q > r.
				\end{cases}
		\]
	Hence it is in $\Alg^{\RP}_{u} / k$ for all $q$.
\end{proof}

\begin{Prop} \label{0010}
	Let $n \ge 1$ and $q, s, r \in \Z$.
	Let $G \in \Alg^{\RP}_{u} / k$ be either
	$R^{q} \Psi_{\RPS} \Lambda_{n}(r)$, $H^{q} i_{\RPS}^{\ast} \mathfrak{T}_{n}(r)$
	or $H^{q} R i_{\RPS}^{!} \mathfrak{T}_{n}(r)$.
	Then $R^{s} \pi_{k, \RP, \ast} G \in \mathcal{W}_{F}$.
\end{Prop}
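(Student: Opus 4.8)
The statement already hands us $G \in \Alg^{\RP}_{u}/k$ --- this is where Proposition \ref{0008} (combined with the characterization of $D_{0}(k_{\RP})$ in Proposition \ref{0000}, transported along the equivalence $D_{0}(k_{\RP}) \isomto D_{0}(k_{\RPS})$) enters --- so the proof is purely a matter of assembling the structural results of Section \ref{0007}. The plan is to prove, more generally, that $R^{s} \pi_{k, \RP, \ast} G \in \mathcal{W}_{F}$ for every $G \in \Alg^{\RP}_{u}/k$ and every $s \in \Z$, by splitting into four ranges of $s$; the specific shape of $G$ among the three listed nearby-cycle sheaves plays no further role once its membership in $\Alg^{\RP}_{u}/k$ is recorded.

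First I would handle $s < 0$: here $R^{s} \pi_{k, \RP, \ast} G = 0$ since $R \pi_{k, \RP, \ast}$ is the right-derived functor of a left-exact pushforward, and $0 \in \mathcal{W}_{F}$. For $s = 0$, $\pi_{k, \RP, \ast} G \in \mathcal{W}_{F}$ is exactly Proposition \ref{0005} (and it is moreover pro-algebraic when $G$ is wound). For $s = 1$, $R^{1} \pi_{k, \RP, \ast} G \in \mathcal{W}_{F}$ is Proposition \ref{0009}. For $s \ge 2$, $R^{s} \pi_{k, \RP, \ast} G = 0$ by Proposition \ref{0004}, hence lies in $\mathcal{W}_{F}$. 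These four cases exhaust $\Z$, which gives the claim.

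There is essentially no obstacle at this point: all the genuine work has been front-loaded into the earlier propositions --- N\'eron models of wound unipotent groups and the perfect Greenberg transform for the $s = 0$ wound case (Proposition \ref{0005}), the duality for relatively perfect unipotent groups of Section \ref{0113} feeding into the $s = 1$ case via Propositions \ref{0006} and \ref{0009}, and the connectedness and vanishing statements of Proposition \ref{0004}. The one point that requires care is bookkeeping across sites: the sheaves $R^{q} \Psi_{\RPS} \Lambda_{n}(r)$, $H^{q} i_{\RPS}^{\ast} \mathfrak{T}_{n}(r)$ and $H^{q} R i_{\RPS}^{!} \mathfrak{T}_{n}(r)$ are a priori objects on $k_{\RPS}$, whereas $\pi_{k, \RP}$ and the results of Section \ref{0007} live on $k_{\RP}$; one should therefore regard $G$ via the inverse of $\alpha_{\ast} \colon D_{0}(k_{\RP}) \isomto D_{0}(k_{\RPS})$, i.e.\ as the corresponding object of $\Alg^{\RP}_{u}/k \subset \Ab(k_{\RP})$, before applying the cited propositions.
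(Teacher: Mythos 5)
Your proof is correct and follows essentially the same route as the paper: membership of the three nearby-cycle sheaves in $\Alg^{\RP}_{u}/k$ comes from Proposition \ref{0008} together with the equivalence $D_{0}(k_{\RP}) \isomto D_{0}(k_{\RPS})$, and then the $\mathcal{W}_{F}$-membership for $s=0,1$ is Propositions \ref{0005} and \ref{0009}, with Proposition \ref{0004} supplying the vanishing for $s \ge 2$. The paper's proof is a one-liner citing Propositions \ref{0005}, \ref{0009} and \ref{0008}; your version spells out the case split on $s$ and the $\alpha_{\ast}$-identification of sites more explicitly, which is harmless bookkeeping.
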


\begin{proof}
	This follows from Propositions \ref{0005}, \ref{0009} and \ref{0008}.
\end{proof}

Recall from \cite[Section 1, Definition 1]{Kat82} and \cite[Section 2.3]{Suz24}
that for a relatively perfect $k$-algebra $R$,
its Kato canonical lifting over $\Order_{K}$ is a unique complete flat $\Order_{K}$-algebra $S$
equipped with an isomorphism $S \tensor_{\Order_{K}} k \cong R$ over $k$.

For $F' \in F^{\perar}$, let $\alg{O}_{K}(F')$ be
the Kato canonical lifting of $\alg{k}(F')$ over $\Order_{K}$.
Define $\alg{K}(F') = \alg{O}_{K}(F') \tensor_{\Order_{K}} K$.
We recall the relative sites $\Spec \alg{K}_{\et}$, $\Spec \alg{O}_{K, \et}$ and $\Spec \alg{k}_{\et}$
from \cite[Sections 4.1 and 5.1]{Suz24}.
The underlying category of $\Spec \alg{k}_{\et}$ consists of pairs $(k', F')$,
where $F' \in F^{\perar}$ and $k'$ an \'etale $\alg{k}(F')$-algebra.
Morphisms are morphisms between morphisms $(\alg{k}(F') \to k') \to (\alg{k}(F'') \to k'')$.
A covering is a finite family $\{(k', F') \to (k'_{i}, F'_{i})\}$
such that $F' \to F'_{i}$ is \'etale for all $i$
and $k' \to \prod_{i} k'_{i}$ is faithfully flat \'etale.
The sites $\Spec \alg{K}_{\et}$ and $\Spec \alg{O}_{K, \et}$ are defined similarly
using the functors $\alg{K}$ and $\alg{O}_{K}$ instead.

We have morphisms of sites
	\[
		\begin{CD}
				\Spec \alg{K}_{\et}
			@> j >>
				\Spec \alg{O}_{K, \et}
			@< i <<
				\Spec \alg{k}_{\et}
			\\
			@V \pi_{\alg{K}} VV
			@V \pi_{\alg{O}_{K}} VV
			@V \pi_{\alg{k}} VV
			\\
				\Spec F^{\perar}_{\et}
			@=
				\Spec F^{\perar}_{\et}
			@=
				\Spec F^{\perar}_{\et},
		\end{CD}
	\]
where vertical morphisms are defined by the functors
$F' \mapsto (\alg{K}(F'), F')$, $(\alg{O}_{K}(F'), F')$ and $(\alg{k}(F'), F')$
and the upper horizontal morphisms are by
$(A', F') \mapsto (A' \tensor_{\Order_{K}} K, F')$ and
$(A', F') \mapsto (A' \tensor_{\Order_{K}} k, F')$
(where $A'$ is an \'etale $\alg{O}_{K}(F')$-algebra).

Let $R \Psi = i^{\ast} R j_{\ast}$.
For $r \in \Z$ and $n \ge 1$, the objects
$\Lambda_{n}(r)$ and $\mathfrak{T}_{n}(r)$ are naturally viewed as
objects of $\Ab(\alg{K}_{\et})$ and $D(\alg{O}_{K, \et})$, respectively.
For $q \in \Z$, the sheaves $R^{q} \pi_{\alg{K}, \ast} \Lambda_{n}(r)$
and $R^{q} \pi_{\alg{O}_{K}, \ast} \mathfrak{T}_{n}(r)$ are the \'etale sheafifications of the presheaves
	\[
			F'
		\mapsto
			H^{q}(\alg{K}(F'), \Lambda_{n}(r)),
		\quad
			H^{q}(\alg{O}_{K}(F'), \mathfrak{T}_{n}(r)).
	\]
The object $R \pi_{\alg{O}_{K}, !} \mathfrak{T}_{n}(r)$ is defined as
$R \pi_{\alg{k}, \ast} R i^{!} \mathfrak{T}_{n}(r)$,
where $i^{!}$ is the right adjoint of $i_{\ast}$
(see \cite[Section 4.3]{Suz24} for more details).

\begin{Prop} \label{0012}
	Let $G \in \Ab(\alg{k}_{\et})$ be torsion.
	Then $R^{q} \pi_{\alg{k}, \ast} G = 0$ for all $q \ge 2$.
\end{Prop}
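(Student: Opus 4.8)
The plan is to reduce the vanishing of $R^{q} \pi_{\alg{k}, \ast} G$ for $q \ge 2$ to a statement about the cohomological dimension of the relevant stalks. Since $\pi_{\alg{k}}$ is the morphism of sites $\Spec \alg{k}_{\et} \to \Spec F^{\perar}_{\et}$ induced by $F' \mapsto (\alg{k}(F'), F')$, the sheaf $R^{q} \pi_{\alg{k}, \ast} G$ is the \'etale sheafification of the presheaf $F' \mapsto H^{q}(\alg{k}(F')_{\et}, G)$. First I would note that, because sheafification and taking stalks commute with filtered colimits and the functor $F' \mapsto \alg{k}(F') = (F' \ctensor_{F} \Order_{k}) \tensor_{\Order_{k}} k$ behaves well under such colimits, it suffices to check that $H^{q}(\alg{k}(\closure{F'})_{\et}, G) = 0$ for $q \ge 2$ when $\closure{F'}$ runs over algebraically closed fields over $F$. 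Here $\alg{k}(\closure{F'})$ is a complete discrete valuation field of equal characteristic $p$ with algebraically closed residue field $\closure{F'}$.

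The key input is then the classical fact that such a field $\alg{k}(\closure{F'})$ has cohomological $p$-dimension $\le 1$: its absolute Galois group is, up to the tame part (which is prime-to-$p$ and hence contributes nothing $p$-primary), a free pro-$p$ group, so $\mathrm{cd}_{p}$ of the field is $1$; combined with $\mathrm{cd}_{\ell} \le 1$ for $\ell \ne p$ (again because the residue field is algebraically closed, so only tame ramification occurs), one gets $\mathrm{scd} \le 1$ for all primes, hence $H^{q} = 0$ for $q \ge 2$ for any torsion coefficients. I expect this can be cited from \cite{Suz24} (the local-field computations in Section 6 there, or the cohomological-dimension statements underlying them) rather than reproved; alternatively one can invoke the structure of $R \Psi$ already recalled in the excerpt together with the fact that the residue field is algebraically closed. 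The torsion hypothesis on $G$ is what lets us decompose into primary parts and apply these dimension bounds; without it the statement would fail (e.g. for $\Gm$).

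The main obstacle is a bookkeeping one rather than a conceptual one: one must be careful that the \emph{relative} site $\Spec \alg{k}_{\et}$ — whose objects are pairs $(k', F')$ — really does have its higher direct images computed stalkwise by the Galois cohomology of the fibre fields $\alg{k}(\closure{F'})$, i.e.\ that the formation of $\alg{k}(-)$ commutes with the strict henselizations appearing when computing stalks of the \'etale sheafification over $\Spec F^{\perar}_{\et}$. This is essentially the content of the site-theoretic setup of \cite[Sections 4.1 and 5.1]{Suz24}, so I would quote the relevant compatibility from there. Once that identification of stalks is in place, the vanishing is immediate from $\mathrm{scd}(\alg{k}(\closure{F'})) \le 1$, and the proof is short.
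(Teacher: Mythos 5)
Your proposal is correct and takes essentially the same approach as the paper: reduce to stalks over algebraically closed $\closure{F'}$ and invoke that the resulting local field has cohomological dimension $\le 1$. One small point worth making precise: the stalk is $\dirlim_{F''} H^{q}(\alg{k}(F''), G)$ over finite subextensions $F''$ of $\closure{F'}/F'$, which by commutation of \'etale cohomology with filtered colimits is the cohomology of the \emph{henselian} (not complete) discrete valuation field $\dirlim_{F''}\alg{k}(F'')$; this is not literally $\alg{k}(\closure{F'})$, which is its completion. Since both have residue field $\closure{F'}$ and the $\mathrm{cd}\le 1$ bound applies to either, the conclusion is unaffected, but the paper is careful to work with the henselian limit field rather than asserting compatibility of $\alg{k}(\var)$ with strict henselization.
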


\begin{proof}
	The sheaf $R^{q} \pi_{\alg{k}, \ast} G \in \Ab(F^{\perar}_{\et})$ is
	the \'etale sheafification of the presheaf
	$F' \mapsto H^{q}(\alg{k}(F'), G)$,
	where $G$ is restricted to the small \'etale site of $\alg{k}(F')$.
	Let $F' \in F^{\perar}$ be a field
	and $\closure{F'}$ its algebraic closure.
	Then the direct limit of $\alg{k}(F'')$ over all finite subextensions $F''$ of $\closure{F'} / F'$
	is a henselian discrete valuation field with residue field $\closure{F'}$.
	Such a field has cohomological dimension $1$.
	This implies the result.
\end{proof}

\begin{Prop}
	Let $F' \in F^{\perar}$.
	Then the base change functor defines a morphism of sites
	$\Spec \alg{k}(F')_{\RPS} \to \Spec k_{\RPS}$.
\end{Prop}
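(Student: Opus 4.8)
The plan is to verify that the base change functor $u\colon Y \mapsto Y \times_{\Spec k} \Spec \alg{k}(F')$, regarded as a functor from the underlying category of $\Spec k_{\RPS}$ to that of $\Spec \alg{k}(F')_{\RPS}$, sends objects to objects and covering families to covering families, and commutes with fibre products. These properties make $u$ a premorphism of sites in the sense recalled above; since, moreover, base change commutes with all fibre products and preserves the final object, the associated pullback functor $f^{\ast}$ on sheaves is exact, so that $u$ is in fact a morphism of sites.

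First I would check that the target site is defined. As the residue field $F$ of $k$ is perfect, Cohen's theorem gives $k \cong F((\pi))$ and $\Order_k \cong F[[\pi]]$ for a uniformizer $\pi$; writing $F' \cong \prod_i F'_i$ as a finite product of perfect field extensions of $F$, one gets $\Order_k \ctensor_F F' \cong \prod_i F'_i[[\pi]]$ and hence $\alg{k}(F') \cong \prod_i F'_i((\pi))$, a finite product of complete discrete valuation fields of characteristic $p$ with perfect residue fields. In particular each factor has the finite $p$-basis $\{\pi\}$, so $\Spec \alg{k}(F')_{\RPS}$ is defined (componentwise) as in \cite{KS19}.

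The substantive step, and the only place where real work is needed, is that $u$ preserves relative perfect smoothness: the base change of a relatively perfectly smooth $k$-scheme is relatively perfectly smooth over $\alg{k}(F')$. This is Zariski local on the source and $u$ is compatible with Zariski localization, so I may assume $Y = \Spec(B^{\RP})$ with $B$ a smooth $k$-algebra, and, shrinking further, that $B$ is \'etale over a polynomial ring $k[x_1,\dots,x_d]$. Then $B^{\RP} \cong B \tensor_{k[x_1,\dots,x_d]} k[x_1^{1/p^\infty},\dots,x_d^{1/p^\infty}]$: the right-hand side is \'etale over $k[x_1^{1/p^\infty},\dots,x_d^{1/p^\infty}] = (k[x_1,\dots,x_d])^{\RP}$, hence relatively perfect over $k$ since a scheme \'etale over a relatively perfect one is again relatively perfect, and it visibly has the universal property of $B^{\RP}$. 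Because $k[x_i^{1/p^\infty}]$ is free over $k$, this description is compatible with the base change $k \to \alg{k}(F')$: one finds $B^{\RP} \tensor_k \alg{k}(F') \cong (B \tensor_k \alg{k}(F')) \tensor_{\alg{k}(F')[x_1,\dots,x_d]} \alg{k}(F')[x_1^{1/p^\infty},\dots,x_d^{1/p^\infty}]$, which is the relative perfection over $\alg{k}(F')$ of the smooth $\alg{k}(F')$-algebra $B \tensor_k \alg{k}(F')$. Hence $u(Y)$ is relatively perfectly smooth over $\alg{k}(F')$. (Alternatively, this base change compatibility can be quoted directly from \cite{KS19}.)

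Finally, $u$ sends covering families to covering families since \'etale morphisms and faithfully flat \'etale families are stable under base change, and $u$ commutes with all fibre products and sends the final object to the final object. Therefore the presheaf pushforward along $u$ preserves sheaves and the induced $f^{\ast}$ is exact, so $u$ defines a morphism of sites $\Spec \alg{k}(F')_{\RPS} \to \Spec k_{\RPS}$. The hard part is the compatibility of relative perfection with the base change $k \to \alg{k}(F')$ sketched above; the remaining checks are formal, and the fact that $\alg{k}(F')$ is a finite product of fields rather than a field is harmless since every construction is carried out factor by factor.
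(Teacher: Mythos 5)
The first half of your argument — that the base change functor is well-defined, sends relatively perfectly smooth $k$-schemes to relatively perfectly smooth $\alg{k}(F')$-schemes, preserves the final object and products, and turns \'etale covers into \'etale covers — is correct and gives the premorphism. The paper treats this as routine and concentrates entirely on the remaining point, which is exactness of the pullback $f^{\ast}$, and that is exactly where your proof has a gap.

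You conclude exactness from the statement that ``base change commutes with all fibre products and sends the final object to the final object.'' But the underlying category of $\Spec k_{\RPS}$ does \emph{not} have all finite limits: products of relatively perfectly smooth $k$-schemes are again relatively perfectly smooth, and fibre products along \'etale maps exist, but equalizers of arbitrary morphisms do not stay in the category (the scheme-theoretic equalizer of two maps between relative perfections of smooth schemes is usually not relatively perfectly smooth, and applying $(\var)^{\RP}$ to it need not land in the subcategory either). The criterion ``preserves finite limits $\Rightarrow$ pullback exact'' requires the source to actually have those limits; what one really needs is that the comma categories $(Y' \downarrow u)$ are cofiltered, and the equalizer condition in that cofilteredness is precisely the one that is not formal here. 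Your appeal to the fact that \emph{scheme-theoretic} fibre products commute with base change does not help, because the index category for $u^{s}$ lives inside $\Spec k_{\RPS}$, not inside all $k$-schemes.

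The paper's actual proof supplies this missing input by a different route: it reduces the exactness to showing that $\alg{k}(F')$ is a filtered direct limit of relatively perfectly smooth $k$-algebras, which forces the relevant comma categories to be cofiltered. This is itself not formal — $\alg{k}(F')$ is a completion, not a plain tensor product, so its description as a filtered colimit of nice $k$-algebras requires real content. Concretely, the paper reduces to $F'$ a field, cites \cite[Section 3.6, Lemma 2]{BLR90} for the regularity of $F' \ctensor_{F} \Order_{k}$ over $\Order_{k}$, invokes N\'eron--Popescu desingularization (the paragraph before that lemma) to write $\alg{k}(F')$ as a filtered direct limit of smooth $k$-algebras, and finally applies relative perfection (using that $\alg{k}(F')$ is already relatively perfect over $k$) to upgrade this to a filtered direct limit of relatively perfectly smooth $k$-algebras. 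Your Cohen-structure computation of $\alg{k}(F')$ and your explicit description of $B^{\RP}$ are correct side remarks, but they do not substitute for this regularity/desingularization step, which is the heart of the proof.
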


\begin{proof}
	The only point to check is the exactness of the pullback functor
	for sheaves of sets.
	It is enough to show that $\alg{k}(F')$ is a filtered direct limit of
	relatively perfectly smooth $k$-algebras.
	We may assume that $F'$ is a field.
	The ring $F' \ctensor_{F} \Order_{k}$ is regular over $\Order_{k}$
	by \cite[Section 3.6, Lemma 2]{BLR90}.
	Hence $\alg{k}(F')$ is a filtered direct limit of smooth $k$-algebras
	by the paragraph before \cite[Section 3.6, Lemma 2]{BLR90}.
	Applying the relative perfections,
	we know that $\alg{k}(F')$ is a filtered direct limit of relatively perfectly smooth $k$-algebras
	since $\alg{k}(F')$ is relatively perfect over $k$.
\end{proof}

\begin{Prop} \label{0013}
	Let $q, s, r \in \Z$ and $n \ge 1$.
	Let $G \in \Ab(\alg{k}_{\et})$ be either $R^{q} \Psi \Lambda_{n}(r)$,
	$H^{q} i^{\ast} \mathfrak{T}_{n}(r)$ or
	$H^{q} R i^{!} \mathfrak{T}_{n}(r)$.
	Then $R^{s} \pi_{\alg{k}, \ast} G \in \mathcal{W}_{F}$.
\end{Prop}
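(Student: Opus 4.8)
\emph{Proof plan.} The plan is to reduce Proposition~\ref{0013} to Proposition~\ref{0010} by comparing the pushforward $R\pi_{\alg{k},\ast}$ on the relative site with the relatively perfect pushforward $R\pi_{k,\RP,\ast}$ applied to the corresponding object of Proposition~\ref{0008}. Since each of $R^{q}\Psi\Lambda_{n}(r)$, $H^{q}i^{\ast}\mathfrak{T}_{n}(r)$ and $H^{q}Ri^{!}\mathfrak{T}_{n}(r)$ is $p^{n}$-torsion, Proposition~\ref{0012} gives $R^{s}\pi_{\alg{k},\ast}G=0$ for $s\ge 2$, and trivially $R^{s}\pi_{\alg{k},\ast}G=0$ for $s<0$; so only $s=0$ and $s=1$ remain.

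Let $G_{0}\in\Alg^{\RP}_{u}/k$ be the corresponding object of Proposition~\ref{0008}, namely $R^{q}\Psi_{\RPS}\Lambda_{n}(r)$, $H^{q}i_{\RPS}^{\ast}\mathfrak{T}_{n}(r)$ or $H^{q}Ri_{\RPS}^{!}\mathfrak{T}_{n}(r)$. The first step is to check that, for a field $F'\in F^{\perar}$, the restriction $G_{F'}$ of $G$ to the small \'etale site of $\alg{k}(F')$ is the restriction of the pullback $\beta_{F'}^{\ast}G_{0}$ along the morphism of sites $\beta_{F'}\colon\Spec\alg{k}(F')_{\RPS}\to\Spec k_{\RPS}$ of the preceding proposition; this is the base-change compatibility of the $p$-adic \'etale Tate twist and of the functors $R\Psi_{\RPS},i_{\RPS}^{\ast},Ri_{\RPS}^{!}$ along $\Order_{K}\to\alg{O}_{K}(F')$, which is built into the constructions of \cite{KS19} and into the relative sites of \cite{Suz24}. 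Granting this, since $G_{0}$ is representable by a $k$-group scheme, the cohomology of $\alg{k}(F')$ with coefficients in $G_{0}\times_{k}\alg{k}(F')$ is the same whether computed on the small \'etale site of $\alg{k}(F')$ or on the localization of $\Spec k_{\RP}$ at $\Spec\alg{k}(F')$: every \'etale $\alg{k}(F')$-scheme is already an object of $\Spec k_{\RP}$ (\'etale extensions of the relatively perfect ring $\alg{k}(F')$ are relatively perfect over $k$), so the small \'etale site is cofinal for the \'etale topology on that localization. Hence $\pi_{\alg{k},\ast}G\cong\pi_{k,\RP,\ast}G_{0}$ and $R^{1}\pi_{\alg{k},\ast}G\cong R^{1}\pi_{k,\RP,\ast}G_{0}$ as sheaves on $F^{\perar}_{\et}$, and both lie in $\mathcal{W}_{F}$ by Proposition~\ref{0010}.

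One can package this comparison by observing that the premorphism of sites $\delta\colon\Spec k_{\RP}\to\Spec\alg{k}_{\et}$ given on underlying categories by $(k',F')\mapsto\Spec k'$ satisfies $\pi_{\alg{k}}\compose\delta=\pi_{k,\RP}$, hence $R\pi_{k,\RP,\ast}G_{0}\cong R\pi_{\alg{k},\ast}R\delta_{\ast}G_{0}$; what the argument shows is that $R\delta_{\ast}G_{0}\cong G$ (with $R^{\ge 2}\pi_{k,\RP,\ast}G_{0}=0$ by Proposition~\ref{0004} and the cofinality argument giving $R^{1}\delta_{\ast}G_{0}=0$), so that $R\pi_{\alg{k},\ast}G\cong R\pi_{k,\RP,\ast}G_{0}$ in $D(F^{\perar}_{\et})$. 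The main obstacle is the first step, i.e.\ the base-change description of $G_{F'}$: one must be careful that restricting the relative $R\Psi$, $\mathfrak{T}_{n}(r)$ and $Ri^{!}$ to the fiber over $F'$ really produces the $\beta_{F'}$-pullbacks of their relatively perfect counterparts of Proposition~\ref{0008}, and that this identification is natural enough in $F'$ for the resulting isomorphism of sheaves on $F^{\perar}_{\et}$ to respect the ind-pro-algebraic structures and hence to take place inside $\mathcal{W}_{F}$. Once that is available, the rest is formal from representability and from Proposition~\ref{0010}.
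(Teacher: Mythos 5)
Your outline matches the paper's proof of Proposition~\ref{0013}: reduce to Proposition~\ref{0010} by comparing $R^{s}\pi_{\alg{k},\ast}G$ with $R^{s}\pi_{k,\RP,\ast}G'$ for the corresponding $G'\in\Alg^{\RP}_{u}/k$ of Proposition~\ref{0008}, which in turn reduces to identifying the restriction of $G$ to the small \'etale site of $\alg{k}(F')$ with the base change $g_{k}^{\ast}G'$. Your reduction to $s\in\{0,1\}$ via Proposition~\ref{0012} is harmless but unnecessary, since the isomorphism $R^{s}\pi_{\alg{k},\ast}G\cong R^{s}\pi_{k,\RP,\ast}G'$ together with Propositions~\ref{0004} and~\ref{0010} already covers all $s$.

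The one place you overclaim is the statement that the base-change compatibility along $\Order_{K}\to\alg{O}_{K}(F')$ is ``built into the constructions.'' It is not: what is built in is the existence of a natural comparison morphism $g_{k}^{\ast}R\Psi_{\RPS}\Lambda_{n}(r)\to R\Psi_{\RPS,F'}\Lambda_{n}(r)$; showing it is an \emph{isomorphism} is genuine content, since relatively perfect nearby cycles are not covered by a generic base-change theorem. The paper verifies this by reducing to $n=1$, adjoining $\zeta_{p}$, and checking directly on the graded pieces of $R^{q}\Psi_{\RPS}\Lambda(r)$ (which are explicitly listed as $\Ga^{\RP}$, $\Omega_{k}^{1}$, $\Ga^{\RP}/(\Ga^{\RP})^{p}$, $\Lambda$, $\nu(1)$, all of which commute with $g_{k}^{\ast}$), then passes back to general $n$, and only then transports the isomorphism to $i_{\RPS}^{\ast}\mathfrak{T}_{n}(r)$ and $Ri_{\RPS}^{!}\mathfrak{T}_{n}(r)$ using the compatibility of Kato's boundary map $R^{r}\Psi_{\RPS}\Lambda_{n}(r)\to\nu_{n}(r-1)$ with base change. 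You do flag this as ``the main obstacle,'' which is correct, but the closing sentence ``once that is available, the rest is formal'' leaves precisely the substantive part of the proof open. Fill in that verification and your argument is complete and coincides with the paper's.
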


\begin{proof}
	Let $G' \in \Alg^{\RP}_{u} / k$ be either
	$R^{q} \Psi_{\RPS} \Lambda_{n}(r)$, $H^{q} i_{\RPS}^{\ast} \mathfrak{T}_{n}(r)$
	or $H^{q} R i_{\RPS}^{!} \mathfrak{T}_{n}(r)$, correspondingly.
	By Proposition \ref{0010},
	it is enough to show that
	$R^{s} \pi_{\alg{k}, \ast} G \cong R^{s} \pi_{k, \RP, \ast} G'$.
	They are the sheafifications of the presheaves
		\[
				F'
			\mapsto
					H^{s}(\alg{k}(F'), G),
				\quad
					H^{s}(\alg{k}(F'), G' \times_{k} \alg{k}(F')),
		\]
	where the $G$ in the first presheaf is the restriction of $G$ to the small \'etale site of $\alg{k}(F')$
	and the $G' \times_{k} \alg{k}(F')$ in the second presheaf is
	the fiber product of schemes ($G' \in \Alg^{\RP}_{u} / k$ being representable).
	Hence it is enough to show that
	the group scheme $G' \times_{k} \alg{k}(F')$ as a sheaf on $\alg{k}(F')_{\et}$
	is isomorphic to the restriction of $G$ to $\alg{k}(F')_{\et}$.
	We may assume that $F'$ is a field.
	The base change functors define morphisms of sites
		\[
				\Spec \alg{K}(F')_{\Et}
			\stackrel{j_{\RPS, F'}}{\to}
				\Spec \alg{O}_{K}(F')_{\RPS}
			\stackrel{i_{\RPS, F'}}{\gets}
				\Spec \alg{k}(F')_{\RPS}.
		\]
	Let $R \Psi_{\RPS, F'} = i_{\RPS, F'}^{\ast} R j_{\RPS, F', \ast}$.
	Let
		\[
			\begin{CD}
					\Spec \alg{K}(F')_{\Et}
				@> j_{\RPS, F'} >>
					\Spec \alg{O}_{K}(F')_{\RPS}
				@< i_{\RPS, F'} <<
					\Spec \alg{k}(F')_{\RPS}
				\\
				@V g_{K} VV
				@V g_{\Order_{K}} VV
				@V g_{k} VV
				\\
					\Spec K_{\Et}
				@> j_{\RPS} >>
					\Spec \Order_{K, \RPS}
				@< i_{\RPS} <<
					\Spec k_{\RPS}
			\end{CD}
		\]
	be the premorphisms of sites defined by base change functors.
	Let $G'' \in \Ab(\alg{k}(F')_{\RPS})$ be either
	$R^{q} \Psi_{\RPS, F'} \Lambda_{n}(r)$, $H^{q} i_{\RPS, F'}^{\ast} \mathfrak{T}_{n}(r)$
	or $H^{q} R i_{\RPS, F'}^{!} \mathfrak{T}_{n}(r)$, correspondingly.
	Then the restriction of $G''$ to $\alg{k}(F')_{\et}$
	is the restriction of $G$ to $\alg{k}(F')_{\et}$.
	Hence it is enough to show that $G' \times_{k} \alg{k}(F') \cong g_{k}^{\ast} G'$
	is isomorphic to $G''$.
	We have an isomorphism of functors
		\[
				g_{k}^{\ast} i_{\RPS}^{\ast}
			\cong
				i_{\RPS, F'}^{\ast} L g_{\Order_{K}}^{\ast}
			\colon
				D(\Order_{K, \RPS})
			\to
				D(\alg{k}(F')_{\RPS})
		\]
	and a natural morphism of functors
		\[
				L g_{\Order_{K}}^{\ast} R j_{\RPS, \ast}
			\to
				R j_{\RPS, F', \ast} g_{K}^{\ast}
			\colon
				D(K_{\Et})
			\to
				D(\alg{O}_{K}(F')_{\RPS})
		\]
	by \cite[Proposition 2.6]{Suz21Imp}.
	Combining them, we have a morphism of functors
		\[
				g_{k}^{\ast} R \Psi_{\RPS}
			\to
				R \Psi_{\RPS, F'} g_{K}^{\ast}
			\colon
				D(K_{\Et})
			\to
				D(\alg{k}(F')_{\RPS}).
		\]
	This induces a morphism
		\begin{equation} \label{0011}
				g_{k}^{\ast} R \Psi_{\RPS} \Lambda_{n}(r)
			\to
				R \Psi_{\RPS, F'} \Lambda_{n}(r).
		\end{equation}
	For $n = 1$, the explicit calculations of the graded pieces of
	$R^{q} \Psi_{\RPS} \Lambda(r)$ (\cite[(6.3.1), (6.3.2)]{Suz24}) when $\zeta_{p} \in K$
	shows that the morphism
		\[
				g_{k}^{\ast} R^{q} \Psi_{\RPS} \Lambda(r)
			\to
				R^{q} \Psi_{\RPS, F'} \Lambda(r)
		\]
	is an isomorphism (with $\zeta_{p} \in K$ or not) for all $q$.
	Hence \eqref{0011} is an isomorphism for general $n$.
	This settles the case
	$G' = R^{q} \Psi_{\RPS} \Lambda(r)$ and $G'' = R^{q} \Psi_{\RPS, F'} \Lambda(r)$.
	We have a commutative diagram
		\[
			\begin{CD}
					g_{k}^{\ast} R^{r} \Psi_{\RPS} \Lambda_{n}(r)
				@>>>
					g_{k}^{\ast} \nu_{n}(r - 1)
				\\ @VVV @VVV \\
					R^{r} \Psi_{\RPS, F'} \Lambda_{n}(r)
				@>>>
					\nu_{n}(r - 1)
			\end{CD}
		\]
	in $\Ab(\alg{k}(F')_{\RPS})$ by the construction of the Kato boundary map.
	The vertical morphisms are isomorphisms.
	Hence they induce an isomorphism of triangles
		\[
			\begin{CD}
					g_{k}^{\ast} i_{\RPS}^{\ast} \mathfrak{T}_{n}(r)
				@>>>
					g_{k}^{\ast} \tau_{\le r} R \Psi_{\RPS} \Lambda_{n}(r)
				@>>>
					g_{k}^{\ast} \nu_{n}(r - 1)
				\\ @| @| @| \\
					i_{\RPS, F'}^{\ast} \mathfrak{T}_{n}(r)
				@>>>
					\tau_{\le r} R \Psi_{\RPS, F'} \Lambda_{n}(r)
				@>>>
					\nu_{n}(r - 1)
			\end{CD}
		\]
	This settles the case
	$G' = H^{q} i_{\RPS}^{\ast} \mathfrak{T}_{n}(r)$ and
	$G'' = H^{q} i_{\RPS, F'}^{\ast} \mathfrak{T}_{n}(r)$.
	This in turn induces an isomorphism of triangles
		\[
			\begin{CD}
					g_{k}^{\ast} i_{\RPS}^{\ast} \mathfrak{T}_{n}(r)
				@>>>
					g_{k}^{\ast} R \Psi_{\RPS} \Lambda_{n}(r)
				@>>>
					g_{k}^{\ast} R i_{\RPS}^{!} \mathfrak{T}_{n}(r)[1]
				\\ @| @| @| \\
					i_{\RPS, F'}^{\ast} \mathfrak{T}_{n}(r)
				@>>>
					R \Psi_{\RPS, F'} \Lambda_{n}(r)
				@>>>
					R i_{\RPS, F'}^{!} \mathfrak{T}_{n}(r)[1],
			\end{CD}
		\]
	settling the remaining case.
\end{proof}

\begin{Prop} \label{0014}
	Let $q, r \in \Z$ and $n \ge 1$.
	Then the objects $R^{q} \pi_{\alg{K}, \ast} \Lambda_{n}(r)$,
	$R^{q} \pi_{\alg{O}_{K}, \ast} \mathfrak{T}_{n}(r)$ and
	$R^{q + 1} \pi_{\alg{O}_{K}, !} \mathfrak{T}_{n}(r)$ are in $\mathcal{W}_{F}$.
	They are zero unless $q = 0, 1, 2$.
\end{Prop}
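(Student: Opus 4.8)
The plan is to rewrite each of the three families of objects as $R\pi_{\alg{k}, \ast}$ of a bounded complex on $\Spec\alg{k}_{\et}$ whose cohomology sheaves are exactly those treated in Proposition \ref{0013}, and then to run a two-column hypercohomology spectral sequence. The first step is the fibrewise identification $R\pi_{\alg{O}_{K}, \ast}\cong R\pi_{\alg{k}, \ast}\compose i^{\ast}$ on bounded complexes in $D(\alg{O}_{K, \et})$ with torsion cohomology: for every $F'\in F^{\perar}$ the ring $\alg{O}_{K}(F')$ is a finite product of complete (hence henselian) discrete valuation rings with residue ring $\alg{k}(F')$, so the affine analogue of proper base change for henselian pairs gives $R\Gamma(\alg{O}_{K}(F'), \mathcal{F})\cong R\Gamma(\alg{k}(F'), \mathcal{F}|_{\Spec\alg{k}(F')})$ for torsion $\mathcal{F}$ (this is already implicit in the set-up of \cite[Section 4.3]{Suz24}), and sheafifying over $F^{\perar}_{\et}$ yields the asserted isomorphism of derived functors. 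Applying it to $Rj_{\ast}\Lambda_{n}(r)$ and to $\mathfrak{T}_{n}(r)$, and using $\pi_{\alg{K}}=\pi_{\alg{O}_{K}}\compose j$ together with the definition $R\pi_{\alg{O}_{K}, !}\mathfrak{T}_{n}(r)=R\pi_{\alg{k}, \ast}Ri^{!}\mathfrak{T}_{n}(r)$, one obtains
\[
    R\pi_{\alg{K}, \ast}\Lambda_{n}(r)\cong R\pi_{\alg{k}, \ast}R\Psi\Lambda_{n}(r), \qquad R\pi_{\alg{O}_{K}, \ast}\mathfrak{T}_{n}(r)\cong R\pi_{\alg{k}, \ast}i^{\ast}\mathfrak{T}_{n}(r), \qquad R\pi_{\alg{O}_{K}, !}\mathfrak{T}_{n}(r)=R\pi_{\alg{k}, \ast}Ri^{!}\mathfrak{T}_{n}(r),
\]
so all three objects have the form $R\pi_{\alg{k}, \ast}C$ with the cohomology sheaves of $C$ among $R^{q}\Psi\Lambda_{n}(r)$, $H^{q}i^{\ast}\mathfrak{T}_{n}(r)$ and $H^{q}Ri^{!}\mathfrak{T}_{n}(r)$.

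Next I would feed such a $C$ into the spectral sequence $E_{2}^{s, t}=R^{s}\pi_{\alg{k}, \ast}(H^{t}C)\Rightarrow R^{s+t}\pi_{\alg{k}, \ast}C$. Every entry lies in $\mathcal{W}_{F}$ by Proposition \ref{0013}, and by Proposition \ref{0012} one has $R^{s}\pi_{\alg{k}, \ast}=0$ on torsion sheaves for $s\ge 2$; hence only the columns $s=0, 1$ survive, the spectral sequence degenerates at $E_{2}$, and for each $q$ there is a short exact sequence
\[
    0\to R^{1}\pi_{\alg{k}, \ast}(H^{q-1}C)\to R^{q}\pi_{\alg{k}, \ast}C\to R^{0}\pi_{\alg{k}, \ast}(H^{q}C)\to 0
\]
with both outer terms in $\mathcal{W}_{F}$. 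As $\mathcal{W}_{F}$ is closed under extensions (as used in the proof of Proposition \ref{0041}), the middle term is in $\mathcal{W}_{F}$, which proves the $\mathcal{W}_{F}$-membership of all three objects for every $q$. The case $r<0$ is degenerate: then $\mathfrak{T}_{n}(r)=j_{!}\Lambda_{n}(r)$, so $i^{\ast}\mathfrak{T}_{n}(r)=0$ and $Ri^{!}\mathfrak{T}_{n}(r)\cong R\Psi\Lambda_{n}(r)[-1]$.

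For the vanishing outside $q\in\{0, 1, 2\}$ I would pin down the amplitudes of $C$. The complex $R\Psi\Lambda_{n}(r)=i^{\ast}Rj_{\ast}\Lambda_{n}(r)$ is concentrated in degrees $0$ and $1$ (its stalks being $p$-adic Galois cohomology of fraction fields of strictly henselian discrete valuation rings, which have $p$-cohomological dimension one; this also follows from \cite[Proposition 6.1]{KS19}). By Proposition \ref{0008} and the formula for $H^{q}i^{\ast}\mathfrak{T}_{n}(r)$ recorded in its proof, $i^{\ast}\mathfrak{T}_{n}(r)$ is then also concentrated in degrees $0$ and $1$, with $H^{0}i^{\ast}\mathfrak{T}_{n}(r)\isomto H^{0}R\Psi\Lambda_{n}(r)$ for $r\ge 0$; the triangle $Ri^{!}\mathfrak{T}_{n}(r)\to i^{\ast}\mathfrak{T}_{n}(r)\to R\Psi\Lambda_{n}(r)$ then shows $Ri^{!}\mathfrak{T}_{n}(r)$ is concentrated in degrees $1$ and $2$ (directly so for $r<0$). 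Combined with the surviving columns $s\in\{0, 1\}$, this gives the stated vanishing ranges; alternatively, these amplitude bounds, together with the underlying $\Ind \Pro \Alg_{u} / F$-structure, may simply be quoted from the derived-category-level results of \cite[Section 6]{Suz24}, which the present section refines.

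The step I expect to be the genuine obstacle is precisely this amplitude and structure analysis of the $p$-adic nearby cycles over the imperfect residue field $k$ --- in particular the vanishing $R^{q}\Psi\Lambda_{n}(r)=0$ for $q\ge 2$, the isomorphism $H^{0}i^{\ast}\mathfrak{T}_{n}(r)\isomto H^{0}R\Psi\Lambda_{n}(r)$, and the care needed with the borderline twists $r=0, 1$. Everything downstream --- the henselian-pair identification, the degeneracy of the two-column spectral sequence, and the extension-closure of $\mathcal{W}_{F}$ --- is formal once Propositions \ref{0012} and \ref{0013} are at hand.
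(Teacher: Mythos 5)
Your reduction to a two-column spectral sequence over $\Spec\alg{k}_{\et}$ via $R\pi_{\alg{O}_{K},*}\cong R\pi_{\alg{k},*}\compose i^{*}$, and the deduction of $\mathcal{W}_{F}$-membership from Propositions \ref{0012}, \ref{0013} and extension-closure, is exactly what the paper does (the paper simply states this identification implicitly by writing everything as $R^{q}\pi_{\alg{k},*}G$).

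However, the amplitude analysis you flag as ``the genuine obstacle'' contains a genuine error. You claim $R\Psi\Lambda_{n}(r)$ is concentrated in degrees $0, 1$ because strictly henselian fraction fields ``have $p$-cohomological dimension one.'' That is false in this relative setting: the residue field of the relevant strictly henselian ring is $\alg{k}(F')^{\sep}$, a separably closed but \emph{imperfect} field of characteristic $p$ with $[\alg{k}(F')^{\sep} : (\alg{k}(F')^{\sep})^{p}] = p$, and the corresponding mixed-characteristic strictly henselian fraction field has $p$-cohomological dimension $2$, not $1$. Indeed the proof of Proposition \ref{0008} explicitly lists $\Omega_{k}^{1}$ and $\nu(1)$ among the graded pieces of $R^{q}\Psi_{\RPS}\Lambda$, and these occur in $R^{2}\Psi$, so $R\Psi\Lambda_{n}(r)$ lives in degrees $[0,2]$. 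Consequently $i^{*}\mathfrak{T}_{n}(r)$ for $r\ge 2$ is concentrated in $[0,2]$, and a direct check of the triangle $Ri^{!}\mathfrak{T}_{n}(2)\to i^{*}\mathfrak{T}_{n}(2)\to R\Psi\Lambda_{n}(2)$ gives $Ri^{!}\mathfrak{T}_{n}(2)\cong\nu_{n}(1)[-3]$, concentrated in degree $3$, not $[1,2]$ as you assert. Your two-column spectral sequence then gives a priori amplitude $[0,3]$ (resp.\ $[3,4]$ after shift for the compact-support object), so the claimed range $q\in\{0,1,2\}$ does not follow without an extra vanishing such as $R^{1}\pi_{\alg{k},*}\nu_{n}(1)=0$ (which does hold, but needs its own argument).

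Fortunately your hedge --- quoting the derived-category-level vanishing from \cite[Section 6]{Suz24} --- is exactly what the paper does (``The vanishing for $q \ne 0, 1, 2$ follows from \cite[Propositions 6.2.1 and 6.5.1]{Suz24}''). So your proposal reaches the right conclusion via the same route as the paper; just replace the erroneous cohomological-dimension-one claim and its downstream amplitude statements with the citation, or else correct the amplitudes to $[0,2]$ and supply the needed extra vanishing.
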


\begin{proof}
	Let $G \in D(\alg{k}_{\et})$ be either $R \Psi \Lambda_{n}(r)$,
	$i^{\ast} \mathfrak{T}_{n}(r)$ or
	$R i^{!} \mathfrak{T}_{n}(r)$.
	By Proposition \ref{0012},
	we have an exact sequence
		\[
				0
			\to
				R^{1} \pi_{\alg{k}, \ast} H^{q - 1} G
			\to
				R^{q} \pi_{\alg{k}, \ast} G
			\to
				\pi_{\alg{k}, \ast} H^{q} G
			\to
				0
		\]
	in $\Ab(F^{\perar}_{\et})$.
	The first and third terms are in $\mathcal{W}_{F}$
	by Proposition \ref{0013}.
	Hence so is the second.
	The vanishing for $q \ne 0, 1, 2$ follows from
	\cite[Propositions 6.2.1 and 6.5.1]{Suz24}.
\end{proof}

\begin{Prop} \label{0016}
	Let $r, q, \in \Z$ and $n \ge 1$.
	\begin{enumerate}
		\item
			The perfect pairing
				\[
							R \pi_{\alg{K}, \ast} \Lambda_{n}(r)
						\tensor^{L}
							R \pi_{\alg{K}, \ast} \Lambda_{n}(2 - r)
					\to
						\Lambda_{\infty}[-2],
				\]
			in \cite[Proposition 6.2.2]{Suz24} induces a Pontryagin duality
				\[
						\pi_{0}(R^{q} \pi_{\alg{K}, \ast} \Lambda_{n}(r))
					\leftrightarrow
						\pi_{0}(R^{2 - q} \pi_{\alg{K}, \ast} \Lambda_{n}(2 - r))
				\]
			of finite \'etale group schemes over $F$ and a Serre duality
				\[
						(R^{q} \pi_{\alg{K}, \ast} \Lambda_{n}(r))^{0}
					\leftrightarrow
						(R^{3 - q} \pi_{\alg{K}, \ast} \Lambda_{n}(2 - r))^{0}
				\]
			of connected groups in $\mathcal{W}_{F}$.
		\item
			The perfect pairing
				\[
							R \pi_{\alg{O}_{K}, \ast} \mathfrak{T}_{n}(r)
						\tensor^{L}
							R \pi_{\alg{O}_{K}, !} \mathfrak{T}_{n}(2 - r)
					\to
						\Lambda_{\infty}[-3],
				\]
			in \cite[Proposition 6.5.1]{Suz24} induces a Pontryagin duality
				\[
						\pi_{0}(R^{q} \pi_{\alg{O}_{K}, \ast} \mathfrak{T}_{n}(r))
					\leftrightarrow
						\pi_{0}(R^{3 - q} \pi_{\alg{O}_{K}, !} \mathfrak{T}_{n}(2 - r))
				\]
			of finite \'etale group schemes over $F$ and a Serre duality
				\[
						(R^{q} \pi_{\alg{O}_{K}, \ast} \mathfrak{T}_{n}(r))^{0}
					\leftrightarrow
						(R^{4 - q} \pi_{\alg{O}_{K}, !} \mathfrak{T}_{n}(r))^{0}
				\]
			of connected groups in $\mathcal{W}_{F}$.
	\end{enumerate}
\end{Prop}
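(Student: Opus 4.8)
The plan is to deduce this directly from Proposition \ref{0078}, whose hypotheses are supplied by the derived-category perfect pairings already constructed in \cite{Suz24} together with the finiteness and $\mathcal{W}_{F}$-membership established in Proposition \ref{0014}. All the genuinely new input has therefore been assembled already; what remains is to perform the bookkeeping of degree shifts so that the target of each pairing is $\Lambda_{\infty}$ placed in degree zero, as required by Proposition \ref{0078}, and to unwind the resulting dualities. Note that the functors $\pi_{\alg{K}, \ast}$ and $\pi_{\alg{O}_{K}, \ast}$ land in $D(F^{\perar}_{\et})$, which is one of the two sites allowed in Proposition \ref{0078}.

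For part (1), I would set $G = R \pi_{\alg{K}, \ast} \Lambda_{n}(r)$ and $H = R \pi_{\alg{K}, \ast} \Lambda_{n}(2 - r)[2]$. Shifting the second factor of the perfect pairing $R \pi_{\alg{K}, \ast} \Lambda_{n}(r) \tensor^{L} R \pi_{\alg{K}, \ast} \Lambda_{n}(2 - r) \to \Lambda_{\infty}[-2]$ of \cite[Proposition 6.2.2]{Suz24} by $[2]$ turns it into a perfect pairing $G \tensor^{L} H \to \Lambda_{\infty}$ in $D(F^{\perar}_{\et})$. By Proposition \ref{0014}, $H^{q} G = R^{q} \pi_{\alg{K}, \ast} \Lambda_{n}(r)$ lies in $\mathcal{W}_{F}$ for every $q$ and vanishes outside $q \in \{0, 1, 2\}$, so Proposition \ref{0078} applies and yields a Serre duality $(H^{q} G)^{0} \leftrightarrow (H^{1 - q} H)^{0}$ and a Pontryagin duality $\pi_{0}(H^{q} G) \leftrightarrow \pi_{0}(H^{- q} H)$. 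Using $H^{j}(C[2]) = H^{j + 2}(C)$, these are exactly the asserted dualities between $R^{q} \pi_{\alg{K}, \ast} \Lambda_{n}(r)$ and $R^{3 - q} \pi_{\alg{K}, \ast} \Lambda_{n}(2 - r)$ on identity components, and between $R^{q}$ and $R^{2 - q}$ on $\pi_{0}$.

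For part (2) I would argue identically, taking $G = R \pi_{\alg{O}_{K}, \ast} \mathfrak{T}_{n}(r)$ and $H = R \pi_{\alg{O}_{K}, !} \mathfrak{T}_{n}(2 - r)[3]$, so that the pairing of \cite[Proposition 6.5.1]{Suz24} with target $\Lambda_{\infty}[-3]$ becomes a perfect pairing $G \tensor^{L} H \to \Lambda_{\infty}$; Proposition \ref{0014} again gives $H^{q} G \in \mathcal{W}_{F}$ concentrated in degrees $0, 1, 2$, and Proposition \ref{0078} produces the stated dualities once the shift by $[3]$ is accounted for. The only step that requires attention is keeping the shift conventions of \cite{Suz24} and Proposition \ref{0078} aligned; no new geometric or cohomological ingredient enters, since the substantive content has been isolated in Proposition \ref{0014} (hence in Sections \ref{0113} and \ref{0007}) and in the perfect pairings imported from \cite{Suz24}.
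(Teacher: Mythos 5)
Your proposal is correct and follows exactly the paper's proof, which reads in full ``This follows from Propositions \ref{0078} and \ref{0014}''; you have simply made explicit the shift bookkeeping that the paper leaves implicit. (As a minor aside, your computation confirms that the last display of the Proposition should read $\mathfrak{T}_{n}(2-r)$ rather than $\mathfrak{T}_{n}(r)$, a typo in the statement.)
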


\begin{proof}
	This follows from Propositions \ref{0078} and \ref{0014}.
\end{proof}

\begin{Prop} \label{0030}
	Let $n \ge 1$.
	\begin{enumerate}
		\item \label{0025}
			$\pi_{\alg{O}_{K}, \ast} \Lambda_{n} \cong \Lambda_{n}$ is finite,
			$R^{1} \pi_{\alg{O}_{K}, \ast} \Lambda_{n}$ is connected ind-algebraic and
			$R^{2} \pi_{\alg{O}_{K}, \ast} \Lambda_{n} = 0$.
		\item \label{0026}
			$\pi_{\alg{O}_{K}, \ast} \mathfrak{T}_{n}(1)$ is finite and
			$R^{2} \pi_{\alg{O}_{K}, \ast} \mathfrak{T}_{n}(1) = 0$.
		\item \label{0074}
			$\pi_{\alg{O}_{K}, \ast} \mathfrak{T}_{n}(2)$ is finite.
		\item \label{0027}
			$R^{1} \pi_{\alg{O}_{K}, !} \mathfrak{T}_{n}(r) = 0$ for all $r$.
		\item \label{0029}
			$R^{2} \pi_{\alg{O}_{K}, !} \mathfrak{T}_{n}(1)$ is finite.
		\item \label{0028}
			$R^{2} \pi_{\alg{O}_{K}, !} \mathfrak{T}_{n}(2) = 0$ and
			$R^{3} \pi_{\alg{O}_{K}, \ast} \mathfrak{T}_{n}(2)$ is pro-algebraic.
	\end{enumerate}
\end{Prop}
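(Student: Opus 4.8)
The plan is to deduce all six assertions from the structure already established: Proposition \ref{0014} (membership in $\mathcal{W}_F$, together with the concentration of $R^\bullet\pi_{\alg{K},\ast}\Lambda_n(r)$ and $R^\bullet\pi_{\alg{O}_K,\ast}\mathfrak{T}_n(r)$ in degrees $0,1,2$ and of $R^\bullet\pi_{\alg{O}_K,!}\mathfrak{T}_n(r)$ in degrees $1,2,3$), the two dualities of Proposition \ref{0016}, the comparison isomorphisms $R^s\pi_{\alg{k},\ast}G\cong R^s\pi_{k,\RP,\ast}G'$ supplied by the proof of Proposition \ref{0013} together with Propositions \ref{0005} and \ref{0004}, and the localization triangle
\[
    R\pi_{\alg{O}_K,!}\mathfrak{T}_n(r)
    \to
    R\pi_{\alg{O}_K,\ast}\mathfrak{T}_n(r)
    \to
    R\pi_{\alg{K},\ast}\Lambda_n(r)
    \to
    R\pi_{\alg{O}_K,!}\mathfrak{T}_n(r)[1]
\]
obtained by applying $R\pi_{\alg{O}_K,\ast}$ to $i_\ast Ri^!\mathfrak{T}_n(r)\to\mathfrak{T}_n(r)\to Rj_\ast\Lambda_n(r)$ and using $R\pi_{\alg{O}_K,\ast}\compose i_\ast=R\pi_{\alg{k},\ast}$, $R\pi_{\alg{O}_K,\ast}\compose Rj_\ast=R\pi_{\alg{K},\ast}$. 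The only genuinely non-formal input is a description of $Ri^!\mathfrak{T}_n(r)$, which then feeds the dualities.

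First I would pin down $Ri^!\mathfrak{T}_n(r)$ via the triangle $Ri^!\mathfrak{T}_n(r)\to i^\ast\mathfrak{T}_n(r)\to R\Psi\Lambda_n(r)$ of Proposition \ref{0008}. For $r\ge 1$, the object $i^\ast\mathfrak{T}_n(r)$ is by construction obtained from $\tau_{\le r}R\Psi\Lambda_n(r)$ by replacing its degree-$r$ cohomology $R^r\Psi\Lambda_n(r)$ with $\Ker\bigl(R^r\Psi\Lambda_n(r)\onto\nu_n(r-1)\bigr)$; chasing the long exact sequence, and using that $i^\ast\mathfrak{T}_n(r)\to R\Psi\Lambda_n(r)$ is an isomorphism in degrees $<r$, one finds $Ri^!\mathfrak{T}_n(r)$ concentrated in degrees $\ge r+1$ with $H^{r+1}Ri^!\mathfrak{T}_n(r)\cong\nu_n(r-1)$ (the Kato residue being surjective). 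Thus $Ri^!\mathfrak{T}_n(1)$ lives in degrees $\ge 2$ with $H^2\cong\nu_n(0)=\Lambda_n$, and $Ri^!\mathfrak{T}_n(2)$ is concentrated in degree $3$, equal there to $\nu_n(1)$. For $r=0$ one has $\mathfrak{T}_n(0)=\Lambda_n$ and $Ri^!\Lambda_n\cong(\tau_{\ge 1}R\Psi\Lambda_n)[-1]$, again in degrees $\ge 2$; for $r<0$, $\mathfrak{T}_n(r)=j_!\Lambda_n(r)$ gives $Ri^!\mathfrak{T}_n(r)\cong R\Psi\Lambda_n(r)[-1]$, so that $R^1\pi_{\alg{O}_K,!}\mathfrak{T}_n(r)\cong\pi_{\alg{k},\ast}R^0\Psi\Lambda_n(r)$, which vanishes by the calculations of \cite[Section 6]{Suz24}.

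The deductions then run as follows. For \eqref{0027}: since $Ri^!\mathfrak{T}_n(r)$ sits in degrees $\ge 2$ for $r\ge 0$ (the case $r<0$ was just recalled), $R^1\pi_{\alg{O}_K,!}\mathfrak{T}_n(r)=R^1\pi_{\alg{k},\ast}Ri^!\mathfrak{T}_n(r)=0$. For \eqref{0029}: $R^2\pi_{\alg{O}_K,!}\mathfrak{T}_n(1)=\pi_{\alg{k},\ast}\bigl(H^2Ri^!\mathfrak{T}_n(1)\bigr)=\pi_{\alg{k},\ast}\Lambda_n=\Lambda_n$ is finite. For \eqref{0028}: $Ri^!\mathfrak{T}_n(2)$ being in degree $3$ gives $R^2\pi_{\alg{O}_K,!}\mathfrak{T}_n(2)=0$, and $R^3\pi_{\alg{O}_K,!}\mathfrak{T}_n(2)\cong\pi_{\alg{k},\ast}\nu_n(1)\cong\pi_{k,\RP,\ast}\nu_n(1)$ is pro-algebraic in $\mathcal{W}_F$ by Proposition \ref{0005}, since $\nu_n(1)$ is wound, while $R^3\pi_{\alg{O}_K,\ast}\mathfrak{T}_n(2)=0$ by Proposition \ref{0014}. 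For \eqref{0025}: by henselian invariance of \'etale cohomology (higher \'etale cohomology of a strictly henselian local ring vanishes) $R^q\pi_{\alg{O}_K,\ast}\Lambda_n\cong R^q\pi_{\alg{k},\ast}\Lambda_n\cong R^q\pi_{k,\RP,\ast}\Lambda_n$, which is $\Lambda_n$ for $q=0$, connected ind-algebraic for $q=1$, and $0$ for $q\ge 2$, by Propositions \ref{0005} and \ref{0004}. For \eqref{0026} and \eqref{0074}: since $R^0\pi_{\alg{O}_K,!}\mathfrak{T}_n(r)=0$, the localization triangle embeds $R^0\pi_{\alg{O}_K,\ast}\mathfrak{T}_n(r)$ into $R^0\pi_{\alg{K},\ast}\Lambda_n(r)$, a finite constant sheaf ($p$-power roots of unity) for $r=1,2$; hence $\pi_{\alg{O}_K,\ast}\mathfrak{T}_n(r)$ is finite \'etale. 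Finally $R^2\pi_{\alg{O}_K,\ast}\mathfrak{T}_n(1)=0$, because by Proposition \ref{0016}(2) its identity component is Serre dual to $\bigl(R^2\pi_{\alg{O}_K,!}\mathfrak{T}_n(1)\bigr)^0=0$ (by \eqref{0029}) and its $\pi_0$ is Pontryagin dual to $\pi_0\bigl(R^1\pi_{\alg{O}_K,!}\mathfrak{T}_n(1)\bigr)=0$ (by \eqref{0027}).

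I expect the main obstacle to be the identification $H^{r+1}Ri^!\mathfrak{T}_n(r)\cong\nu_n(r-1)$ for general $n$: it amounts to saying that the top graded piece of the $p$-adic nearby cycle complex $R^\bullet\Psi\Lambda_n(r)$ is exactly $\nu_n(r-1)$, so it rests on the relatively perfect unipotent structure of nearby cycles from \cite{BS20} as packaged in Propositions \ref{0008}, \ref{0010} and \ref{0013}, and cannot be reached by d\'evissage on $n$ since $\mathcal{W}_F$ is not abelian. A secondary technical point is ensuring that the comparison and henselian-invariance isomorphisms are compatible with the relative-site formalism — the ``technicalities about the choice of sites'' flagged at the start of Section \ref{0070}.
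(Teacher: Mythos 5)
Your proposal is correct in substance but takes a genuinely different route from the paper. The paper's own proof is very short: it obtains items \eqref{0025}, \eqref{0026}, \eqref{0074} by applying \'etale sheafification to the descriptions of Zariski sheaves in \cite[proof of Proposition 6.5.2]{Suz24}, obtains \eqref{0027} from the isomorphism $\pi_{\alg{O}_K,\ast}\mathfrak{T}_n(r)\isomto\pi_{\alg{K},\ast}\Lambda_n(r)$ and the injectivity $R^1\pi_{\alg{O}_K,\ast}\mathfrak{T}_n(r)\into R^1\pi_{\alg{K},\ast}\Lambda_n(r)$ (both again cited from loc.\ cit.)\ via the localization triangle, and then deduces \eqref{0029} and \eqref{0028} from \eqref{0025}--\eqref{0027} by the duality of Proposition \ref{0016}. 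You instead compute $Ri^!_{\RPS}\mathfrak{T}_n(r)$ directly from the defining triangle, identifying $H^{r+1}Ri^!_{\RPS}\mathfrak{T}_n(r)\cong\nu_n(r-1)$, and read off \eqref{0027}, \eqref{0029}, \eqref{0028} from the degeneration of the Leray spectral sequence for $\pi_{\alg{k},\ast}$ (using that $R^q\pi_{\alg{k},\ast}=0$ for $q\ge 2$); \eqref{0025} comes from henselian invariance and Propositions \ref{0005}--\ref{0004}; the finiteness in \eqref{0026} and \eqref{0074} from the localization triangle; and the vanishing $R^2\pi_{\alg{O}_K,\ast}\mathfrak{T}_n(1)=0$ by duality from \eqref{0027} and \eqref{0029}. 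The direction of the duality deduction is thus reversed relative to the paper. What your approach buys is a self-contained argument that avoids the specific computations of \cite[Proposition 6.5.2]{Suz24}, feeding everything through the $Ri^!$ structure that Propositions \ref{0008}--\ref{0013} already put in place; what the paper buys is brevity by direct reference.

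A few imprecisions worth fixing. In \eqref{0025} your parenthetical "(higher \'etale cohomology of a strictly henselian local ring vanishes)" is the wrong justification — $\alg{O}_K(F')$ is henselian but not \emph{strictly} henselian, and what you actually need is henselian invariance $R\Gamma(\alg{O}_K(F'),\Lambda_n)\cong R\Gamma(\alg{k}(F'),\Lambda_n)$ (Gabber's affine analogue of proper base change), which precisely does not assert vanishing. Your claim that $Ri^!_{\RPS}\mathfrak{T}_n(2)$ is \emph{concentrated in degree} $3$ is stronger than you verify (it needs $R^q\Psi_{\RPS}\Lambda_n(2)=0$ for $q>2$); fortunately only degrees $\le 3$ of $Ri^!$ enter the computation of $R^{\le 3}\pi_{\alg{O}_K,!}$, so the argument is unaffected, but the statement should be softened to "in degrees $\ge 3$ with $H^3\cong\nu_n(1)$." For $r<0$ you assert that $\pi_{\alg{k},\ast}R^0\Psi\Lambda_n(r)$ vanishes "by the calculations of \cite[Section 6]{Suz24}" without an argument; since $R^0\Psi\Lambda_n(r)$ encodes roots of unity of $K$, this is not an obvious vanishing, and the paper itself handles this case only by citing the same reference — you should either reproduce that calculation or cite it more precisely. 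Finally, note that "$R^3\pi_{\alg{O}_K,\ast}\mathfrak{T}_n(2)$ is pro-algebraic" in item \eqref{0028} is surely a typo for $R^3\pi_{\alg{O}_K,!}\mathfrak{T}_n(2)$ (compare the summary table and Proposition \ref{0031}\eqref{0039}); your proof correctly establishes the $!$-version and separately notes $R^3\pi_{\alg{O}_K,\ast}\mathfrak{T}_n(2)=0$.
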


\begin{proof}
	\eqref{0025}, \eqref{0026} and \eqref{0074} follow by applying \'etale sheafification to
	the descriptions of the Zariski sheaves in the proof of \cite[Proposition 6.5.2]{Suz24}.
	We have $\pi_{\alg{O}_{K}, \ast} \mathfrak{T}_{n}(r) \isomto \pi_{\alg{K}, \ast} \Lambda_{n}(r)$
	for all $r$.
	The morphisms
	$R^{q} \pi_{\alg{O}_{K}, \ast} \mathfrak{T}_{n}(r) \to R^{q} \pi_{\alg{K}, \ast} \Lambda_{n}(r)$
	are injective for all $q$ and $r$
	by the injectivity stated in the last paragraph of the proof of \cite[Proposition 6.5.2]{Suz24}.
	These imply \eqref{0027}.
	The rest follow from Proposition \ref{0016} by duality.
\end{proof}

Recall from \cite[paragraph after Proposition 4.3.2]{Suz24}
that we define
	\[
			R \alg{\Gamma}(\alg{K}, \var)
		=
			\algebrize R \pi_{\alg{K}, \ast}
		\colon
			D(\alg{K}_{\et})
		\to
			D(F^{\ind\rat}_{\pro\et}),
	\]
	\[
			R \alg{\Gamma}(\alg{O}_{K}, \var)
		=
			\algebrize R \pi_{\alg{O}_{K}, \ast}
		\colon
			D(\alg{O}_{K, \et})
		\to
			D(F^{\ind\rat}_{\pro\et}),
	\]
	\[
			R \alg{\Gamma}_{c}(\alg{O}_{K}, \var)
		=
			\algebrize R \pi_{\alg{O}_{K}, !}
		\colon
			D(\alg{O}_{K, \et})
		\to
			D(F^{\ind\rat}_{\pro\et})
	\]
and $\alg{H}^{q} = H^{q} R \alg{\Gamma}$, $\alg{H}^{q}_{c} = H^{q} R \alg{\Gamma}_{c}$.
If $G \in D^{b}(\alg{K}_{\et})$ or $D^{b}(\alg{O}_{K, \et})$ is concentrated in non-negative degrees
and $R^{q} \pi_{\alg{K}, \ast} G$ is in $\mathcal{W}_{F}$ for all $q$ and zero for large enough $q$,
then we occasionally write $\alg{\Gamma}(\alg{K}, G) = \alg{H}^{0}(\alg{K}, G)$
or $\alg{\Gamma}(\alg{O}_{K}, G) = \alg{H}^{0}(\alg{O}_{K}, G)$, respectively.

\begin{Prop} \label{0031}
	Let $q, r \in \Z$ and $n \ge 1$.
	\begin{enumerate}
		\item \label{0032}
			$\alg{H}^{q}(\alg{K}, \Lambda_{n}(r))$,
			$\alg{H}^{q}(\alg{O}_{K}, \mathfrak{T}_{n}(r))$ and
			$\alg{H}^{q}_{c}(\alg{O}_{K}, \mathfrak{T}_{n}(r))$ are
			in $\mathcal{W}_{F}$ for all $q$ and $r$.
		\item \label{0033}
			$\alg{\Gamma}(\alg{K}, \Lambda_{n}(r))$ is finite and
			$\alg{H}^{q}(\alg{K}, \Lambda_{n}(r)) = 0$ for all $q \ne 0, 1, 2$ and all $r$.
		\item \label{0034}
			$\alg{\Gamma}(\alg{O}_{K}, \Lambda_{n}) \cong \Lambda_{n}$ is finite,
			$\alg{H}^{1}(\alg{O}_{K}, \Lambda_{n})$ is connected ind-algebraic and
			$\alg{H}^{q}(\alg{O}_{K}, \Lambda_{n}) = 0$ for all $q \ne 0, 1$.
		\item \label{0035}
			$\alg{H}^{q}_{c}(\alg{O}_{K}, \Lambda_{n}) = 0$ for all $q \ne 2, 3$.
		\item \label{0036}
			$\alg{\Gamma}(\alg{O}_{K}, \mathfrak{T}_{n}(1))$ is finite and
			$\alg{H}^{q}(\alg{O}_{K}, \mathfrak{T}_{n}(1)) = 0$ for all $q \ne 0, 1$.
		\item \label{0037}
			$\alg{H}^{2}_{c}(\alg{O}_{K}, \mathfrak{T}_{n}(1))$ is finite and
			$\alg{H}^{q}_{c}(\alg{O}_{K}, \mathfrak{T}_{n}(1)) = 0$ for all $q \ne 2, 3$.
		\item \label{0038}
			$\alg{\Gamma}(\alg{O}_{K}, \mathfrak{T}_{n}(2))$ is finite and
			$\alg{H}^{q}(\alg{O}_{K}, \mathfrak{T}_{n}(2)) = 0$ for all $q \ne 0, 1, 2$.
		\item \label{0039}
			$\alg{H}^{3}_{c}(\alg{O}_{K}, \mathfrak{T}_{n}(2))$ is pro-algebraic and
			$\alg{H}^{q}_{c}(\alg{O}_{K}, \mathfrak{T}_{n}(2)) = 0$ for all $q \ne 3$.
	\end{enumerate}
\end{Prop}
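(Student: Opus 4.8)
The plan is to deduce all eight assertions from the statements already established over the perfect artinian \'etale site---Propositions~\ref{0014}, \ref{0030} and~\ref{0016}---by transporting them along the algebrization functor $\algebrize$. The substantive point is \eqref{0032}; granting it, parts~\eqref{0033}--\eqref{0039} follow by transcription.

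For \eqref{0032}, I would use the two distinguished triangles available after applying the triangulated functor $\algebrize$: the localization triangle on $\Spec\alg{O}_{K,\et}$ for the open/closed pair $(\Spec\alg{K}_{\et},\Spec\alg{k}_{\et})$, which together with $R\pi_{\alg{K},\ast} = R\pi_{\alg{O}_{K},\ast}\compose Rj_{\ast}$ and the defining formula $R\pi_{\alg{O}_{K},!} = R\pi_{\alg{k},\ast}\compose Ri^{!}$ yields a distinguished triangle
\[
		R\alg{\Gamma}_{c}(\alg{O}_{K},\mathfrak{T}_{n}(r))
	\to
		R\alg{\Gamma}(\alg{O}_{K},\mathfrak{T}_{n}(r))
	\to
		R\alg{\Gamma}(\alg{K},\Lambda_{n}(r))
\]
in $D(F^{\ind\rat}_{\pro\et})$; and the triangle obtained from the nearby-cycle triangle $Ri^{!}\mathfrak{T}_{n}(r) \to i^{\ast}\mathfrak{T}_{n}(r) \to R\Psi\Lambda_{n}(r)$, which expresses these three terms through the algebraized cohomology over $\alg{k}$ of the building-block complexes $R\Psi\Lambda_{n}(r)$, $i^{\ast}\mathfrak{T}_{n}(r)$ and $Ri^{!}\mathfrak{T}_{n}(r)$. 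For these building blocks Proposition~\ref{0008} identifies the cohomology sheaves as relatively perfect unipotent groups, so the results of Section~\ref{0007} together with Propositions~\ref{0013} and~\ref{0012} show that the relevant algebraized cohomology objects over $\alg{k}$ lie in $\mathcal{W}_{F}$ and are concentrated in degrees $0,1,2$. I would then run the resulting long exact sequences and use the homological-algebra lemmas of Section~\ref{0114}---Propositions~\ref{0024}, \ref{0041}, \ref{0047}, \ref{0048}, \ref{0049} and~\ref{0089}---to propagate membership in $\mathcal{W}_{F}$, together with the pro-/ind-/algebraic refinements, through the kernels, cokernels, images and extensions that occur; the precise degreewise pattern furnished by Proposition~\ref{0014} is exactly what ensures that the hypotheses of those lemmas hold in each degree. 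Finally, Proposition~\ref{0015} identifies each object $\alg{H}^{q}(\alg{K},\Lambda_{n}(r))$, $\alg{H}^{q}(\alg{O}_{K},\mathfrak{T}_{n}(r))$, $\alg{H}^{q}_{c}(\alg{O}_{K},\mathfrak{T}_{n}(r))$ with the corresponding $F^{\perar}_{\et}$-level object of Proposition~\ref{0014}.

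Granting \eqref{0032}, parts~\eqref{0033}--\eqref{0039} follow: by Proposition~\ref{0015} the finiteness, connectedness and pro-/ind-algebraicity statements and the vanishing ranges of Proposition~\ref{0030} pass to $\alg{H}^{q}(\alg{K},\Lambda_{n}(r))$, $\alg{H}^{q}(\alg{O}_{K},\mathfrak{T}_{n}(r))$ and $\alg{H}^{q}_{c}(\alg{O}_{K},\mathfrak{T}_{n}(r))$, with the overall degree bounds $q\ne 0,1,2$ coming from Propositions~\ref{0014} and~\ref{0012}, and the handful of remaining vanishings (such as $\alg{H}^{2}_{c}(\alg{O}_{K},\mathfrak{T}_{n}(2))=0$) obtained as in the proof of Proposition~\ref{0030}, that is by duality from Proposition~\ref{0016}. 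The step I expect to be the main obstacle is the one just described inside \eqref{0032}: passing from ``the $F^{\perar}_{\et}$-level cohomology lies in $\mathcal{W}_{F}$'' to ``the algebraized cohomology lies in $\mathcal{W}_{F}$''. Since $\algebrize$ is genuinely a derived functor and $\mathcal{W}_{F}$ is not abelian, one cannot work one cohomology object at a time, and the real content is the bookkeeping of which lemma of Section~\ref{0114} applies in which cohomological degree; the boundedness and the sharp degree pattern of Proposition~\ref{0014} are precisely what make this close up.
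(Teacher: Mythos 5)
The paper's own proof is a one-liner citing Propositions~\ref{0015}, \ref{0014} and~\ref{0030}, and the real content lies in Proposition~\ref{0014}, which is proved entirely at the $F^{\perar}_{\et}$-level. There, Proposition~\ref{0012} says $R^{q}\pi_{\alg{k},\ast}=0$ for $q\ge 2$ on torsion sheaves, so the Leray spectral sequence for $\pi_{\alg{k},\ast}$ applied to $R\Psi\Lambda_{n}(r)$, $i^{\ast}\mathfrak{T}_{n}(r)$, $R i^{!}\mathfrak{T}_{n}(r)$ degenerates into two-term short exact sequences
\[
	0 \to R^{1}\pi_{\alg{k},\ast}H^{q-1}G \to R^{q}\pi_{\alg{k},\ast}G \to \pi_{\alg{k},\ast}H^{q}G \to 0
\]
with outer terms in $\mathcal{W}_{F}$ by Proposition~\ref{0013}; the middle term is then in $\mathcal{W}_{F}$ simply because $\mathcal{W}_{F}$ is closed under extensions. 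No triangle-chasing and none of the heavier homological-algebra lemmas of Section~\ref{0114} are required. Proposition~\ref{0015} then transports both membership in $\mathcal{W}_{F}$ and the structure (finite, pro-, ind-algebraic) from $R^{q}\pi_{\alg{K},\ast}$, $R^{q}\pi_{\alg{O}_{K},\ast}$, $R^{q}\pi_{\alg{O}_{K},!}$ to the corresponding $\alg{H}^{q}$'s in one step, and Proposition~\ref{0030} supplies the specific degree-by-degree data.

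Your proposal instead moves the triangles to the algebraized level and proposes to propagate $\mathcal{W}_{F}$-membership through the resulting long exact sequences via Propositions~\ref{0024}, \ref{0041}, \ref{0047}, \ref{0048}, \ref{0049}, \ref{0089}. This has a concrete gap: you assert that Propositions~\ref{0013} and~\ref{0012} together with Section~\ref{0007} show that ``the relevant algebraized cohomology objects over $\alg{k}$ lie in $\mathcal{W}_{F}$,'' but those propositions are statements about $R^{s}\pi_{\alg{k},\ast}G$ in $\Ab(F^{\perar}_{\et})$, not about $H^{s}\algebrize R\pi_{\alg{k},\ast}G$ in $D(F^{\ind\rat}_{\pro\et})$. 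The passage between the two is precisely the $\algebrize$-transport difficulty you correctly flag as the main obstacle, yet you defer Proposition~\ref{0015} to the very end as a ``final identification,'' when it (or its mechanism) is already required at this intermediate $\alg{k}$-level step; without it your long exact sequences have inputs whose $\mathcal{W}_{F}$-membership is unjustified, and the bookkeeping you propose cannot get started. Moreover, once one is willing to apply Proposition~\ref{0015} to the $\alg{k}$-level pieces, the same move applies directly to $R\pi_{\alg{K},\ast}\Lambda_{n}(r)$, $R\pi_{\alg{O}_{K},\ast}\mathfrak{T}_{n}(r)$ and $R\pi_{\alg{O}_{K},!}\mathfrak{T}_{n}(r)$ (whose cohomologies are already in $\mathcal{W}_{F}$ by Proposition~\ref{0014}), and the algebraized triangles and the Section~\ref{0114} lemmas become superfluous; those lemmas exist for the global bookkeeping of Section~\ref{0115}, where there is no analogue of the two-term Leray degeneration afforded locally by Proposition~\ref{0012}.
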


\begin{proof}
	This follows from Propositions \ref{0015}, \ref{0014} and \ref{0030}.
\end{proof}

\begin{Prop} \label{0092}
	Let $r, q, \in \Z$ and $n \ge 1$.
	\begin{enumerate}
		\item
			The perfect pairing
				\[
							R \alg{\Gamma}(\alg{K}, \Lambda_{n}(r))
						\tensor^{L}
							R \alg{\Gamma}(\alg{K}, \Lambda_{n}(2 - r))
					\to
						\Lambda_{\infty}[-2],
				\]
			in \cite[Proposition 6.2.2]{Suz24} induces a Pontryagin duality
				\[
						\pi_{0}(\alg{H}^{q}(\alg{K}, \Lambda_{n}(r)))
					\leftrightarrow
						\pi_{0}(\alg{H}^{2 - q}(\alg{K}, \Lambda_{n}(2 - r)))
				\]
			of finite \'etale group schemes over $F$ and a Serre duality
				\[
						\alg{H}^{q}(\alg{K}, \Lambda_{n}(r))^{0}
					\leftrightarrow
						\alg{H}^{3 - q}(\alg{K}, \Lambda_{n}(2 - r))^{0}
				\]
			of connected groups in $\mathcal{W}_{F}$.
		\item
			The perfect pairing
				\[
							R \alg{\Gamma}(\alg{O}_{K}, \mathfrak{T}_{n}(r))
						\tensor^{L}
							R \alg{\Gamma}_{c}(\alg{O}_{K}, \mathfrak{T}_{n}(2 - r))
					\to
						\Lambda_{\infty}[-3]
				\]
			in \cite[Proposition 6.5.1]{Suz24} induces a Pontryagin duality
				\[
						\pi_{0}(\alg{H}^{q}(\alg{O}_{K}, \mathfrak{T}_{n}(r)))
					\leftrightarrow
						\pi_{0}(\alg{H}^{3 - q}_{c}(\alg{O}_{K}, \mathfrak{T}_{n}(2 - r)))
				\]
			of finite \'etale group schemes over $F$ and a Serre duality
				\[
						\alg{H}^{q}(\alg{O}_{K}, \mathfrak{T}_{n}(r))^{0}
					\leftrightarrow
						\alg{H}^{4 - q}_{c}(\alg{O}_{K}, \mathfrak{T}_{n}(2 - r))^{0}
				\]
			of connected groups in $\mathcal{W}_{F}$.
	\end{enumerate}
\end{Prop}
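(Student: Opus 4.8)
The plan is to deduce Proposition \ref{0092} formally from the derived-category-level perfect pairings of \cite[Propositions 6.2.2 and 6.5.1]{Suz24} together with the general mechanism of Proposition \ref{0078}, in exact parallel with the way Proposition \ref{0016} was deduced from Propositions \ref{0078} and \ref{0014}. The point is that Proposition \ref{0078} converts a perfect pairing on the derived category level into a duality for each cohomology object as soon as those cohomology objects are known to lie in $\mathcal{W}_{F}$ and to vanish outside finitely many degrees.

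First I would verify these hypotheses. By Proposition \ref{0031} \eqref{0032}, the objects $\alg{H}^{q}(\alg{K}, \Lambda_{n}(r))$, $\alg{H}^{q}(\alg{O}_{K}, \mathfrak{T}_{n}(r))$ and $\alg{H}^{q}_{c}(\alg{O}_{K}, \mathfrak{T}_{n}(r))$ all lie in $\mathcal{W}_{F}$ for all $q$ and $r$, and by the vanishing ranges in the remaining parts of Proposition \ref{0031} (equivalently by \cite[Propositions 6.2.1 and 6.5.1]{Suz24}) they vanish for all but finitely many $q$. Hence $R \alg{\Gamma}(\alg{K}, \Lambda_{n}(r))$, $R \alg{\Gamma}(\alg{O}_{K}, \mathfrak{T}_{n}(r))$ and $R \alg{\Gamma}_{c}(\alg{O}_{K}, \mathfrak{T}_{n}(r))$ are bounded complexes all of whose cohomology objects lie in $\mathcal{W}_{F}$, which is exactly the input Proposition \ref{0078} asks of both sides of a perfect pairing into $\Lambda_{\infty}$. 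The pairings themselves are furnished at the level of $R \alg{\Gamma}$ by \cite[Propositions 6.2.2 and 6.5.1]{Suz24}; should one instead start from the corresponding pairings of $R \pi_{\alg{K}, \ast}$, $R \pi_{\alg{O}_{K}, \ast}$ and $R \pi_{\alg{O}_{K}, !}$ over $F^{\perar}_{\et}$, one applies the functor $\algebrize$, using its compatibility with tensor products, internal Hom, and the object $\Lambda_{\infty}$, so that perfectness is preserved.

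The only remaining point is the shift bookkeeping. A perfect pairing $G \tensor^{L} H \to \Lambda_{\infty}[-m]$ is the same datum as a perfect pairing $G \tensor^{L} (H[m]) \to \Lambda_{\infty}$, so Proposition \ref{0078} gives a Pontryagin duality $\pi_{0}(H^{q} G) \leftrightarrow \pi_{0}(H^{m - q} H)$ of finite \'etale groups and a Serre duality $(H^{q} G)^{0} \leftrightarrow (H^{m + 1 - q} H)^{0}$ of connected groups in $\mathcal{W}_{F}$. Specializing to $m = 2$, $G = R \alg{\Gamma}(\alg{K}, \Lambda_{n}(r))$, $H = R \alg{\Gamma}(\alg{K}, \Lambda_{n}(2 - r))$ gives assertion (1), and to $m = 3$, $G = R \alg{\Gamma}(\alg{O}_{K}, \mathfrak{T}_{n}(r))$, $H = R \alg{\Gamma}_{c}(\alg{O}_{K}, \mathfrak{T}_{n}(2 - r))$ gives assertion (2). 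I do not expect a genuine obstacle here: the mathematical content is already packaged in Proposition \ref{0031} — itself resting on the relatively perfect unipotent group duality of Sections \ref{0113} and \ref{0007}, the $p$-adic nearby cycle computations, and the finiteness results of \cite{Suz24} — and in the derived-level pairings of \cite{Suz24}; what is left, namely keeping track of the shifts $[-2]$ and $[-3]$ and, if one goes through the $R\pi$-level pairings, the monoidality of $\algebrize$, is routine.
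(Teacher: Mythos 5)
Your proof is correct and follows essentially the same route as the paper: the paper's own proof is a one-liner ("This follows from Proposition \ref{0016}"), and Proposition \ref{0016} is itself obtained by feeding the $\mathcal{W}_{F}$-membership (Proposition \ref{0014}) and the derived-level perfect pairings into the general mechanism of Proposition \ref{0078}. You apply Proposition \ref{0078} once, directly at the $R\alg{\Gamma}$ level using Proposition \ref{0031}, rather than applying it at the $R\pi$ level and then transporting via $\algebrize$ (Proposition \ref{0015}), but this is only a cosmetic reordering of the same argument, and your shift bookkeeping ($m=2$ and $m=3$) is accurate.
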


\begin{proof}
	This follows from Proposition \ref{0016}.
\end{proof}

Here is a summary of the above finiteness results:

	\[
		\begin{array}{c|c}
				\alg{H}^{q}(\alg{K}, \Lambda_{n}(r))
			&
				\forall r
			\\ \hline
				q = 0
			&
				\text{finite}
			\\
				q = 1
			&
				\text{general}
			\\
				q = 2
			&
				\text{general}
		\end{array}
	\]
	\[
		\begin{array}{c|ccc}
				\alg{H}^{q}(\alg{O}_{K}, \mathfrak{T}_{n}(r))
			&
				r = 0
			&
				r = 1
			&
				r = 2
			\\ \hline
				q = 0
			&
				\Lambda_{n}
			&
				\text{finite}
			&
				\text{finite}
			\\
				q = 1
			&
				\text{ind-alg}
			&
				\text{general}
			&
				\text{general}
			\\
				q = 2
			&
				0
			&
				0
			&
				\text{general}
		\end{array}
	\]
	\[
		\begin{array}{c|ccc}
				\alg{H}^{q}_{c}(\alg{O}_{K}, \mathfrak{T}_{n}(r))
			&
				r = 0
			&
				r = 1
			&
				r = 2
			\\ \hline
				q = 2
			&
				\text{general}
			&
				\text{finite}
			&
				0
			\\
				q = 3
			&
				\text{general}
			&
				\text{general}
			&
				\text{pro-alg}
		\end{array}
	\]
These groups are zero for $(q, r)$ outside the ranges in the tables.
By Proposition \ref{0092} (or by a direct calculation),
we have
	\begin{equation} \label{0093}
			\pi_{0} \alg{H}^{3}_{c}(\alg{O}_{K}, \mathfrak{T}_{n}(2))
		\cong
			\Lambda_{n}.
	\end{equation}


\section{Curves and their tubular neighborhoods}
\label{0059}

In this section, we show that
the ind-pro-algebraic group structures on cohomology of smooth affine curves over $F$
and their $p$-adic tubular neighborhoods
constructed in \cite[Section 9]{Suz24} belong to $\mathcal{W}_{F}$.

Let $V = \Spec B$ be a smooth affine geometrically connected curve over $F$.
Let $Y$ be the smooth compactification of $V$ and set $T = Y \setminus V$.
We briefly recall the constructions in \cite[Section 9.1]{Suz24}.
For $F' \in F^{\perar}$, we set $\alg{B}(F') = F' \tensor_{F} B$.
The functor $\alg{B}$ defines a site $\Spec \alg{B}_{\et}$
and a morphism of sites
	\[
			\pi_{\alg{B}}
		\colon
			\Spec \alg{B}_{\et}
		\to
			\Spec F^{\perar}_{\et}
	\]
by the method in Section \ref{0070}.
That is, $\Spec \alg{B}_{\et}$ is the category of pairs $(B', F')$
(where $F' \in F^{\perar}$ and $B'$ an \'etale $\alg{B}(F')$-algebra)
endowed with the \'etale topology.
For $x \in T$, let $\Hat{\Order}_{k_{x}}$ be the complete local ring of $Y$ at $x$.
Let $\Hat{k}_{x}$ and $F_{x}$ be its fraction field and residue field, respectively.
As in Section \ref{0070},
we have a functor
	\[
			\Hat{\alg{k}}_{x, 0}(F'_{x})
		=
			(F'_{x} \ctensor_{F_{x}} \Hat{\Order}_{k_{x}}) \tensor_{\Hat{\Order}_{k_{x}}} \Hat{k}_{x}
	\]
in $F'_{x} \in F_{x}^{\perar}$.
Let $\Hat{\alg{k}}_{x}$ be the Weil restriction $\Weil_{F_{x} / F} \Hat{\alg{k}}_{x, 0}$,
so
	\[
			\Hat{\alg{k}}_{x}(F')
		=
			\Hat{\alg{k}}_{x, 0}(F' \tensor_{F} F_{x})
		=
			(F' \ctensor_{F} \Hat{\Order}_{k_{x}}) \tensor_{\Hat{\Order}_{k_{x}}} \Hat{k}_{x}
	\]
for $F' \in F^{\perar}$.
By the method in Section \ref{0070},
the functor $\Hat{\alg{k}}_{x}$ defines a site $\Spec \Hat{\alg{k}}_{x}$ and a morphism of sites
	\[
			\pi_{\Hat{\alg{k}}_{x}}
		\colon
			\Spec \Hat{\alg{k}}_{x}
		\to
			\Spec F^{\perar}_{\et}.
	\]
Let
	\[
			\pi_{\alg{k}_{x} / \alg{B}}
		\colon
			\Spec \Hat{\alg{k}}_{x, \et}
		\to
			\Spec \alg{B}_{\et}
	\]
be the morphism defined by the functor $(B', F') \mapsto (B' \tensor_{\alg{B}(F')} \Hat{\alg{k}}_{x}(F'), F')$
(where $F' \in F^{\perar}$ and $B'$ an \'etale $\alg{B}(F')$-algebra).

A $B$-module $M$ can naturally be viewed as a sheaf
$(B', F') \mapsto B' \tensor_{B} M$ on $\Spec \alg{B}_{\et}$.
A similar process exists for viewing a $\Hat{k}_{x}$-module as a sheaf on $\Spec \Hat{\alg{k}}_{x, \et}$.
Hence the sheaf $\nu(1)$ (the dlog part of $\Omega^{1}$) can also be viewed
as a sheaf on $\Spec \alg{B}_{\et}$ or $\Spec \Hat{\alg{k}}_{x, \et}$.

Let $G \in \Ab(\alg{B}_{\et})$ and $G_{x} \in \Ab(\Hat{\alg{k}}_{x})$ for each $x \in T$ be sheaves
and $G \to \pi_{\Hat{\alg{k}}_{x} / \alg{B}, \ast} G_{x}$ a morphism in $\Ab(\alg{B}_{\et})$.
The latter induces a morphism
	\[
			R \pi_{\alg{B}, \ast} G
		\to
			R \pi_{\Hat{\alg{k}}_{x}, \ast} G_{x}
	\]
in $D(F^{\perar}_{\et})$.
Then the object $R \pi_{\alg{B}, \Hat{!}} G \in D(F^{\perar}_{\et})$ in \cite[Section 9.1]{Suz24}
is a canonical object fitting in a distinguished triangle%
\footnote{
	Actually it is denoted as $R \Bar{\pi}_{\alg{B}, \Hat{!}} G$ in \cite[Section 9.1]{Suz24}.
	The bar in $\Bar{\pi}$ indicates it is defined by ``fibered sites''.
	Since we do not get into the details of fibered sites in this paper,
	we will not need or use this notation.
	We have $R \pi_{\alg{B}, \ast} G \cong R \Bar{\pi}_{\alg{B}, \ast} G$
	and $R \pi_{\Hat{\alg{k}}_{x}, \ast} G_{x} \cong R \Bar{\pi}_{\Hat{\alg{k}}_{x}, \ast} G_{x}$
	by construction.
}
	\[
			R \pi_{\alg{B}, \Hat{!}} G
		\to
			R \pi_{\alg{B}, \ast} G
		\to
			\bigoplus_{x \in T}
				R \pi_{\Hat{\alg{k}}_{x}, \ast} G_{x}.
	\]
For $q \in \Z$, denote $R^{q} \pi_{\alg{B}, \Hat{!}} G = H^{q} R \pi_{\alg{B}, \Hat{!}} G$.
For $G = \Lambda$ or $\nu(1)$, we take $G_{x}$ to be its natural counterpart over $\Spec \Hat{\alg{k}}_{x}$.
For $G = M$ a $B$-module, we take $G_{x} = M \tensor_{B} \Hat{k}_{x}$.
The morphism $G \to \pi_{\Hat{\alg{k}}_{x} / \alg{B}, \ast} G_{x}$ is the natural one.

\begin{Prop} \label{0017}
	Let $q \in \Z$.
	Let $G$ be either
	$\Lambda$, $\nu(1)$ or a finite projective $B$-module.
	Then $R^{q} \pi_{\alg{B}, \ast} G \in \mathcal{W}_{F}$ is ind-algebraic
	and $R^{q + 1} \pi_{\alg{B}, \Hat{!}} G \in \mathcal{W}_{F}$ is pro-algebraic.
	They are zero unless $q = 0, 1$.
\end{Prop}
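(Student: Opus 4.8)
The plan is to treat $\Lambda$, $\nu(1)$ and a finite projective $B$-module $M$ in parallel, first proving the statement for $R^{q}\pi_{\alg{B},\ast}$ and then deducing it for $R^{q+1}\pi_{\alg{B},\Hat{!}}$ from the defining triangle
\[
		R \pi_{\alg{B}, \Hat{!}} G
	\to
		R \pi_{\alg{B}, \ast} G
	\to
		\bigoplus_{x \in T} R \pi_{\Hat{\alg{k}}_{x}, \ast} G_{x},
\]
the local inputs at the points of $T$ coming from Sections~\ref{0113} and~\ref{0007}. For the degree bounds: $V$ is affine, so coherent cohomology on $V$ (and on each base change $\Spec\alg{B}(F')$, a filtered union of affine curves) vanishes above degree $0$, whence $R^{q}\pi_{\alg{B},\ast}M=0$ for $q\ge1$. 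For $\Lambda$ the Artin--Schreier sequence $0\to\Lambda\to\Ga\to\Ga\to0$ (second map $x\mapsto x^{p}-x$) identifies $R\pi_{\alg{B},\ast}\Lambda$ with the mapping fibre of an endomorphism of $\alg{B}$, concentrated in degrees $0,1$; for $\nu(1)$ the sequence $0\to\nu(1)\to\Omega^{1}_{V}\to\Omega^{1}_{V}\to0$ (second map $C-1$, $C$ the Cartier operator, every $1$-form on the curve being closed) identifies $R\pi_{\alg{B},\ast}\nu(1)$ with the mapping fibre of an endomorphism of $\Omega^{1}_{V}(V)$, again in degrees $0,1$. Combining $R^{q}\pi_{\alg{B},\ast}G=0$ for $q\notin\{0,1\}$ with $R^{q}\pi_{\Hat{\alg{k}}_{x},\ast}G_{x}=0$ for $q\ge2$ (local fields; Section~\ref{0007}), the long exact sequence of the triangle gives $R^{m}\pi_{\alg{B},\Hat{!}}G=0$ for $m\notin\{1,2\}$, and $R^{0}\pi_{\alg{B},\Hat{!}}G=\ker\bigl(R^{0}\pi_{\alg{B},\ast}G\to\bigoplus_{x}R^{0}\pi_{\Hat{\alg{k}}_{x},\ast}G_{x}\bigr)=0$ in each case.

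For the $R^{q}\pi_{\alg{B},\ast}$-part I would argue directly. In degree $0$: $R^{0}\pi_{\alg{B},\ast}\Lambda=\Lambda$ is finite \'etale ($V$ geometrically connected); $R^{0}\pi_{\alg{B},\ast}M$ is the vector group $F'\mapsto F'\tensor_{F}M$, a filtered direct limit of copies of the perfection of $\Ga$ with injective transitions, hence ind-algebraic and in $\mathcal{W}_{F}$; and $R^{0}\pi_{\alg{B},\ast}\nu(1)$ is the kernel of a homomorphism out of the vector group $\Omega^{1}_{V}(V)$, hence ind-algebraic and in $\mathcal{W}_{F}$ (Propositions~\ref{0089} and~\ref{0048}). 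In degree $1$: $R^{1}\pi_{\alg{B},\ast}M=0$, while $R^{1}\pi_{\alg{B},\ast}\Lambda$ and $R^{1}\pi_{\alg{B},\ast}\nu(1)$ are cokernels of homomorphisms out of the connected ind-algebraic groups $\alg{B}$, $\Omega^{1}_{V}(V)\in\mathcal{W}_{F}$, hence connected ind-algebraic in $\mathcal{W}_{F}$ by Proposition~\ref{0089}.

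For the local terms: each $\Hat{k}_{x}$ is a complete discretely valued field of characteristic $p$ with residue field $F_{x}$ finite (so perfect) over $F$, giving $[\Hat{k}_{x}:\Hat{k}_{x}^{p}]=p$, and $\Hat{\alg{k}}_{x}=\Weil_{F_{x}/F}\Hat{\alg{k}}_{x,0}$; thus Sections~\ref{0113} and~\ref{0007} apply to $\Hat{k}_{x,0}$ (using the comparison between the relative \'etale and relatively perfect sites as in the proof of Proposition~\ref{0013}, and that $\Weil_{F_{x}/F}$ preserves $\mathcal{W}$, connectedness and the ind-/pro-algebraic properties). Here $\Lambda=\nu_{1}(0)$ and $\nu(1)=\nu_{1}(1)$ lie in $\Alg^{\RP}_{u}/\Hat{k}_{x}$ and are wound (the latter by the duality $\nu_{1}(1)\leftrightarrow\nu_{1}(0)$ noted after Section~\ref{0113}), while $M\tensor_{B}\Hat{k}_{x}$ is a finite-dimensional, hence split, $\Hat{k}_{x}$-vector group. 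So by Propositions~\ref{0005}, \ref{0004} and~\ref{0009}: $R^{0}\pi_{\Hat{\alg{k}}_{x},\ast}G_{x}\in\mathcal{W}_{F}$, pro-algebraic when $G_{x}$ is wound; $R^{1}\pi_{\Hat{\alg{k}}_{x},\ast}G_{x}\in\Ind\Alg_{u}/F$ is connected and in $\mathcal{W}_{F}$; and $R^{q}\pi_{\Hat{\alg{k}}_{x},\ast}G_{x}=0$ for $q\ge2$. Feeding these and the previous step into the long exact sequence of the triangle and running the homological algebra of Section~\ref{0114} (Propositions~\ref{0041}, \ref{0047}, \ref{0048}, \ref{0049}) places $R^{m}\pi_{\alg{B},\Hat{!}}G$ in $\mathcal{W}_{F}$. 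To upgrade to \emph{pro}-algebraic I would apply Proposition~\ref{0078} to the derived-level curve duality of \cite[Section~9]{Suz24}, the perfect pairing interchanging $R\pi_{\alg{B},\Hat{!}}$ with $R\pi_{\alg{B},\ast}$ and the coefficients $\Lambda\leftrightarrow\nu(1)$, $M\leftrightarrow\Hom_{B}(M,\Omega^{1}_{V})$, all again of the treated type: since the Serre self-duality of $\mathcal{W}_{F}$ interchanges connected ind-algebraic and connected pro-algebraic groups and fixes finite \'etale $\pi_{0}$'s, the ind-algebraicity of the $R^{q}\pi_{\alg{B},\ast}$-groups forces the pro-algebraicity of $R^{q+1}\pi_{\alg{B},\Hat{!}}G$.

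The heavy lifting is thus borrowed: the structure of $R^{q}\pi_{\Hat{\alg{k}}_{x},\ast}G_{x}$ over the local fields $\Hat{k}_{x}$ (Sections~\ref{0113} and~\ref{0007}, resting on N\'eron models, Greenberg transforms and Kato's duality for relatively perfect unipotent groups) and the derived-level curve duality of \cite[Section~9]{Suz24}. The point demanding care here is exactly the last one --- passing from membership in $\mathcal{W}_{F}$ to the sharper ind-/pro-algebraicity, where one must exploit that Serre self-duality swaps $\Ind\Alg_{u}/F$ and $\Pro\Alg_{u}/F$ on connected objects. If one prefers not to invoke the curve duality, one proves the $\Hat{!}$-side by hand: for $G=M$, $R^{1}\pi_{\alg{B},\Hat{!}}M=\Coker\bigl(M\to\bigoplus_{x}M\tensor_{B}\Hat{k}_{x}\bigr)$ is an extension of the finite-dimensional group $H^{1}(Y,\mathcal{M})$ (for a coherent extension $\mathcal{M}$ of $M$ to the proper curve $Y$) by a pro-algebraic group built from the completions of $\mathcal{M}$ along $T$, and $R^{2}\pi_{\alg{B},\Hat{!}}M=0$; for $G=\Lambda,\nu(1)$ the cokernel term $\Coker\bigl(R^{0}\pi_{\alg{B},\ast}G\to\bigoplus_{x}R^{0}\pi_{\Hat{\alg{k}}_{x},\ast}G_{x}\bigr)$ is a quotient of a pro-algebraic group by Proposition~\ref{0047}, leaving one to show that $\ker\bigl(R^{1}\pi_{\alg{B},\ast}G\to\bigoplus_{x}R^{1}\pi_{\Hat{\alg{k}}_{x},\ast}G_{x}\bigr)$ is finite --- for $\Lambda$ this is the finiteness, as a sheaf over $F^{\perar}$, of the group of abelian $p$-coverings of $Y$ split along $T$, and that finiteness (or the duality that sidesteps it) is the real obstacle.
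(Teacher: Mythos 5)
Your argument takes a different route from the paper, which simply refers the proof of this proposition to \cite[Proposition 9.1.5]{Suz24}; you reconstruct one via the Artin--Schreier and Cartier resolutions on the global side, the wound/split dichotomy of Sections~\ref{0113} and~\ref{0007} at the points of $T$, the excision triangle, and the curve-level perfect pairing fed into Proposition~\ref{0078} to pass from $\mathcal{W}_{F}$-membership to pro-algebraicity on the $\Hat{!}$-side. Structurally this is sound, and you correctly locate the genuine content in the last step, namely that the direct approach requires a finiteness statement which the duality sidesteps.

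The one place the argument does not close as written is $R^{0}\pi_{\alg{B},\ast}\nu(1)$. You cite Propositions~\ref{0089} and~\ref{0048} to put this kernel in $\mathcal{W}_{F}$, but neither does so: Proposition~\ref{0089} controls images, not kernels, and Proposition~\ref{0048} only places the identity component of a subobject in $\mathcal{W}_{F}$, upgrading to the full subobject only once $\pi_{0}$ is already known to be finite (Proposition~\ref{0049} would at best give that $\pi_{0}$ is \'etale). What is actually true, and needed, is that $R^{0}\pi_{\alg{B},\ast}\nu(1)$ is finite \'etale; this follows over $\closure{F}$ from the $d\log$ sequence $0 \to \Gm^{p} \to \Gm \to \nu(1) \to 0$ on $V_{\et}$, which gives $0 \to B^{\times}_{\closure{F}}/p \to H^{0}(V_{\closure{F}}, \nu(1)) \to \Pic(V_{\closure{F}})[p] \to 0$ with both outer terms finite. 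Without this the duality step you use to obtain pro-algebraicity of $R^{q+1}\pi_{\alg{B},\Hat{!}}\Lambda$ cannot even begin, since it takes $R^{q}\pi_{\alg{B},\ast}\nu(1) \in \mathcal{W}_{F}$ as its input. The rest of the degree-$0$ bookkeeping (for $\Lambda$ and for $M$), the local wound/split classification, and the reading off of degree bounds from the excision triangle are all handled correctly.
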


\begin{proof}
	This follows from \cite[Proposition 9.1.5]{Suz24} and its proof.
\end{proof}

Next, let $A$ be a ring.
Assume all of the following:
\begin{enumerate}
	\item
		$A$ is a two-dimensional regular integral domain.
	\item
		$A$ contains a primitive $p$-th root of unity $\zeta_{p}$.
	\item
		The radical $I$ of the ideal $(p)$ of $A$ is principal.
	\item
		$B := A / I$ is a one-dimensional geometrically connected smooth algebra over $F$.
	\item
		The pair $(A, I)$ is complete.
\end{enumerate}
Set $R = A[1 / p]$.
We apply the notation above for $V = \Spec B$.
We briefly recall the constructions in \cite[Section 9.2]{Suz24}.
For $F' \in F^{\perar}$,
let $\Hat{\alg{A}}(F')$ be the Kato canonical lifting
of the relatively perfect $B$-algebra $\alg{B}(F')$ to $A$.
Set $\Hat{\alg{R}}(F') = \Hat{\alg{A}}(F') \tensor_{A} R$.
This functor defines a site $\Spec \Hat{\alg{R}}_{\et}$ 
and a morphism of sites $\pi_{\Hat{\alg{R}}} \colon \Spec \Hat{\alg{R}}_{\et} \to \Spec F^{\perar}_{\et}$
by the method in Section \ref{0070}.
For $x \in T$, Let $\Hat{\Order}_{\eta_{x}}$ be the the Kato canonical lifting
of the relatively perfect $B$-algebra $\Hat{k}_{x}$ to $A$.
Let $\Hat{K}_{\eta_{x}}$ be its fraction field,
which is a complete discrete valuation field with residue field $\Hat{k}_{x}$.
Let $\Hat{\alg{K}}_{\eta_{x}, 0}(F'_{x})$ be the functor in $F'_{x} \in F_{x}^{\perar}$
defined in Section \ref{0070} for $\Hat{K}_{\eta_{x}}$.
Let $\Hat{\alg{K}}_{\eta_{x}} = \Weil_{F_{x} / F} \Hat{\alg{K}}_{\eta_{x}, 0}$.
Then we have a site a site $\Spec \Hat{\alg{K}}_{\eta_{x}, \et}$
and a morphism of sites
$\pi_{\Hat{\alg{K}}_{\eta_{x}}} \colon \Spec \Hat{\alg{K}}_{\eta_{x}, \et} \to \Spec F^{\perar}_{\et}$
by the method in Section \ref{0070}.
The object $R \pi_{\Hat{\alg{K}}_{\eta_{x}}, \ast} \Lambda \in D(F^{\perar}_{\et})$
is the Weil restriction $\Weil_{F_{x} / F}$ of
$R \pi_{\Hat{\alg{K}}_{\eta_{x}, 0}, \ast} \Lambda \in D(F^{\perar}_{x, \et})$.
We have a canonical morphism
	\[
			R \pi_{\Hat{\alg{R}}, \ast} \Lambda
		\to
			R \pi_{\Hat{\alg{K}}_{\eta_{x}}, \ast} \Lambda
	\]
in $D(F^{\perar}_{\et})$.
Then the object $R \pi_{\Hat{\alg{R}}, \Hat{!}} \Lambda \in D(F^{\perar}_{\et})$ in \cite[Section 9.2]{Suz24}
is a canonical object fitting in a distinguished triangle
	\[
			R \pi_{\Hat{\alg{R}}, \Hat{!}} \Lambda
		\to
			R \pi_{\Hat{\alg{R}}, \ast} \Lambda
		\to
			\bigoplus_{x \in T}
				R \pi_{\Hat{\alg{K}}_{\eta_{x}}, \ast} \Lambda.
	\]
For $q \in \Z$, denote
$R^{q} \pi_{\Hat{\alg{R}}, \Hat{!}} \Lambda = H^{q} R \pi_{\Hat{\alg{R}}, \Hat{!}} \Lambda$.

\begin{Prop} \label{0018}
	Let $q \in \Z$.
	Then $R^{q} \pi_{\Hat{\alg{R}}, \ast} \Lambda \in \mathcal{W}_{F}$ is ind-algebraic
	and $R^{q + 1} \pi_{\Hat{\alg{R}}, \Hat{!}} \Lambda \in \mathcal{W}_{F}$ is pro-algebraic.
	They are zero unless $q = 0, 1, 2$.
\end{Prop}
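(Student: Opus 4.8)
The plan is to deduce the statement from the curve case (Proposition \ref{0017}) and the two-dimensional local field case (Propositions \ref{0014} and \ref{0031}) by a d\'evissage along the $p$-adic nearby cycle filtration, keeping track of the ind-/pro-algebraicity in each degree.

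Following the construction of \cite[Section 9.2]{Suz24}: via the Kato canonical lifting one has a site $\Spec\Hat{\alg{A}}_{\et}$ over $\Spec F^{\perar}_{\et}$ with morphisms $\Spec\Hat{\alg{R}}_{\et} \stackrel{j}{\to} \Spec\Hat{\alg{A}}_{\et} \stackrel{i}{\gets} \Spec\alg{B}_{\et}$ over $\Spec F^{\perar}_{\et}$, and, writing $R\Psi = i^{\ast}Rj_{\ast}$, the objects $R\pi_{\Hat{\alg{R}},\ast}\Lambda$ and $R\pi_{\Hat{\alg{R}},\Hat{!}}\Lambda$ are assembled by finitely many distinguished triangles out of $R\pi_{\alg{B},\ast}$ (resp.\ $R\pi_{\alg{B},\Hat{!}}$) applied to $R\Psi\Lambda$ and out of the boundary cohomologies $R\pi_{\Hat{\alg{K}}_{\eta_{x}},\ast}\Lambda$ for $x \in T$ (the residue field of $\Hat{K}_{\eta_{x}}$ being the one-dimensional local field $\Hat{k}_{x}$, so that the results of Section \ref{0070} apply after Weil restriction along $F_{x}/F$). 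Since $\zeta_{p} \in A$, I would then invoke the relative-over-$V$ form of the structure of $p$-adic nearby cycles used in Section \ref{0070} (cf.\ \cite[Proposition 6.1]{KS19} and the proof of Proposition \ref{0008}): $R^{q}\Psi\Lambda$ vanishes outside a bounded range of degrees and admits, in each degree, a finite filtration whose graded pieces are among $\Lambda$, $\nu(1)$, $\Ga^{\RP}/(\Ga^{\RP})^{p}$ and finite projective $B$-modules. Proposition \ref{0017} (with $\Ga^{\RP}/(\Ga^{\RP})^{p}$ controlled by $\Ga^{\RP}$ through a short exact sequence) then gives that $R^{s}\pi_{\alg{B},\ast}$ of each graded piece is ind-algebraic in $\mathcal{W}_{F}$ and $R^{s+1}\pi_{\alg{B},\Hat{!}}$ of it is pro-algebraic in $\mathcal{W}_{F}$, each zero for $s \ne 0,1$.

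Feeding these inputs into the distinguished triangles, using the homological algebra of Section \ref{0114} (Propositions \ref{0024}, \ref{0047}, \ref{0048}, \ref{0089} and \ref{0041}) together with Proposition \ref{0014} for the boundary terms, I would obtain that $R^{q}\pi_{\Hat{\alg{R}},\ast}\Lambda$ is ind-algebraic in $\mathcal{W}_{F}$ and $R^{q+1}\pi_{\Hat{\alg{R}},\Hat{!}}\Lambda$ is pro-algebraic in $\mathcal{W}_{F}$, all zero for $q \ne 0,1,2$ (the degree bounds also being contained in the derived-level statement of \cite[Section 9.2]{Suz24}).

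The step I expect to be the main obstacle is precisely this last d\'evissage: ensuring that the conclusions come out ind-algebraic (for $\pi_{\Hat{\alg{R}},\ast}$) and pro-algebraic (for $\pi_{\Hat{\alg{R}},\Hat{!}}$) in \emph{every} degree, which relies not on formal homological algebra alone but on the explicit shape of the top-degree nearby cycle sheaf (its $\nu(1)$-part) and on the duality between $\pi_{\ast}$ and $\pi_{\Hat{!}}$ at the boundary points, so that the relevant cokernels of boundary maps are of the correct algebraicity type. Establishing the relative-over-$V$ structure of $R^{q}\Psi\Lambda$ with enough precision to run this argument, along the lines of \cite{KS19} and the proof of Proposition \ref{0013}, is the other point that requires care.
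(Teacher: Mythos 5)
Your proposal is essentially the same approach the paper takes: a d\'evissage along the $p$-adic nearby-cycle filtration, feeding the graded pieces (filtered compatibly at all boundary points $x \in T$) into Proposition \ref{0017} and the local field results of Section \ref{0070}, then assembling via the spectral sequence / distinguished triangles. The paper cites two spectral sequences
$R^{i}\pi_{\alg{B},\ast}H^{j}\mathcal{E} \Rightarrow R^{i+j}\pi_{\Hat{\alg{R}},\ast}\Lambda$ and
$R^{i}\pi_{\alg{B},!}H^{j}\mathcal{E} \Rightarrow R^{i+j}\pi_{\Hat{\alg{R}},!}\Lambda$
from \cite[Section 9.3, Proposition 9.2.4, Propositions 9.3.1--3]{Suz24}, where $H^{j}\mathcal{E}$ admits a finite filtration with graded pieces $\Lambda$, $\nu(1)$ or a finite projective $B$-module (no $\Ga^{\RP}/(\Ga^{\RP})^{p}$ in this relative-over-$V$ form, contrary to your list --- though your proposed fix via an extra short exact sequence through $\Ga^{\RP}$ would have been harmless), together with the degree-$3$ vanishings $R^{3}\pi_{\Hat{\alg{R}},\ast}\Lambda=0$ (\cite[Proposition 9.5.3]{Suz24}) and $R^{3}\pi_{\Hat{\alg{K}}_{\eta_x},\ast}\Lambda=0$ (Proposition \ref{0014}), plus Proposition \ref{0012} for the residue-level cohomological dimension bound. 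One clarification on your closing worry: because $\Ind\Alg_{u}/F$ and $\Pro\Alg_{u}/F$ are abelian subcategories of $\Ind\Pro\Alg_{u}/F$ closed under extensions, and because these spectral sequences have only two nonzero columns ($i=0,1$ by Proposition \ref{0017}) and hence degenerate at $E_{2}$, the ind-/pro-algebraicity of each $R^{q}$ follows purely formally from the $E_{2}$-terms; no appeal to the $\nu(1)$-part of the top-degree nearby cycle sheaf or to the duality between $\pi_{\ast}$ and $\pi_{\Hat{!}}$ at boundary points is needed at this stage.
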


\begin{proof}
	By \cite[Section 9.3, Proposition 9.2.4, Propositions 9.3.1--3]{Suz24},
	there are spectral sequences
		\[
				E_{2}^{i j}
			=
				R^{i} \pi_{\alg{B}, \ast} H^{j} \mathcal{E}
			\Longrightarrow
				R^{i + j} \pi_{\Hat{\alg{R}}, \ast} \Lambda,
		\]
		\[
				E_{2}^{i j}
			=
				R^{i} \pi_{\alg{B}, !} H^{j} \mathcal{E}
			\Longrightarrow
				R^{i + j} \pi_{\Hat{\alg{R}}, !} \Lambda,
		\]
	where $H^{j} \mathcal{E} \in \Ab(\alg{B}_{\et})$
	(and, implicitly, $H^{j} \mathcal{E}_{x} \in \Ab(\Hat{\alg{k}}_{x, \et})$
	and a morphism $H^{j} \mathcal{E} \to \pi_{\Hat{\alg{k}}_{x} / \alg{B}, \ast} H^{j} \mathcal{E}_{x}$)
	is a certain sheaf admitting a finite filtration with graded pieces
	isomorphic to $\Lambda$, $\nu(1)$ or a finite projective $B$-module
	(and this filtration is compatible at all $x \in T$).
	We have $R^{3} \pi_{\Hat{\alg{R}}, \ast} \Lambda = 0$
	by \cite[Proposition 9.5.3]{Suz24}.
	Also $R^{3} \pi_{\Hat{\alg{K}}_{\eta_{x}}, \ast} \Lambda = 0$
	by Proposition \ref{0014}.
	With Proposition \ref{0012},
	the result follows by writing down the above (degenerate) spectral sequences
	and applying Propositions \ref{0017} and \ref{0014}.
\end{proof}

Set
	\[
			R \alg{\Gamma}(\Hat{\alg{R}}, \Lambda)
		=
			\algebrize R \pi_{\Hat{\alg{R}}, \Lambda} \Lambda
		\in
			D(F^{\ind\rat}_{\pro\et}),
	\]
	\[
			R \alg{\Gamma}_{c}(\Hat{\alg{R}}, \Lambda)
		=
			\algebrize R \pi_{\Hat{\alg{R}}, \Hat{!}} \Lambda
		\in
			D(F^{\ind\rat}_{\pro\et}),
	\]
and $\alg{H}^{q} = H^{q} R \alg{\Gamma}$, $\alg{H}^{q}_{c} = H^{q} R \alg{\Gamma}_{c}$.
The object $R \alg{\Gamma}(\Hat{\alg{K}}_{\eta_{x}}, \Lambda) = \algebrize R \pi_{\Hat{\alg{K}}_{\eta_{x}}} \Lambda$
of $D(F^{\ind\rat}_{\pro\et})$ is the Weil restriction $\Weil_{F_{x} / F}$ of
$R \alg{\Gamma}(\Hat{\alg{K}}_{\eta_{x}, 0}, \Lambda) \in D(F^{\ind\rat}_{x, \pro\et})$.

\begin{Prop} \label{0020}
	Let $q \in \Z$.
	Then $\alg{H}^{q}(\Hat{\alg{R}}, \Lambda) \in \mathcal{W}_{F}$ is ind-algebraic
	and $\alg{H}^{q + 1}_{c}(\Hat{\alg{R}}, \Lambda) \in \mathcal{W}_{F}$ is pro-algebraic.
	They are zero unless $q = 0, 1, 2$.
\end{Prop}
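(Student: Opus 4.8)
The plan is to transport the structure results of Proposition~\ref{0018}, which are statements in $D(F^{\perar}_{\et})$, across the algebrization functor $\algebrize$ to $D(F^{\ind\rat}_{\pro\et})$, in exactly the way that Proposition~\ref{0031} is obtained from Proposition~\ref{0014}. The point is that everything in the present statement follows once one identifies $\alg{H}^{q}(\Hat{\alg{R}}, \Lambda)$ with $R^{q}\pi_{\Hat{\alg{R}}, \ast}\Lambda$ and $\alg{H}^{q+1}_{c}(\Hat{\alg{R}}, \Lambda)$ with $R^{q+1}\pi_{\Hat{\alg{R}}, \Hat{!}}\Lambda$: membership in $\mathcal{W}_{F}$, ind-algebraicity, pro-algebraicity and the vanishing outside $q=0,1,2$ are then immediate from Proposition~\ref{0018}, since $\Ind \Alg_{u}/F$ and $\Pro \Alg_{u}/F$ are full subcategories of $\Ind \Pro \Alg_{u}/F$.

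First I would note that, by Proposition~\ref{0018}, the complexes $R\pi_{\Hat{\alg{R}}, \ast}\Lambda$ and $R\pi_{\Hat{\alg{R}}, \Hat{!}}\Lambda$ in $D(F^{\perar}_{\et})$ are bounded with all cohomology objects in $\mathcal{W}_{F}$. Since $\genby{\mathcal{W}_{F}}_{F^{\perar}_{\et}}$ is a triangulated subcategory, closed under direct summands, containing $\mathcal{W}_{F}$ placed in degree zero, it contains every bounded complex whose cohomology lies in $\mathcal{W}_{F}$ — one sees this by inducting on the number of nonzero cohomology objects and splicing with the canonical truncation triangles. Hence both complexes lie in $\genby{\mathcal{W}_{F}}_{F^{\perar}_{\et}}$, and so by \cite[Proposition~3.1.5]{Suz24} their images $R\alg{\Gamma}(\Hat{\alg{R}}, \Lambda)=\algebrize R\pi_{\Hat{\alg{R}}, \ast}\Lambda$ and $R\alg{\Gamma}_{c}(\Hat{\alg{R}}, \Lambda)=\algebrize R\pi_{\Hat{\alg{R}}, \Hat{!}}\Lambda$ lie in $D^{b}(\Ind \Pro \Alg_{u}/F)$. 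To pin down their cohomology I would run a d\'evissage in $D(F^{\ind\rat}_{\pro\et})$: $\algebrize$ is triangulated and carries an object of $\mathcal{W}_{F}$ placed in degree zero to the same object placed in degree zero, so applying it to the canonical truncation triangles of $R\pi_{\Hat{\alg{R}}, \ast}\Lambda$ — whose graded pieces are $(R^{q}\pi_{\Hat{\alg{R}}, \ast}\Lambda)[-q]$ — and reading off the long exact cohomology sequences shows, by induction on $q$, that $\algebrize(\tau_{\le q}R\pi_{\Hat{\alg{R}}, \ast}\Lambda)$ has cohomology $R^{i}\pi_{\Hat{\alg{R}}, \ast}\Lambda$ in each degree $i\le q$ and vanishes in higher degrees. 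Taking $q$ large gives $\alg{H}^{q}(\Hat{\alg{R}}, \Lambda)\cong R^{q}\pi_{\Hat{\alg{R}}, \ast}\Lambda$, and the identical argument for $R\pi_{\Hat{\alg{R}}, \Hat{!}}\Lambda$ gives $\alg{H}^{q+1}_{c}(\Hat{\alg{R}}, \Lambda)\cong R^{q+1}\pi_{\Hat{\alg{R}}, \Hat{!}}\Lambda$. (Alternatively, once one knows these objects lie in $\mathcal{W}_{F}$ — which can be extracted from the $D^{b}(\Ind \Pro \Alg_{u}/F)$-statement by feeding the short exact sequences of the truncation filtration into Propositions~\ref{0024}, \ref{0041}, \ref{0047} and~\ref{0048} — the isomorphisms follow directly from Proposition~\ref{0015}.)

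The only input that is not purely formal is the behaviour of $\algebrize$ on $\mathcal{W}_{F}$: that it is triangulated and restricts to the natural inclusion $\mathcal{W}_{F}\into\Ab(F^{\ind\rat}_{\pro\et})$, which is what makes the graded pieces of the d\'evissage compute correctly. This rests on \cite[Propositions~3.1.2, 3.1.4 and~3.1.5]{Suz24} together with the full faithfulness of the Yoneda embedding of $\mathcal{W}_{F}$; granting it, the remainder is bookkeeping with truncation triangles, the relevant extension problems being controlled by the closure properties of $\mathcal{W}_{F}$ inside $\Ind \Pro \Alg_{u}/F$ collected in Section~\ref{0114}. I therefore expect the main obstacle to be conceptual only insofar as one must correctly invoke (or re-derive) this compatibility of $\algebrize$; everything downstream of it is routine.
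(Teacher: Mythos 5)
Your proof is correct and follows the same approach as the paper: the paper's proof of Proposition~\ref{0020} is the one-line citation of Propositions~\ref{0015} and~\ref{0018}, which is exactly the combination you assemble. Your explicit d\'evissage through truncation triangles is simply unpacking the content of Proposition~\ref{0015} (whose proof in the paper performs the identification of cohomology objects via Galois invariants and full faithfulness of the Yoneda embedding rather than by a truncation induction), so you arrive at the same place with a little more detail than strictly required.
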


\begin{proof}
	This follows from Propositions \ref{0015} and \ref{0018}.
\end{proof}


\section{Two-dimensional local rings: preliminaries}
\label{0086}

For the rest of the paper,
let $A$ be a two-dimensional normal noetherian complete local ring
of mixed characteristic with residue field $F$.

In this section, we first show that
the ind-pro-algebraic group structures on cohomology of punctured spectra of $A$
constructed in \cite{Suz24} belong to $\mathcal{W}_{F}$
if $A$ is ``enough resolved''.
We then show a slightly weaker property for a general $A$
as a preparation for the next section.
This is done by taking a resolution of singularities of $A$
and combining the statements already proved for
tubular neighborhoods of the smooth part of the exceptional divisor (Section \ref{0059})
and the local rings of the resolution at closed points.

Let $K$ be the fraction field of $A$.
Let $X = \Spec A \setminus \{\ideal{m}\}$,
where $\ideal{m}$ is the maximal ideal of $A$.
Let $P$ be the set of height one primes ideals of $A$.
For $\ideal{p} \in P$, let $\Hat{A}_{\ideal{p}}$ be the complete local ring of $A$ at $\ideal{p}$
and $\Hat{K}_{\ideal{p}}$ and $\kappa(\ideal{p})$ its fraction field and residue field, respectively.
Let $F_{\ideal{p}}$ be the residue field of $\kappa(\ideal{p})$.
Let $S \subset P$ be a finite subset.
Set $U_{S} = X \setminus S$.

We briefly recall the constructions in \cite[Section 10.1]{Suz24}.
The ring $A$ has a canonical $W(F)$-algebra structure
(\cite[Chapter V, Section 4, Theorem 2.1]{DG70b}).
For $F' \in F^{\perar}$, let
	\[
			\alg{A}(F')
		=
			W(F') \ctensor_{W(F)} A
		=
			\invlim_{n} (W(F') \tensor_{W(F)} A / \ideal{m}^{n}),
	\]
	\[
			\alg{U}_{S}(F')
		=
			U_{S} \times_{\Spec A} \Spec \alg{A}(F').
	\]
Let $\alg{U}_{S, \et}$ be the category of pairs $(U', F')$
(where $F' \in F^{\perar}$ and $U'$ an \'etale $\alg{U}_{S}(F')$-scheme)
endowed with the \'etale topology.
The functor $F' \mapsto (\alg{U}_{S}(F'), F')$ defines a morphism of sites
	\[
			\pi_{\alg{U}_{S}}
		\colon
			\alg{U}_{S, \et}
		\to
			\Spec F^{\perar}_{\et}.
	\]
Let $\lambda_{S} \colon \alg{U}_{S, \et} \to \alg{X}_{\et}$ be the morphism defined by
$(X', F') \mapsto (X' \times_{\alg{X}(F')} \alg{U}_{S}(F'), F')$.
Let $R \pi_{\alg{U}_{S}, !} = R \pi_{\alg{X}, \ast} \lambda_{S, !}$.
Define
	\[
			R \alg{\Gamma}(\alg{U}_{S}, \var)
		=
			\algebrize R \pi_{\alg{U}_{S}, \ast}
		\colon
			D(\alg{U}_{S, \et})
		\to
			D(F^{\ind\rat}_{\pro\et}),
	\]
	\[
			R \alg{\Gamma}_{c}(\alg{U}_{S}, \var)
		=
			\algebrize R \pi_{\alg{U}_{S}, !}
		\colon
			D(\alg{U}_{S, \et})
		\to
			D(F^{\ind\rat}_{\pro\et})
	\]
and $\alg{H}^{q} = H^{q} R \alg{\Gamma}$, $\alg{H}^{q}_{c} = H^{q} R \alg{\Gamma}_{c}$.
If $G \in D^{b}(\alg{U}_{S, \et})$ is concentrated in non-negative degrees
and $R^{q} \pi_{\alg{U}_{S}, \ast} G$ is in $\mathcal{W}_{F}$ for all $q$ and zero for large enough $q$,
then we occasionally write $\alg{\Gamma}(\alg{U}_{S}, G) = \alg{H}^{0}(\alg{U}_{S}, G)$.
If $U_{S} = \Spec R$ is affine,
then $\pi_{\alg{U}_{S}}$, $R \alg{\Gamma}(\alg{U}_{S}, \var)$ and $R \alg{\Gamma}_{c}(\alg{U}_{S}, \var)$
are also denoted as $\pi_{\alg{R}}$, $R \alg{\Gamma}(\alg{R}, \var)$ and $R \alg{\Gamma}_{c}(\alg{R}, \var)$, respectively.

For $\ideal{p} \in P$ and $F' \in F^{\perar}$,
let $\Hat{\alg{A}}_{\ideal{p}}(F')$ be the ring $\alg{A}(F') \tensor_{A} \Hat{A}_{\ideal{p}}$
completed with respect to the ideal $\alg{A}(F') \tensor_{A} \ideal{p} \Hat{A}_{\ideal{p}}$.
Let
	\[
			\varalg{\kappa}(\ideal{p})(F')
		=
				\Hat{\alg{A}}_{\ideal{p}}(F')
			\tensor_{\Hat{A}_{\ideal{p}}}
				\kappa(\ideal{p}),
	\]
	\[
			\Hat{\alg{K}}_{\ideal{p}}(F')
		=
				\Hat{\alg{A}}_{\ideal{p}}(F')
			\tensor_{\Hat{A}_{\ideal{p}}}
				\Hat{K}_{\ideal{p}}.
	\]
If $\ideal{p}$ divides $p$, then these functors are the Weil restrictions $\Weil_{F_{\ideal{p}} / F}$
of the functors defined in Section \ref{0070} for $\Hat{K}_{\ideal{p}}$.
We have sites and morphisms of sites
	\[
		\begin{CD}
				\Spec \Hat{\alg{K}}_{\ideal{p}, \et}
			@> \lambda_{\ideal{p}} >>
				\Spec \Hat{\alg{A}}_{\ideal{p}, \et}
			@< i_{\ideal{p}} <<
				\Spec \varalg{\kappa}(\ideal{p})_{\et}
			\\
			@V \pi_{\Hat{\alg{K}}_{\ideal{p}}} VV
			@V \pi_{\Hat{\alg{A}}_{\ideal{p}}} VV
			@V \pi_{\varalg{\kappa}(\ideal{p})} VV
			\\
				\Spec F^{\perar}_{\et}
			@=
				\Spec F^{\perar}_{\et}
			@=
				\Spec F^{\perar}_{\et}
		\end{CD}
	\]
as in Section \ref{0070}.
Let $R \alg{\Gamma}(\Hat{\alg{K}}_{\ideal{p}}, \var) = \algebrize R \pi_{\Hat{\alg{K}}_{\ideal{p}}, \ast}$ and so on.

For any $n \ge 1$, $r \ge 0$ and $\ideal{p} \in P$ dividing $p$,
we have the sheaf $\nu_{n}(r)$ on $\Spec \varalg{\kappa}(\ideal{p})_{\et}$.
Setting $S$ to be the set of primes dividing $p$,
we have a morphism
	\[
			\tau_{\le r}
			R \lambda_{S, \ast} \Lambda_{n}(r)
		\to
			i_{\ideal{p}, \ast} \nu_{n}(r - 1)
	\]
in $D(\alg{X}_{\et})$
by the restriction of the morphism \eqref{0107}
(where we set $\nu_{n}(-1) = 0$).
Let $\mathfrak{T}_{n}(r) \in D(\alg{X}_{\et})$ to be
the canonical mapping cone of the resulting morphism
	\[
			\tau_{\le r}
			R \lambda_{S, \ast} \Lambda_{n}(r)
		\to
			\bigoplus_{\ideal{p} \in S}
				i_{\ideal{p}, \ast} \nu_{n}(r - 1).
	\]
For $r < 0$, we set $\mathfrak{T}_{n}(r) = \lambda_{S, !} \Lambda_{n}(r)$.
For a general $S \subset P$ and $r \in \Z$,
we obtain an object $\mathfrak{T}_{n}(r) \in D(\alg{U}_{S, \et})$
by the restriction of $\mathfrak{T}_{n}(r) \in D(\alg{X}_{\et})$.

\begin{Prop} \label{0019}
	Assume all of the following:
	\begin{enumerate}
		\item \label{0053}
			$A$ is regular.
		\item \label{0055}
			$A$ contains a fixed primitive $p$-th root of unity $\zeta_{p}$.
		\item \label{0056}
			Either of the following holds:
			\begin{enumerate}
				\item \label{0057}
					$(p)$ is divisible by exactly one prime ideal $\ideal{p}$
					and $A / \ideal{p}$ is regular, or
				\item \label{0058}
					$(p)$ is divisible by exactly two prime ideals
					$\ideal{p}_{\alpha}$ and $\ideal{p}_{\beta}$,
					and $\ideal{p}_{\alpha} + \ideal{p}_{\beta} = \ideal{m}$,
					and both $A / \ideal{p}_{\alpha}$ and $A / \ideal{p}_{\beta}$ are regular.
			\end{enumerate}
	\end{enumerate}
	Set $R = A[1 / p]$ and write $\Spec R = U_{S}$
	(so $S$ is $\{\ideal{p}\}$ in Case \eqref{0057} and
	$\{\ideal{p}_{\alpha}, \ideal{p}_{\beta}\}$ in Case \eqref{0058}).
	Let $q \in \Z$.
	
	Then $R^{q} \pi_{\alg{R}, \ast} \Lambda \in \mathcal{W}_{F}$ is pro-algebraic
	and $R^{q + 1} \pi_{\alg{R}, !} \Lambda \in \mathcal{W}_{F}$ is ind-algebraic.
	They are zero unless $q = 0, 1, 2$.
	We have $\pi_{\alg{R}, \ast} \Lambda \cong \Lambda$.
\end{Prop}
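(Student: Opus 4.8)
The argument parallels that of Proposition \ref{0018}, with the $2$-dimensional local fields $\Hat{K}_{\ideal{p}}$ ($\ideal{p} \in S$, all dividing $p$) and their residue fields $\kappa(\ideal{p})$ playing the role of the curve $B$ there. Since only $\Lambda = \Lambda_{1}$-coefficients are involved, the $n = 1$ case of every cited result suffices, and by hypothesis \eqref{0055} we may trivialise all twists, so that the relevant nearby cycle sheaves $R^{q} \Psi \Lambda(r)$ of $\Hat{K}_{\ideal{p}} / \kappa(\ideal{p})$ are explicitly known. The equality $\pi_{\alg{R}, \ast} \Lambda \cong \Lambda$ is the easy point: for a field $F' \in F^{\perar}$ the ring $\alg{A}(F') = W(F') \ctensor_{W(F)} A$ is a complete regular local domain, hence $\alg{R}(F') = \alg{A}(F')[1 / p]$ is a domain and $H^{0}(\alg{R}(F'), \Lambda) = \Lambda$; \'etale sheafifying over $\Spec F^{\perar}_{\et}$ gives the claim, and $\Lambda$ is finite \'etale.

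For the remaining assertions I would use the localisation decomposition of $X$ into its characteristic-$0$ part $U_{S} = \Spec R$ and the characteristic-$p$ locus $V(p) \cap X$, which underlies the construction of \cite[Section 10]{Suz24}: inside $X$ the reduced subscheme $V(p) \cap X$ equals $\bigsqcup_{\ideal{p} \in S} \Spec \kappa(\ideal{p})$ — a \emph{disjoint} union, since in case \eqref{0058} the divisors $V(\ideal{p}_{\alpha})$, $V(\ideal{p}_{\beta})$ meet only at $\ideal{m} \notin X$ — while the henselisation of $X$ along $\Spec \kappa(\ideal{p})$ is $\Spec \Hat{A}_{\ideal{p}}$ with fraction field the $2$-dimensional local field $\Hat{K}_{\ideal{p}}$. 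Pushing the associated distinguished triangles to $F$, and using $\zeta_{p} \in A$ to relate $R \lambda_{S, \ast} \Lambda$ to $\mathfrak{T}_{1}(r)$ and the $\nu_{1}(r - 1)$-sheaves, one expresses $R \pi_{\alg{R}, \ast} \Lambda$ and $R \pi_{\alg{R}, !} \Lambda$ through three kinds of inputs: the cohomologies $R \pi_{\Hat{\alg{K}}_{\ideal{p}}, \ast} \Lambda_{1}(r)$ and $R \pi_{\Hat{\alg{A}}_{\ideal{p}}, \ast} \mathfrak{T}_{1}(r)$, which lie in $D^{b}(\mathcal{W}_{F})$ with known shape by Propositions \ref{0014} and \ref{0031}; the pushforwards $R \pi_{\varalg{\kappa}(\ideal{p}), \ast}$ of the nearby cycle sheaves and their $\nu$-subquotients, which are objects of $\Alg^{\RP}_{u} / \kappa(\ideal{p})$ and hence land in $\mathcal{W}_{F}$ by Propositions \ref{0005}, \ref{0009} and \ref{0013}; and $R \pi_{\alg{X}, \ast} \mathfrak{T}_{1}(r)$. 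The last of these I would compute from the localisation triangle $R \alg{\Gamma}_{\ideal{m}}(\alg{A}, \mathfrak{T}_{1}(r)) \to R \pi_{\alg{A}, \ast} \mathfrak{T}_{1}(r) \to R \pi_{\alg{X}, \ast} \mathfrak{T}_{1}(r)$ on $\Spec A$: as $(\Spec A, \Spec F)$ is a Henselian pair, $R \pi_{\alg{A}, \ast} \mathfrak{T}_{1}(r)$ is the stalk of $\mathfrak{T}_{1}(r)$ at $\ideal{m}$, read off from the defining triangle of $\mathfrak{T}_{1}(r)$ (note $\nu_{1}(s) = 0$ for $s \ge 1$ since $F$ is perfect), while $R \alg{\Gamma}_{\ideal{m}}(\alg{A}, \mathfrak{T}_{1}(r))$ is governed by the purity of Sato's $p$-adic \'etale Tate twist on the regular scheme $\Spec A$ with strict normal crossing special fibre $V(p)$, which again reduces it to $\mathfrak{T}$- and $\nu$-cohomology along the strata of $V(p)$, i.e.\ to the data of Sections \ref{0070} and \ref{0007}.

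Granting this, all building blocks lie in $D^{b}(\mathcal{W}_{F})$ and are concentrated in the degree ranges claimed. Since $\mathcal{W}_{F}$ is closed under extensions but not abelian, the conclusion $R^{q} \pi_{\alg{R}, \ast} \Lambda, R^{q + 1} \pi_{\alg{R}, !} \Lambda \in \mathcal{W}_{F}$ is then obtained by propagating membership along the long exact sequences of the above triangles via the homological-algebra lemmas of Section \ref{0114}, namely Propositions \ref{0024}, \ref{0041}, \ref{0047}, \ref{0048} and \ref{0049}; the refinement that the $\ast$-terms are pro-algebraic and the $!$-terms ind-algebraic follows from the corresponding dichotomy among the building blocks (Propositions \ref{0031}, \ref{0005}, \ref{0004}) together with Propositions \ref{0024} and \ref{0047}, once one checks that the ind-algebraic contributions of the boundary terms only ever meet the $!$-side and the pro-algebraic ones the $\ast$-side. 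I expect the genuine difficulties to be, first, the purity computation of $R \alg{\Gamma}_{\ideal{m}}(\alg{A}, \mathfrak{T}_{1}(r))$ and, second, the bookkeeping needed to match the several triangles and to track the ind-/pro-algebraic types through the resulting spectral sequences without the convenience of an abelian category — precisely the sort of homological work that dominates the proof of Proposition \ref{0018}.
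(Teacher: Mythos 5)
Your route diverges from the paper's at the decisive point: the paper's proof is two lines long.  The statement for $R^{q}\pi_{\alg{R},\ast}\Lambda$ is quoted directly from \cite[Proposition 10.3.5]{Suz24} (which you could not know without that paper in hand, so an attempt to re-derive it is natural), and the statement for $R^{q+1}\pi_{\alg{R},!}\Lambda$ is then \emph{deduced} by applying Proposition \ref{0078} to the perfect pairing $R\pi_{\alg{R},\ast}\Lambda\tensor^{L}R\pi_{\alg{R},!}\Lambda\to\Lambda_{\infty}[-3]$ of \cite[Proposition 10.1.5]{Suz24}.  Once the $\ast$-side is known to lie in $\mathcal{W}_{F}$ with the stated pro-algebraic shape, duality immediately forces the $!$-side into $\mathcal{W}_{F}$ with the complementary ind-algebraic shape, because Serre duality on $\mathcal{W}_{F}$ interchanges connected pro-algebraic and connected ind-algebraic parts.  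You never invoke Proposition \ref{0078} and instead try to establish the $!$-side by the same localisation-triangle machinery as the $\ast$-side; this is the key idea you miss.

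Setting aside the missed duality shortcut, your direct plan has two concrete soft spots.  First, the assertion that ``the ind-algebraic contributions of the boundary terms only ever meet the $!$-side and the pro-algebraic ones the $\ast$-side'' is exactly the kind of bookkeeping claim that, when it genuinely has to be proved for non-affine $U_{S}$ later in the paper (compare the proofs of Propositions \ref{0050} and \ref{0052}), consumes a full page of case-by-case analysis in $\Ind\Pro\Alg_{u}/F$ using Propositions \ref{0024}, \ref{0041}, \ref{0047}--\ref{0049}; in your sketch it is simply hoped for, and it is not obvious that every boundary arrow keeps pro- and ind-algebraic contributions on the expected sides.  Second, you propose to feed $R\pi_{\alg{X},\ast}\mathfrak{T}_{1}(r)$ into the argument and to compute it from the Henselian-pair stalk plus a purity computation of $R\alg{\Gamma}_{\ideal{m}}(\alg{A},\mathfrak{T}_{1}(r))$; but the finiteness properties of $R\pi_{\alg{X},\ast}\mathfrak{T}_{n}(r)$ are established only \emph{after} Proposition \ref{0019} in the paper (Propositions \ref{0043}, \ref{0046}), starting from the present result, and the self-contained Sato purity computation along the two intersecting strata of $V(p)$ in Case \eqref{0058} that you propose as a substitute is not actually carried out and is the most delicate part of what you sketch.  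The duality argument sidesteps both issues at once.
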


\begin{proof}
	The statement for $R^{q} \pi_{\alg{R}, \ast} \Lambda$ follows from
	\cite[Proposition 10.3.5]{Suz24}.
	We have a perfect pairing
		\[
					R \pi_{\alg{R}, \ast} \Lambda
				\tensor^{L}
					R \pi_{\alg{R}, !} \Lambda
			\to
				\Lambda_{\infty}[-3]
		\]
	in $D(F^{\perar}_{\et})$ by \cite[Proposition 10.1.5]{Suz24}.
	Hence Proposition \ref{0078} implies the statement for $R^{q + 1} \pi_{\alg{R}, !} \Lambda$.
\end{proof}

\begin{Prop} \label{0021}
	Under the assumptions of Proposition \ref{0019},
	let $q \in \Z$.
	Then $\alg{H}^{q}(\alg{R}, \Lambda) \in \mathcal{W}_{F}$ is pro-algebraic
	and $\alg{H}^{q + 1}_{c}(\alg{R}, \Lambda) \in \mathcal{W}_{F}$ is ind-algebraic.
	They are zero unless $q = 0, 1, 2$.
	We have $\alg{\Gamma}(\alg{R}, \Lambda) \cong \Lambda$.
\end{Prop}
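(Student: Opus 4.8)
The plan is to derive this statement from Proposition~\ref{0019} by transporting its finiteness assertions through the functor $\algebrize$, exactly as Propositions~\ref{0020} and~\ref{0031} are obtained from their pre-algebrized counterparts.

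First I would record that, by Proposition~\ref{0019}, the objects $R\pi_{\alg{R},\ast}\Lambda$ and $R\pi_{\alg{R},!}\Lambda$ of $D(F^{\perar}_{\et})$ are bounded and have all their cohomology objects in $\mathcal{W}_{F}$ (nonzero only in degrees $0,1,2$, respectively $1,2,3$). Hence both belong to $\genby{\mathcal{W}_{F}}_{F^{\perar}_{\et}}$: any bounded complex in $D(F^{\perar}_{\et})$ whose cohomology objects lie in $\mathcal{W}_{F}$ is assembled, through the canonical truncation triangles, from finitely many objects of $\mathcal{W}_{F}$ placed in single degrees, and therefore lies in the smallest triangulated subcategory closed under direct summands that contains them.

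Next I would apply Proposition~\ref{0015} in each degree $q$, first with $G=R\pi_{\alg{R},\ast}\Lambda$ and $H=\algebrize G=R\alg{\Gamma}(\alg{R},\Lambda)$, then with $G=R\pi_{\alg{R},!}\Lambda$ and $H=R\alg{\Gamma}_{c}(\alg{R},\Lambda)$. This produces canonical isomorphisms
\[
	\alg{H}^{q}(\alg{R},\Lambda)\;\cong\;R^{q}\pi_{\alg{R},\ast}\Lambda,
	\qquad
	\alg{H}^{q+1}_{c}(\alg{R},\Lambda)\;\cong\;R^{q+1}\pi_{\alg{R},!}\Lambda
\]
for all $q$. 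Through these isomorphisms every conclusion of Proposition~\ref{0019} carries over verbatim: $\alg{H}^{q}(\alg{R},\Lambda)\in\mathcal{W}_{F}$ is pro-algebraic, $\alg{H}^{q+1}_{c}(\alg{R},\Lambda)\in\mathcal{W}_{F}$ is ind-algebraic, both vanish unless $q=0,1,2$, and $\alg{\Gamma}(\alg{R},\Lambda)=\alg{H}^{0}(\alg{R},\Lambda)\cong\pi_{\alg{R},\ast}\Lambda\cong\Lambda$.

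I do not expect any genuinely new obstacle here; the whole substance lies in Proposition~\ref{0019} (hence ultimately in \cite[Proposition~10.3.5]{Suz24} and the duality pairing \cite[Proposition~10.1.5]{Suz24}) and in the general transfer machinery of Proposition~\ref{0015}. The one point deserving a word of care is the verification of the hypotheses of Proposition~\ref{0015}: that $G$ lies in $\genby{\mathcal{W}_{F}}_{F^{\perar}_{\et}}$, which is the boundedness observation above, and that the cohomology objects of $H=\algebrize G$ lie in $\mathcal{W}_{F}$, which is built into the proof of Proposition~\ref{0015} (one checks that $H^{q}H$ and $H^{q}G$ agree on $F'$-points for all fields $F'\in F^{\perar}$ and uses that $\mathcal{W}_{F}$ is a full subcategory of $\Ab(F^{\perar}_{\et})$).
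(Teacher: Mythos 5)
Your argument is exactly the paper's: the paper's own proof of Proposition~\ref{0021} reads simply ``This follows from Propositions~\ref{0015} and~\ref{0019},'' and your d\'evissage remark (assembling $R\pi_{\alg{R},\ast}\Lambda$ and $R\pi_{\alg{R},!}\Lambda$ from their cohomologies via truncation triangles so as to place them in $\genby{\mathcal{W}_F}_{F^{\perar}_{\et}}$) is precisely the implicit step being relied on. The one thing worth flagging is that Proposition~\ref{0015} as stated lists $H^q(\algebrize G)\in\mathcal{W}_F$ among its \emph{hypotheses} rather than deriving it, so your parenthetical claim that this is ``built into the proof'' of Proposition~\ref{0015} is slightly loose --- though the paper's own one-line citation glosses over the same verification.
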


\begin{proof}
	This follows from Propositions \ref{0015} and \ref{0019}.
\end{proof}

Let $\pi_{\mathfrak{X} / A} \colon \mathfrak{X} \to \Spec A$ be a resolution of singularities
such that $Y \cup \closure{S} \subset \mathfrak{X}$ is
supported on a strict normal crossing divisor,
where $Y$ is the reduced part of $\mathfrak{X} \times_{A} F$
and $\closure{S}$ is the (reduced) closure of $S$ in $\mathfrak{X}$.
We briefly recall the constructions in \cite[Sections 10.4 and 10.5]{Suz24}.

Let $Y_{0}$ (resp.\ $Y_{1}$) be the set of closed (resp.\ generic) points of $Y$.
For $\eta \in Y_{1}$, let $Y_{\eta}$ be the closure of $\eta$ in $Y$
and set $\eta_{0} = Y_{\eta} \cap Y_{0}$.
Let $F_{\eta}$ be the constant field of $Y_{\eta}$.
Let $\Hat{\Order}_{K_{\eta}}$ be the complete local ring of $\mathfrak{X}$ at $\eta$
and $\Hat{K}_{\eta}$ its fraction field.
For $x \in Y_{0}$, let $\Hat{A}_{x}$ be the complete local ring of $\mathfrak{X}$ at $x$
with residue field $F_{x}$
and let $Y_{1}^{x}$ be the set of height one primes of $\Hat{A}_{x}$
lying over some element of $Y_{1}$ (via the morphism $\Spec \Hat{A}_{x} \to \mathfrak{X}$).
For $\eta \in Y_{1}$ and $x \in \eta_{0}$,
there is a unique $\eta_{x} \in Y_{1}^{x}$ lying over $\eta$.
Let $\Hat{K}_{\eta_{x}}$ be the complete local field of $\Hat{A}_{x}$ at $\eta_{x}$,
$\Hat{\Order}_{K_{\eta_{x}}}$ its ring of integers
and $\Hat{\kappa}(\eta_{x})$ its residue field.
For each $x \in Y_{0}$,
let $\Hat{B}_{x} = \Order(\Spec \Hat{A}_{x} \times_{\mathfrak{X}} Y)$,
$\Hat{R}_{x} = \Order(\Spec \Hat{A}_{x} \times_{\mathfrak{X}} X)$
and $\Hat{R}_{x, S} = \Order(\Spec \Hat{A}_{x} \times_{\mathfrak{X}} U_{S})$.

We will take completed neighborhoods of small enough affine opens
of irreducible components of $Y$.
For each $\eta \in Y_{1}$,
choose an affine open neighborhood $W_{\eta} \subset \mathfrak{X}$ of $\eta$
small enough so that:
\begin{itemize}
	\item \label{0094}
		$W_{\eta} \cap Y_{\eta'} = \emptyset$
		for any $\eta' \in Y_{1} \setminus \{\eta\}$,
	\item \label{0095}
		$W_{\eta}$ does not contain the specialization of any element of $S$ in $Y$ and
	\item \label{0096}
		$W_{\eta} \cap Y \subset W_{\eta}$ is a principal divisor.
\end{itemize}
Set $T = Y \setminus \bigcup_{\eta \in Y_{1}} (W_{\eta} \cap Y)$
and $B_{\eta, T} = \Order(W_{\eta} \cap Y)$.
For each $\eta \in Y_{1}$,
define $\Hat{A}_{\eta, T}$ to be the the completion of
$\Order(W_{\eta})$ with respect to the ideal $\ideal{m} \Order(W_{\eta})$.
Write $\Spec \Hat{R}_{\eta, T} = \Spec \Hat{A}_{\eta, T} \setminus \Spec B_{\eta, T}$.
For any $\eta \in Y_{1}$ and $x \in \eta_{0}$,
we have a canonical $A$-algebra homomorphism
$\Hat{A}_{\eta, T} \to \Hat{\Order}_{K_{\eta_{x}}}$
inducing the natural inclusion map
$B_{\eta, T} \into \Hat{\kappa}(\eta_{x})$ on the quotients.

We can apply the construction in this section for $\Hat{A}_{x}$
and the construction in Section \ref{0059} for $\Hat{A}_{\eta, T}$.
With Weil restrictions $\Weil_{F_{x} / F}$ and $\Weil_{F_{\eta} / F}$,
we have objects
$R \pi_{\Hat{\alg{R}}_{x, S}, \ast} \Lambda \in D(F^{\perar}_{\et})$,
$R \alg{\Gamma}_{c}(\Hat{\alg{R}}_{\eta, T}, \Lambda) \in D(F^{\ind\rat}_{\pro\et})$
and so on.

\begin{Prop} \label{0022}
	Assume that $S$ is the set of height one primes dividing $p$
	and that $\zeta_{p} \in A$.
	Let $q \in \Z$.
	Then $\alg{H}^{q}(\alg{U}_{S}, \Lambda) \in \Pro \Alg_{u} / F$
	and $\alg{H}^{q + 1}_{c}(\alg{U}_{S}, \Lambda) \in \Ind \Alg_{u} / F$.
	They are zero unless $q = 0, 1, 2, 3$ and $q = 0, 1, 2$, respectively.
	We have $\alg{\Gamma}(\alg{U}_{S}, \Lambda) \cong \Lambda$.
\end{Prop}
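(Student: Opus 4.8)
The plan is to reduce the statement to cases already handled: two-dimensional regular complete local rings (Propositions \ref{0019} and \ref{0021}), tubular neighborhoods of smooth affine curves (Proposition \ref{0020}), and two-dimensional local fields (Proposition \ref{0014}). One passes between these via a resolution of singularities and reassembles the pieces using the homological algebra of Section \ref{0114}.

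First I would observe that, since $S$ is the set of height one primes dividing $p$ and $\ideal{m} \supset (p)$, we have $U_{S} = \Spec A \setminus V(p) = \Spec R$ with $R = A[1/p]$, so we are in the affine situation and may write $\pi_{\alg{U}_{S}} = \pi_{\alg{R}}$. Next, choose a resolution $\pi_{\mathfrak{X}/A} \colon \mathfrak{X} \to \Spec A$ as set up just before the statement, so that $Y \cup \closure{S}$ is supported on a strict normal crossing divisor in the regular scheme $\mathfrak{X}$. Since $A$ is normal of dimension two, $\pi_{\mathfrak{X}/A}$ is an isomorphism over $X$, and (one checks) $\closure{S} \setminus S \subset \lvert Y \rvert$, so that $\pi_{\mathfrak{X}/A}^{-1}(U_{S})$ is the open subscheme $\mathfrak{X} \setminus (Y \cup \closure{S})$; at the level of the relative sites this lets us compute $\alg{U}_{S}$ on that complement. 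Following \cite[Sections 10.4 and 10.5]{Suz24}, this description together with the affine covering of a neighborhood of $Y$ by the $W_{\eta}$ ($\eta \in Y_{1}$) and the $\Spec \Hat{A}_{x}$ ($x \in Y_{0}$) yields distinguished triangles (equivalently, spectral sequences) expressing $R \pi_{\alg{U}_{S}, \ast} \Lambda$ and $R \pi_{\alg{U}_{S}, !} \Lambda$ in terms of the cohomology of the punctured complete local rings $\Hat{\alg{R}}_{x, S}$ at the closed points $x \in Y_{0}$, the cohomology of the tubular neighborhoods $\Hat{\alg{R}}_{\eta, T}$ of the open parts $W_{\eta} \cap Y$ of the components of $Y$, and the two-dimensional local field cohomology $R \pi_{\Hat{\alg{K}}_{\eta_{x}}, \ast} \Lambda$ at the intersection points (the last already being absorbed into the ``$\Hat{!}$'' variants of the tubular-neighborhood cohomology).

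I would then track the types. Each $\Hat{A}_{x}$ is a two-dimensional regular complete local ring; the strict normal crossing hypothesis on $Y \cup \closure{S}$ at $x$ is precisely condition \eqref{0056} of Proposition \ref{0019}, and $\zeta_{p} \in A \subset \Hat{A}_{x}$ gives condition \eqref{0055}, so Propositions \ref{0019} and \ref{0021} apply: $R \pi_{\Hat{\alg{R}}_{x, S}, \ast} \Lambda$ has pro-algebraic cohomology in $\mathcal{W}_{F}$ concentrated in degrees $0,1,2$, and $R \pi_{\Hat{\alg{R}}_{x, S}, !} \Lambda$ has ind-algebraic cohomology in $\mathcal{W}_{F}$. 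By Proposition \ref{0020}, $R \pi_{\Hat{\alg{R}}_{\eta, T}, \ast} \Lambda$ has ind-algebraic cohomology and $R \pi_{\Hat{\alg{R}}_{\eta, T}, \Hat{!}} \Lambda$ has pro-algebraic cohomology, both in $\mathcal{W}_{F}$. Thus in the triangle computing $R \pi_{\alg{U}_{S}, \ast} \Lambda$ the relevant inputs are the \emph{pro}-algebraic objects $R \pi_{\Hat{\alg{R}}_{\eta, T}, \Hat{!}} \Lambda$ and $R \pi_{\Hat{\alg{R}}_{x, S}, \ast} \Lambda$, whereas in the one computing $R \pi_{\alg{U}_{S}, !} \Lambda$ they are the \emph{ind}-algebraic objects $R \pi_{\Hat{\alg{R}}_{\eta, T}, \ast} \Lambda$ and $R \pi_{\Hat{\alg{R}}_{x, S}, !} \Lambda$; in both cases all terms are objects of $\mathcal{W}_{F}$. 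Feeding the associated long exact sequences into Proposition \ref{0041} (together with Propositions \ref{0024} and \ref{0015} to descend to $D(F^{\ind\rat}_{\pro\et})$) gives $\alg{H}^{q}(\alg{U}_{S}, \Lambda) \in \Pro \Alg_{u} / F$ and $\alg{H}^{q+1}_{c}(\alg{U}_{S}, \Lambda) \in \Ind \Alg_{u} / F$, and the vanishing ranges $q \in \{0,1,2,3\}$ and $q \in \{0,1,2\}$ follow from the vanishing statements for the pieces in Propositions \ref{0020}, \ref{0021} and \ref{0014} and the number of steps in the (degenerate) spectral sequences. Finally $\alg{\Gamma}(\alg{U}_{S}, \Lambda) \cong \Lambda$, since $\alg{R}(F') = \alg{A}(F')[1/p]$ is connected for every field $F' \in F^{\perar}$ (indeed $\alg{A}(F') = W(F') \ctensor_{W(F)} A$ is local with residue field $F'$), whence $\pi_{\alg{R}, \ast} \Lambda \cong \Lambda$.

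The main obstacle I anticipate is not any new geometric input but the homological bookkeeping of the third paragraph: one must extract from \cite[Sections 10.4--10.5]{Suz24} the gluing triangles precisely enough to see (i) that the apparent clash between the types produced by Propositions \ref{0020} and \ref{0021} --- ind- versus pro-algebraic for ordinary cohomology --- is resolved by the degree shift with which the ``$\Hat{!}$''-completed tubular-neighborhood complexes enter, and (ii) that the two-dimensional local field contributions $R \pi_{\Hat{\alg{K}}_{\eta_{x}}, \ast} \Lambda$, which by Proposition \ref{0014} are only \emph{general} objects of $\mathcal{W}_{F}$, appear solely through those completed complexes and hence do not obstruct pro- (resp.\ ind-) algebraicity. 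Verifying the exactness and compatibility of these triangles is where the real work lies.
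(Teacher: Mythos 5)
Your proposal matches the paper's proof in essence: the paper also invokes the two gluing distinguished triangles from the diagram in the proof of \cite[Proposition 10.5.5]{Suz24}, in which $R\alg{\Gamma}(\alg{U}_{S}, \Lambda)$ is sandwiched between $\bigoplus_{\eta} R\alg{\Gamma}_{c}(\Hat{\alg{R}}_{\eta,T},\Lambda)$ and $\bigoplus_{x} R\alg{\Gamma}(\Hat{\alg{R}}_{x,S},\Lambda)$ (both with pro-algebraic cohomology by Propositions \ref{0020} and \ref{0021}), and $R\alg{\Gamma}_{c}(\alg{U}_{S}, \Lambda)$ between $\bigoplus_{x} R\alg{\Gamma}_{c}(\Hat{\alg{R}}_{x,S},\Lambda)$ and $\bigoplus_{\eta} R\alg{\Gamma}(\Hat{\alg{R}}_{\eta,T},\Lambda)$ (both ind-algebraic); your observation that the type-swap under the ``$\Hat{!}$'' degree shift makes the two outer pieces of each triangle land on the \emph{same} side is exactly the point the paper is using. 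One small imprecision: Proposition \ref{0041} is not the right tool for this last step, since its hypotheses demand a three-periodic mixture of pro-algebraic, $\mathcal{W}_{F}$, and ind-algebraic terms, whereas here the two outer pieces in each long exact sequence are of a \emph{single} type (both pro, or both ind); what you actually need is only that $\Pro\Alg_{u}/F$ and $\Ind\Alg_{u}/F$, as subcategories of $\Ind\Pro\Alg_{u}/F$, are closed under subobjects, quotients, and extensions, and then that $H^{q}$ of the middle term is an extension of a subobject of the third by a quotient of the first. Invoking Proposition \ref{0041} literally would suggest membership in $\mathcal{W}_{F}$, which is not what the statement claims and is only established later (Proposition \ref{0075}).
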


\begin{proof}
	We have distinguished triangles
		\begin{gather*}
					\bigoplus_{\eta \in Y_{1}}
						R \alg{\Gamma}_{c}(\Hat{\alg{R}}_{\eta, T}, \Lambda)
				\to
					R \alg{\Gamma}(\alg{U}_{S}, \Lambda)
				\to
					\bigoplus_{x \in T}
						R \alg{\Gamma}(\Hat{\alg{R}}_{x, S}, \Lambda),
			\\
					\bigoplus_{x \in T}
						R \alg{\Gamma}_{c}(\Hat{\alg{R}}_{x, S}, \Lambda)
				\to
					R \alg{\Gamma}_{c}(\alg{U}_{S}, \Lambda)
				\to
					\bigoplus_{\eta \in Y_{1}}
						R \alg{\Gamma}(\Hat{\alg{R}}_{\eta, T}, \Lambda)
		\end{gather*}
	in $D(F^{\ind\rat}_{\pro\et})$
	by the diagram in the proof of \cite[Proposition 10.5.5]{Suz24}.%
	\footnote{
		The objects
		$R \Bar{\pi}_{\Hat{\alg{U}}_{\Hat{T}}, \ast} \Lambda$ and
		$R \Bar{\pi}_{\Hat{\alg{U}}_{\Hat{T}}, \Hat{!}} \Lambda$
		in the proof of \cite[Proposition 10.5.5]{Suz24}
		are isomorphic to
		$R \pi_{\alg{U}_{S}, \ast} \Lambda$ and
		$R \Bar{\pi}_{\alg{U}_{S}, !} \Lambda$
		($= R \pi_{\alg{U}_{S}, !} \Lambda$ in our notation),
		respectively,
		by \cite[Propositions 10.4.1 and 10.5.1]{Suz24}.
		After applying $\algebrize$,
		they become $R \alg{\Gamma}(\alg{U}_{S}, \Lambda)$
		and $R \alg{\Gamma}_{c}(\alg{U}_{S}, \Lambda)$, respectively.
	}
	Hence Propositions \ref{0020} and \ref{0021} imply the result.
\end{proof}

\begin{Prop} \label{0040}
	Assume that $S$ contains all height one primes dividing $p$.
	Let $n \ge 1$ and $r \in \Z$.
	Then $\alg{H}^{q}(\alg{U}_{S}, \Lambda_{n}(r)) \in \Pro \Alg_{u} / F$
	and $\alg{H}^{q + 1}_{c}(\alg{U}_{S}, \Lambda_{n}(r)) \in \Ind \Alg_{u} / F$.
	They are zero unless $q = 0, 1, 2, 3$ and $q = 0, 1, 2$, respectively.
	The group $\alg{\Gamma}(\alg{U}_{S}, \Lambda_{n}(r))$ is finite.
\end{Prop}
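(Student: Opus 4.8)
The plan is to reduce the assertion, by a chain of dévissages, to Proposition \ref{0022} --- the case $n = 1$, trivial twist, $\zeta_{p} \in A$, and $S = S_{0} := \{\ideal{p} \in P : \ideal{p} \mid p\}$ --- keeping track of pro-algebraicity of $\alg{H}^{q}$ and ind-algebraicity of $\alg{H}^{q}_{c}$ at every step by means of the homological algebra of Section \ref{0114}. The key fact used throughout is that kernels, cokernels and extensions of pro-algebraic (resp.\ ind-algebraic) unipotent groups, formed in $\Ind \Pro \Alg_{u}/F$, are again pro-algebraic (resp.\ ind-algebraic), which is the variant of Proposition \ref{0024} for $\Pro \Alg_{u}/F$ together with closure under extensions; the vanishing ranges and finiteness of $\alg{\Gamma}$ are then inherited automatically.

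First I would reduce to the case $\zeta_{p} \in A$. Replace $A$ by the normalization $A'$ of $A[\zeta_{p}]$: it is again a two-dimensional normal noetherian complete local ring of mixed characteristic, finite over $A$, with residue field $F$ (as $\zeta_{p} - 1$ lies over $p$), and $[\mathrm{Frac}(A') : K]$ divides $p - 1$, hence is prime to $p$. Taking for $S'$ the set of primes of $A'$ over $S$ (still containing all primes dividing $p$, with $U'_{S'}$ finite over $U_{S}$), the composite of pullback and trace on $R \alg{\Gamma}(\alg{U}_{S}, \Lambda_{n}(r))$ and on $R \alg{\Gamma}_{c}(\alg{U}_{S}, \Lambda_{n}(r))$ is multiplication by $[A' : A]$, which is invertible on these $p$-primary objects. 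Hence each of these is a direct summand of its counterpart over $A'$, and since $\Pro \Alg_{u}/F$ and $\Ind \Alg_{u}/F$ are closed under direct summands, I may assume $\zeta_{p} \in A$ from now on.

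Next, with $\zeta_{p} \in A$, I would perform dévissage on $n$ using the short exact sequence $0 \to \Lambda_{n-1}(r) \to \Lambda_{n}(r) \to \Lambda_{1}(r) \to 0$ on $U_{S} \subset \Spec A[1/p]$ (exact because the corresponding $\Z_{p}$-Tate twist is flat over $\Z_{p}$). The induced distinguished triangles and long exact sequences present $\alg{H}^{q}(\alg{U}_{S}, \Lambda_{n}(r))$ (resp.\ $\alg{H}^{q}_{c}$) as an extension of a subobject of the $\Lambda_{1}$-cohomology by a quotient of the $\Lambda_{n-1}$-cohomology, so induction on $n$ (base case $n = 1$) reduces to $n = 1$; in degree $0$ this makes $\alg{\Gamma}(\alg{U}_{S}, \Lambda_{n}(r))$ an extension of a subgroup of the finite group $\alg{\Gamma}(\alg{U}_{S}, \Lambda_{1}(r))$ by the finite group $\alg{\Gamma}(\alg{U}_{S}, \Lambda_{n-1}(r))$, hence finite. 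For $n = 1$ and $\zeta_{p} \in A$ one has $\Lambda_{1}(r) = \mu_{p}^{\otimes r} \cong \Z/p\Z = \Lambda$ as \'etale sheaves on $U_{S}$, so it suffices to treat the constant sheaf $\Lambda$.

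Finally, for $\Lambda$, $\zeta_{p} \in A$ and $S \supseteq S_{0}$, I would remove the finitely many primes of $S \setminus S_{0}$ one at a time (none dividing $p$), via the localization distinguished triangle relating $R \alg{\Gamma}$ and $R \alg{\Gamma}_{c}$ over $\alg{U}_{S}$ to the same over the larger open with $\ideal{p}$ restored and to the local data at $\ideal{p}$; since $\ideal{p} \nmid p$, this local data is the cohomology attached to $\Hat{\alg{K}}_{\ideal{p}}$ (and the punctured $V(\ideal{p})$), which lies in $\mathcal{W}_{F}$, vanishes outside degrees $0, 1, 2$, and is of the appropriate ind/pro type by \cite[Section 7]{Suz24}. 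Feeding this together with the base case Proposition \ref{0022} into the long exact sequences and invoking Propositions \ref{0024} and \ref{0041} gives the statement for general $S$, with $\alg{\Gamma}(\alg{U}_{S}, \Lambda)$ finite as before. Alternatively, this last step can be carried out by repeating the resolution-of-singularities argument of Proposition \ref{0022} with the strict normal crossing resolution already chosen to contain $\closure{S}$, using Propositions \ref{0020} and \ref{0021}. The main difficulty is precisely this bookkeeping: $\Pro \Alg_{u}/F$ and $\Ind \Alg_{u}/F$ are not abelian subcategories of $\Ind \Pro \Alg_{u}/F$, and an extension of a pro-algebraic by an ind-algebraic group can genuinely fail to be either, so one must verify degree by degree in every long exact sequence that the pro-algebraic groups coming from ordinary cohomology and the ind-algebraic groups coming from compact support never combine into such a ``mixed'' object --- which is exactly what the lemmas of Section \ref{0114} are set up to exclude.
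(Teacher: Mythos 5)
Your proposal is correct and follows essentially the same route as the paper: the paper likewise reduces to $n=1$, establishes that the statement for $U_S$ is equivalent to the statement for any larger $U_{S'}$ via the localization triangles at primes $\ideal{p} \nmid p$ (using that the local terms over $\Hat{\alg{A}}_{\ideal{p}}$ lie in $D^b(\Alg_u/F)$ in bounded degree ranges), and then invokes ``a standard d\'evissage argument'' --- which is precisely your trace reduction to $\zeta_p \in A$ together with the identification $\Lambda(r)\cong\Lambda$ --- to land on Proposition \ref{0022}. You merely carry out the $\zeta_p$, $n$, and twist reductions before adjusting $S$, whereas the paper adjusts $S$ first; the underlying dévissage is the same.
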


\begin{proof}
	The case $n \ge 1$ is reduced to the case $n = 1$.
	Let $S' \subset P$ be a finite subset containing $S$.
	Let $U_{S'} = X \setminus S'$.
	Then the statement for $U_{S}$ is equivalent to the statement for $U_{S'}$.
	Indeed, we have distinguished triangles
		\begin{gather*}
					\bigoplus_{\ideal{p} \in S' \setminus S}
						R \alg{\Gamma}_{c}(\Hat{\alg{A}}_{\ideal{p}}, \Lambda(r))
				\to
					R \alg{\Gamma}(\alg{U}_{S}, \Lambda(r))
				\to
					R \alg{\Gamma}(\alg{U}_{S'}, \Lambda(r)),
			\\
					R \alg{\Gamma}_{c}(\alg{U}_{S'}, \Lambda(r))
				\to
					R \alg{\Gamma}_{c}(\alg{U}_{S}, \Lambda(r))
				\to
					\bigoplus_{\ideal{p} \in S' \setminus S}
						R \alg{\Gamma}(\Hat{\alg{A}}_{\ideal{p}}, \Lambda(r))
		\end{gather*}
	in $D(F^{\ind\rat}_{\pro\et})$;
	see the diagram in \cite[Section 10.6]{Suz24}.
	For any $\ideal{p} \in P$ not dividing $p$, we have
		\begin{gather*}
					R \alg{\Gamma}_{c}(\Hat{\alg{A}}_{\ideal{p}}, \Lambda(r))
				\cong
					R \alg{\Gamma}(\varalg{\kappa}(\ideal{p}), \Lambda(r - 1))[-2],
			\\
					R \alg{\Gamma}(\Hat{\alg{A}}_{\ideal{p}}, \Lambda(r))
				\cong
					R \alg{\Gamma}(\varalg{\kappa}(\ideal{p}), \Lambda(r)),
		\end{gather*}
	which are objects of $D^{b}(\Alg_{u} / F)$ concentrated in
	degrees $2, 3$, degrees $0, 1$, respectively, by \cite[Theorem 7.2.4]{Suz24}.
	This implies the equivalence.
	Now a standard d\'evissage argument (see \cite[Section 10.6]{Suz24})
	reduces the general case to Proposition \ref{0022}.
\end{proof}

We will later show in Proposition \ref{0075}
that the groups in Proposition \ref{0040} are actually in $\mathcal{W}_{F}$.


\section{Finiteness and duality}
\label{0115}

We continue the notation of Section \ref{0086}.
In this section, we prove Theorems \ref{0077} and \ref{0088}.
We first treat the case $S = \emptyset$ (where $U = U_{S}$)
since this case admits stronger finiteness properties than the general case
and hence is easier.
Based on it, we then treat the case where $S \subset P$ contains all prime ideals dividing $p$.
Finally, we treat the general case.

We first recall the perfect pairing
	\begin{equation} \label{0108}
				R \alg{\Gamma}(\alg{U}_{S}, \mathfrak{T}_{n}(r))
			\tensor^{L}
				R \alg{\Gamma}_{c}(\alg{U}_{S}, \mathfrak{T}_{n}(2 - r))
		\to
			\Lambda_{\infty}[-3]
	\end{equation}
in $D(F^{\ind\rat}_{\pro\et})$ from \cite[Theorem 10.6.1]{Suz24}.

\begin{Prop} \label{0043}
	Let $n \ge 1$.
	Then $\alg{\Gamma}(\alg{X}, \Lambda_{n}) \cong \Lambda_{n}$ and
	$\alg{H}^{1}(\alg{X}, \Lambda_{n})$ are finite,
	$\alg{H}^{2}(\alg{X}, \Lambda_{n})$ and $\alg{H}^{3}(\alg{X}, \Lambda_{n})$ are ind-algebraic in $\mathcal{W}_{F}$
	and $\alg{H}^{q}(\alg{X}, \Lambda_{n}) = 0$ for $q \ne 0, 1, 2, 3$.
\end{Prop}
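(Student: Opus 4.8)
The plan is to bracket $\alg{X} = \alg{U}_{\emptyset}$ between the affine scheme $\alg{U}_{S'}$, where $S' \subset P$ is the set of height one primes dividing $p$ (so that $U_{S'} = \Spec A[1/p]$ and Proposition \ref{0040} applies), and the local contributions $R\alg{\Gamma}(\Hat{\alg{A}}_{\ideal{p}}, \var)$, $R\alg{\Gamma}_{c}(\Hat{\alg{A}}_{\ideal{p}}, \var)$ at the primes $\ideal{p} \mid p$, and to play the two localization triangles of \cite[Section 10.6]{Suz24} (used already in the proof of Proposition \ref{0040}) against one another. Since each $\Hat{A}_{\ideal{p}}$ with $\ideal{p} \mid p$ is a complete discrete valuation ring of the kind treated in Section \ref{0070}, up to the Weil restriction $\Weil_{F_{\ideal{p}} / F}$ along a finite extension, Proposition \ref{0031} gives that $\alg{H}^{q}(\Hat{\alg{A}}_{\ideal{p}}, \Lambda_{n})$ and $\alg{H}^{q}_{c}(\Hat{\alg{A}}_{\ideal{p}}, \Lambda_{n})$ lie in $\mathcal{W}_{F}$, with $\alg{H}^{q}(\Hat{\alg{A}}_{\ideal{p}}, \Lambda_{n}) = 0$ for $q \ne 0, 1$ (equal to $\Weil_{F_{\ideal{p}}/F} \Lambda_{n}$ for $q = 0$ and connected ind-algebraic for $q = 1$) and $\alg{H}^{q}_{c}(\Hat{\alg{A}}_{\ideal{p}}, \Lambda_{n}) = 0$ for $q \ne 2, 3$; all these properties are preserved by $\Weil_{F_{\ideal{p}}/F}$.

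The assertion $\alg{\Gamma}(\alg{X}, \Lambda_{n}) \cong \Lambda_{n}$ would be seen directly: for a field $F' \in F^{\perar}$, $\alg{X}(F')$ is the punctured spectrum of the two-dimensional normal local ring $\alg{A}(F')$, hence connected, so $\pi_{\alg{X}, \ast} \Lambda_{n}$ is the constant sheaf and $\alg{\Gamma}(\alg{X}, \Lambda_{n}) = \algebrize \Lambda_{n} = \Lambda_{n}$. Next I would feed Propositions \ref{0040} and \ref{0031} into the localization triangle $R\alg{\Gamma}_{c}(\alg{U}_{S'}, \Lambda_{n}) \to R\alg{\Gamma}(\alg{X}, \Lambda_{n}) \to \bigoplus_{\ideal{p} \mid p} R\alg{\Gamma}(\Hat{\alg{A}}_{\ideal{p}}, \Lambda_{n})$ (legitimate since $R\alg{\Gamma}_{c}(\alg{X}, \var) = R\alg{\Gamma}(\alg{X}, \var)$): both outer terms have cohomology in $\Ind\Alg_{u} / F$, vanishing outside degrees $\le 3$, so by Proposition \ref{0024} every $\alg{H}^{q}(\alg{X}, \Lambda_{n})$ is ind-algebraic and vanishes for $q \ne 0, 1, 2, 3$.

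For the finiteness of $\alg{H}^{1}(\alg{X}, \Lambda_{n})$, the idea is to evaluate at a geometric point: $\alg{H}^{1}(\alg{X}, \Lambda_{n})(\closure{F}) \cong H^{1}(X \times_{A} \closure{A}, \Lambda_{n}) \cong \Hom_{\cont}(\pi_{1}(X \times_{A} \closure{A}), \Lambda_{n})$, where $\closure{A} = W(\closure{F}) \ctensor_{W(F)} A$; by the finite generation of the pro-$p$ fundamental group of the punctured spectrum \cite{Suz23}, this group is finite. An object of $\Ind\Alg_{u} / F$ with finitely many $\closure{F}$-points is forced to be finite \'etale (it is a filtered union of algebraic subquotients, each of which has finitely many $\closure{F}$-points and hence is finite \'etale, and such a union stabilizes), so $\alg{H}^{1}(\alg{X}, \Lambda_{n})$ is finite and in particular lies in $\mathcal{W}_{F}$.

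It then remains to promote $\alg{H}^{2}$ and $\alg{H}^{3}$ from ind-algebraic to $\mathcal{W}_{F}$, and here the \emph{other} localization triangle $\bigoplus_{\ideal{p} \mid p} R\alg{\Gamma}_{c}(\Hat{\alg{A}}_{\ideal{p}}, \Lambda_{n}) \to R\alg{\Gamma}(\alg{X}, \Lambda_{n}) \to R\alg{\Gamma}(\alg{U}_{S'}, \Lambda_{n})$ enters: its long exact sequence $\cdots \to \bigoplus_{\ideal{p} \mid p} \alg{H}^{q}_{c}(\Hat{\alg{A}}_{\ideal{p}}, \Lambda_{n}) \to \alg{H}^{q}(\alg{X}, \Lambda_{n}) \to \alg{H}^{q}(\alg{U}_{S'}, \Lambda_{n}) \to \cdots$, once broken into periods, is of exactly the shape governed by Proposition \ref{0041}, with the pro-algebraic family being $\{\alg{H}^{q}(\alg{U}_{S'}, \Lambda_{n})\} \subset \Pro\Alg_{u}/F$ (Proposition \ref{0040}), the middle ($\mathcal{W}_{F}$) family being $\{\bigoplus_{\ideal{p} \mid p} \alg{H}^{q}_{c}(\Hat{\alg{A}}_{\ideal{p}}, \Lambda_{n})\}$, and the ind-algebraic family being $\{\alg{H}^{q}(\alg{X}, \Lambda_{n})\}$ (established in the previous step). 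Proposition \ref{0041} then forces all three families into $\mathcal{W}_{F}$, so $\alg{H}^{2}(\alg{X}, \Lambda_{n})$ and $\alg{H}^{3}(\alg{X}, \Lambda_{n})$ are ind-algebraic groups in $\mathcal{W}_{F}$. The main obstacle I anticipate is precisely this last step: Proposition \ref{0040} delivers only pro-/ind-algebraicity, not $\mathcal{W}_{F}$-membership, for the cohomology of $\Spec A[1/p]$, and since $\mathcal{W}_{F}$ is not abelian one cannot d\'evissage directly; the point is that the \emph{local} contributions at $\ideal{p} \mid p$ do lie in $\mathcal{W}_{F}$, so arranging them as the sandwiched terms in Proposition \ref{0041} is what pulls the global groups into $\mathcal{W}_{F}$ --- with \cite{Suz23} supplying the independent input needed to certify that $\alg{H}^{1}(\alg{X}, \Lambda_{n})$, hence the family $\{\alg{H}^{q}(\alg{X}, \Lambda_{n})\}$, occupies the ind-algebraic slot in the first place.
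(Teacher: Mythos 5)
Your proposal is correct and follows essentially the same route as the paper: same choice of $S'$, same two localization triangles against $\alg{U}_{S'} = \Spec A[1/p]$, the same use of Proposition \ref{0041} to sandwich $\alg{H}^{2}$ and $\alg{H}^{3}$ into $\mathcal{W}_{F}$ between the pro-algebraic cohomology of $\alg{U}_{S'}$ and the local $\mathcal{W}_F$-terms $\alg{H}^{\bullet}_c(\Hat{\alg{A}}_{\ideal{p}}, \Lambda_n)$, and the same appeal to \cite{Suz23} for the finiteness of $\alg{H}^1$. The only small divergence is how $\alg{H}^1$ is shown finite: you argue directly that an object of $\Ind\Alg_u/F$ with finite $\closure{F}$-points must be finite \'etale (which is correct, by factoring through images and noting connected perfected unipotent groups have infinitely many $\closure{F}$-points), whereas the paper first upgrades $\alg{H}^1(\alg{X}, \Lambda_n)$ to $\Alg_u/F$ via the injection $\alg{H}^1(\alg{X}, \Lambda_n) \into \alg{H}^1(\alg{U}_{S'}, \Lambda_n)$ of an ind-algebraic group into a pro-algebraic one, and only then invokes \cite{Suz23}; both variants work.
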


\begin{proof}
	Let $S$ be the set of height one primes dividing $p$.
	By Proposition \ref{0031} \eqref{0035}, we have 
	$\alg{\Gamma}(\alg{X}, \Lambda_{n}) \isomto \alg{\Gamma}(\alg{U}_{S}, \Lambda_{n}) \cong \Lambda_{n}$.
	Consider the distinguished triangle
		\[
				R \alg{\Gamma}_{c}(\alg{U}_{S}, \Lambda_{n})
			\to
				R \alg{\Gamma}(\alg{X}, \Lambda_{n})
			\to
				\bigoplus_{\ideal{p} \in S}
					R \alg{\Gamma}(\Hat{\alg{A}}_{\ideal{p}}, \Lambda_{n})
		\]
	in $D(F^{\ind\rat}_{\pro\et})$.
	The first and third terms are in $D^{b}(\Ind \Alg_{u} / F)$
	by Propositions \ref{0040} and \ref{0031} \eqref{0034}, respectively.
	Hence so is the second.
	The same propositions show $\alg{H}^{q}(\alg{X}, \Lambda_{n}) = 0$ for $q \ne 0, 1, 2, 3$.
	In the distinguished triangle
		\begin{equation} \label{0042}
				R \alg{\Gamma}(\alg{X}, \Lambda_{n})
			\to
				R \alg{\Gamma}(\alg{U}_{S}, \Lambda_{n})
			\to
				\bigoplus_{\ideal{p} \in S}
					R \alg{\Gamma}_{c}(\Hat{\alg{A}}_{\ideal{p}}, \Lambda_{n})[1],
		\end{equation}
	the second term is in $D^{b}(\Pro \Alg_{u} / F)$
	by Proposition \ref{0040}
	and the cohomologies of the third term are in $\mathcal{W}_{F}$
	by Proposition \ref{0031} \eqref{0032}.
	Therefore we may apply Proposition \ref{0041} to
	the long exact sequence associated with \eqref{0042}.
	This implies that $\alg{H}^{q}(\alg{X}, \Lambda_{n}) \in \mathcal{W}_{F}$ for all $q$.
	Since $\alg{H}^{1}_{c}(\Hat{\alg{A}}_{\ideal{p}}, \Lambda_{n}) = 0$,
	we have an exact sequence
		\[
				0
			\to
				\alg{H}^{1}(\alg{X}, \Lambda_{n})
			\to
				\alg{H}^{1}(\alg{U}_{S}, \Lambda_{n})
			\to
				\bigoplus_{\ideal{p} \in S}
					\alg{H}^{2}_{c}(\Hat{\alg{A}}_{\ideal{p}}, \Lambda_{n}).
		\]
	All the terms are in $\mathcal{W}_{F}$,
	with the first ind-algebraic and the second pro-algebraic.
	Therefore $\alg{H}^{1}(\alg{X}, \Lambda_{n}) \in \Alg_{u} / F$.
	By \cite[Theorem 1.2]{Suz23}, we know that
	the group
		$
				\alg{H}^{1}(\alg{X}, \Lambda_{n})(\closure{F})
			\cong
				H^{1}(\alg{X}(\closure{F}), \Lambda_{n})
		$
	is finite.
	This implies that $\alg{H}^{1}(\alg{X}, \Lambda_{n})$ is finite.
\end{proof}

\begin{Prop}
	Let $n \ge 1$.
	Then $\alg{\Gamma}(\alg{X}, \mathfrak{T}_{n}(2))$ and
	$\alg{H}^{3}(\alg{X}, \mathfrak{T}_{n}(2)) \cong \Lambda_{n}$ are finite,
	$\alg{H}^{1}(\alg{X}, \mathfrak{T}_{n}(2))$ and
	$\alg{H}^{2}(\alg{X}, \mathfrak{T}_{n}(2))$ are pro-algebraic in $\mathcal{W}_{F}$
	and $\alg{H}^{q}(\alg{X}, \Lambda_{n}) = 0$ for $q \ne 0, 1, 2, 3$.
\end{Prop}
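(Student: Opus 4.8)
The plan is to follow the pattern of the proof of Proposition \ref{0043}: use the localization triangle to get boundedness, the vanishing range, finiteness of $\alg{\Gamma}$ and pro-algebraicity of $\alg{H}^1,\alg{H}^2$, and then use the perfect duality pairing against $R\alg{\Gamma}(\alg{X},\Lambda_n)$ (whose cohomology has just been placed in $\mathcal{W}_F$ by Proposition \ref{0043}) to obtain membership in $\mathcal{W}_F$ and the identification $\alg{H}^3(\alg{X},\mathfrak{T}_n(2))\cong\Lambda_n$.

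First I would take $S$ to be the set of height one primes of $A$ dividing $p$, so that $\mathfrak{T}_n(2)$ restricts to $\Lambda_n(2)$ on $\alg{U}_S$, and consider the distinguished triangle
\[
		R\alg{\Gamma}(\alg{X}, \mathfrak{T}_n(2))
	\to
		R\alg{\Gamma}(\alg{U}_S, \Lambda_n(2))
	\to
		\bigoplus_{\ideal{p} \in S} R\alg{\Gamma}_c(\Hat{\alg{A}}_{\ideal{p}}, \mathfrak{T}_n(2))[1]
\]
in $D(F^{\ind\rat}_{\pro\et})$, the analogue of \eqref{0042} for the coefficient complex $\mathfrak{T}_n(2)$. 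By Proposition \ref{0040} the middle term lies in $D^b(\Pro\Alg_u/F)$, concentrated in degrees $0,1,2,3$, with $\alg{\Gamma}(\alg{U}_S,\Lambda_n(2))$ finite. By Proposition \ref{0031} \eqref{0039} (via the Weil restriction $\Weil_{F_{\ideal{p}}/F}$ when $F_{\ideal{p}}\neq F$) each $R\alg{\Gamma}_c(\Hat{\alg{A}}_{\ideal{p}}, \mathfrak{T}_n(2))$ is pro-algebraic and concentrated in degree $3$, so the third term is pro-algebraic and concentrated in degree $2$. The associated long exact sequence then shows that $R\alg{\Gamma}(\alg{X},\mathfrak{T}_n(2))$ is concentrated in degrees $0,1,2,3$; that $\alg{\Gamma}(\alg{X},\mathfrak{T}_n(2))\cong\alg{\Gamma}(\alg{U}_S,\Lambda_n(2))$ is finite; that $\alg{H}^1(\alg{X},\mathfrak{T}_n(2))\cong\alg{H}^1(\alg{U}_S,\Lambda_n(2))$ is pro-algebraic; and that $\alg{H}^2(\alg{X},\mathfrak{T}_n(2))$ is the kernel of a morphism between two pro-algebraic groups (since $\alg{H}^1$ of the third term vanishes), hence pro-algebraic, the embedding $\Pro\Alg_u/F\into\Ind\Pro\Alg_u/F$ being exact.

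To place these groups in $\mathcal{W}_F$ and to identify $\alg{H}^3$, I would use the perfect pairing \eqref{0108} with $S=\emptyset$ and $r=0$, namely
\[
			R\alg{\Gamma}(\alg{X}, \Lambda_n)
		\tensor^L
			R\alg{\Gamma}(\alg{X}, \mathfrak{T}_n(2))
	\to
		\Lambda_\infty[-3]
\]
in $D(F^{\ind\rat}_{\pro\et})$ (recall $\mathfrak{T}_n(0)=\Lambda_n$ and that $R\alg{\Gamma}$ and $R\alg{\Gamma}_c$ coincide on $\alg{X}$). By Proposition \ref{0043} the cohomology objects of $R\alg{\Gamma}(\alg{X},\Lambda_n)$ lie in $\mathcal{W}_F$ and vanish outside degrees $0,1,2,3$; applying Proposition \ref{0078} to the pairing, after shifting $R\alg{\Gamma}(\alg{X},\mathfrak{T}_n(2))$ by $[3]$ so that the target becomes $\Lambda_\infty$, yields $\alg{H}^q(\alg{X},\mathfrak{T}_n(2))\in\mathcal{W}_F$ for all $q$, together with Serre dualities $\alg{H}^q(\alg{X},\mathfrak{T}_n(2))^0\leftrightarrow\alg{H}^{4-q}(\alg{X},\Lambda_n)^0$ and Pontryagin dualities $\pi_0\alg{H}^q(\alg{X},\mathfrak{T}_n(2))\leftrightarrow\pi_0\alg{H}^{3-q}(\alg{X},\Lambda_n)$. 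Since $\alg{H}^1(\alg{X},\Lambda_n)$ is finite (Proposition \ref{0043}), its connected part vanishes, so $\alg{H}^3(\alg{X},\mathfrak{T}_n(2))^0=0$ and $\pi_0\alg{H}^3(\alg{X},\mathfrak{T}_n(2))$ is Pontryagin dual to $\pi_0\alg{H}^0(\alg{X},\Lambda_n)=\Lambda_n$; hence $\alg{H}^3(\alg{X},\mathfrak{T}_n(2))\cong\Lambda_n$ is finite. The same pairing re-confirms finiteness of $\alg{\Gamma}(\alg{X},\mathfrak{T}_n(2))$, and combined with the pro-algebraicity already obtained in the previous step this gives all the assertions.

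The step requiring the most care is the passage to $\mathcal{W}_F$. Because $\mathcal{W}_F$ is not abelian, membership cannot be propagated along the localization triangle directly — Proposition \ref{0041} does not apply, since neither that triangle nor its excision counterpart has the required $\Pro$/$\mathcal{W}_F$/$\Ind$ shape — so one is forced to route it through the perfect pairing and the previously established fact (Proposition \ref{0043}) that $R\alg{\Gamma}(\alg{X},\Lambda_n)$ has cohomology in $\mathcal{W}_F$. A secondary, purely routine point is keeping track of the shift $[-3]$ in the pairing and of the Weil restrictions at the primes $\ideal{p}$ with $F_{\ideal{p}}\neq F$.
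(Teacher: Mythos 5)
Your proposal is correct and the decisive step coincides with the paper's: the paper's entire proof of this proposition is the one-liner that it ``follows from Propositions \ref{0078} and \ref{0043} by the perfect pairing \eqref{0108} with $S=\emptyset$ and $r=0$,'' which is exactly your second paragraph. The preliminary localization-triangle argument you include (the analogue of \eqref{0042} for $\mathfrak{T}_n(2)$, using Propositions \ref{0040} and \ref{0031}~\eqref{0039}) is a sound computation but is not needed: the perfect pairing together with Proposition \ref{0078} already yields the pro-algebraicity of $\alg{H}^1$ and $\alg{H}^2$, since Serre duality for connected groups in $\mathcal{W}_F$ interchanges connected pro-algebraic and connected ind-algebraic objects, and $\alg{H}^2(\alg{X},\Lambda_n),\alg{H}^3(\alg{X},\Lambda_n)$ are ind-algebraic in $\mathcal{W}_F$ by Proposition \ref{0043}. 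So your extra step is harmless belt-and-suspenders rather than a genuinely different route, and your concluding observation about why Proposition \ref{0041} cannot be applied directly to the localization triangle (the middle term is only known to be in $\Pro\Alg_u/F$ at this point, not $\mathcal{W}_F$) is accurate.
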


\begin{proof}
	This follows from Propositions \ref{0078} and \ref{0043} by
	the perfect pairing \eqref{0108} with $S = \emptyset$ and $r = 0$.
\end{proof}

\begin{Prop} \label{0046}
	Let $n \ge 1$.
	Then $\alg{\Gamma}(\alg{X}, \mathfrak{T}_{n}(1))$ is finite,
	$\alg{H}^{1}(\alg{X}, \mathfrak{T}_{n}(1))$ is pro-algebraic in $\mathcal{W}_{F}$,
	$\alg{H}^{2}(\alg{X}, \mathfrak{T}_{n}(1))$ is in $\Alg_{u} / F$,
	$\alg{H}^{3}(\alg{X}, \mathfrak{T}_{n}(1))$ is ind-algebraic in $\mathcal{W}_{F}$
	and $\alg{H}^{q}(\alg{X}, \mathfrak{T}_{n}(1)) = 0$ for $q \ne 0, 1, 2, 3$.
\end{Prop}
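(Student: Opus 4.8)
The plan is to follow the proof of Proposition~\ref{0043} (the case $r=0$), now feeding in the local inputs of Section~\ref{0070} for the coefficient $\mathfrak{T}_{n}(1)$, and then to pin down the last piece by the self-dual pairing~\eqref{0108}. Set $S$ to be the set of height one primes dividing $p$, so that $U_{S}=\Spec A[1/p]$. The correction term $\bigoplus_{\ideal{p}\in S} i_{\ideal{p},\ast}\nu_{n}(0)$ in the definition of $\mathfrak{T}_{n}(1)$ is supported on $S$, hence $\mathfrak{T}_{n}(1)$ restricted to $\alg{U}_{S}$ is isomorphic to $\Lambda_{n}(1)$, and Proposition~\ref{0040} applies: $\alg{H}^{q}(\alg{U}_{S},\mathfrak{T}_{n}(1))\in\Pro\Alg_{u}/F$ (vanishing for $q\notin\{0,1,2,3\}$, with $\alg{\Gamma}$ finite) and $\alg{H}^{q}_{c}(\alg{U}_{S},\mathfrak{T}_{n}(1))\in\Ind\Alg_{u}/F$ (vanishing for $q\notin\{1,2,3\}$). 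The local terms at $\ideal{p}\in S$ are Weil restrictions of the groups of Proposition~\ref{0031}: by parts \eqref{0032} and \eqref{0036}, $R\alg{\Gamma}(\Hat{\alg{A}}_{\ideal{p}},\mathfrak{T}_{n}(1))$ is concentrated in degrees $0,1$, with $\alg{\Gamma}$ finite and both cohomology objects in $\mathcal{W}_{F}$; by parts \eqref{0032} and \eqref{0037}, $R\alg{\Gamma}_{c}(\Hat{\alg{A}}_{\ideal{p}},\mathfrak{T}_{n}(1))$ is concentrated in degrees $2,3$, with $\alg{H}^{2}_{c}$ finite and $\alg{H}^{3}_{c}\in\mathcal{W}_{F}$.

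Next I would invoke the two localization triangles of \cite[Section~10.6]{Suz24},
\[
		\bigoplus_{\ideal{p}\in S} R\alg{\Gamma}_{c}(\Hat{\alg{A}}_{\ideal{p}},\mathfrak{T}_{n}(1))
	\to
		R\alg{\Gamma}(\alg{X},\mathfrak{T}_{n}(1))
	\to
		R\alg{\Gamma}(\alg{U}_{S},\mathfrak{T}_{n}(1)),
\]
\[
		R\alg{\Gamma}_{c}(\alg{U}_{S},\mathfrak{T}_{n}(1))
	\to
		R\alg{\Gamma}(\alg{X},\mathfrak{T}_{n}(1))
	\to
		\bigoplus_{\ideal{p}\in S} R\alg{\Gamma}(\Hat{\alg{A}}_{\ideal{p}},\mathfrak{T}_{n}(1)).
\]
The first gives $\alg{H}^{q}(\alg{X},\mathfrak{T}_{n}(1))=0$ for $q\notin\{0,1,2,3\}$ and $\alg{\Gamma}(\alg{X},\mathfrak{T}_{n}(1))\cong\alg{\Gamma}(\alg{U}_{S},\mathfrak{T}_{n}(1))$, which is finite; chasing its long exact sequence with Propositions~\ref{0024}, \ref{0047} and \ref{0048} shows that $\alg{H}^{1}(\alg{X},\mathfrak{T}_{n}(1))$ and $\alg{H}^{2}(\alg{X},\mathfrak{T}_{n}(1))$ are pro-algebraic, each being an extension of a subobject of a pro-algebraic $\alg{H}^{q}(\alg{U}_{S},\mathfrak{T}_{n}(1))$ by a finite group. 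The second triangle, using $\alg{H}^{q}(\Hat{\alg{A}}_{\ideal{p}},\mathfrak{T}_{n}(1))=0$ for $q\ge 2$, gives $\alg{H}^{3}(\alg{X},\mathfrak{T}_{n}(1))\cong\alg{H}^{3}_{c}(\alg{U}_{S},\mathfrak{T}_{n}(1))$ and exhibits $\alg{H}^{2}(\alg{X},\mathfrak{T}_{n}(1))$ as a quotient of $\alg{H}^{2}_{c}(\alg{U}_{S},\mathfrak{T}_{n}(1))$; so both are ind-algebraic (Proposition~\ref{0024}). Combining, $\alg{H}^{2}(\alg{X},\mathfrak{T}_{n}(1))\in\Pro\Alg_{u}/F\cap\Ind\Alg_{u}/F=\Alg_{u}/F$, hence in $\mathcal{W}_{F}$; and $\alg{H}^{3}(\alg{U}_{S},\mathfrak{T}_{n}(1))$, being an ind-algebraic quotient of the ind-algebraic $\alg{H}^{3}(\alg{X},\mathfrak{T}_{n}(1))$, also lies in $\Alg_{u}/F$.

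It remains to place the remaining cohomology objects in $\mathcal{W}_{F}$. The \emph{delicate point} is that Proposition~\ref{0040} delivers the $\alg{U}_{S}$-cohomologies only in $\Pro\Alg_{u}/F$, resp.\ $\Ind\Alg_{u}/F$ --- their membership in $\mathcal{W}_{F}$ is Proposition~\ref{0075}, proved later --- so Proposition~\ref{0041} cannot be applied to the long exact sequence of a single triangle, and one must run everything through the local terms of Section~\ref{0070}, which do lie in $\mathcal{W}_{F}$. For $\alg{H}^{3}(\alg{X},\mathfrak{T}_{n}(1))$, the second triangle presents it as an extension of $\alg{H}^{3}(\alg{U}_{S},\mathfrak{T}_{n}(1))\in\Alg_{u}/F$ by an ind-algebraic quotient of $\bigoplus_{\ideal{p}}\alg{H}^{3}_{c}(\Hat{\alg{A}}_{\ideal{p}},\mathfrak{T}_{n}(1))\in\mathcal{W}_{F}$; an ind-algebraic quotient of a $\mathcal{W}_{F}$-object is again in $\mathcal{W}_{F}$ (Propositions~\ref{0024}, \ref{0047}, \ref{0049}), and $\mathcal{W}_{F}$ is closed under extensions, so $\alg{H}^{3}(\alg{X},\mathfrak{T}_{n}(1))\in\mathcal{W}_{F}$, ind-algebraic. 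Finally, $\alg{H}^{1}(\alg{X},\mathfrak{T}_{n}(1))$ is obtained from the perfect self-pairing~\eqref{0108} with $S=\emptyset$ and $r=1$ (for which $R\alg{\Gamma}_{c}(\alg{X},\var)=R\alg{\Gamma}(\alg{X},\var)$) by the argument of Proposition~\ref{0078}: since $\sheafext^{i}_{F^{\perar}_{\et}}(\var,\Lambda_{\infty})=0$ for $i\ge 2$ (cf.\ \cite[Proposition~3.1.7]{Suz24}), the resulting duality isomorphism identifies $\alg{H}^{1}(\alg{X},\mathfrak{T}_{n}(1))$ with an iterated extension of the finite \'etale group $\sheafhom_{F^{\perar}_{\et}}(\alg{H}^{2}(\alg{X},\mathfrak{T}_{n}(1)),\Lambda_{\infty})$ and $\sheafext^{1}_{F^{\perar}_{\et}}(\alg{H}^{3}(\alg{X},\mathfrak{T}_{n}(1)),\Lambda_{\infty})$; the latter is pro-algebraic in $\mathcal{W}_{F}$ because $\alg{H}^{3}(\alg{X},\mathfrak{T}_{n}(1))\in\mathcal{W}_{F}$ is ind-algebraic (Serre duality, as in the proof of Proposition~\ref{0009}), so $\alg{H}^{1}(\alg{X},\mathfrak{T}_{n}(1))$ is pro-algebraic in $\mathcal{W}_{F}$. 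With all four cohomology objects in $\mathcal{W}_{F}$, Proposition~\ref{0078} applied to~\eqref{0108} also reconfirms the asserted structure and produces the dualities $\alg{H}^{q}(\alg{X},\mathfrak{T}_{n}(1))^{0}\leftrightarrow\alg{H}^{4-q}(\alg{X},\mathfrak{T}_{n}(1))^{0}$ and $\pi_{0}\alg{H}^{q}(\alg{X},\mathfrak{T}_{n}(1))\leftrightarrow\pi_{0}\alg{H}^{3-q}(\alg{X},\mathfrak{T}_{n}(1))$. I expect this last $\mathcal{W}_{F}$-bookkeeping --- upgrading the merely pro-/ind-algebraic global inputs via the local $\mathcal{W}_{F}$-terms and the self-duality --- to be the main obstacle.
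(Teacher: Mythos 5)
Your proof is essentially correct, but it takes a genuinely different route from the paper's for the hard step of placing everything in $\mathcal{W}_{F}$. Your steps (b)--(d) (the vanishing, $\alg{\Gamma}$ finite, $\alg{H}^{1},\alg{H}^{2}\in\Pro\Alg_{u}/F$ from the first localization triangle and Propositions~\ref{0031}, \ref{0040}, and $\alg{H}^{3}(\alg{X},\mathfrak{T}_{n}(1))\cong\alg{H}^{3}_{c}(\alg{U}_{S},\Lambda_{n}(1))\in\Ind\Alg_{u}/F$ from the second) match the paper's. The divergence is in the endgame. The paper's proof is organized entirely around Proposition~\ref{0041}: after observing $\alg{H}^{2}(\Hat{\alg{A}}_{\ideal{p}},\mathfrak{T}_{n}(1))=0$ it passes to $\tau_{\le 2}$ of the second triangle, whose three terms have cohomologies in $\Ind\Alg_{u}/F$, $\Pro\Alg_{u}/F$, $\mathcal{W}_{F}$, and one application of Proposition~\ref{0041} yields $\alg{H}^{1}(\alg{X},\mathfrak{T}_{n}(1))\in\mathcal{W}_{F}$ and $\alg{H}^{2}(\alg{X},\mathfrak{T}_{n}(1))\in\Alg_{u}/F$ simultaneously; then it applies Proposition~\ref{0041} a second time, to the five-term exact sequence extracted from triangle~\eqref{0071} with a finite kernel on the left, to get $\alg{H}^{3}(\alg{X},\mathfrak{T}_{n}(1))\in\mathcal{W}_{F}$. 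You instead obtain $\alg{H}^{2}\in\Alg_{u}/F$ by intersecting $\Pro\Alg_{u}/F$ with $\Ind\Alg_{u}/F$ (fine, and in the spirit of the paper's proof of Proposition~\ref{0043}), handle $\alg{H}^{3}$ by an explicit extension argument, and handle $\alg{H}^{1}$ by the self-dual pairing~\eqref{0108} at $S=\emptyset$, $r=1$ and the hypercohomology spectral sequence for $R\sheafhom(\var,\Lambda_{\infty})$ --- this spectral-sequence route to $\alg{H}^{1}$ is a valid and rather elegant alternative, because the only $E_{2}$-terms hitting total degree $-2$ are $\sheafhom(\alg{H}^{2},\Lambda_{\infty})$ and $\sheafext^{1}(\alg{H}^{3},\Lambda_{\infty})$, both already under control, and the potentially obstructing $\sheafext^{2}$ vanishes since $\alg{H}^{3}\in\mathcal{W}_{F}$.

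Two small flags. First, you invoke ``an ind-algebraic quotient of a $\mathcal{W}_{F}$-object is again in $\mathcal{W}_{F}$'' and cite Propositions~\ref{0024}, \ref{0047}, \ref{0049}; none of these actually states it. Proposition~\ref{0047} is the \emph{pro}-algebraic version, Proposition~\ref{0024} only keeps you inside $\Ind\Alg_{u}/F$, and Proposition~\ref{0049} presupposes the quotient is already in $\mathcal{W}_{F}$. The statement you want is true, and its proof is the direct-limit mirror of the proof of Proposition~\ref{0047} (write the connected part of $G\in\mathcal{W}_{F}$ as a union of pro-algebraic subgroups, observe that the image of the pro-algebraic piece lands in $\Alg_{u}/F$ by the factorization argument of Proposition~\ref{0024}, and that the ind-algebraic top filtration piece surjects with algebraic successive quotients), but since the paper does not record it, you should either prove it or, more economically, replace the extension argument by the paper's second application of Proposition~\ref{0041}, which covers exactly this configuration. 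Second, the triangle that presents $\alg{H}^{3}(\alg{X},\mathfrak{T}_{n}(1))$ as an extension with kernel an image of $\bigoplus_{\ideal{p}}\alg{H}^{3}_{c}(\Hat{\alg{A}}_{\ideal{p}},\mathfrak{T}_{n}(1))$ is the \emph{first} triangle~\eqref{0071}, not the second; the second gives the isomorphism $\alg{H}^{3}(\alg{X},\mathfrak{T}_{n}(1))\cong\alg{H}^{3}_{c}(\alg{U}_{S},\Lambda_{n}(1))$.
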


\begin{proof}
	The statement for $\alg{\Gamma}(\alg{X}, \mathfrak{T}_{n}(1))$ is obvious.
	Let $S$ be the set of primes dividing $p$.
	Consider the distinguished triangle
		\begin{equation} \label{0071}
				\bigoplus_{\ideal{p} \in S}
					R \alg{\Gamma}_{c}(\Hat{\alg{A}}_{\ideal{p}}, \mathfrak{T}_{n}(1))
			\to
				R \alg{\Gamma}(\alg{X}, \mathfrak{T}_{n}(1))
			\to
				R \alg{\Gamma}(\alg{U}_{S}, \Lambda_{n}(1))
		\end{equation}
	in $D(F^{\ind\rat}_{\pro\et})$.
	By Propositions \ref{0031} and \ref{0040},
	we know that $\alg{H}^{q}(\alg{X}, \mathfrak{T}_{n}(1)) \in \Pro \Alg_{u} / F$
	for $q = 1$ and $2$ and zero for $q \ne 0, 1, 2, 3$.
	Consider the distinguished triangle
		\[
				R \alg{\Gamma}_{c}(\alg{U}_{S}, \Lambda_{n}(1))
			\to
				R \alg{\Gamma}(\alg{X}, \mathfrak{T}_{n}(1))
			\to
				\bigoplus_{\ideal{p} \in S}
					R \alg{\Gamma}(\Hat{\alg{A}}_{\ideal{p}}, \mathfrak{T}_{n}(1))
		\]
	in $D(F^{\ind\rat}_{\pro\et})$.
	As $\alg{H}^{2}(\Hat{\alg{A}}_{\ideal{p}}, \mathfrak{T}_{n}(1)) = 0$
	by Propositions \ref{0031},
	this induces a distinguished triangle
		\[
				\tau_{\le 2}
				R \alg{\Gamma}_{c}(\alg{U}_{S}, \Lambda_{n}(1))
			\to
				\tau_{\le 2}
				R \alg{\Gamma}(\alg{X}, \mathfrak{T}_{n}(1))
			\to
				\tau_{\le 2}
				\bigoplus_{\ideal{p} \in S}
					R \alg{\Gamma}(\Hat{\alg{A}}_{\ideal{p}}, \mathfrak{T}_{n}(1)).
		\]
	The first, second and third terms have cohomologies in
	$\Ind \Alg_{u} / F$, $\Pro \Alg_{u} / F$ and $\mathcal{W}_{F}$, respectively.
	Applying Proposition \ref{0041}, we know that
	$\alg{H}^{1}(\alg{X}, \mathfrak{T}_{n}(1)) \in \mathcal{W}_{F}$
	and $\alg{H}^{2}(\alg{X}, \mathfrak{T}_{n}(1)) \in \Alg_{u} / F$.
	Also, we have $\alg{H}_{c}^{3}(\alg{U}_{S}, \Lambda_{n}(1)) \isomto \alg{H}^{3}(\alg{X}, \mathfrak{T}_{n}(1))$,
	so it is in $\Ind \Alg_{u} / F$.
	The distinguished triangle \eqref{0071} induces an exact sequence
		\begin{align*}
			&
					\alg{H}^{2}(\alg{X}, \mathfrak{T}_{n}(1))
				\to
					\alg{H}^{2}(\alg{U}_{S}, \Lambda_{n}(1))
				\to
					\bigoplus_{\ideal{p} \in S}
						\alg{H}^{3}_{c}(\Hat{\alg{A}}_{\ideal{p}}, \mathfrak{T}_{n}(1))
			\\
			& \quad
				\to
					\alg{H}^{3}(\alg{X}, \mathfrak{T}_{n}(1))
				\to
					\alg{H}^{3}(\alg{U}_{S}, \Lambda_{n}(1))
				\to
					0,
		\end{align*}
	where the kernel of the first morphism is finite.
	These terms, from left to right, are in
	$\Alg_{u} / F$, $\Pro \Alg_{u} / F$, $\mathcal{W}_{F}$,
	$\Ind \Alg_{u} / F$ and $\Pro \Alg_{u} / F$.
	Applying Proposition \ref{0041}, we know that
	$\alg{H}^{3}(\alg{X}, \mathfrak{T}_{n}(1)) \in \mathcal{W}_{F}$.
\end{proof}

\begin{Prop} \label{0075}
	Let $n \ge 1$ and $r \in \Z$.
	Assume that $S$ contains all primes dividing $p$.
	\begin{enumerate}
		\item
			$\alg{\Gamma}(\alg{U}_{S}, \Lambda_{n}(r))$ is finite,
			$\alg{H}^{1}(\alg{U}_{S}, \Lambda_{n}(r))$ and
			$\alg{H}^{2}(\alg{U}_{S}, \Lambda_{n}(r))$ are pro-algebraic in $\mathcal{W}_{F}$
			and $\alg{H}^{q}(\alg{U}_{S}, \Lambda_{n}(r)) = 0$ for $q \ne 0, 1, 2$.
		\item
			$\alg{H}^{1}_{c}(\alg{U}_{S}, \Lambda_{n}(r))$ is finite,
			$\alg{H}^{2}_{c}(\alg{U}_{S}, \Lambda_{n}(r))$ and
			$\alg{H}^{3}_{c}(\alg{U}_{S}, \Lambda_{n}(r))$ are ind-algebraic in $\mathcal{W}_{F}$
			and $\alg{H}^{q}_{c}(\alg{U}_{S}, \Lambda_{n}(r)) = 0$ for $q \ne 1, 2, 3$.
	\end{enumerate}
\end{Prop}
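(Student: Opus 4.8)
The plan is to deduce both parts from the case $r=0$ by dévissage, and to deduce part (2) from part (1) by the duality pairing \eqref{0108}.

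First, a reduction. Exactly as in the proof of Proposition \ref{0040}, for $S \subseteq S'$ the comparison triangles relating $R\alg{\Gamma}(\alg{U}_{S}, \Lambda_{n}(r))$, $R\alg{\Gamma}(\alg{U}_{S'}, \Lambda_{n}(r))$ (and their compact-support versions) to the contributions of $\ideal{p} \in S' \setminus S$ — for which, by \cite[Theorem 7.2.4]{Suz24}, $R\alg{\Gamma}(\Hat{\alg{A}}_{\ideal{p}}, \Lambda_{n}(r))$ and $R\alg{\Gamma}_{c}(\Hat{\alg{A}}_{\ideal{p}}, \Lambda_{n}(r))$ are complexes of objects of $\Alg_{u}/F \subset \mathcal{W}_{F}$ concentrated in degrees $0,1$ and $2,3$ respectively — show, using Propositions \ref{0047} and \ref{0089} and that $\mathcal{W}_{F}$ is closed under extensions, that the assertion for $U_{S}$ is equivalent to that for $U_{S'}$. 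Hence we may take $S = S_{0}$, the set of height-one primes dividing $p$, so that $U_{S_{0}} = \Spec A[1/p]$ and $\mathfrak{T}_{n}(r)|_{U_{S_{0}}} = \Lambda_{n}(r)$. For $r = 0$ and any $n$, the needed assertions about $R\alg{\Gamma}(\alg{U}_{S_{0}}, \Lambda_{n})$ are already contained in the proof of Proposition \ref{0043}: the application of Proposition \ref{0041} there puts $\alg{H}^{q}(\alg{U}_{S_{0}}, \Lambda_{n})$ in $\mathcal{W}_{F}$, while the localization triangle $R\alg{\Gamma}_{c}(\alg{U}_{S_{0}}, \Lambda_{n}) \to R\alg{\Gamma}(\alg{X}, \Lambda_{n}) \to \bigoplus_{\ideal{p} \in S_{0}} R\alg{\Gamma}(\Hat{\alg{A}}_{\ideal{p}}, \Lambda_{n})$ together with the finiteness of $\alg{\Gamma}(\alg{X}, \Lambda_{n})$, $\alg{\Gamma}(\Hat{\alg{A}}_{\ideal{p}}, \Lambda_{n})$ and $\alg{H}^{1}(\alg{X}, \Lambda_{n})$ (the last by \cite[Theorem 1.2]{Suz23}) shows that $\alg{H}^{1}_{c}(\alg{U}_{S_{0}}, \Lambda_{n})$ is finite.

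For general $r$ one reduces first to $n=1$ by the dévissage of \cite[Section 10.6]{Suz24} applied to $0 \to \Lambda_{1}(r) \to \Lambda_{n}(r) \to \Lambda_{n-1}(r) \to 0$ (again via Propositions \ref{0047}, \ref{0089} and Proposition \ref{0040}, which ensures that all groups occurring are pro- resp.\ ind-algebraic). For $n=1$, base change along the finite étale covering $\Spec A[1/p][\zeta_{p}] \to \Spec A[1/p] = U_{S_{0}}$ of degree $p-1$, over which $\Lambda_{1}(r) \cong \Lambda_{1}$; it is a finite disjoint union of $U'_{i} := \Spec A'_{i}[1/p]$ with each $A'_{i}$ a two-dimensional normal complete local ring of mixed characteristic with residue field $F'_{i}$ finite over $F$. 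Since $p-1$ is invertible modulo $p$, the trace projector exhibits $R\alg{\Gamma}(\alg{U}_{S_{0}}, \Lambda_{1}(r))$ and $R\alg{\Gamma}_{c}(\alg{U}_{S_{0}}, \Lambda_{1}(r))$ as direct summands of $\bigoplus_{i} \Weil_{F'_{i}/F} R\alg{\Gamma}(\alg{U}'_{i}, \Lambda_{1})$ and $\bigoplus_{i} \Weil_{F'_{i}/F} R\alg{\Gamma}_{c}(\alg{U}'_{i}, \Lambda_{1})$ respectively. By the $r=0$ case applied to each $A'_{i}$, and the fact that $\Weil_{F'_{i}/F}$ preserves $\mathcal{W}_{F}$ and finiteness, each $\alg{H}^{q}(\alg{U}_{S_{0}}, \Lambda_{1}(r))$ is a direct summand of a $\mathcal{W}_{F}$-object which, being pro-algebraic by Proposition \ref{0040}, lies in $\mathcal{W}_{F}$ by Proposition \ref{0047}, and $\alg{H}^{1}_{c}(\alg{U}_{S_{0}}, \Lambda_{1}(r))$ is a direct summand of a finite group, hence finite. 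Thus $\alg{H}^{q}(\alg{U}_{S_{0}}, \Lambda_{n}(r)) \in \mathcal{W}_{F}$ for all $q, n, r$ and $\alg{H}^{1}_{c}(\alg{U}_{S_{0}}, \Lambda_{n}(r))$ is finite.

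Now part (2) follows from part (1): on $U_{S_{0}}$ the pairing \eqref{0108} reads
\[
		R\alg{\Gamma}(\alg{U}_{S_{0}}, \Lambda_{n}(r))
	\tensor^{L}
		R\alg{\Gamma}_{c}(\alg{U}_{S_{0}}, \Lambda_{n}(2-r))
	\to
		\Lambda_{\infty}[-3],
\]
and, the cohomology of the first factor lying in $\mathcal{W}_{F}$ and vanishing outside $q\in\{0,1,2,3\}$ (Proposition \ref{0040}), Proposition \ref{0078} shows $\alg{H}^{q}_{c}(\alg{U}_{S_{0}}, \Lambda_{n}(r)) \in \mathcal{W}_{F}$ and yields the Serre and Pontryagin dualities of Theorem \ref{0088}. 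In particular the connected part of $\alg{H}^{3}(\alg{U}_{S_{0}}, \Lambda_{n}(r))$ is Serre-dual to $\alg{H}^{1}_{c}(\alg{U}_{S_{0}}, \Lambda_{n}(2-r))^{0} = 0$ (finiteness!) and its $\pi_{0}$ is Pontryagin-dual to $\pi_{0}\alg{H}^{0}_{c}(\alg{U}_{S_{0}}, \Lambda_{n}(2-r)) = 0$, so $\alg{H}^{3}(\alg{U}_{S_{0}}, \Lambda_{n}(r)) = 0$; together with the concentrations, finiteness of $\alg{\Gamma}$, and the pro-/ind-algebraicity claims of Proposition \ref{0040} this gives (1) and (2) for $S = S_{0}$, and the reduction of the first paragraph transfers them to the original $S$. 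The main obstacle is the homological-algebra bookkeeping needed to stay within the hypotheses of Proposition \ref{0041} and Propositions \ref{0047}--\ref{0049} (tracking which groups are pro-algebraic, ind-algebraic, or merely in $\mathcal{W}_{F}$), and organizing the dévissage so that one only ever invokes the case $r=0$ — where the finiteness of $\alg{H}^{1}(\alg{X}, \Lambda_{n})$ from \cite{Suz23} is available — rather than having to control the localization map $\alg{H}^{1}(\alg{X}, \mathfrak{T}_{n}(r)) \to \bigoplus_{\ideal{p}} \alg{H}^{1}(\Hat{\alg{A}}_{\ideal{p}}, \mathfrak{T}_{n}(r))$ for $r = 1, 2$ directly.
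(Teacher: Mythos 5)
Your proposal is correct and follows essentially the same approach as the paper's proof: reduce to $S$ equal to the set of primes dividing $p$, get the finiteness of $\alg{H}^{1}_{c}(\alg{U}_{S_{0}}, \Lambda_{n})$ from the localization sequence sandwiching it between $\bigoplus_{\ideal{p}}\alg{\Gamma}(\Hat{\alg{A}}_{\ideal{p}}, \Lambda_{n})$ and $\alg{H}^{1}(\alg{X}, \Lambda_{n})$ (the latter finite by \cite{Suz23} via Proposition~\ref{0043}), reduce the twist by passing to $A[\zeta_{p}]$ and d\'evissage on $n$, and then kill $\alg{H}^{3}$ by the duality pairing \eqref{0108} and Proposition~\ref{0078}. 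You merely make explicit, with the trace projector, what the paper compresses into the parenthetical ``reduce to the case $\zeta_{p}\in A$'', and both you and the paper are equally brief about the $n$-d\'evissage; note only that Propositions~\ref{0047} and~\ref{0089} by themselves cover quotients and images but not the kernel $\Ker\bigl(\alg{H}^{q}(\Lambda_{n-1}(r))\to\alg{H}^{q+1}(\Lambda_{1}(r))\bigr)$ appearing in the long exact sequence (Proposition~\ref{0049} treats only ind-algebraic quotients), so a small additional observation about kernels of morphisms between pro-algebraic objects of $\mathcal{W}_{F}$ is implicitly being used, as it also is in the paper.
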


\begin{proof}
	That these groups are in $\mathcal{W}_{F}$ is already proved in the proof of
	Propositions \ref{0043} and \ref{0046}
	if $S$ is the set of all primes dividing $p$.
	The case of larger $S$ can be reduced to this case.
	With Proposition \ref{0040},
	what is remaining to show is $\alg{H}^{3}(\alg{U}_{S}, \Lambda_{n}(r)) = 0$.
	In the exact sequence
		\[
				\bigoplus_{\ideal{p} \in S}
					\alg{\Gamma}(\Hat{\alg{A}}_{\ideal{p}}, \Lambda_{n})
			\to
				\alg{H}^{1}_{c}(\alg{U}_{S}, \Lambda_{n})
			\to
				\alg{H}^{1}(\alg{X}, \Lambda_{n}),
		\]
	the first and third terms are finite \'etale
	trivially and by Proposition \ref{0043}, respectively.
	Therefore $\alg{H}^{1}_{c}(\alg{U}_{S}, \Lambda_{n})$ is finite \'etale.
	This implies that $\alg{H}^{1}_{c}(\alg{U}_{S}, \Lambda(r))$ is finite \'etale for all $r$
	(reduce to the case $\zeta_{p} \in A$)
	and hence $\alg{H}^{1}_{c}(\alg{U}_{S}, \Lambda_{n}(r))$ is finite \'etale for all $r$ and $n$.
	With this and by the perfect pairing \eqref{0108} and Proposition \ref{0078},
	we know that $\alg{H}^{3}(\alg{U}_{S}, \Lambda_{n}(r)) = 0$.
\end{proof}

Now we make no assumption on $S$.

\begin{Prop} \label{0050}
	Let $n \ge 1$ and $r \le 0$.
	Then $\alg{\Gamma}(\alg{U}_{S}, \mathfrak{T}_{n}(r))$ is finite,
	$\alg{H}^{1}(\alg{U}_{S}, \mathfrak{T}_{n}(r))$ is pro-algebraic in $\mathcal{W}_{F}$,
	$\alg{H}^{2}(\alg{U}_{S}, \mathfrak{T}_{n}(r))$ is in $\mathcal{W}_{F}$,
	$\alg{H}^{3}(\alg{U}_{S}, \mathfrak{T}_{n}(r))$ is ind-algebraic in $\mathcal{W}_{F}$
	and $\alg{H}^{q}(\alg{U}_{S}, \mathfrak{T}_{n}(r)) = 0$ for $q \ne 0, 1, 2, 3$.
\end{Prop}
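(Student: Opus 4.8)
The plan is to deduce the $r \le 0$ case for general $S$ from three cases already settled — $S = \emptyset$ (Proposition \ref{0043} for $r = 0$, and $R\alg{\Gamma}(\alg{X}, \mathfrak{T}_{n}(r)) \cong R\alg{\Gamma}_{c}(\alg{U}_{S_{0}}, \Lambda_{n}(r))$ for $r < 0$, covered by Proposition \ref{0075}, where $S_{0}$ is the set of height one primes above $p$), $S \supset S_{0}$ (Proposition \ref{0075}), and the two-dimensional local fields $\Hat{K}_{\ideal{p}}$ (Proposition \ref{0031}) — by playing off two localization triangles against each other and invoking the duality pairing \eqref{0108}. A preliminary d\'evissage reduces to $n = 1$, and a finite base change trivializing the twist reduces $r < 0$ to $r = 0$, both exactly as in Proposition \ref{0040}; so I may assume $\mathfrak{T}_{n}(r) = \Lambda_{n}$.

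First, the localization triangle over $\alg{X}$,
\[
\bigoplus_{\ideal{p} \in S} R\alg{\Gamma}_{c}(\Hat{\alg{A}}_{\ideal{p}}, \Lambda_{n}) \to R\alg{\Gamma}(\alg{X}, \Lambda_{n}) \to R\alg{\Gamma}(\alg{U}_{S}, \Lambda_{n}),
\]
has middle term ind-algebraic in $\mathcal{W}_{F}$, concentrated in degrees $0,\dots,3$, with $\alg{\Gamma}(\alg{X}, \Lambda_{n}) \cong \Lambda_{n}$ (Proposition \ref{0043}), and left term concentrated in degrees $2, 3$ with cohomology in $\mathcal{W}_{F}$, being $R\alg{\Gamma}(\varalg{\kappa}(\ideal{p}), \Lambda_{n}(-1))[-2] \in D^{b}(\Alg_{u}/F)$ for $\ideal{p} \nmid p$ (by \cite[Theorem 7.2.4]{Suz24}) and controlled by Proposition \ref{0031} for $\ideal{p} \mid p$. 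Reading the long exact sequence gives at once that $\alg{\Gamma}(\alg{U}_{S}, \Lambda_{n}) \cong \Lambda_{n}$ is finite, that $\alg{H}^{q}(\alg{U}_{S}, \Lambda_{n}) = 0$ for $q \notin \{0,1,2,3\}$, and — since $\alg{H}^{4}_{c}(\Hat{\alg{A}}_{\ideal{p}}, \Lambda_{n}) = 0$ — that $\alg{H}^{3}(\alg{U}_{S}, \Lambda_{n})$ is a quotient of the ind-algebraic group $\alg{H}^{3}(\alg{X}, \Lambda_{n})$, hence ind-algebraic by Proposition \ref{0024}. Second, the localization triangle over $\alg{U}_{S'}$ ($S' = S \cup S_{0}$),
\[
\bigoplus_{\ideal{p} \in S' \setminus S} R\alg{\Gamma}_{c}(\Hat{\alg{A}}_{\ideal{p}}, \Lambda_{n}) \to R\alg{\Gamma}(\alg{U}_{S}, \Lambda_{n}) \to R\alg{\Gamma}(\alg{U}_{S'}, \Lambda_{n}),
\]
has right term pro-algebraic in $\mathcal{W}_{F}$ (Proposition \ref{0075}) and left term vanishing in degrees $\le 1$ (the indices $\ideal{p}$ all divide $p$), so $\alg{H}^{1}(\alg{U}_{S}, \Lambda_{n})$ is the kernel of a morphism with pro-algebraic source, hence pro-algebraic; together with Proposition \ref{0048} this gives $\alg{H}^{1}(\alg{U}_{S}, \Lambda_{n})$ pro-algebraic in $\mathcal{W}_{F}$.

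It remains to show $\alg{H}^{q}(\alg{U}_{S}, \Lambda_{n}) \in \mathcal{W}_{F}$ for every $q$. For this I would rotate the second triangle so that its three slots carry, respectively, pro-algebraic, $\mathcal{W}_{F}$-, and ind-algebraic cohomology, after splitting off the degree-$3$ part of the local terms — for $\ideal{p} \mid p$ these are ind-algebraic, while the degree-$2$ parts are only general objects of $\mathcal{W}_{F}$ — by a truncation exactly as in the proof of Proposition \ref{0046}, and then apply Proposition \ref{0041}. The remaining fine structure (notably $\alg{H}^{2}$ as a general object, and the finiteness of the $\pi_{0}$'s) is then pinned down from the perfect pairing \eqref{0108} together with Proposition \ref{0078}, using the companion statement for $R\alg{\Gamma}_{c}(\alg{U}_{S}, \mathfrak{T}_{n}(2-r))$, which is proved in parallel by the same triangles; so the present proposition is really entangled with its dual in the range $r \ge 2$.

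The step I expect to be the main obstacle is the last one: since $\mathcal{W}_{F}$ is not abelian and the $p$-adic local compact-support terms $\alg{H}^{q}_{c}(\Hat{\alg{A}}_{\ideal{p}}, \Lambda_{n})$ with $\ideal{p} \mid p$ are merely general objects of $\mathcal{W}_{F}$, none of the triangles can be fed into Proposition \ref{0041} without first truncating in degree and then checking, slot by slot, which lemma of Section \ref{0114} (Propositions \ref{0041}, \ref{0047}--\ref{0049}, \ref{0089}) applies. Getting this bookkeeping correct, rather than any isolated conceptual point, is where the work lies.
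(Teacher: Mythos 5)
Your overall strategy — the two localization triangles
\[
\bigoplus_{\ideal{p} \in S} R\alg{\Gamma}_{c}(\Hat{\alg{A}}_{\ideal{p}}, \mathfrak{T}_{n}(r)) \to R\alg{\Gamma}(\alg{X}, \mathfrak{T}_{n}(r)) \to R\alg{\Gamma}(\alg{U}_{S}, \mathfrak{T}_{n}(r))
\]
and
\[
\bigoplus_{\ideal{p} \in S_{0}\setminus S} R\alg{\Gamma}_{c}(\Hat{\alg{A}}_{\ideal{p}}, \mathfrak{T}_{n}(r)) \to R\alg{\Gamma}(\alg{U}_{S}, \mathfrak{T}_{n}(r)) \to R\alg{\Gamma}(\alg{U}_{S_{0}}, \mathfrak{T}_{n}(r)),
\]
fed into the lemmas of Section \ref{0114} and the pairing \eqref{0108} — is exactly the paper's. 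But there is a genuine gap at the very first step. You reduce to $n = 1$ and $r = 0$ ``by d\'evissage exactly as in Proposition \ref{0040},'' but Proposition \ref{0040} concludes only membership in $\Pro\Alg_{u}/F$ or $\Ind\Alg_{u}/F$, which are abelian subcategories where a d\'evissage on $n$ along $\mathfrak{T}_{1}(r) \to \mathfrak{T}_{n}(r) \to \mathfrak{T}_{n-1}(r)$ goes through. The present proposition claims membership in $\mathcal{W}_{F}$, which is \emph{not} abelian, and the introduction explicitly singles this out: ``A simple d\'evissage on $n$ does not work either since $\mathcal{W}_{F}$ is not an abelian category.'' Concretely, in the long exact sequence the segment around $\alg{H}^{2}$ has pattern $\Pro\mathcal{W}_{F} \to \mathcal{W}_{F} \to ? \to \mathcal{W}_{F} \to \Ind\mathcal{W}_{F}$, which does not match the $\Pro/\mathcal{W}_{F}/\Ind$ period required by Proposition \ref{0041}, and none of Propositions \ref{0047}--\ref{0049} lets you extract $\mathcal{W}_{F}$-membership of the middle term from its two $\mathcal{W}_{F}$ neighbors. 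The paper avoids this entirely: its proof of Proposition \ref{0050} runs directly for arbitrary $n \ge 1$ and $r \le 0$, since all the inputs (Propositions \ref{0031}, \ref{0043}, \ref{0075}) already hold at that level of generality; the single $n$-d\'evissage in the whole chain is buried inside Proposition \ref{0040}, where it is safe.

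There are two smaller gaps in the remaining steps. For $\alg{H}^{1}$ you obtain pro-algebraicity as a kernel inside a pro-algebraic group, and then invoke Proposition \ref{0048} to upgrade to $\mathcal{W}_{F}$; but Proposition \ref{0048} concerns injections into an \emph{ind}-algebraic group in $\mathcal{W}_{F}$, and has no pro-algebraic analogue in Section \ref{0114}. The paper instead gets $\alg{H}^{1} \in \mathcal{W}_{F}$ by combining the two triangles: it shows $\Im(H^{1}C_{U_{S}} \to H^{2}D)$ is pro-algebraic and $\Im(H^{2}D \to H^{2}C_{X})$ is ind-algebraic, then applies Proposition \ref{0041} to the short exact sequence they form inside $H^{2}D$, and finally uses the extension by the finite group $H^{1}C_{X}$. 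Likewise, your argument for $\alg{H}^{3}$ only gives ind-algebraicity via Proposition \ref{0024}; the extra $\mathcal{W}_{F}$-membership requires another application of Proposition \ref{0041} to the sequence $0 \to \Im(H^{2}C_{U_{0}} \to H^{3}E) \to H^{3}E \to H^{3}C_{U_{S}} \to 0$, which is where the paper gets it. Finally, for $\alg{H}^{2} \in \mathcal{W}_{F}$ you only gesture at truncating and applying Proposition \ref{0041}; as you note this is where the real work is, and it turns out to be substantially more delicate than a single truncation plus \ref{0041} — the paper plays several images and kernels from both triangles against each other (using Propositions \ref{0024}, \ref{0047}, \ref{0048}, \ref{0049}, \ref{0089} in turn) to pin down first $\pi_{0}(H^{2}C_{U_{S}})$ as simultaneously \'etale and profinite (hence finite), and only then $H^{2}C_{U_{S}} \in \mathcal{W}_{F}$.
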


\begin{proof}
	Let $S_{0}$ be the set of all primes dividing $p$.
	We may assume that $S \subset S_{0}$.
	Let $U_{0} = U_{S_{0}}$.
	The finiteness of $\alg{\Gamma}(\alg{U}_{S}, \mathfrak{T}_{n}(r))$ is obvious.
	Denote the distinguished triangles
		\begin{gather*}
					\bigoplus_{\ideal{p} \in S}
						R \alg{\Gamma}_{c}(\Hat{\alg{A}}_{\ideal{p}}, \mathfrak{T}_{n}(r))
				\to
					R \alg{\Gamma}(\alg{X}, \mathfrak{T}_{n}(r))
				\to
					R \alg{\Gamma}(\alg{U}_{S}, \mathfrak{T}_{n}(r)),
			\\
					\bigoplus_{\ideal{p} \in S_{0} \setminus S}
						R \alg{\Gamma}_{c}(\Hat{\alg{A}}_{\ideal{p}}, \mathfrak{T}_{n}(r))
				\to
					R \alg{\Gamma}(\alg{U}_{S}, \mathfrak{T}_{n}(r))
				\to
					R \alg{\Gamma}(\alg{U}_{0}, \mathfrak{T}_{n}(r)),
		\end{gather*}
	by
		\begin{gather*}
					D
				\to
					C_{X}
				\to
					C_{U_{S}},
			\\
					E
				\to
					C_{U_{S}}
				\to
					C_{U_{0}},
		\end{gather*}
	respectively.
	
	The morphism $H^{1} C_{U_{0}} \to H^{2} E$ is a morphism
	from a pro-algebraic group in $\mathcal{W}_{F}$
	to an object of $\mathcal{W}_{F}$.
	Hence its kernel $H^{1} C_{U_{S}}$ is pro-algebraic.
	Hence the morphism $H^{1} C_{U_{S}} \to H^{2} D$ is a morphism
	from a pro-algebraic group to an object of $\mathcal{W}_{F}$.
	Hence its image is pro-algebraic.
	The morphism $H^{2} D \to H^{2} C_{X}$ is a morphism
	from an object of $\mathcal{W}_{F}$ to an ind-algebraic group in $\mathcal{W}_{F}$.
	Hence its image is ind-algebraic.
	Therefore $\Im(H^{1} C_{U_{S}} \to H^{2} D), \Im(H^{2} D \to H^{2} C_{X}) \in \mathcal{W}_{F}$
	by Proposition \ref{0041}.
	As $H^{1} C_{X}$ is finite, we have $H^{1} C_{U_{S}} \in \mathcal{W}_{F}$.
	
	In the exact sequence
		\[
				0
			\to
				H^{1} C_{U_{S}}
			\to
				H^{1} C_{U_{0}}
			\to
				H^{2} E,
		\]
	the terms are in $\mathcal{W}_{F}$, with the first and second terms pro-algebraic.
	Hence $\Im(H^{1} C_{U_{0}} \to H^{2} E)$ is pro-algebraic in $\mathcal{W}_{F}$
	by Proposition \ref{0047}.
	Hence $\Im(H^{2} E \to H^{2} C_{U_{S}}) \in \mathcal{W}_{F}$
	again by Proposition \ref{0047}.
	As $H^{2} C_{X}$ is ind-algebraic in $\mathcal{W}_{F}$,
	the object $\Im(H^{2} C_{X} \to H^{2} C_{U_{S}})$ is ind-algebraic
	by Proposition \ref{0024}.
	
	The morphism $H^{2} C_{U_{0}} \to H^{3} E$ is a morphism
	from a pro-algebraic group in $\mathcal{W}_{F}$ to an object of $\mathcal{W}_{F}$.
	Hence $\Im(H^{2} C_{U_{S}} \to H^{2} C_{U_{0}}) = \Ker(H^{2} C_{U_{0}} \to H^{3} E)$
	and $\Im(H^{2} C_{U_{0}} \to H^{3} E)$ are pro-algebraic.
	Since the object $H^{3} C_{X}$ is ind-algebraic in $\mathcal{W}_{F}$
	and the morphism $H^{3} C_{X} \to H^{3} C_{U_{S}}$ is surjective,
	it follows that $H^{3} C_{U_{S}}$ is ind-algebraic by Proposition \ref{0024}.
	We have $H^{3} E \in \mathcal{W}_{F}$.
	Applying Proposition \ref{0041} to the exact sequence
		\[
				0
			\to
				\Im(H^{2} C_{U_{0}} \to H^{3} E)
			\to
				H^{3} E
			\to
				H^{3} C_{U_{S}}
			\to
				0,
		\]
	we know that the first and third terms in this sequence are in $\mathcal{W}_{F}$.
	Thus $H^{3} C_{U_{S}}$ is ind-algebraic in $\mathcal{W}_{F}$.
	
	The morphism $H^{3} D \to H^{3} C_{X}$ is a morphism
	from an object of $\mathcal{W}_{F}$ to an ind-algebraic group in $\mathcal{W}_{F}$.
	Hence we have $\Im(H^{3} D \to H^{3} C_{X}) \in \mathcal{W}_{F}$
	by Proposition \ref{0048}.
	Hence the morphism $H^{3} D \onto \Im(H^{3} D \to H^{3} C_{X})$
	is a morphism from an object of $\mathcal{W}_{F}$
	to an ind-algebraic group in $\mathcal{W}_{F}$.
	Therefore Proposition \ref{0049} implies that
	its kernel $\Im(H^{2} C_{U_{S}} \to H^{3} D)$ has identity component in $\mathcal{W}_{F}$
	and component group \'etale.
	
	Consider the exact sequences
		\begin{gather*}
					0
				\to
					\Im(H^{2} C_{X} \to H^{2} C_{U_{S}})
				\to
					H^{2} C_{U_{S}}
				\to
					\Im(H^{2} C_{U_{S}} \to H^{3} D)
				\to
					0,
			\\
					0
				\to
					\Im(H^{2} E \to H^{2} C_{U_{S}})
				\to
					H^{2} C_{U_{S}}
				\to
					\Im(H^{2} C_{U_{S}} \to H^{2} C_{U_{0}})
				\to
					0.
		\end{gather*}
	With what have been proved so far,
	the first sequence shows that $\pi_{0}(H^{2} C_{U_{S}})$ is \'etale
	and the second sequence shows that $\pi_{0}(H^{2} C_{U_{S}})$ is profinite.
	Thus $\pi_{0}(H^{2} C_{U_{S}})$ is finite,
	and $\Im(H^{2} C_{U_{S}} \to H^{3} D) \in \mathcal{W}_{F}$.
	The image of the composite
		\[
				\Im(H^{2} C_{X} \to H^{2} C_{U_{S}})
			\into
				H^{2} C_{U_{S}}
			\onto
				\Im(H^{2} C_{U_{S}} \to H^{2} C_{U_{0}})
		\]
	is $\Im(H^{2} C_{X} \to H^{2} C_{U_{0}})$,
	which is in $\Alg_{u} / F$
	by what was shown about \eqref{0042} in the proof of Proposition \ref{0043}.
	Hence, with what have been proved so far, we know that the induced morphism
		\[
				\Im(H^{2} C_{U_{S}} \to H^{3} D)
			\onto
				\frac{
					\Im(H^{2} C_{U_{S}} \to H^{2} C_{U_{0}})
				}{
					\Im(H^{2} C_{X} \to H^{2} C_{U_{0}})
				}
		\]
	is a surjection from an object of $\mathcal{W}_{F}$
	onto a pro-algebraic group.
	Therefore Proposition \ref{0047} implies that
	$\Im(H^{2} C_{U_{S}} \to H^{2} C_{U_{0}}) / \Im(H^{2} C_{X} \to H^{2} C_{U_{0}})$
	is in $\mathcal{W}_{F}$.
	Hence $\Im(H^{2} C_{U_{S}} \to H^{2} C_{U_{0}}) \in \mathcal{W}_{F}$.
	With $\Im(H^{2} E \to H^{2} C_{U_{S}}) \in \mathcal{W}_{F}$,
	we know that $H^{2} C_{U_{S}} \in \mathcal{W}_{F}$.
\end{proof}

\begin{Prop}
	Let $n \ge 1$ and $r \ge 2$.
	Then $\alg{H}_{c}^{0}(\alg{U}_{S}, \mathfrak{T}_{n}(r))$ is finite,
	$\alg{H}^{1}_{c}(\alg{U}_{S}, \mathfrak{T}_{n}(r))$ is pro-algebraic in $\mathcal{W}_{F}$,
	$\alg{H}^{2}_{c}(\alg{U}_{S}, \mathfrak{T}_{n}(r))$ is in $\mathcal{W}_{F}$,
	$\alg{H}^{3}_{c}(\alg{U}_{S}, \mathfrak{T}_{n}(r))$ is ind-algebraic in $\mathcal{W}_{F}$
	and $\alg{H}^{q}_{c}(\alg{U}_{S}, \mathfrak{T}_{n}(r)) = 0$ for $q \ne 0, 1, 2, 3$.
\end{Prop}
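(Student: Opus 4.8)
The plan is to obtain this proposition from Proposition \ref{0050} by duality, in exactly the way the $r=0$ cases of the earlier propositions were obtained from their $r\le 0$ counterparts. Put $r' = 2-r$, so that $r' \le 0$ since $r \ge 2$. Substituting $r'$ for $r$ in the perfect pairing \eqref{0108} and shifting, I get a perfect pairing
\[
	R \alg{\Gamma}(\alg{U}_{S}, \mathfrak{T}_{n}(r'))[3]
	\tensor^{L}
	R \alg{\Gamma}_{c}(\alg{U}_{S}, \mathfrak{T}_{n}(r))
	\to
	\Lambda_{\infty}
\]
in $D(F^{\ind\rat}_{\pro\et})$. By Proposition \ref{0050} every $\alg{H}^{q}(\alg{U}_{S}, \mathfrak{T}_{n}(r'))$ lies in $\mathcal{W}_{F}$ and vanishes for $q \ne 0,1,2,3$; hence the cohomology objects of $R \alg{\Gamma}(\alg{U}_{S}, \mathfrak{T}_{n}(r'))[3]$ lie in $\mathcal{W}_{F}$ and vanish in all but finitely many degrees. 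So Proposition \ref{0078} applies and gives at once that each $\alg{H}^{q}_{c}(\alg{U}_{S}, \mathfrak{T}_{n}(r))$ lies in $\mathcal{W}_{F}$ (and vanishes for all but finitely many $q$), together with a Pontryagin duality $\pi_{0}\alg{H}^{q}_{c}(\alg{U}_{S}, \mathfrak{T}_{n}(r)) \leftrightarrow \pi_{0}\alg{H}^{3-q}(\alg{U}_{S}, \mathfrak{T}_{n}(r'))$ of finite \'etale groups and a Serre duality $(\alg{H}^{q}_{c}(\alg{U}_{S}, \mathfrak{T}_{n}(r)))^{0} \leftrightarrow (\alg{H}^{4-q}(\alg{U}_{S}, \mathfrak{T}_{n}(r')))^{0}$ of connected groups in $\mathcal{W}_{F}$.

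It then remains to read off the finer structure degree by degree, feeding in the structure of the right-hand sides from Proposition \ref{0050}. For $q \notin \{0,1,2,3\}$ one checks that $\pi_{0}\alg{H}^{3-q}(\alg{U}_{S}, \mathfrak{T}_{n}(r'))$ and $(\alg{H}^{4-q}(\alg{U}_{S}, \mathfrak{T}_{n}(r')))^{0}$ both vanish — the latter for $q = 4$ because $\alg{\Gamma}(\alg{U}_{S}, \mathfrak{T}_{n}(r'))$ is finite — so $\alg{H}^{q}_{c}(\alg{U}_{S}, \mathfrak{T}_{n}(r)) = 0$. For $q = 0$ the identity component is dual to $(\alg{H}^{4}(\alg{U}_{S}, \mathfrak{T}_{n}(r')))^{0} = 0$ while $\pi_{0}$ is dual to the finite \'etale group $\pi_{0}\alg{H}^{3}(\alg{U}_{S}, \mathfrak{T}_{n}(r'))$, so $\alg{H}^{0}_{c}$ is finite. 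For $q = 1$ the identity component is Serre-dual to $(\alg{H}^{3}(\alg{U}_{S}, \mathfrak{T}_{n}(r')))^{0}$, which is connected ind-algebraic; since the Serre duality autoequivalence on connected groups in $\mathcal{W}_{F}$ interchanges ind-algebraic and pro-algebraic groups (this is part of the structure described in \cite[Proposition 3.1.7]{Suz24}), $(\alg{H}^{1}_{c})^{0}$ is pro-algebraic, and with $\pi_{0}$ finite \'etale we conclude $\alg{H}^{1}_{c}$ is pro-algebraic in $\mathcal{W}_{F}$. For $q = 3$ the identity component is Serre-dual to the connected pro-algebraic group $(\alg{H}^{1}(\alg{U}_{S}, \mathfrak{T}_{n}(r')))^{0}$, hence ind-algebraic, giving $\alg{H}^{3}_{c}$ ind-algebraic in $\mathcal{W}_{F}$; and $q = 2$ needs nothing beyond membership in $\mathcal{W}_{F}$.

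There is essentially no serious obstacle here: once Proposition \ref{0050} and the duality formalism of Proposition \ref{0078} are in hand, the argument is pure bookkeeping with the indexing of the Serre and Pontryagin dualities. The only point requiring a little care is the assertion that Serre duality on connected objects of $\mathcal{W}_{F}$ exchanges the ``ind-algebraic'' and ``pro-algebraic'' properties, which I would simply attribute to the explicit form of the autoequivalence built from the filtration $G \supset G^{0} \supset G' \supset 0$ in the definition of $\mathcal{W}_{F}$, where the co-connected ind-algebraic part $G^{0}/G'$ and the pro-algebraic part $G'$ are interchanged.
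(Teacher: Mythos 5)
Your proposal matches the paper's own argument, which is simply the one-liner ``This follows from Proposition \ref{0050} by duality.'' You have merely expanded the bookkeeping, including correctly noting that the shift in \eqref{0108} puts the Serre duality in degrees $q \leftrightarrow 4-q$ and the Pontryagin duality in degrees $q \leftrightarrow 3-q$, and that the Serre autoequivalence on connected objects of $\mathcal{W}_F$ swaps the pro-algebraic and ind-algebraic parts of the defining filtration.
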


\begin{proof}
	This follows from Proposition \ref{0050} by duality.
\end{proof}

\begin{Prop} \label{0051}
	Let $n \ge 1$ and $r \le 0$.
	Then $\alg{H}^{0}_{c}(\alg{U}_{S}, \mathfrak{T}_{n}(r))$ and
	$\alg{H}^{1}_{c}(\alg{U}_{S}, \mathfrak{T}_{n}(r))$ are finite,
	$\alg{H}^{2}_{c}(\alg{U}_{S}, \mathfrak{T}_{n}(r))$ and
	$\alg{H}^{3}_{c}(\alg{U}_{S}, \mathfrak{T}_{n}(r))$ are ind-algebraic in $\mathcal{W}_{F}$
	and $\alg{H}^{q}(\alg{U}_{S}, \mathfrak{T}_{n}(r)) = 0$ for $q \ne 0, 1, 2, 3$.
\end{Prop}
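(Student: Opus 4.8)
The plan is to deduce this from Proposition \ref{0075}(2) by excision, treating the cases $r<0$ and $r=0$ separately, because $\mathfrak{T}_{n}(r)$ has a different shape on $\alg{X}$ in the two cases. Write $S_{0}$ for the set of height one primes dividing $p$, so that $\alg{U}_{S_{0}}$ is the locus where $p$ is invertible and $\mathfrak{T}_{n}(r)|_{\alg{U}_{S_{0}}} = \Lambda_{n}(r)$ for $r\le 0$.

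For $r<0$ the object $\mathfrak{T}_{n}(r)$ on $\alg{X}$ is $\lambda_{S_{0},!}\Lambda_{n}(r)$, the extension by zero from $\alg{U}_{S_{0}}$. If $S\subseteq S_{0}$, then $\mathfrak{T}_{n}(r)|_{\alg{U}_{S}}$ is still the extension by zero of $\Lambda_{n}(r)$ from $\alg{U}_{S_{0}}\subseteq\alg{U}_{S}$, and since extension by zero is transitive, $\lambda_{S,!}$ applied to it is $\lambda_{S_{0},!}\Lambda_{n}(r)$; hence $R\alg{\Gamma}_{c}(\alg{U}_{S},\mathfrak{T}_{n}(r))\cong R\alg{\Gamma}_{c}(\alg{U}_{S_{0}},\mathfrak{T}_{n}(r))$, which is covered by Proposition \ref{0075}(2). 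For a general $S$ I would first excise the finitely many primes $\ideal{q}\in S$ not dividing $p$: each such $\Hat{A}_{\ideal{q}}$ has $p$ invertible, so $\mathfrak{T}_{n}(r)|_{\Hat{\alg{A}}_{\ideal{q}}}=\Lambda_{n}(r)$ and $R\alg{\Gamma}(\Hat{\alg{A}}_{\ideal{q}},\mathfrak{T}_{n}(r))\cong R\alg{\Gamma}(\varalg{\kappa}(\ideal{q}),\Lambda_{n}(r))$ lies in $D^{b}(\Alg_{u}/F)$, concentrated in degrees $0,1$ with $\alg{\Gamma}$ finite (\cite[Theorem 7.2.4]{Suz24}); the excision triangle of \cite[Section 10.6]{Suz24} relating $\alg{U}_{S}$ to $\alg{U}_{S\cap S_{0}}$, together with Proposition \ref{0041}, then transports the statement from $\alg{U}_{S\cap S_{0}}$ to $\alg{U}_{S}$.

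For $r=0$ the sheaf $\mathfrak{T}_{n}(0)$ on $\alg{X}$ is the constant sheaf $\Lambda_{n}$ (here $\nu_{n}(-1)=0$, and $\lambda_{S_{0},\ast}\Lambda_{n}=\Lambda_{n}$ by normality of $\alg{X}$). I would use the excision triangle $R\alg{\Gamma}_{c}(\alg{U}_{S},\Lambda_{n})\to R\alg{\Gamma}(\alg{X},\Lambda_{n})\to\bigoplus_{\ideal{p}\in S}R\alg{\Gamma}(\Hat{\alg{A}}_{\ideal{p}},\Lambda_{n})$ from the proof of Proposition \ref{0043}. The middle term is described by Proposition \ref{0043}: $\alg{\Gamma}$ and $\alg{H}^{1}$ are finite, $\alg{H}^{2}$ and $\alg{H}^{3}$ are ind-algebraic in $\mathcal{W}_{F}$, and the rest vanish. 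Each local term is concentrated in degrees $0,1$, with $\alg{\Gamma}$ finite and $\alg{H}^{1}$ (connected) ind-algebraic in $\mathcal{W}_{F}$, and vanishes in degrees $\ge 2$ --- by Proposition \ref{0031}\,\eqref{0034} (after $\Weil_{F_{\ideal{p}}/F}$) when $\ideal{p}\mid p$, and by \cite[Theorem 7.2.4]{Suz24} when $\ideal{p}\nmid p$. Running the long exact sequence: $\alg{H}^{0}_{c}$ and $\alg{H}^{1}_{c}$ come out as extensions of finite groups by finite groups, hence finite; $\alg{H}^{3}_{c}\cong\alg{H}^{3}(\alg{X},\Lambda_{n})$ since the degree-$2$ and degree-$3$ local terms vanish; and $\alg{H}^{2}_{c}$ is an extension of $\alg{H}^{2}(\alg{X},\Lambda_{n})$ by $\Coker\bigl(\alg{H}^{1}(\alg{X},\Lambda_{n})\to\bigoplus\alg{H}^{1}(\Hat{\alg{A}}_{\ideal{p}},\Lambda_{n})\bigr)$, the latter a quotient by a finite subgroup of an ind-algebraic group in $\mathcal{W}_{F}$, hence again such (Propositions \ref{0024} and \ref{0089}). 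Since $\mathcal{W}_{F}$ is closed under extensions, all four groups lie in $\mathcal{W}_{F}$, the ind-algebraic assertions follow from Proposition \ref{0024}, and the vanishing outside $q=0,1,2,3$ is immediate.

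The step I expect to be most delicate is the homological bookkeeping in the $r=0$ case --- keeping track of which kernels and cokernels stay finite, stay ind-algebraic, and stay inside $\mathcal{W}_{F}$ --- but it is much milder here than in the proof of Proposition \ref{0050}, precisely because for $r\le 0$ every group entering the excision sequences is finite or ind-algebraic in degrees $\le 3$, so no pro-algebraic part can be created; this is in contrast with the $r=1$ and $r\ge 2$ cases, where the local terms genuinely carry pro-algebraic pieces. The remaining points are essentially formal: the identifications $\mathfrak{T}_{n}(0)|_{\alg{X}}=\Lambda_{n}$ and $\mathfrak{T}_{n}(r)|_{\alg{X}}=\lambda_{S_{0},!}\Lambda_{n}(r)$ for $r<0$, and the transitivity of extension by zero underlying the reduction of the $r<0$ case to Proposition \ref{0075}(2).
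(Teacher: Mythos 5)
Your proof is correct and follows essentially the same route as the paper: the $r<0$ case reduces to Proposition~\ref{0075} via the extension-by-zero identification, and the $r=0$ case runs the long exact sequence of $R\alg{\Gamma}_{c}(\alg{U}_{S},\Lambda_{n})\to R\alg{\Gamma}(\alg{X},\Lambda_{n})\to\bigoplus_{\ideal{p}\in S}R\alg{\Gamma}(\Hat{\alg{A}}_{\ideal{p}},\Lambda_{n})$ against Propositions~\ref{0031} and~\ref{0043}. One small simplification for $r<0$: since $\mathfrak{T}_{n}(r)|_{\alg{U}_{S}}$ is the extension by zero of $\Lambda_{n}(r)$ from $\alg{U}_{S\cup S_{0}}$ for \emph{every} $S$, transitivity gives $R\alg{\Gamma}_{c}(\alg{U}_{S},\mathfrak{T}_{n}(r))\cong R\alg{\Gamma}_{c}(\alg{U}_{S\cup S_{0}},\Lambda_{n}(r))$ directly, so the extra excision step for $S\not\subseteq S_{0}$ is unnecessary.
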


\begin{proof}
	The case $r < 0$ follows from Proposition \ref{0075}.
	For $r = 0$, the statement follows from the distinguished triangle
		\[
				R \alg{\Gamma}_{c}(\alg{U}_{S}, \Lambda_{n})
			\to
				R \alg{\Gamma}(\alg{X}, \Lambda_{n})
			\to
				\bigoplus_{\ideal{p} \in S}
					R \alg{\Gamma}(\Hat{\alg{A}}_{\ideal{p}}, \Lambda_{n})
		\]
	and Propositions \ref{0031} and \ref{0043}.
\end{proof}

\begin{Prop}
	Let $n \ge 1$ and $r \ge 2$.
	Then $\alg{\Gamma}(\alg{U}_{S}, \mathfrak{T}_{n}(r))$ and
	$\alg{H}^{3}(\alg{U}_{S}, \mathfrak{T}_{n}(r))$ are finite,
	$\alg{H}^{1}(\alg{U}_{S}, \mathfrak{T}_{n}(r))$ and
	$\alg{H}^{2}(\alg{U}_{S}, \mathfrak{T}_{n}(r))$ are pro-algebraic in $\mathcal{W}_{F}$ and
	$\alg{H}^{q}(\alg{U}_{S}, \mathfrak{T}_{n}(r)) = 0$ for $q \ne 0, 1, 2, 3$.
\end{Prop}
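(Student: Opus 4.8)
The plan is to deduce this proposition from Proposition \ref{0051} (its compactly-supported mirror, for twists $\le 0$) via the perfect pairing \eqref{0108} and the formal duality machinery of Proposition \ref{0078}, exactly in the manner used above to pass from Proposition \ref{0043} to its $\mathfrak{T}_{n}(2)$-counterpart.

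First I would invoke the perfect pairing
\[
		R \alg{\Gamma}(\alg{U}_{S}, \mathfrak{T}_{n}(r))
	\tensor^{L}
		R \alg{\Gamma}_{c}(\alg{U}_{S}, \mathfrak{T}_{n}(2 - r))
	\to
		\Lambda_{\infty}[-3]
\]
of \eqref{0108}. Since $r \ge 2$ we have $2 - r \le 0$, so Proposition \ref{0051} applies to the second factor and shows that $\alg{H}^{q}_{c}(\alg{U}_{S}, \mathfrak{T}_{n}(2 - r))$ lies in $\mathcal{W}_{F}$ for every $q$ and vanishes outside $0 \le q \le 3$. After shifting the pairing by $[3]$ so that it lands in $\Lambda_{\infty}$, the hypotheses of Proposition \ref{0078} are met, hence $\alg{H}^{q}(\alg{U}_{S}, \mathfrak{T}_{n}(r)) \in \mathcal{W}_{F}$ for all $q$ (vanishing for all but finitely many $q$), and the pairing induces a Serre duality $\alg{H}^{q}(\alg{U}_{S}, \mathfrak{T}_{n}(r))^{0} \leftrightarrow \alg{H}^{4 - q}_{c}(\alg{U}_{S}, \mathfrak{T}_{n}(2 - r))^{0}$ of connected groups in $\mathcal{W}_{F}$ together with a Pontryagin duality $\pi_{0} \alg{H}^{q}(\alg{U}_{S}, \mathfrak{T}_{n}(r)) \leftrightarrow \pi_{0} \alg{H}^{3 - q}_{c}(\alg{U}_{S}, \mathfrak{T}_{n}(2 - r))$ of finite \'etale groups.

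Then I would read off the stated structure degree by degree. By Proposition \ref{0051} the groups $\alg{H}^{0}_{c}$ and $\alg{H}^{1}_{c}$ in twist $2 - r$ are finite, so their identity components vanish; Serre duality then forces $\alg{H}^{3}(\alg{U}_{S}, \mathfrak{T}_{n}(r))^{0} = \alg{H}^{0}(\alg{U}_{S}, \mathfrak{T}_{n}(r))^{0} = 0$, while the relevant $\pi_{0}$ stay finite, so $\alg{\Gamma}(\alg{U}_{S}, \mathfrak{T}_{n}(r))$ and $\alg{H}^{3}(\alg{U}_{S}, \mathfrak{T}_{n}(r))$ are finite. Since $\alg{H}^{2}_{c}$ and $\alg{H}^{3}_{c}$ in twist $2 - r$ are ind-algebraic in $\mathcal{W}_{F}$, Serre duality makes $\alg{H}^{2}(\alg{U}_{S}, \mathfrak{T}_{n}(r))^{0}$ and $\alg{H}^{1}(\alg{U}_{S}, \mathfrak{T}_{n}(r))^{0}$ pro-algebraic; combined with finiteness of $\pi_{0}$ and the fact that an extension of a connected pro-algebraic group in $\mathcal{W}_{F}$ by a finite \'etale group is again pro-algebraic in $\mathcal{W}_{F}$, this gives $\alg{H}^{1}(\alg{U}_{S}, \mathfrak{T}_{n}(r)), \alg{H}^{2}(\alg{U}_{S}, \mathfrak{T}_{n}(r)) \in \Pro \Alg_{u} / F$. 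Finally, for $q \notin \{0,1,2,3\}$ both $\alg{H}^{3 - q}_{c}$ and $\alg{H}^{4 - q}_{c}$ in twist $2 - r$ are either zero or, in the single borderline case $q = 4$, finite with trivial identity component, so $\alg{H}^{q}(\alg{U}_{S}, \mathfrak{T}_{n}(r)) = 0$ there.

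I do not expect a serious obstacle: this statement is purely the duality partner of Proposition \ref{0051}, and the argument is bookkeeping with the interchange of pro- and ind-algebraicity under Serre duality and the finiteness of $\pi_{0}$ for objects of $\mathcal{W}_{F}$. The only mildly delicate points are getting the degree shift right when applying Proposition \ref{0078} and recording that $\mathcal{W}_{F}$ is stable under extension by finite \'etale groups, which is immediate from its definition.
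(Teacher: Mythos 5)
Your argument is correct and coincides with the alternative route the paper itself indicates parenthetically, namely deducing the statement from Proposition \ref{0051} by duality via the perfect pairing \eqref{0108} and Proposition \ref{0078} (the paper's primary route instead uses the localization triangle for $\alg{X} \to \alg{U}_{S}$ together with Propositions \ref{0031} and \ref{0043}, but both are equivalent in effort). You handle the degree shift correctly, and the observations that Serre duality interchanges connected pro-algebraic and connected ind-algebraic groups in $\mathcal{W}_{F}$ and that an extension of a finite \'etale group by a connected pro-algebraic group in $\mathcal{W}_{F}$ is again pro-algebraic in $\mathcal{W}_{F}$ are exactly the bookkeeping needed.
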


\begin{proof}
	This follows from the distinguished triangle
		\[
				\bigoplus_{\ideal{p} \in S}
					R \alg{\Gamma}_{c}(\Hat{\alg{A}}_{\ideal{p}}, \mathfrak{T}_{n}(r))
			\to
				R \alg{\Gamma}(\alg{X}, \mathfrak{T}_{n}(r))
			\to
				R \alg{\Gamma}(\alg{U}_{S}, \mathfrak{T}_{n}(r))
		\]
	and Propositions \ref{0031} and \ref{0043}
	(or from Proposition \ref{0051} by duality).
\end{proof}

\begin{Prop} \label{0052}
	Let $n \ge 1$.
	Then $\alg{\Gamma}(\alg{U}_{S}, \mathfrak{T}_{n}(1))$ is finite,
	$\alg{H}^{1}(\alg{U}_{S}, \mathfrak{T}_{n}(1))$ and
	$\alg{H}^{2}(\alg{U}_{S}, \mathfrak{T}_{n}(1))$ are pro-algebraic in $\mathcal{W}_{F}$,
	$\alg{H}^{3}(\alg{U}_{S}, \mathfrak{T}_{n}(1))$ is ind-algebraic in $\mathcal{W}_{F}$ and
	$\alg{H}^{q}(\alg{U}_{S}, \mathfrak{T}_{n}(1)) = 0$ for $q \ne 0, 1, 2, 3$.
\end{Prop}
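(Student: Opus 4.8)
The plan is to follow the pattern of the proofs of Propositions \ref{0046} and \ref{0050}, interpolating between the two cases already in hand: $S = \emptyset$, settled by Proposition \ref{0046}, and $S$ containing the set $S_{0}$ of all height one primes dividing $p$, settled by Proposition \ref{0075} (since $\mathfrak{T}_{n}(1)$ restricts to $\Lambda_{n}(1)$ on $\alg{U}_{S}$ once every prime dividing $p$ has been removed). For a general $S$, I would work with the two localization triangles in $D(F^{\ind\rat}_{\pro\et})$
\[
		\bigoplus_{\ideal{p} \in S}
			R \alg{\Gamma}_{c}(\Hat{\alg{A}}_{\ideal{p}}, \mathfrak{T}_{n}(1))
	\to
		R \alg{\Gamma}(\alg{X}, \mathfrak{T}_{n}(1))
	\to
		R \alg{\Gamma}(\alg{U}_{S}, \mathfrak{T}_{n}(1))
\]
and
\[
		\bigoplus_{\ideal{p} \in S_{0} \setminus S}
			R \alg{\Gamma}_{c}(\Hat{\alg{A}}_{\ideal{p}}, \mathfrak{T}_{n}(1))
	\to
		R \alg{\Gamma}(\alg{U}_{S}, \mathfrak{T}_{n}(1))
	\to
		R \alg{\Gamma}(\alg{U}_{S \cup S_{0}}, \Lambda_{n}(1)),
\]
both instances of the diagram in \cite[Section 10.6]{Suz24} (the second one as in the proof of Proposition \ref{0040}). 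Into the associated long exact sequences I would feed: Proposition \ref{0046} for the terms $\alg{H}^{q}(\alg{X}, \mathfrak{T}_{n}(1))$; Proposition \ref{0075} for the terms $\alg{H}^{q}(\alg{U}_{S \cup S_{0}}, \Lambda_{n}(1))$; Proposition \ref{0031} for the local terms at primes dividing $p$, where crucially $\alg{H}^{q}_{c}(\Hat{\alg{A}}_{\ideal{p}}, \mathfrak{T}_{n}(1))$ is finite in degree $2$ and vanishes outside degrees $2, 3$, while $\alg{H}^{q}(\Hat{\alg{A}}_{\ideal{p}}, \mathfrak{T}_{n}(1))$ vanishes outside degrees $0, 1$; and \cite[Theorem 7.2.4]{Suz24}, together with the identifications $R \alg{\Gamma}_{c}(\Hat{\alg{A}}_{\ideal{p}}, \Lambda_{n}(1)) \cong R \alg{\Gamma}(\varalg{\kappa}(\ideal{p}), \Lambda_{n}(0))[-2]$ recalled in the proof of Proposition \ref{0040}, for the local terms at primes of $S$ not dividing $p$, which are bounded complexes of objects of $\Alg_{u}/F$ placed in degrees $2, 3$ and so are simultaneously pro- and ind-algebraic.

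With these inputs I would run the homological algebra of Section \ref{0114} degree by degree. The degrees $q = 0$ and $q = 1$ are immediate: in both triangles the relevant local term vanishes in degrees $\leq 1$, so $\alg{\Gamma}(\alg{U}_{S}, \mathfrak{T}_{n}(1)) \cong \alg{\Gamma}(\alg{X}, \mathfrak{T}_{n}(1))$ is finite, and $\alg{H}^{1}(\alg{U}_{S}, \mathfrak{T}_{n}(1))$ is an extension of a subobject of a finite group (or of an object of $\Alg_{u}/F$) by the pro-algebraic group $\alg{H}^{1}(\alg{X}, \mathfrak{T}_{n}(1)) \in \mathcal{W}_{F}$; since finite \'etale $p$-groups lie in $\Alg_{u}/F$ and both $\Pro\Alg_{u}/F$ and $\mathcal{W}_{F}$ are closed under extensions, this group is pro-algebraic in $\mathcal{W}_{F}$. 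Because all contributing terms are concentrated in degrees $0, 1, 2, 3$, the vanishing for $q \notin \{0,1,2,3\}$ is automatic. The serious content is in degrees $2$ and $3$, where the local term $\alg{H}^{3}_{c}(\Hat{\alg{A}}_{\ideal{p}}, \mathfrak{T}_{n}(1))$ at primes dividing $p$ is only a general object of $\mathcal{W}_{F}$, and since $\mathcal{W}_{F}$ is not abelian one cannot simply d\'evissage.

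The main obstacle, then, is to extract pro-algebraicity of $\alg{H}^{2}(\alg{U}_{S}, \mathfrak{T}_{n}(1))$ and ind-algebraicity of $\alg{H}^{3}(\alg{U}_{S}, \mathfrak{T}_{n}(1))$ despite this general local term, and I would do it by the two-sided image bookkeeping of the proof of Proposition \ref{0050}. From the first triangle, Propositions \ref{0024}, \ref{0049} and \ref{0089} applied to the successive images built from $\alg{H}^{2}(\alg{X}, \mathfrak{T}_{n}(1)) \in \Alg_{u}/F$ and $\alg{H}^{3}(\alg{X}, \mathfrak{T}_{n}(1))$ (ind-algebraic in $\mathcal{W}_{F}$) exhibit $\alg{H}^{2}(\alg{U}_{S}, \mathfrak{T}_{n}(1))^{0}$ as an object of $\mathcal{W}_{F}$ with \'etale component group, and exhibit $\alg{H}^{3}(\alg{U}_{S}, \mathfrak{T}_{n}(1))$ as a quotient of the ind-algebraic group $\alg{H}^{3}(\alg{X}, \mathfrak{T}_{n}(1))$, hence ind-algebraic. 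From the second triangle, where $\alg{H}^{2}(\alg{U}_{S \cup S_{0}}, \Lambda_{n}(1))$ is pro-algebraic by Proposition \ref{0075} and the preceding local terms $\alg{H}^{2}_{c}(\Hat{\alg{A}}_{\ideal{p}}, \mathfrak{T}_{n}(1))$ are finite, the group $\alg{H}^{2}(\alg{U}_{S}, \mathfrak{T}_{n}(1))$ is an extension of a subobject of the pro-algebraic group $\alg{H}^{2}(\alg{U}_{S \cup S_{0}}, \Lambda_{n}(1))$ by a finite group; by Proposition \ref{0047} the relevant image lies in $\mathcal{W}_{F}$, so $\alg{H}^{2}(\alg{U}_{S}, \mathfrak{T}_{n}(1))^{0}$ is pro-algebraic and $\pi_{0} \alg{H}^{2}(\alg{U}_{S}, \mathfrak{T}_{n}(1))$ is profinite. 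Being at once \'etale and profinite, this component group is finite, so $\alg{H}^{2}(\alg{U}_{S}, \mathfrak{T}_{n}(1))$ is pro-algebraic in $\mathcal{W}_{F}$; and combining the ind-algebraicity just obtained with the $\mathcal{W}_{F}$-membership extracted by fitting the relevant exact sequences to the pattern of Proposition \ref{0041} (as at the end of the proof of Proposition \ref{0046}) gives that $\alg{H}^{3}(\alg{U}_{S}, \mathfrak{T}_{n}(1))$ is ind-algebraic in $\mathcal{W}_{F}$. The finiteness of $\alg{H}^{2}_{c}(\Hat{\alg{A}}_{\ideal{p}}, \mathfrak{T}_{n}(1))$ is precisely the feature of the middle twist $r = 1$ that makes the conclusion stronger than for $r \leq 0$; as a consistency check one can compare the outcome with the perfect pairing \eqref{0108} at $r = 1$ via Proposition \ref{0078}.
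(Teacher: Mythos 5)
Your proposal is correct and follows essentially the same route as the paper's own proof: the same two localization triangles $D \to C_X \to C_{U_S}$ and $E \to C_{U_S} \to C_{U_0}$, the same key inputs from Propositions \ref{0031}, \ref{0043}/\ref{0046}, \ref{0075}, and \cite[Theorem 7.2.4]{Suz24}, and the same crucial observation that $\alg{H}^2_c(\Hat{\alg{A}}_{\ideal{p}}, \mathfrak{T}_n(1))$ is finite at primes dividing $p$, which is exactly what distinguishes $r=1$ from $r \le 0$. Your treatment of degree $2$ is slightly heavier than necessary: you import the full two-sided $\pi_0$-is-\'etale-and-profinite-hence-finite bookkeeping of Proposition \ref{0050}, whereas the finiteness of $H^2 E$ makes the direct argument work here — one reads off $H^2 C_{U_S} \in \Pro\Alg_u/F$ immediately from the second triangle (extension of the pro-algebraic $\Ker(H^2 C_{U_0}\to H^3 E)$ by a finite group), and then a single application of Proposition \ref{0041} to the tail of the first triangle gives $\mathcal{W}_F$-membership.
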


\begin{proof}
	Let $S_{0}$ be the set of all primes dividing $p$.
	We may assume that $S \subset S_{0}$.
	Let $U_{0} = U_{S_{0}}$.
	The finiteness of $\alg{\Gamma}(\alg{U}_{S}, \mathfrak{T}_{n}(1))$ is obvious.
	Denote the distinguished triangles
		\begin{gather*}
					\bigoplus_{\ideal{p} \in S}
						R \alg{\Gamma}_{c}(\Hat{\alg{A}}_{\ideal{p}}, \mathfrak{T}_{n}(1))
				\to
					R \alg{\Gamma}(\alg{X}, \mathfrak{T}_{n}(1))
				\to
					R \alg{\Gamma}(\alg{U}_{S}, \mathfrak{T}_{n}(1)),
			\\
					\bigoplus_{\ideal{p} \in S_{0} \setminus S}
						R \alg{\Gamma}_{c}(\Hat{\alg{A}}_{\ideal{p}}, \mathfrak{T}_{n}(1))
				\to
					R \alg{\Gamma}(\alg{U}_{S}, \mathfrak{T}_{n}(1))
				\to
					R \alg{\Gamma}(\alg{U}_{0}, \mathfrak{T}_{n}(1)),
		\end{gather*}
	by
		\begin{gather*}
					D
				\to
					C_{X}
				\to
					C_{U_{S}},
			\\
					E
				\to
					C_{U_{S}}
				\to
					C_{U_{0}},
		\end{gather*}
	respectively.
	
	In the exact sequence
		\[
				0
			\to
				H^{1} C_{X}
			\to
				H^{1} C_{U_{S}}
			\to
				H^{2} D,
		\]
	the first term is pro-algebraic in $\mathcal{W}_{F}$
	and the third term is finite.
	Hence $H^{1} C_{U_{S}}$ is pro-algebraic in $\mathcal{W}_{F}$.
	Since $H^{3} C_{X}$ is ind-algebraic in $\mathcal{W}_{F}$,
	the surjection $H^{3} C_{X} \onto H^{3} C_{U_{S}}$ and Proposition \ref{0024}
	imply that $H^{3} C_{U_{S}} \in \Ind \Alg_{u} / F$.
	The morphism $H^{2} C_{U_{0}} \to H^{3} E$ is a morphism
	from a pro-algebraic group in $\mathcal{W}_{F}$
	to an object of $\mathcal{W}_{F}$.
	Hence its image and kernel are pro-algebraic.
	Applying Proposition \ref{0041}
	to the exact sequence
		\[
				0
			\to
				\Im(H^{2} C_{U_{0}} \to H^{3} E)
			\to
				H^{3} E
			\to
				H^{3} C_{U_{S}}
			\to
				0,
		\]
	we know that $H^{3} C_{U_{S}} \in \mathcal{W}_{F}$.
	In the exact sequence
		\[
				H^{2} E
			\to
				H^{2} C_{U_{S}}
			\to
				\Ker(H^{2} C_{U_{0}} \to H^{3} E)
			\to
				0,
		\]
	the first term is finite and the third term is pro-algebraic.
	Hence $H^{2} C_{U_{S}}$ is pro-algebraic.
	In the exact sequence
		\[
				H^{2} C_{X}
			\to
				H^{2} C_{U_{S}}
			\to
				H^{3} D
			\to
				H^{3} C_{X},
		\]
	the first term is in $\Alg_{u} / F$,
	the second in $\Pro \Alg_{u} / F$,
	the third in $\mathcal{W}_{F}$
	and the fourth ind-algebraic in $\mathcal{W}_{F}$.
	Applying Proposition \ref{0041},
	we know that $H^{2} C_{U_{S}} \in \mathcal{W}_{F}$.
\end{proof}

\begin{Prop}
	Let $n \ge 1$.
	Then $\alg{\Gamma}_{c}(\alg{U}_{S}, \mathfrak{T}_{n}(1))$ is finite,
	$\alg{H}^{1}_{c}(\alg{U}_{S}, \mathfrak{T}_{n}(1))$ is pro-algebraic in $\mathcal{W}_{F}$,
	$\alg{H}^{2}_{c}(\alg{U}_{S}, \mathfrak{T}_{n}(1))$ and
	$\alg{H}^{3}_{c}(\alg{U}_{S}, \mathfrak{T}_{n}(1))$ are ind-algebraic in $\mathcal{W}_{F}$ and
	$\alg{H}^{q}_{c}(\alg{U}_{S}, \mathfrak{T}_{n}(1)) = 0$ for $q \ne 0, 1, 2, 3$.
\end{Prop}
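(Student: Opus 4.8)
The plan is to deduce this proposition from Proposition \ref{0052} by duality, exactly in the style of the compactly-supported statements for $r\le 0$ and $r\ge 2$ proved above. First I would specialize the perfect pairing \eqref{0108} to the present $S$ and to $r=1$, obtaining a perfect pairing
\[
			R \alg{\Gamma}(\alg{U}_{S}, \mathfrak{T}_{n}(1))
		\tensor^{L}
			R \alg{\Gamma}_{c}(\alg{U}_{S}, \mathfrak{T}_{n}(1))
	\to
		\Lambda_{\infty}[-3]
\]
in $D(F^{\ind\rat}_{\pro\et})$. By Proposition \ref{0052} the objects $\alg{H}^{q}(\alg{U}_{S}, \mathfrak{T}_{n}(1))$ are all in $\mathcal{W}_{F}$ and vanish outside $q\in\{0,1,2,3\}$, so Proposition \ref{0078} applies (with the evident shift of $R\alg{\Gamma}_{c}$ turning the target into $\Lambda_{\infty}$) and yields, for every $q$, that $\alg{H}^{q}_{c}(\alg{U}_{S}, \mathfrak{T}_{n}(1))\in\mathcal{W}_{F}$, that it vanishes for all but finitely many $q$, and that there are a Serre duality
\[
			\alg{H}^{q}(\alg{U}_{S}, \mathfrak{T}_{n}(1))^{0}
		\leftrightarrow
			\alg{H}^{4-q}_{c}(\alg{U}_{S}, \mathfrak{T}_{n}(1))^{0}
\]
of connected groups in $\mathcal{W}_{F}$ and a Pontryagin duality
\[
			\pi_{0}(\alg{H}^{q}(\alg{U}_{S}, \mathfrak{T}_{n}(1)))
		\leftrightarrow
			\pi_{0}(\alg{H}^{3-q}_{c}(\alg{U}_{S}, \mathfrak{T}_{n}(1)))
\]
of finite \'etale groups over $F$.

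Next I would read off the asserted structure degree by degree, using that the Serre autoequivalence of connected groups in $\mathcal{W}_{F}$ interchanges pro-algebraic and ind-algebraic groups (\cite[Proposition 3.1.7]{Suz24}) and that Pontryagin duality preserves finiteness. For $q\notin\{0,1,2,3\}$ both $\pi_{0}(\alg{H}^{3-q})$ and $(\alg{H}^{4-q})^{0}$ vanish, so $\alg{H}^{q}_{c}(\alg{U}_{S}, \mathfrak{T}_{n}(1))=0$; for $q=0$ the dual pair is $\pi_{0}(\alg{H}^{3})$, finite \'etale, and $(\alg{H}^{4})^{0}=0$, so $\alg{\Gamma}_{c}(\alg{U}_{S}, \mathfrak{T}_{n}(1))$ is finite; for $q=1$ the dual pair is $\pi_{0}(\alg{H}^{2})$, finite, together with $(\alg{H}^{3})^{0}$, connected ind-algebraic, whose Serre dual is connected pro-algebraic, so $\alg{H}^{1}_{c}$ is pro-algebraic in $\mathcal{W}_{F}$; for $q=2$ the dual pair is $\pi_{0}(\alg{H}^{1})$, finite, together with $(\alg{H}^{2})^{0}$, connected pro-algebraic by Proposition \ref{0052}, so $\alg{H}^{2}_{c}$ is ind-algebraic in $\mathcal{W}_{F}$; and for $q=3$ the dual pair is $\pi_{0}(\alg{H}^{0})$, finite, together with $(\alg{H}^{1})^{0}$, connected pro-algebraic, so $\alg{H}^{3}_{c}$ is ind-algebraic in $\mathcal{W}_{F}$.

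Since the input pairing is already established in \cite{Suz24} and the finiteness of the non-compact side is Proposition \ref{0052}, I do not anticipate a genuine obstacle. The only points requiring care are bookkeeping the degree shift when invoking Proposition \ref{0078} and checking that the structural adjectives produced by Proposition \ref{0052} are precisely the ones interchanged by the Serre and Pontryagin dualities, which is the content of \cite[Propositions 3.1.7 and 3.1.9]{Suz24}.
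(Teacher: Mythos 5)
Your proof is correct and is exactly what the paper's terse "This follows from Proposition \ref{0052} by duality" is abbreviating: specialize the pairing \eqref{0108} to $r=1$, apply Proposition \ref{0078} (with the degree shift by $3$), and read off the structural adjectives degree by degree using that Serre duality interchanges connected pro-algebraic and connected ind-algebraic groups while Pontryagin duality preserves finiteness. Your bookkeeping of the degree shift and of the Serre/Pontryagin correspondences is accurate.
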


\begin{proof}
	This follows from Proposition \ref{0052} by duality.
\end{proof}

Theorem \ref{0077} follows from the above results.
With Proposition \ref{0078},
we obtain Theorem \ref{0088}.


\section{Hasse principles}
\label{0116}

In this section, we prove Theorem \ref{0109}.

\begin{Def}
	Assume $F = \closure{F}$.
	\begin{enumerate}
		\item
			Let $U \subset X$ be a dense open subscheme.
			Let $\ideal{p} \in P \cap U$.
			A finite \'etale covering $U' / U$ is said to be \emph{completely split at $\ideal{p}$}
			if the $\kappa(\ideal{p})$-scheme $U' \times_{U} \kappa(\ideal{p})$ is isomorphic
			to a disjoint union of copies of $\Spec \kappa(\ideal{p})$.
		\item
			A finite \'etale covering $X' / X$ is said to be \emph{completely split}
			if it is completely split at all elements of $P$.
		\item
			For $n \ge 1$,
			define $H^{1}_{\cs}(X, \Lambda_{n})$ to be the subgroup of $H^{1}(X, \Lambda_{n})$
			consisting of completely split coverings.
	\end{enumerate}
\end{Def}

\begin{Prop} \label{0060}
	Assume that $F = \closure{F}$ and
	$A$ satisfies the assumptions of Proposition \ref{0019}
	and Condition \eqref{0057}.
	Write $\ideal{p} = (\pi)$ and $\ideal{m} = (\pi, t)$.
	Let $U = \Spec A[1 / p]$.
	Let $n \ge 1$ and $U' \in H^{1}(U, \Lambda_{n})$.
	Then, if $U'$ is completely split at the primes $(t^{m} - \pi)$ for all $m \ge 1$,
	then it is trivial.
\end{Prop}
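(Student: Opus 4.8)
Here is how I would go about it.

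The plan is to reduce to $n=1$, apply Kummer theory to $U=\Spec A[1/p]$ (which is a regular integral scheme of characteristic zero), and then carry out an explicit ramification analysis along the family of curves $t^{m}=\pi$. For the reduction: since the connecting maps for $0\to\Lambda\to\Lambda_{n}\to\Lambda_{n-1}\to0$ vanish on $H^{0}$ for constant coefficients, the map $H^{1}(V,\Lambda)\into H^{1}(V,\Lambda_{n})$ is injective with image the $p$-torsion, for any connected $V$. Hence if $U'$ had order $p^{k}$ in $H^{1}(U,\Lambda_{n})$ with $k\ge1$, then $p^{k-1}U'$ would be the image of a nonzero class $\chi\in H^{1}(U,\Lambda)$; the splitting hypothesis on $U'$, together with the same injectivity over each residue field $\kappa((t^{m}-\pi))$, would force $\chi$ to be completely split at all $(t^{m}-\pi)$, contradicting the case $n=1$. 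So it suffices to treat $n=1$.

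Now take $n=1$. As $A$ is regular local it is a UFD, so $\Cl(A)=0$, and since $(p)$ and $(\pi)$ have the same radical, $\Pic(U)=\Cl(A[1/p])=\Cl(A)/\langle[\ideal{p}]\rangle=0$ and $R:=A[1/p]=A[\pi^{-1}]$. Because $A[1/p]$ has characteristic zero and $\zeta_{p}\in A$, Kummer theory gives $H^{1}(U,\Lambda)\cong H^{1}(U,\mu_{p})=R^{\times}/(R^{\times})^{p}$; and $R^{\times}=A^{\times}\times\pi^{\Z}$, so the class of $U'$ is $[u\pi^{j}]$ with $u\in A^{\times}$ and $0\le j<p$. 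For each $m$, $\kappa((t^{m}-\pi))$ is the fraction field of the complete discrete valuation ring $A/(t^{m}-\pi)$; its normalized valuation $v$ has $v(t)=1$, $v(\pi)=v(t^{m})=m$, and $v(p)=me$, where $(p)\sim(\pi^{e})$ and automatically $(p-1)\mid e$ since $v(\zeta_{p}-1)=e/(p-1)=:e'\in\Z_{\ge1}$. The restriction of $[u\pi^{j}]$ has valuation $jm$, so complete splitting forces $p\mid jm$; taking $m=1$ gives $j=0$.

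It remains to prove the crux: if $u\in A^{\times}$ becomes a $p$-th power in $\kappa((t^{m}-\pi))$ for every $m\ge1$, then $u\in(A^{\times})^{p}$. Since $F$ is perfect we may assume $u=1+g\in1+\ideal{m}$. The point is that the $p$-th power map on principal units of a mixed-characteristic complete discrete valuation field is governed by two regimes separated by the break $e'$: via $\exp$/$\log$ it is multiplication by $p$ above level $e'$ (shifting the $\ideal{m}$-adic level by $e$), and its leading term is $x\mapsto x^{p}$ below level $e'$; the same holds in each $\Order_{m}:=A/(t^{m}-\pi)$, with break $me'$ and shift $me$. Hence a principal unit $1+y$ in $\kappa((t^{m}-\pi))$ can be a $p$-th power only if $v(y)\equiv0\pmod p$ and $v(y)<pme'$, or $v(y)\ge pme'$. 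Using that $A$ is regular, so $\gr_{\ideal{m}}A=F[\bar\pi,\bar t]$ is a polynomial ring, and that $F$ is perfect, one first divides $u$ by $p$-th powers $\ideal{m}$-adically to reach a normal form $1+g$ whose leading symbol in some $\gr^{c}_{\ideal{m}}A$ is not a $p$-th power polynomial; such a symbol necessarily contains a monomial $\bar\pi^{a}\bar t^{b}$ with $p\nmid a$ (equivalently $p\nmid b$). The homomorphism $\varphi_{m}\colon A\to\Order_{m}$ sends $\pi\mapsto\bar t^{m}$, $t\mapsto\bar t$, so $v(\varphi_{m}(g))$ is the minimum of $ma'+b'$ over the monomials $\bar\pi^{a'}\bar t^{b'}$ occurring in the symbol of $g$ (after perturbing $m$ to avoid cancellation); choosing $m$ large and in a suitable residue class modulo $p$ — possible precisely because the above monomial is not a $p$-th power monomial — makes $v(\varphi_{m}(g))$ coprime to $p$ and smaller than $pme'$, so $\varphi_{m}(u)$ is not a $p$-th power in $\kappa((t^{m}-\pi))$, a contradiction. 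Thus $u\in(A^{\times})^{p}$ and $U'$ is trivial.

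The main obstacle is this last step and its $\ideal{m}$-adic bookkeeping: making the normalization of $u$ precise and terminating, controlling the mixed-characteristic corrections $\binom{p}{k}x^{k}$ to $(1+x)^{p}$ (whose $\ideal{m}$-adic orders interleave those of $x^{p}$), treating the boundary level $c=e'$ where cancellation can occur, and — the genuinely geometric input — checking that for a normalized $u$ there is always an admissible $m$, i.e.\ that the curves $t^{m}=\pi$, being tangent to $\{\pi=0\}$ to every order, detect every relevant graded piece of $1+\ideal{m}$.
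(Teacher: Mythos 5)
Your overall route --- reduce to $n=1$, pass to $H^1(U,\Lambda)\cong R^{\times}/(R^{\times})^{p}$ by Kummer theory, reduce to a principal unit $u=1+g\in 1+\ideal{m}$ after killing the $\pi^{\Z}$ part, and then choose $m$ so that $v(\varphi_m(g))$ is coprime to $p$ and below the principal-unit break of $\kappa((t^m-\pi))$ --- is exactly the paper's strategy. The gap is in the filtration you use. You filter $1+\ideal{m}$ by total $\ideal{m}$-adic degree and normalize $g$ so that its leading symbol in $\gr^{c}_{\ideal{m}}A=F[\bar\pi,\bar t]_{c}$ is not a $p$-th power, then claim that $v(\varphi_m(g))$ is $\min(ma'+b')$ over the monomials of that leading symbol. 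This is false for large $m$: the map $\varphi_m$ weights $\pi$ by $m$ and $t$ by $1$, so a monomial $t^{b''}$ of $g$ with $b''>c$ (hence not in the leading symbol) but zero $\pi$-exponent has $\varphi_m$-valuation $b''$, undercutting $ma'+b'\ge m$ for every leading-symbol monomial with $a'\ge 1$. After your $\gr_{\ideal{m}}$-normalization you therefore lose control of which monomial governs $v(\varphi_m(g))\bmod p$; the one that actually does (for the $m$ you can choose) may be a $p$-th power monomial you never normalized away. (Smaller point: ``$p\nmid a$ (equivalently $p\nmid b$)'' should read ``$p\nmid a$ or $p\nmid b$''.)

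The paper avoids this by filtering $G=(1+\ideal{m})/(1+\ideal{m})^{p}$ by the monomial subgroups $G^{c,d}=\Im\bigl(1+\pi^{c}t^{d}A\to G\bigr)$ --- that is, by the Newton polytope of $g$, not by total degree. Corners $(c,d)$ of the polytope are exactly what $\varphi_m$ sees as $m$ varies (supporting lines of slope $-1/m$). With $f_A=pe_A/(p-1)$ it shows $G^{c,d}=0$ once $c\ge f_A$, reduces corners with $p\mid\gcd(c,d)$ by extracting $p$-th roots of Teichm\"uller coefficients, and at each remaining corner with $p\nmid\gcd(c,d)$ picks $m>d/(f_A-c)$ with $p\nmid mc+d$, so that $mc+d$ is below the break $mf_A$ of $\kappa((t^m-\pi))$ and coprime to $p$; the coefficient of $\pi^{c}t^{d}$ is then read off through $G^{c,d}\to G_m^{mc+d}\onto F$. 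If you redo your normalization and the choice of $m$ over this bi-filtration (i.e., over Newton-polytope corners rather than $\gr_{\ideal{m}}$-symbols), your argument essentially becomes the paper's.
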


\begin{proof}
	We may assume that $n = 1$.
	Identifying $\Lambda$ with $\Lambda(1)$ over $U$,
	we want to show that the kernel of the natural map
		\[
				\varphi
			\colon
				A[1 / p]^{\times} / A[1 / p]^{\times p}
			\to
				\prod_{m \ge 1}
					\kappa(t^{m} - \pi)^{\times} / \kappa(t^{m} - \pi)^{\times p}
		\]
	is zero.
	This kernel is contained in $G := (1 + \ideal{m}) /  (1 + \ideal{m})^{p}$.
	Let $e_{A}$ be the absolute ramification index of $\Hat{A}_{\ideal{p}}$
	and set $f_{A} = p e_{A} / (p - 1)$.
	For non-negative integers $c, d$ not both zero,
	let $G^{c, d}$ be the image of $1 + \pi^{c} t^{d} A$ in $G$.
	Then $G^{c, d} = 0$ if $c \ge f_{A}$.
	Also $G^{c, d} \subset G^{c + 1, d} + G^{c, d + 1}$
	if $c < f_{A}$ and $p \mid \gcd(c, d)$.
	Hence it is enough to show that
	$G^{c, d} \cap \Ker(\varphi) \subset G^{c + 1, d} + G^{c, d + 1}$
	if $c < f_{A}$ and $p \nmid \gcd(c, d)$.
	Let $m > d / (f_{A} - c)$ be an integer such that $p \nmid m c + d$.
	For a positive integer $l$,
	let $G_{m}^{l}$ be the image of $1 + t^{l} A$ in
	$\kappa(t^{m} - \pi)^{\times} / \kappa(t^{m} - \pi)^{\times p}$.
	Since $m e_{A}$ is the absolute ramification index of $A / (t^{m} - \pi)$
	and $p \nmid m c + d < m f_{A}$,
	we have $G_{m}^{m c + d} / G_{m}^{m c + d + 1} \cong F$
	via $1 + t^{m c + d} x \mapsto x \mod t$.
	Hence the kernel of the composite
		$
				G^{c, d}
			\to
				G_{m}^{m c + d}
			\onto
				F
		$
	is contained in $G^{c + 1, d} + G^{c, d + 1}$.
\end{proof}

As in Section \ref{0086},
we take a resolution $\mathfrak{X} \to \Spec A$
such that $\mathfrak{X} \times_{A} A / p A \subset \mathfrak{X}$ is supported on a strict normal crossing divisor.
For any $n \ge 1$,
the natural map $H^{1}(\mathfrak{X}, \Lambda_{n}) \to H^{1}(X, \Lambda_{n})$ is injective
and the natural map $H^{1}(\mathfrak{X}, \Lambda_{n}) \to H^{1}(Y, \Lambda_{n})$ is an isomorphism.
We view $H^{1}(Y, \Lambda_{n})$ as a subgroup of $H^{1}(X, \Lambda_{n})$.

\begin{Prop} \label{0091}
	Assume $F = \closure{F}$.
	Let $n \ge 1$ and $X' \in H^{1}(X, \Lambda_{n})$.
	Then $X'$ is completely split if and only if
	$X' \in H^{1}(Y, \Lambda_{n})$.
	In particular, the subgroup $H^{1}(Y, \Lambda_{n})$ of $H^{1}(X, \Lambda_{n})$
	is independent of the resolution $\mathfrak{X}$.
\end{Prop}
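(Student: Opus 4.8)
The plan is to identify $H^1_{\cs}(X,\Lambda_n)$ with the subgroup $H^1(Y,\Lambda_n)=\operatorname{im}\bigl(H^1(\mathfrak{X},\Lambda_n)\to H^1(X,\Lambda_n)\bigr)$; the ``in particular'' then follows at once, since ``completely split'' makes no reference to $\mathfrak{X}$. Throughout I use that for $\ideal{p}\in P$ a class $\alpha\in H^1(X,\Lambda_n)$ represents a covering completely split at $\ideal{p}$ if and only if $\alpha|_{\kappa(\ideal{p})}=0$ (a $\Lambda_n$-torsor being trivial iff its base change is a disjoint union of copies of the base), that the \'etale cohomology of a henselian local ring with coefficients in $\Lambda_n$ is that of its closed point, and that $\mathfrak{X}\to\Spec A$ is an isomorphism over $X$ (so $X$ is open in $\mathfrak{X}$ with complement $Y$).

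For the inclusion $H^1(Y,\Lambda_n)\subseteq H^1_{\cs}(X,\Lambda_n)$, let $\alpha$ extend to $\widetilde\alpha\in H^1(\mathfrak{X},\Lambda_n)$ and fix $\ideal{p}\in P$. The closure $C$ of $\ideal{p}$ in $\mathfrak{X}$ is an integral curve which is finite over $\Spec(A/\ideal{p})$ (being proper and quasi-finite) and birational to it; as $\ideal{p}$ is a closed point of $X\cong\mathfrak{X}\setminus Y$, every closed point of $C$ lies on $Y$. Writing $C=\Spec R$ with $R$ a complete semilocal one-dimensional domain, its normalization is a finite product of complete discrete valuation rings, each with fraction field $\kappa(\ideal{p})$ and residue field finite over $F=\closure{F}$, hence equal to $F$; so its $H^1$ with $\Lambda_n$-coefficients vanishes. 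Restricting $\widetilde\alpha$ to this normalization and then to a generic point gives $\alpha|_{\kappa(\ideal{p})}=0$, so the covering is completely split at $\ideal{p}$.

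For the reverse inclusion, let $X'$ be completely split and $\alpha=[X']$. A standard transfer argument along the prime-to-$p$ extension $A[\zeta_p]/A$ (as in the reductions of Sections~\ref{0086} and~\ref{0115}) reduces us to the case $\zeta_p\in A$. For each irreducible component $Y_i$ of $Y$, let $W_i\subseteq Y_i$ be the dense open obtained by deleting the finitely many points lying on some other component of the strict normal crossing divisor $\mathfrak{X}\times_A A/pA$. For a closed point $x\in W_i$, the complete local ring $\widehat A_x=\widehat{\mathcal O}_{\mathfrak{X},x}$ is a two-dimensional complete regular local domain of mixed characteristic with residue field $F$, in which $(p)$ has radical $(\pi_i)$ for a local equation $\pi_i$ of $Y_i$ with $\widehat A_x/(\pi_i)$ regular; hence $\widehat A_x$ satisfies the hypotheses of Proposition~\ref{0019} under Condition~\eqref{0057}. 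Choosing $t$ with $\ideal{m}_x=(\pi_i,t)$, and using that $Y\cap\Spec\widehat A_x=V(\pi_i)$, the class $\alpha$ restricts to $\beta_x\in H^1(\widehat A_x[1/p],\Lambda_n)$. For every $m\ge1$ the ideal $(t^m-\pi_i)$ is a height-one prime of $\widehat A_x$ not containing $p$; the point of $X$ it determines maps under $\mathfrak{X}\to\Spec A$ to a prime $\ideal{q}_m\in P$ with $p\notin\ideal{q}_m$, and since $X'$ is completely split at $\ideal{q}_m$, pulling $\alpha|_{\kappa(\ideal{q}_m)}=0$ back along $\Spec\kappa(t^m-\pi_i)\to X$ shows that $\beta_x$ is completely split at $(t^m-\pi_i)$ for all $m$. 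Proposition~\ref{0060} now yields $\beta_x=0$.

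Thus for every closed $x\in\bigcup_iW_i$ the covering $X'$ is trivial on $\Spec\widehat A_x\setminus Y$, so the normalization of $\widehat A_x$ in it is \'etale; since $\widehat A_x$ is the completion of the excellent normal ring $\mathcal O_{\mathfrak{X},x}$, faithfully flat descent of \'etaleness shows the normalization of $\mathcal O_{\mathfrak{X},x}$ in $X'$ is \'etale, i.e.\ $\alpha$ extends over $\Spec\mathcal O_{\mathfrak{X},x}$. Localizing also at the generic points, $\alpha$ extends over $\Spec\mathcal O_{\mathfrak{X},y}$ for all $y\in\bigcup_iW_i$, and trivially for $y\in X$; since $\mathfrak{X}$ is normal, extensions of finite \'etale covers of the dense open $X$ are unique where they exist, so these glue to a finite \'etale cover over $X\cup\bigcup_iW_i=\mathfrak{X}\setminus Z$ with $Z$ a finite set of closed points. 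By Zariski--Nagata purity of the branch locus this extends over $\mathfrak{X}$, so $\alpha\in H^1(Y,\Lambda_n)$. The point to watch is that $Y$, being (part of) the characteristic-$p$ locus of $\mathfrak{X}$, is a codimension-one locus at which $\Lambda_n=\Z/p^n\Z$ has no cohomological purity, so $\alpha$ cannot be extended across $Y$ by a Gysin/residue computation; the local Hasse principle of Proposition~\ref{0060} is precisely what forces the extension across this codimension-one part, and only the residual codimension-two locus $Z$ is dispatched by purity.
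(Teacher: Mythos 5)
Your proof is correct and follows essentially the same approach as the paper's: after reducing to $\zeta_p\in A$ (the paper via exact sequences over $H^1(K,\Lambda_n)$, you via a prime-to-$p$ transfer, both left at a comparable level of detail), both arguments apply Proposition~\ref{0060} at the completed local rings $\widehat{A}_x$ of $\mathfrak{X}$ at the smooth points of $Y$ not lying under a strict transform of a prime dividing $p$, extend the resulting cover to $\mathfrak{X}$ minus finitely many closed points, and then invoke Zariski--Nagata purity. The only real addition is that you make the ``if'' direction (which the paper calls ``easy'') explicit by restricting the extended class to the closure of $\ideal{p}$ in $\mathfrak{X}$ and using that its normalization is a complete DVR with algebraically closed residue field.
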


\begin{proof}
	We may assume that $\zeta_{p} \in K$ by the natural exact sequences
		\begin{gather*}
					0
				\to
					H^{1}_{\cs}(X, \Lambda_{n})
				\to
					H^{1}(K, \Lambda_{n})
				\to
					\prod_{\ideal{p} \in P}
						H^{1}(K_{\ideal{p}}, \Lambda_{n}),
			\\
					0
				\to
					H^{1}(Y, \Lambda_{n})
				\to
					H^{1}(K, \Lambda_{n})
				\to
						\prod_{\ideal{p} \in P}
							\frac{
								H^{1}(K_{\ideal{p}}, \Lambda_{n})
							}{
								H^{1}(\kappa(\ideal{p}), \Lambda_{n})
							}
					\times
						\prod_{\eta \in Y_{1}}
							\frac{
								H^{1}(K_{\eta}, \Lambda_{n})
							}{
								H^{1}(\kappa(\eta), \Lambda_{n})
							}.
		\end{gather*}
	The ``if'' part is easy.
	Assume $X'$ is completely split.
	For any non-singular point $x \in Y_{0}$ that is not
	the specialization of any element of $P$ dividing $p$,
	the covering $X' \times_{X} \Hat{R}_{x}$ of $\Spec \Hat{R}_{x}$ is trivial
	by Propositions \ref{0060}.
	Hence $X'$ extends to a finite \'etale covering of $\mathfrak{X}$
	minus finitely many closed points and hence to a finite \'etale covering of $\mathfrak{X}$
	by the purity of branch locus (\cite[Tag 0BMA]{Sta22}).
\end{proof}

Define
	\[
			\alg{H}^{1}_{\cs}(\alg{X}, \Lambda_{n})
		:=
			\alg{H}^{1}(\alg{Y}, \Lambda_{n}),
	\]
the finite \'etale group scheme over $F$
with group of $\closure{F}$-points given by
$H^{1}(\alg{Y}(\closure{F}), \Lambda_{n})$,
where $\alg{Y}(\closure{F}) = Y \times_{F} \closure{F}$.
It is an $F$-subgroup scheme of $\alg{H}^{1}(\alg{X}, \Lambda_{n})$
independent of $\mathfrak{X}$
by Proposition \ref{0091}.
Also define
	\[
			\pi_{0}(\alg{H}^{2}(\alg{K}, \Lambda_{n}(2)))
		:=
			\dirlim_{S}
				\pi_{0}(\alg{H}^{2}(\alg{U}_{S}, \Lambda_{n}(2))),
	\]
where $S$ runs through all finite subsets of $P$.
It is an \'etale group over $F$.
Applying $\pi_{0}$ to the exact sequences
	\[
			\alg{H}^{2}(\alg{U}_{S}, \mathfrak{T}_{n}(2)))
		\to
			\bigoplus_{\ideal{p} \in S}
				\alg{H}^{3}_{c}(\Hat{\alg{A}}_{\ideal{p}}, \mathfrak{T}_{n}(2))
		\to
			\alg{H}^{3}(\alg{X}, \mathfrak{T}_{n}(2))
		\to
			0
	\]
for $S \subset P$
and using \eqref{0093}
(for $\ideal{p}$ dividing $p$ and \cite[Theorem 7.2.4]{Suz24} otherwise)
induce an exact sequence
	\begin{equation} \label{0062}
			\pi_{0}(\alg{H}^{2}(\alg{K}, \Lambda_{n}(2)))
		\to
			\bigoplus_{\ideal{p} \in S}
				\Weil_{F_{\ideal{p}} / F}
					\Lambda_{n}
		\to
			\Lambda_{n}
		\to
			0.
	\end{equation}

\begin{Prop}
	The kernel of the first morphism in \eqref{0062} is canonically isomorphic to
	the Pontryagin dual of $\alg{H}^{1}_{\cs}(\alg{X}, \Lambda_{n})$.
\end{Prop}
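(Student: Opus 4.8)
The plan is to deduce the statement from the derived duality underlying Theorem~\ref{0088} (used here through Theorem~\ref{0088} itself for $\alg{U}_{S}$ and Proposition~\ref{0092} for the $\Hat{\alg{A}}_{\ideal{p}}$), combined with the localization triangles relating $\alg{X}$, $\alg{U}_{S}$ and the local rings $\Hat{\alg{A}}_{\ideal{p}}$. Throughout, $(\var)^{\vee}$ denotes the Pontryagin dual $\sheafhom_{F^{\perar}_{\et}}(\var, \Lambda_{\infty})$ of a finite \'etale group over $F$.

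First, for a fixed finite $S \subset P$, I would compare the two localization triangles occurring in the proofs of Propositions~\ref{0050} and~\ref{0051}, namely
\[
      \bigoplus_{\ideal{p} \in S} R \alg{\Gamma}_{c}(\Hat{\alg{A}}_{\ideal{p}}, \mathfrak{T}_{n}(2))
  \to R \alg{\Gamma}(\alg{X}, \mathfrak{T}_{n}(2))
  \to R \alg{\Gamma}(\alg{U}_{S}, \mathfrak{T}_{n}(2))
\]
and, with $\mathfrak{T}_{n}(0) = \Z / p^{n} \Z = \Lambda_{n}$,
\[
      R \alg{\Gamma}_{c}(\alg{U}_{S}, \Lambda_{n})
  \to R \alg{\Gamma}(\alg{X}, \Lambda_{n})
  \to \bigoplus_{\ideal{p} \in S} R \alg{\Gamma}(\Hat{\alg{A}}_{\ideal{p}}, \Lambda_{n}).
\]
By the compatibility of the pairing~\eqref{0108} with the local duality pairings (Proposition~\ref{0092} for $\ideal{p} \mid p$, \cite[Theorem~7.2.4]{Suz24} for $\ideal{p} \nmid p$) and with the excision maps, the first triangle is the Pontryagin dual of the second. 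Passing to cohomology and applying $\pi_{0}$, which loses no information here because $\pi_{0} \alg{H}^{2}(\alg{U}_{S}, \mathfrak{T}_{n}(2))$, $\alg{H}^{1}_{c}(\alg{U}_{S}, \Lambda_{n})$, $\pi_{0} \alg{H}^{3}_{c}(\Hat{\alg{A}}_{\ideal{p}}, \mathfrak{T}_{n}(2)) \cong \Weil_{F_{\ideal{p}} / F} \Lambda_{n}$, $\alg{H}^{0}(\Hat{\alg{A}}_{\ideal{p}}, \Lambda_{n})$, $\alg{H}^{3}(\alg{X}, \mathfrak{T}_{n}(2)) \cong \Lambda_{n}$ and $\alg{H}^{0}(\alg{X}, \Lambda_{n}) \cong \Lambda_{n}$ are all finite \'etale by Propositions~\ref{0043}, \ref{0046}, \ref{0075}, \ref{0031} together with~\eqref{0093}, I would conclude that the $S$-level instance of~\eqref{0062}, namely $\pi_{0} \alg{H}^{2}(\alg{U}_{S}, \mathfrak{T}_{n}(2)) \xrightarrow{\alpha_{S}} \bigoplus_{\ideal{p} \in S} \Weil_{F_{\ideal{p}} / F} \Lambda_{n} \to \Lambda_{n} \to 0$, is the Pontryagin dual of the initial segment $0 \to \alg{H}^{0}(\alg{X}, \Lambda_{n}) \to \bigoplus_{\ideal{p} \in S} \alg{H}^{0}(\Hat{\alg{A}}_{\ideal{p}}, \Lambda_{n}) \xrightarrow{\partial_{S}} \alg{H}^{1}_{c}(\alg{U}_{S}, \Lambda_{n})$ of the long exact sequence of the second triangle (where $\alg{H}^{0}_{c}(\alg{U}_{S}, \Lambda_{n}) = 0$ because $U_{S}$ is a dense open of the irreducible scheme $X$). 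Hence $\Ker(\alpha_{S}) \cong \Coker(\partial_{S})^{\vee}$, and by exactness of that long exact sequence $\Coker(\partial_{S}) = \Im\bigl( \alg{H}^{1}_{c}(\alg{U}_{S}, \Lambda_{n}) \to \alg{H}^{1}(\alg{X}, \Lambda_{n}) \bigr) = \Ker\bigl( \alg{H}^{1}(\alg{X}, \Lambda_{n}) \to \bigoplus_{\ideal{p} \in S} \alg{H}^{1}(\Hat{\alg{A}}_{\ideal{p}}, \Lambda_{n}) \bigr)$.

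Next I would pass to the limit over $S$. Since direct limits are exact, the kernel of the first morphism of~\eqref{0062} is $\dirlim_{S} \Ker(\alpha_{S})$, which by the previous paragraph is the Pontryagin dual of $\invlim_{S} \Ker\bigl( \alg{H}^{1}(\alg{X}, \Lambda_{n}) \to \bigoplus_{\ideal{p} \in S} \alg{H}^{1}(\Hat{\alg{A}}_{\ideal{p}}, \Lambda_{n}) \bigr) = \Ker\bigl( \alg{H}^{1}(\alg{X}, \Lambda_{n}) \to \prod_{\ideal{p} \in P} \alg{H}^{1}(\Hat{\alg{A}}_{\ideal{p}}, \Lambda_{n}) \bigr)$; this is a decreasing family of subgroups of the finite \'etale group $\alg{H}^{1}(\alg{X}, \Lambda_{n})$, so the limits stabilize and the passage to Pontryagin duals causes no trouble. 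It then remains to identify this subgroup with $\alg{H}^{1}_{\cs}(\alg{X}, \Lambda_{n})$. Each $\Hat{A}_{\ideal{p}}$ is complete, hence henselian, local with residue field $\kappa(\ideal{p})$, so a finite \'etale covering of $X$ becomes trivial over $\Spec \Hat{A}_{\ideal{p}}$ exactly when it is completely split at $\ideal{p}$; reading off $\closure{F}$-points, $\Ker\bigl( \alg{H}^{1}(\alg{X}, \Lambda_{n}) \to \prod_{\ideal{p} \in P} \alg{H}^{1}(\Hat{\alg{A}}_{\ideal{p}}, \Lambda_{n}) \bigr)$ therefore has $\closure{F}$-points $H^{1}_{\cs}(\closure{X}, \Lambda_{n}) = H^{1}(\closure{Y}, \Lambda_{n})$ by Proposition~\ref{0091}. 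Since both this subgroup and $\alg{H}^{1}_{\cs}(\alg{X}, \Lambda_{n}) = \alg{H}^{1}(\alg{Y}, \Lambda_{n})$ are finite \'etale subgroup schemes of $\alg{H}^{1}(\alg{X}, \Lambda_{n})$ with the same $\closure{F}$-points and the same $\Gal(\closure{F} / F)$-action, they coincide, which gives the assertion.

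The main obstacle is the compatibility invoked in the first step: that the $\mathfrak{T}_{n}(2)$-triangle is genuinely the Pontryagin dual of the $\Lambda_{n}$-triangle, with the connecting maps matching up (up to sign). I expect this to be essentially formal from the construction of~\eqref{0108} in \cite{Suz24} out of the local pairings and the excision triangles, but it is the point that must be checked with care; the remaining ingredients — the degree and twist bookkeeping, the finiteness inputs quoted above, and the henselian-rigidity identification of the completely split subgroup — are routine.
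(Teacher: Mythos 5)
Your argument is correct and is essentially the paper's own proof, merely unpacked. The paper also starts from the localization triangle $R\alg{\Gamma}_{c}(\alg{U}_{S},\Lambda_{n})\to R\alg{\Gamma}(\alg{X},\Lambda_{n})\to\bigoplus_{\ideal{p}\in S}R\alg{\Gamma}(\Hat{\alg{A}}_{\ideal{p}},\Lambda_{n})$, takes the long exact sequence, passes to the inverse limit over $S$ to arrive at $0\to\Lambda_{n}\to\prod_{\ideal{p}\in P}\Weil_{F_{\ideal{p}}/F}\Lambda_{n}\to\invlim_{S}\alg{H}^{1}_{c}(\alg{U}_{S},\Lambda_{n})\to\alg{H}^{1}_{\cs}(\alg{X},\Lambda_{n})\to0$, and then applies Pontryagin duality (Theorem \ref{0088}) to conclude; your proof runs the two dual triangles in parallel and matches terms instead of dualizing at the end, but this is the same computation. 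The one point you single out as the ``main obstacle'' — that the $\mathfrak{T}_{n}(2)$ excision triangle is dual to the $\Lambda_{n}$ excision triangle with connecting maps matching — is indeed the substantive compatibility, and the paper uses it too (implicitly, in the phrase ``Taking the Pontryagin dual and applying Theorem \ref{0088}''); it is built into the construction of the pairing \eqref{0108} out of the local pairings in \cite{Suz24}, so your caution is well-placed but there is no gap.
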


\begin{proof}
	Let $S \subset P$ be finite and non-empty.
	The distinguished triangle
		\[
				R \alg{\Gamma}_{c}(\alg{U}_{S}, \Lambda_{n})
			\to
				R \alg{\Gamma}(\alg{X}, \Lambda_{n})
			\to
				\bigoplus_{\ideal{p} \in S}
					R \alg{\Gamma}(\Hat{\alg{A}}_{\ideal{p}}, \Lambda_{n})
		\]
	induces an exact sequence
		\[
				0
			\to
				\Lambda_{n}
			\to
				\bigoplus_{\ideal{p} \in S}
					\Weil_{F_{\ideal{p}} / F}
						\Lambda_{n}
			\to
				\alg{H}^{1}_{c}(\alg{U}_{S}, \Lambda_{n})
			\to
				\alg{H}^{1}(\alg{X}, \Lambda_{n})
			\to
				\bigoplus_{\ideal{p} \in S}
					\alg{H}^{1}(\varalg{\kappa}(\ideal{p}), \Lambda_{n})
		\]
	Hence we have an exact sequence
		\[
				0
			\to
				\Lambda_{n}
			\to
				\prod_{\ideal{p} \in P}
					\Weil_{F_{\ideal{p}} / F}
						\Lambda_{n}
			\to
				\invlim_{S}
					\alg{H}^{1}_{c}(\alg{U}_{S}, \Lambda_{n})
			\to
				\alg{H}^{1}_{\cs}(\alg{X}, \Lambda_{n})
			\to
				0
		\]
	of pro-finite-\'etale groups over $F$.
	Taking the Pontryagin dual and applying Theorem \ref{0088}, we get the result.
\end{proof}

Let $\alg{\Gamma}_{Y}$ be the dual graph of $\alg{Y}(\closure{F})$.
It has a continuous action of $\Gal(\closure{F} / F)$.
Let $\alg{H}^{1}(\alg{\Gamma}_{Y}, \Lambda_{n})$ be the first cohomology
of $\alg{\Gamma}_{Y}$ with coefficients in $\Lambda_{n}$.
It is a finite \'etale group scheme over $F$.

\begin{Prop}
	We have an exact sequence
		\[
				0
			\to
				\alg{H}^{1}(\alg{\Gamma}_{Y}, \Lambda_{n})
			\to
				\alg{H}^{1}(\alg{Y}, \Lambda_{n})
			\to
				\bigoplus_{\eta \in Y_{1}}
					\alg{H}^{1}(\alg{Y}_{\eta}, \Lambda_{n})
			\to
				0.
		\]
\end{Prop}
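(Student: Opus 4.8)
The plan is to deduce the sequence over an algebraic closure $\closure{F}$ from the Mayer--Vietoris resolution attached to the irreducible components of $Y$, and then to descend to $F$.

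First I would reduce to the case $F = \closure{F}$. All three objects in the asserted sequence are finite \'etale group schemes over $F$, hence are determined by their groups of $\closure{F}$-points together with the continuous $\Gal(\closure{F}/F)$-action; this is how $\alg{H}^{1}(\alg{Y}, \Lambda_{n})$ and $\alg{H}^{1}(\alg{\Gamma}_{Y}, \Lambda_{n})$ were introduced, and $\bigoplus_{\eta \in Y_{1}} \alg{H}^{1}(\alg{Y}_{\eta}, \Lambda_{n})$ has $\closure{F}$-points $\bigoplus_{i} H^{1}(Y_{i}, \Lambda_{n})$, the sum running over the connected (equivalently, irreducible) components $Y_{i}$ of $\alg{Y}(\closure{F})$ with the evident Galois action. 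So it suffices to build a $\Gal(\closure{F}/F)$-equivariant short exact sequence of finite groups over $\closure{F}$; by functoriality it is enough to construct the sequence geometrically and to remark afterwards that every arrow in it is Galois-equivariant.

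For $F = \closure{F}$, write $Y = \bigcup_{i} Y_{i}$ and set $Y^{(0)} = \bigsqcup_{i} Y_{i}$, $Y^{(1)} = \bigsqcup_{i<j} (Y_{i} \cap Y_{j})$, with $\pi_{0}, \pi_{1}$ the (finite) maps to $Y$. The starting point is that
	\[
			0 \to \Lambda_{n} \to \pi_{0 \ast} \Lambda_{n} \to \pi_{1 \ast} \Lambda_{n} \to 0
	\]
is exact on $Y_{\et}$: this is a stalkwise check which, because $Y$ is a \emph{strict} normal crossing divisor on a regular two-dimensional scheme, is trivial at a smooth point of $Y$ and reduces to $0 \to \Lambda_{n} \to \Lambda_{n}^{2} \to \Lambda_{n} \to 0$ at a node (in particular there are no triple intersections and no self-nodes, so $\Gamma_{Y}$ is an honest graph whose edges are the points of $Y^{(1)}$). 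Applying $R\Gamma(Y, \var)$ and using finiteness of $\pi_{0}, \pi_{1}$ yields a long exact sequence which, after identifying $H^{0}(Y^{(0)}, \Lambda_{n})$ with the $0$-cochains $C^{0}(\Gamma_{Y}, \Lambda_{n})$, $H^{0}(Y^{(1)}, \Lambda_{n})$ with the $1$-cochains $C^{1}(\Gamma_{Y}, \Lambda_{n})$, the connecting-type map with the simplicial coboundary $d$, and using $H^{q}(Y^{(1)}, \Lambda_{n}) = 0$ for $q \ge 1$ (as $Y^{(1)}$ is $0$-dimensional), reads in the relevant range
	\[
			C^{0}(\Gamma_{Y}, \Lambda_{n}) \xrightarrow{d} C^{1}(\Gamma_{Y}, \Lambda_{n}) \xrightarrow{\partial} H^{1}(Y, \Lambda_{n}) \to \bigoplus_{i} H^{1}(Y_{i}, \Lambda_{n}) \to 0.
	\]
Since $\Coker(d) = H^{1}(\Gamma_{Y}, \Lambda_{n})$ by definition of the cohomology of a graph, exactness at $C^{1}(\Gamma_{Y}, \Lambda_{n})$ and at $H^{1}(Y, \Lambda_{n})$ gives that $\partial$ factors as $C^{1}(\Gamma_{Y}, \Lambda_{n}) \onto H^{1}(\Gamma_{Y}, \Lambda_{n}) \into H^{1}(Y, \Lambda_{n})$ with image exactly the kernel of the restriction map. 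This is the desired sequence over $\closure{F}$; everything in sight is natural in $Y$, hence Galois-equivariant, and descends to the statement over $F$.

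The cohomological input is entirely routine --- the standard nerve/Mayer--Vietoris argument for a reduced normal crossing curve. The only point needing a little care, and so the ``main obstacle'', is the bookkeeping in the descent: one must check that the Galois action on $\bigsqcup_{i} Y_{i}$ and on $Y^{(1)}$ is precisely the one built into the definitions of $\alg{\Gamma}_{Y}$ and of $\bigoplus_{\eta} \alg{Y}_{\eta}$ --- concretely, that a generic point $\eta \in Y_{1}$ with constant field $F_{\eta}$ contributes $\Weil_{F_{\eta}/F}$ of the $H^{1}$ of one geometric component above $\eta$, consistently with summing $H^{1}$ over all geometric components. Once this identification is in place the descent is formal.
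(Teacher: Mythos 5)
Your proof is correct and takes essentially the same route as the paper: the paper's argument also rests on the Mayer--Vietoris distinguished triangle $R\alg{\Gamma}(\alg{Y}, \Lambda_{n}) \to \bigoplus_{\eta \in Y_{1}} R\alg{\Gamma}(\alg{Y}_{\eta}, \Lambda_{n}) \to \bigoplus_{x \in Y_{\mathrm{sin}}} \Weil_{F_{x}/F} \Lambda_{n}$, which is exactly what you obtain by applying $R\alg{\Gamma}(\alg{Y}, \var)$ to your short exact sequence $0 \to \Lambda_{n} \to \pi_{0\ast}\Lambda_{n} \to \pi_{1\ast}\Lambda_{n} \to 0$. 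You simply spell out the stalkwise verification of that sequence and the reduction to geometric points / Galois descent, which the paper leaves implicit.
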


\begin{proof}
	This follows from the distinguished triangle
		\[
				R \alg{\Gamma}(\alg{Y}, \Lambda_{n})
			\to
				\bigoplus_{\eta \in Y_{1}}
					R \alg{\Gamma}(\alg{Y}_{\eta}, \Lambda_{n})
			\to
				\bigoplus_{x \in Y_{\mathrm{sin}}}
					\Weil_{F_{x} / F}
						\Lambda_{n},
		\]
	where $Y_{\mathrm{sin}}$ denotes the set of singular points of $Y$.
\end{proof}

This finishes the proof of Theorem \ref{0109}.


\section{Lemmas on localization over resolutions}
\label{0117}

In the next section,
we prove some finiteness statement about $\Br(X)$
(Proposition \ref{0068}).
This uses some arguments local over a resolution of singularities of $A$,
which we deal with in this section.

We give a Hasse principle for Brauer groups of tubular neighborhoods:

\begin{Prop} \label{0067}
	Let $\Hat{K}_{\eta}$ be a complete discrete valuation field
	with ring of integer $\Hat{\Order}_{K_{\eta}}$ and residue field $\kappa(\eta)$.
	Assume that $\Hat{K}_{\eta}$ has characteristic zero, $\zeta_{p} \in \Hat{K}_{\eta}$ and
	$\kappa(\eta)$ is the function field
	of a proper smooth geometrically connected curve $Y_{\eta}$ over $F = \closure{F}$.
	Let $\eta_{0}$ be the set of closed points of $Y_{\eta}$.
	For each $x \in \eta_{0}$,
	let $\Hat{\kappa}(\eta_{x})$ be the complete local field of $Y_{\eta}$ at $x$.
	Let $\Hat{\Order}_{K_{\eta_{x}}}$ be the Kato canonical lifting of
	the relatively perfect $\kappa(\eta)$-algebra $\Hat{\kappa}(\eta_{x})$ to $\Hat{\Order}_{K_{\eta}}$.
	Let $\Hat{K}_{\eta_{x}}$ be the fraction field of $\Hat{\Order}_{K_{\eta_{x}}}$.
	
	Then the kernel of the natural map
		\begin{equation} \label{0064}
				\Br(\Hat{K}_{\eta})[p]
			\to
				\prod_{x \in \eta_{0}}
					\Br(\Hat{K}_{\eta_{x}})[p]
		\end{equation}
	is canonically isomorphic to $H^{1}(Y_{\eta}, \Lambda)$.
\end{Prop}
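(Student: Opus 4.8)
The plan is to pass to $p$-adic nearby cycles, reduce the assertion to a local--global statement on the curve $Y_{\eta}$ for the constant sheaf $\Lambda$, and then use that over an algebraically closed field a $\Lambda$-torsor of $\kappa(\eta)$ is completely split at every closed point of $Y_{\eta}$ precisely when it is everywhere unramified. Concretely, I would first identify $\Br(\Hat{K}_{\eta})[p] = H^{2}(\Hat{K}_{\eta}, \Lambda(1))$ and $\Br(\Hat{K}_{\eta_{x}})[p] = H^{2}(\Hat{K}_{\eta_{x}}, \Lambda(1))$ via the Kummer sequence (here $\Lambda(1) = \mu_{p}$ over the characteristic-zero fields), so that \eqref{0064} becomes the restriction map. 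Writing $R\Psi$ for the nearby cycle functor from $\Hat{K}_{\eta}$ to $\kappa(\eta)$ (and likewise from $\Hat{K}_{\eta_{x}}$ to $\Hat{\kappa}(\eta_{x})$), the base-change comparison for nearby cycles along Kato canonical liftings, exactly as in the proof of Proposition \ref{0013}, identifies the pullback of $R\Psi\Lambda(1)$ from $\kappa(\eta)$ to $\Hat{\kappa}(\eta_{x})$ with the nearby cycle complex there. Since $\kappa(\eta)$ and $\Hat{\kappa}(\eta_{x}) \cong \closure{F}(\!(t)\!)$ have characteristic $p$, hence $p$-cohomological dimension $\le 1$, and trivial Brauer group (Tsen), the spectral sequence $H^{i}(\kappa(\eta), R^{j}\Psi\Lambda(1)) \Rightarrow H^{i+j}(\Hat{K}_{\eta}, \Lambda(1))$ degenerates in total degree $2$ to an extension $0 \to H^{1}(\kappa(\eta), R^{1}\Psi\Lambda(1)) \to \Br(\Hat{K}_{\eta})[p] \to H^{0}(\kappa(\eta), R^{2}\Psi\Lambda(1)) \to 0$, functorially, and similarly over each $\Hat{\kappa}(\eta_{x})$.

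Next I would identify the two pieces. By the Kato--Saito description of the graded pieces of $R^{q}\Psi\Lambda(1)$ recalled in Proposition \ref{0008}, $R^{1}\Psi\Lambda(1)$ is an extension $0 \to \mathcal{F}^{0} \to R^{1}\Psi\Lambda(1) \to \nu_{1}(0) \to 0$ in which $\nu_{1}(0) = \Lambda$ is the Kato residue and $\mathcal{F}^{0}$ is filtered with graded pieces among $\Ga^{\RP}$, $\Ga^{\RP}/(\Ga^{\RP})^{p}$ and $\nu_{1}(1)$. For any field $K$ one has $H^{\ge 1}(K, \Ga^{\RP}) = 0$ and hence $H^{\ge 1}(K, \Ga^{\RP}/(\Ga^{\RP})^{p}) = 0$, while $H^{1}(K, \nu_{1}(1)) = \Br(K)[p]$ by the sequence $0 \to \Gm \xrightarrow{p} \Gm \to \nu_{1}(1) \to 0$; applying this to $K = \kappa(\eta)$ and $K = \Hat{\kappa}(\eta_{x})$, whose Brauer groups vanish and which satisfy $\mathrm{cd}_{p} \le 1$, gives $H^{\ge 1}(\kappa(\eta), \mathcal{F}^{0}) = H^{\ge 1}(\Hat{\kappa}(\eta_{x}), \mathcal{F}^{0}) = 0$. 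Thus the Kato residue induces isomorphisms $H^{1}(\kappa(\eta), R^{1}\Psi\Lambda(1)) \isomto H^{1}(\kappa(\eta), \Lambda)$ and $H^{1}(\Hat{\kappa}(\eta_{x}), R^{1}\Psi\Lambda(1)) \isomto H^{1}(\Hat{\kappa}(\eta_{x}), \Lambda)$, compatibly with restriction. For the remaining term, the localization map $H^{0}(\kappa(\eta), R^{2}\Psi\Lambda(1)) \to \prod_{x} H^{0}(\Hat{\kappa}(\eta_{x}), R^{2}\Psi\Lambda(1))$ is injective already after restriction to one $x$, since $R^{2}\Psi\Lambda(1)$ is a Galois module of $\kappa(\eta)$, the decomposition group $\Gal(\overline{\Hat{\kappa}(\eta_{x})}/\Hat{\kappa}(\eta_{x})) \into \Gal(\overline{\kappa(\eta)}/\kappa(\eta))$ is injective, and taking invariants is left exact. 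A diagram chase of the above extension against its analogues over the $\Hat{\kappa}(\eta_{x})$ then yields a canonical isomorphism $\ker\eqref{0064} \cong \ker\bigl( H^{1}(\kappa(\eta), \Lambda) \to \prod_{x \in \eta_{0}} H^{1}(\Hat{\kappa}(\eta_{x}), \Lambda) \bigr)$.

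Finally I would compute this kernel on the curve. An element is a $\Lambda$-torsor of $\Spec \kappa(\eta)$ that becomes trivial over every $\Spec \Hat{\kappa}(\eta_{x})$; since the completed local ring of $Y_{\eta}$ at $x$ is a complete discrete valuation ring with algebraically closed residue field, triviality over $\Hat{\kappa}(\eta_{x})$ is equivalent to unramifiedness at $x$ (a finite \'etale cover of a henselian local ring with separably closed residue field splits completely). Hence the kernel is the group of everywhere-unramified $\Lambda$-torsors of $\kappa(\eta)$, which by normality of $Y_{\eta}$ and purity of the branch locus coincides with the image of the injection $H^{1}(Y_{\eta}, \Lambda) \into H^{1}(\kappa(\eta), \Lambda)$; conversely every class coming from $Y_{\eta}$ dies over each $\Hat{\kappa}(\eta_{x})$ because it factors through the $H^{1}$ of the completed local ring, which is $H^{1}(\closure{F}, \Lambda) = 0$. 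This gives the asserted canonical isomorphism with $H^{1}(Y_{\eta}, \Lambda)$. The main obstacle is the middle step: assembling the precise structure of the sheaves $R^{q}\Psi\Lambda(1)$ together with their base-change-compatibility along $\kappa(\eta) \into \Hat{\kappa}(\eta_{x})$ and the cohomological vanishings forced by $\Br = 0$ and $\mathrm{cd}_{p} \le 1$; granting these, the curve-theoretic conclusion is routine.
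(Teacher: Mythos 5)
Your proof is correct, and it takes a genuinely different route from the one in the paper.  The paper computes $\Br(\Hat{K}_{\eta})[p] \cong K_{2}(\Hat{K}_{\eta})/p K_{2}(\Hat{K}_{\eta})$ directly via the explicit $U^{m}$-filtration by symbols, writes out the graded pieces as $\kappa(\eta)^{\times}/\kappa(\eta)^{\times p}$, $\kappa(\eta)/\kappa(\eta)^{p}$, $\Omega^{1}_{\kappa(\eta)}$, $\kappa(\eta)/\wp(\kappa(\eta))$, observes that restriction to each $\Hat{K}_{\eta_{x}}$ is injective on every graded piece except the top one $m = f_{\eta}$, and identifies that top piece with $H^{1}(\kappa(\eta),\Lambda)$; the intersection of kernels is then $H^{1}(Y_{\eta},\Lambda)$.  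You instead run the nearby cycle spectral sequence $H^{i}(\kappa(\eta), R^{j}\Psi\Lambda(1)) \Rightarrow H^{i+j}(\Hat{K}_{\eta},\Lambda(1))$, degenerate it using the cohomological dimension bound and Tsen, obtain the two-step filtration $0 \to H^{1}(\kappa(\eta), R^{1}\Psi\Lambda(1)) \to \Br(\Hat{K}_{\eta})[p] \to H^{0}(\kappa(\eta), R^{2}\Psi\Lambda(1)) \to 0$, identify the left term with $H^{1}(\kappa(\eta),\Lambda)$ via the Kato residue and vanishing on the lower graded pieces, show the right-hand localization map is injective, and reduce by a diagram chase to $\ker\bigl(H^{1}(\kappa(\eta),\Lambda) \to \prod_{x} H^{1}(\Hat{\kappa}(\eta_{x}),\Lambda)\bigr) \cong H^{1}(Y_{\eta},\Lambda)$.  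The underlying calculations are the same Bloch--Kato/Kato--Saito facts packaged differently: the paper's $\gr^{m}$ for $m < f_{\eta}$ together assemble to your $H^{0}(\kappa(\eta), R^{2}\Psi\Lambda(1))$, and the paper's $\gr^{f_{\eta}}$ is your $H^{1}(\kappa(\eta), R^{1}\Psi\Lambda(1))$.  Your route is more structural and reuses the nearby cycle compatibility already established in Proposition \ref{0013}; the paper's is more elementary and self-contained.

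Two small corrections.  First, your justification for injectivity of $H^{0}(\kappa(\eta), R^{2}\Psi\Lambda(1)) \to H^{0}(\Hat{\kappa}(\eta_{x}), R^{2}\Psi\Lambda(1))$ via ``$R^{2}\Psi\Lambda(1)$ is a Galois module and taking invariants is left exact'' is not quite right: $R^{2}\Psi\Lambda(1)$ is a representable sheaf on $\kappa(\eta)_{\RPS}$ in $\Alg^{\RP}_{u}/\kappa(\eta)$ (Proposition \ref{0008}), not a discrete Galois module, and $H^{0}$ is its set of sections rather than a set of invariants.  The injectivity is instead an immediate consequence of representability by a separated (indeed affine) scheme together with injectivity of $\kappa(\eta) \into \Hat{\kappa}(\eta_{x})$, once the source and target are matched up using the base-change compatibility of $R\Psi_{\RPS}$ from Proposition \ref{0013}.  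Second, the degeneration of the spectral sequence in total degree $2$ requires $H^{2}(\kappa(\eta), R^{1}\Psi\Lambda(1)) = 0$; for the non-torsion unipotent piece this does not follow from $\mathrm{cd}_{p}\le 1$ alone but from the filtration of $R^{1}\Psi\Lambda(1)$ by $\Ga^{\RP}$, $\Ga^{\RP}/(\Ga^{\RP})^{p}$, $\nu_{1}(1)$ and $\Lambda$, each of whose $H^{\ge 2}$ over $\kappa(\eta)$ and $\Hat{\kappa}(\eta_{x})$ vanishes (for the first two by $H^{\ge 1}(-,\Ga^{\RP}) = 0$ plus the Frobenius sequence, for $\nu_{1}(1)$ by the $\Gm$-sequence together with vanishing Brauer group, for $\Lambda$ by $\mathrm{cd}_{p}\le 1$).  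With those adjustments the argument goes through.
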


\begin{proof}
	Let $e_{\eta}$ be the absolute ramification index of $\Hat{K}_{\eta}$
	and set $f_{\eta} = p e_{\eta} / (p - 1)$.
	We have the usual filtrations by symbols on
	$\Br(\Hat{K}_{\eta})[p] \cong K_{2}(\Hat{K}_{\eta}) / p K_{2}(\Hat{K}_{\eta})$.
	Let $\gr^{m} \Br(\Hat{K}_{\eta})[p]$ be the $m$-th graded piece.
	Let $\gr^{m} \Br(\Hat{K}_{\eta_{x}})[p]$ similarly for each $x \in \eta_{0}$.
	We have
		\[
				\gr^{m} \Br(\Hat{K}_{\eta})[p]
			\cong
				\begin{cases}
						\kappa(\eta)^{\times} / \kappa(\eta)^{\times p}
					&	\text{if }
						m = 0,
					\\
						\kappa(\eta) / \kappa(\eta)^{p}
					&	\text{if }
						0 < m < f_{\eta},\, p \mid m,
					\\
						\Omega_{\kappa(\eta)}^{1}
					&	\text{if }
						0 < m < f_{\eta},\, p \nmid m,
					\\
						\kappa(\eta) / \wp(\kappa(\eta))
					&	\text{if }
						m = f_{\eta},
					\\
						0
					&	\text{else.}
				\end{cases}
		\]
	Similarly,
		\[
				\gr^{m} \Br(\Hat{K}_{\eta_{x}})[p]
			\cong
				\begin{cases}
						\Hat{\kappa}(\eta_{x})^{\times} / \Hat{\kappa}(\eta_{x})^{\times p}
					&	\text{if }
						m = 0,
					\\
						\Hat{\kappa}(\eta_{x}) / \Hat{\kappa}(\eta_{x})^{p}
					&	\text{if }
						0 < m < f_{\eta},\, p \mid m,
					\\
						\Omega_{\Hat{\kappa}(\eta_{x})}^{1}
					&	\text{if }
						0 < m < f_{\eta},\, p \nmid m,
					\\
						\Hat{\kappa}(\eta_{x}) / \wp(\Hat{\kappa}(\eta_{x}))
					&	\text{if }
						m = f_{\eta},
					\\
						0
					&	\text{else.}
				\end{cases}
		\]
	
	The map $\Br(\Hat{K}_{\eta})[p] \to \Br(\Hat{K}_{\eta_{x}})[p]$ preserves the filtration.
	The induced map on the $m$-th graded piece
	is injective except for the case $m = f_{\eta}$.
	In the case $m = f_{\eta}$, the map can be identified with
	the natural map $H^{1}(\kappa(\eta), \Lambda) \to H^{1}(\Hat{\kappa}(\eta_{x}), \Lambda)$.
	Hence the intersection of the kernels over all $x$ is isomorphic to $H^{1}(Y_{\eta}, \Lambda)$.
\end{proof}

Next, we study trace (or transfer) maps.
Assume that $\zeta_{p} \in A$
and take $S \subset P$ to be the set of primes dividing $p$.
Let $\mathfrak{X} \to \Spec A$, $W_{\eta} \subset \mathfrak{X}$ and $T \subset Y$
be as in Section \ref{0086}.
By \cite[(10.4.7), (10.5.1)]{Suz24}, we have a morphism of distinguished triangles
	\begin{equation} \label{0087}
		\begin{CD}
				R \alg{\Gamma}_{c}(\alg{U}_{S}, \Lambda)
			@>>>
				R \alg{\Gamma}(\alg{U}_{S}, \Lambda)
			@>>>
				\bigoplus_{\ideal{p} \in S}
					R \alg{\Gamma}(\Hat{\alg{K}}_{\ideal{p}}, \Lambda)
			\\ @| @VVV @VVV \\
				R \alg{\Gamma}_{c}(\alg{U}_{S}, \Lambda)
			@>>>
				\begin{array}{c}
						\bigoplus_{x \in T}
							R \alg{\Gamma}(\Hat{\alg{R}}_{x, S}, \Lambda)
					\\
					\oplus
						\bigoplus_{\eta \in Y_{1}}
							R \alg{\Gamma}(\Hat{\alg{R}}_{\eta, T}, \Lambda)
				\end{array}
			@>>>
				\begin{array}{c}
						\bigoplus_{
							\substack{
								x \in T \\
								\eta_{x} \in Y_{1}^{x}
							}
						}
							R \alg{\Gamma}(\Hat{\alg{K}}_{\eta_{x}}, \Lambda)
					\\
					\oplus
						\bigoplus_{\ideal{p} \in S}
							R \alg{\Gamma}(\Hat{\alg{K}}_{\ideal{p}}, \Lambda)
				\end{array}
		\end{CD}
	\end{equation}
in $D(F^{\ind\rat}_{\pro\et})$.
In particular, we have canonical morphisms
	\begin{equation} \label{0090}
			\alg{H}^{1}(\Hat{\alg{K}}_{\eta_{x}}, \Lambda)
		\to
			\alg{H}^{2}_{c}(\alg{U}_{S}, \Lambda)
		\to
			\alg{H}^{2}(\alg{X}, \mathfrak{T}(1))
	\end{equation}
for each pair $x \in T$ and $\eta_{x} \in Y_{1}^{x}$
(where the second morphism comes from $\zeta_{p} \colon \Lambda \to \mathfrak{T}(1)$).
Hence for any algebraically closed field $F' \in F^{\perar}$,
we obtain a homomorphism
	\begin{equation} \label{0122}
			H^{1}(\Hat{\alg{K}}_{\eta_{x}}(F'), \Lambda)
		\to
			H^{2}(\alg{X}(F'), \mathfrak{T}(1)).
	\end{equation}

\begin{Prop} \label{0066}
	Under the above setting,
	let $x \in T$ and $\eta_{x} \in Y_{1}^{x}$.
	Then the image of \eqref{0122} is contained in the subgroup
	$H^{1}(\alg{X}(F'), \Gm) \tensor \Lambda$
	of the target.
\end{Prop}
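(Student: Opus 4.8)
The plan is to lift \eqref{0122} to $\Gm$-coefficients. Recall first that the subgroup $H^{1}(\alg{X}(F'), \Gm) \otimes \Lambda$ of $H^{2}(\alg{X}(F'), \mathfrak{T}(1))$ is the image of the map induced on $F'$-points by the natural morphism $\Gm \tensor^{L} \Lambda \to \mathfrak{T}(1)$ on $\alg{X}$ (which over $\Spec A[1 / p]$ restricts to the Kummer identification $\Gm \tensor^{L} \Lambda \cong \Lambda(1)$), and that the cokernel of $H^{1}(\alg{X}(F'), \Gm) \otimes \Lambda \to H^{2}(\alg{X}(F'), \mathfrak{T}(1))$ injects into $\Br(X \times_{F} F')[p]$ (this is the Kummer-type sequence for $\mathfrak{T}(1)$, cf.\ \cite{Sat13}, using that $\alg{X}(F')$ is regular). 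Hence Proposition \ref{0066} is equivalent to the vanishing of the composite
	\[
			H^{1}(\Hat{\alg{K}}_{\eta_{x}}(F'), \Lambda)
		\to
			H^{2}(\alg{X}(F'), \mathfrak{T}(1))
		\to
			\Br(X \times_{F} F')[p] .
	\]

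Next I would unwind the map. By \eqref{0090}, \eqref{0122} is the $(x, \eta_{x})$-component of the connecting map of the bottom distinguished triangle in \eqref{0087}, followed by $\alg{H}^{2}_{c}(\alg{U}_{S}, \Lambda) \to \alg{H}^{2}(\alg{X}, \mathfrak{T}(1))$ induced by $\zeta_{p} \colon \lambda_{S, !} \Lambda \to \mathfrak{T}(1)$. All the complete discrete valuation fields $\Hat{K}_{\eta_{x}}$, $\Hat{K}_{\eta}$, $\Hat{\kappa}(\eta_{x})$ entering the construction of \eqref{0087} have characteristic zero and contain $\zeta_{p}$; over them $\mathfrak{T}_{n}(r) \cong \Lambda_{n}(r)$, and since $\Gm$-cohomology of a field vanishes in positive degrees, the Kummer sequence gives $\alg{H}^{1}(\Hat{\alg{K}}_{\eta_{x}}, \Lambda) \cong \Hat{\alg{K}}_{\eta_{x}}^{\times} \otimes \Lambda$, i.e.\ a class is a formal-meromorphic function along the branch of $Y_{\eta}$ at $x$. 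The key step is to construct a $\Gm$-coefficient counterpart of \eqref{0087} — the same diagram of localization triangles with $\Gm \tensor^{L} \Lambda$ in place of $\Lambda$, which exists because $\Gm$ has the localization (Gysin) property along the regular divisors $W_{\eta} \cap Y$ and $\Spec \varalg{\kappa}(\ideal{p})$ inside the regular scheme $\mathfrak{X}$ — together with the compatibility of these triangles with $\mathfrak{T}(1)$ provided by $\Gm \tensor^{L} \Lambda \to \mathfrak{T}(1)$. Granting this, \eqref{0122} factors as
	\[
			\Hat{\alg{K}}_{\eta_{x}}(F')^{\times} \otimes \Lambda
		\longrightarrow
			H^{1}(\alg{X}(F'), \Gm) \otimes \Lambda
		\longrightarrow
			H^{2}(\alg{X}(F'), \mathfrak{T}(1)),
	\]
the first arrow a $\Gm$-Gysin map (``divisor class on $X = \mathfrak{X} \setminus Y$ of the function'') and the second the defining map of the subgroup, which is exactly the assertion.

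The main obstacle is the middle paragraph: matching the boundary maps of \cite[Sections 10.4 and 10.5]{Suz24} underlying \eqref{0087} with $\Gm$-Gysin maps and verifying compatibility with $\Gm \tensor^{L} \Lambda \to \mathfrak{T}(1)$. The delicate point is that $Y_{\eta}$ has characteristic $p$, so along it $\mathfrak{T}_{n}(1)$ genuinely differs from $\Gm \tensor^{L} \Lambda_{n}$, the discrepancy involving the logarithmic de Rham--Witt sheaf $\nu_{n}$ appearing in \eqref{0107}; one must check that the pushforward along $Y_{\eta}$ picks up no logarithmic term, i.e.\ that \eqref{0122} really factors through $H^{1}(\alg{X}(F'), \Gm) \otimes \Lambda$ rather than through a nonzero Brauer class. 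I expect two routes. One is to track this through the symbol filtration exactly as in the proof of Proposition \ref{0067}: the graded pieces of $\Br(\Hat{K}_{\eta_{x}})[p]$ through which a class coming from $\Hat{K}_{\eta_{x}}^{\times}$ can be nontrivial are the ``$\Gm$-like'' ones ($\gr^{0}$ and the pieces with $p \mid m$), and the map preserves the filtration. The other is to argue on generic points, using $\Br(X \times_{F} F') \hookrightarrow \Br(K \otimes_{F} F')$ and the fact that the image of a Kummer class under a residue-followed-by-pushforward map is everywhere unramified and everywhere split in $\Br(K \otimes_{F} F')$, hence zero, with Galois descent from $\closure{F}$ reducing the general $F'$ to the algebraically closed case. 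In either approach it is the characteristic-zero nature of all the $\Hat{K}_{\eta_{x}}$ that makes the $\Gm$-lift available in the first place.
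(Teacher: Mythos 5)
Your proposal correctly reformulates the statement as the vanishing of the composite to $\Br(\alg{X}(F'))[p]$, and it correctly identifies the core difficulty (that $\mathfrak{T}(1)$ differs from $\Gm \tensor^{L} \Lambda$ along the characteristic-$p$ divisor $Y$, with the discrepancy living in $\nu_n$). But the strategy you then propose — lifting the whole diagram \eqref{0087} to $\Gm$-coefficients — is not what the paper does, and the two fallback routes you sketch both have gaps. Route 1 (filtration tracking in the style of Proposition \ref{0067}) misidentifies the map in play: the map $H^{1}(\Hat{K}_{\eta_{x}}, \Lambda) \to H^{2}_{c}(U_{S}, \Lambda)$ is a connecting morphism in a localization triangle built from tubular-neighborhood data, not a transfer to which the symbol filtration on $\Br(\Hat{K}_{\eta_x})[p]$ applies; nothing in the filtration tells you where the resulting class sits inside $H^{2}(\alg{X}(F'), \mathfrak{T}(1))$. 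Route 2 (everywhere unramified and everywhere split in $\Br(K \tensor_F F')$, hence zero) implicitly invokes a Hasse principle for $\Br(K)[p]$ which is not available at this point — indeed, the Hasse-principle-type statements the paper proves downstream (Proposition \ref{0068}, Theorem \ref{0083}) depend on this very proposition, so this would be circular.

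What the paper actually does is different and, once seen, more elementary. It avoids any $\Gm$-lift of the diagram and works entirely with $\Lambda$-coefficients until the final line. It takes a direct limit over finite sets $S' \supset S$ of height-one primes specializing into $T$, obtaining a commutative square of injective maps into $H^{2}_{c}(R_{S}, \Lambda)$; the decisive input is weak approximation for the two-dimensional complete local field $\Hat{K}_{x}$ at a closed point $x$ of the resolution — the surjectivity of $\Hat{K}_{x}^{\times}/\Hat{K}_{x}^{\times p} \onto \bigoplus_{\eta_x \in Y_1^x} \Hat{K}_{\eta_x}^{\times}/\Hat{K}_{\eta_x}^{\times p}$ — which forces the right vertical map in that square to be an isomorphism. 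This lets one replace a class coming from a single branch $\eta_x$ by classes supported on a sum of $\Hat{K}_{\ideal{p}}$'s and of valuation terms $\Lambda$ indexed by primes in $P_T$; under $H^2_c(R_S, \Lambda) \to H^2(X, \mathfrak{T}(1))$, these all collapse to the image of $\bigoplus_{\ideal{p} \in P_T} \Lambda$, i.e.\ to divisor classes, which lie in $H^1(X, \Gm) \tensor \Lambda$ by construction. The approximation step is entirely absent from your outline and is the idea that makes the proof go; without a replacement for it, lifting to $\Gm$ (or tracking filtrations) would still leave you stuck at the branch $\eta_x$ without a way to move the class to places where the divisor map applies.
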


\begin{proof}
	We may assume that $F = F' = \closure{F}$.
	We want to show that
	the image of $H^{1}(\Hat{K}_{\eta_{x}}, \Lambda) \to H^{2}(X, \mathfrak{T}(1))$
	is contained in the subgroup
	$H^{1}(X, \Gm) \tensor \Lambda$ of the target.
	
	Let $R_{S} = A[1 / p]$.
	We first briefly recall the construction of the diagram \eqref{0087} on $F$-points.
	Let $R_{S, \Et}$ be the big \'etale site of $R_{S}$.
	View $\Gamma(R_{S}, \var)$, $\Gamma(\Hat{K}_{\ideal{p}}, \var)$, $\Gamma(\Hat{K}_{\eta_{x}}, \var)$
	$\Gamma(\Hat{R}_{x, S}, \var)$ and $\Gamma(\Hat{R}_{\eta, T}, \var)$ as
	the section functors $\Ab(R_{S, \Et}) \to \Ab$ at the objects
	$R_{S}$, $\Hat{K}_{\ideal{p}}$, $\Hat{K}_{\eta_{x}}$, $\Hat{R}_{x, S}$ and $\Hat{R}_{\eta, T}$,
	respectively.
	We have a commutative diagram
		\[
			\begin{CD}
					\Gamma(R_{S}, \var)
				@>>>
					\bigoplus_{\ideal{p} \in S}
						\Gamma(\Hat{K}_{\ideal{p}}, \var)
				\\ @VVV @VVV \\
					\begin{array}{c}
							\bigoplus_{x \in T}
								\Gamma(\Hat{R}_{x, S}, \var)
						\\
						\oplus
							\bigoplus_{\eta \in Y_{1}}
								\Gamma(\Hat{R}_{\eta, T}, \var)
					\end{array}
				@>>>
					\begin{array}{c}
							\bigoplus_{
								\substack{
									x \in T \\
									\eta_{x} \in Y_{1}^{x}
								}
							}
								\Gamma(\Hat{K}_{\eta_{x}}, \var)
						\\
						\oplus
							\bigoplus_{\ideal{p} \in S}
								\Gamma(\Hat{K}_{\ideal{p}}, \var)
					\end{array}
			\end{CD}
		\]
	of functors $\Ab(R_{S, \Et}) \to \Ab$,
	where the left vertical map is the diagonal
		\[
				a
			\mapsto
				\bigl(
					(a|_{\Hat{R}_{x, S}})_{x \in T},
					(a|_{\Hat{R}_{\eta, T}})_{\eta \in Y_{1}}
				\bigr),
		\]
	the lower horizontal map is
		\[
				((a_{x})_{x \in T}, (a_{\eta})_{\eta \in Y_{1}})
			\mapsto
				\bigl(
					(a_{x}|_{\Hat{K}_{\eta_{x}}} - a_{\eta}|_{\Hat{K}_{\eta_{x}}})_{x \in T, \eta_{x} \in Y_{1}^{x}},
					(a_{x}|_{\Hat{K}_{\ideal{p}}})_{\ideal{p} \in S}
				\bigr),
		\]
	the upper horizontal map is the diagonal
	and the right vertical map is the inclusion into the second summand.
	This diagram induces a commutative diagram of functors
	$\Ch(R_{S, \Et}) \to \Ch$ of additive categories with translation
	in the sense of \cite[Definition 10.1.1]{KS06}
	(where $\Ch$ is the category of complexes in $\Ab$).
	By taking the right derived functors (\cite[Sections 13.3, 14.3]{KS06}), it induces a morphism
		\begin{align*}
			&
					R \left[
							\Gamma(R_{S}, \var)
						\to
							\bigoplus_{\ideal{p} \in S}
								\Gamma(\Hat{K}_{\ideal{p}}, \var)
					\right][-1]
			\\
			&	\to
					R \left[
							\begin{array}{c}
									\bigoplus_{x \in T}
										\Gamma(\Hat{R}_{x, S}, \var)
								\\
								\oplus
									\bigoplus_{\eta \in Y_{1}}
										\Gamma(\Hat{R}_{\eta, T}, \var)
							\end{array}
						\to
							\begin{array}{c}
									\bigoplus_{
										\substack{
											x \in T \\
											\eta_{x} \in Y_{1}^{x}
										}
									}
										\Gamma(\Hat{K}_{\eta_{x}}, \var)
								\\
								\oplus
									\bigoplus_{\ideal{p} \in S}
										\Gamma(\Hat{K}_{\ideal{p}}, \var)
							\end{array}
					\right][-1]
		\end{align*}
	of triangulated functors $D(R_{S, \Et}) \to D(\Ab)$.
	It becomes an isomorphism
	when precomposed with the natural functor $D_{\tor}^{+}(R_{S, \et}) \to D(R_{S, \Et})$
	from the bounded below derived category of complexes with torsion cohomology,
	and the resulting isomorphic functors $D_{\tor}^{+}(R_{S, \et}) \rightrightarrows D(\Ab)$
	are further isomorphic to $R \Gamma_{c}(R_{S}, \var)$.
	In particular, we have a morphism of distinguished triangles
		\[
			\begin{CD}
					R \Gamma_{c}(R_{S}, \Lambda)
				@>>>
					R \Gamma(R_{S}, \Lambda)
				@>>>
					\bigoplus_{\ideal{p} \in S}
						R \Gamma(\Hat{K}_{\ideal{p}}, \Lambda)
				\\ @| @VVV @VVV \\
					R \Gamma_{c}(R_{S}, \Lambda)
				@>>>
					\begin{array}{c}
							\bigoplus_{x \in T}
								R \Gamma(\Hat{R}_{x, S}, \Lambda)
						\\
						\oplus
							\bigoplus_{\eta \in Y_{1}}
								R \Gamma(\Hat{R}_{\eta, T}, \Lambda)
					\end{array}
				@>>>
					\begin{array}{c}
							\bigoplus_{
								\substack{
									x \in T \\
									\eta_{x} \in Y_{1}^{x}
								}
							}
								R \Gamma(\Hat{K}_{\eta_{x}}, \Lambda)
						\\
						\oplus
							\bigoplus_{\ideal{p} \in S}
								R \Gamma(\Hat{K}_{\ideal{p}}, \Lambda),
					\end{array}
			\end{CD}
		\]
	which is the $F$-points of the diagram \eqref{0087}.%
	\footnote{
		The original construction in \cite[Section 4.6]{Suz24} uses fibered sites.
		For the purpose of proving Proposition \ref{0066} here,
		it is enough to use big \'etale sites,
		which gives the same diagram on $F$-points.
	}
	
	Let $P_{T} \subset P$ be the set of prime ideals $\ideal{p} \in P$
	that specializes to an element of $T$ in $\mathfrak{X}$.
	Let $S' \subset P_{T}$ be an arbitrary finite subset containing $S$.
	Let $R_{S'} = \Order(U_{S} \setminus S')$.
	We have a commutative diagram
		\begin{equation} \label{0102}
			\begin{CD}
					\Gamma(R_{S}, \var)
				@>>>
					\bigoplus_{\ideal{p} \in S}
						\Gamma(\Hat{K}_{\ideal{p}}, \var)
				\\ @VVV @VVV \\
					\Gamma(R_{S'}, \var)
				@>>>
					\begin{array}{c}
							\bigoplus_{\ideal{p} \in S}
								\Gamma(\Hat{K}_{\ideal{p}}, \var)
						\\
						\oplus
							\bigoplus_{\ideal{p} \in S' \setminus S}
								\Gamma_{c}(\Hat{A}_{\ideal{p}}, \var)[1],
					\end{array}
			\end{CD}
		\end{equation}
	of functors $\Ch(U_{S, \Et}) \to \Ch$ of additive categories with translation,
	where $\Gamma_{c}(\Hat{A}_{\ideal{p}}, \var)[1]$ is the mapping cone of
	$\Gamma(\Hat{A}_{\ideal{p}}, \var) \to \Gamma(\Hat{K}_{\ideal{p}}, \var)$.
	The induced morphism
		\begin{align*}
			&
					R \left[
							\Gamma(R_{S}, \var)
						\to
							\bigoplus_{\ideal{p} \in S}
								\Gamma(\Hat{K}_{\ideal{p}}, \var)
					\right][-1]
			\\
			&	\to
					R \left[
							\Gamma(R_{S'}, \var)
						\to
							\begin{array}{c}
									\bigoplus_{\ideal{p} \in S}
										\Gamma(\Hat{K}_{\ideal{p}}, \var)
								\\
								\oplus
									\bigoplus_{\ideal{p} \in S' \setminus S}
										\Gamma_{c}(\Hat{A}_{\ideal{p}}, \var)[1],
							\end{array}
					\right][-1]
		\end{align*}
	of triangulated functors $D(R_{S, \Et}) \to D(\Ab)$
	precomposed with $D_{\tor}^{+}(R_{S, \et}) \to D(R_{S, \Et})$
	is an isomorphism by excision.
	
	Similarly, we have a commutative diagram
		\begin{equation} \label{0103}
			\begin{CD}
					\begin{array}{c}
							\bigoplus_{x \in T}
								\Gamma(\Hat{R}_{x, S}, \var)
						\\
						\oplus
							\bigoplus_{\eta \in Y_{1}}
								\Gamma(\Hat{R}_{\eta, T}, \var)
					\end{array}
				@>>>
					\begin{array}{c}
							\bigoplus_{
								\substack{
									x \in T \\
									\eta_{x} \in Y_{1}^{x}
								}
							}
								\Gamma(\Hat{K}_{\eta_{x}}, \var)
						\\
						\oplus
							\bigoplus_{\ideal{p} \in S}
								\Gamma(\Hat{K}_{\ideal{p}}, \var)
					\end{array}
				\\ @VVV @VVV \\
					\begin{array}{c}
							\bigoplus_{x \in T}
								\Gamma(\Hat{R}_{x, S'}, \var)
						\\
						\oplus
							\bigoplus_{\eta \in Y_{1}}
								\Gamma(\Hat{R}_{\eta, T}, \var)
					\end{array}
				@>>>
					\begin{array}{c}
							\bigoplus_{
								\substack{
									x \in T \\
									\eta_{x} \in Y_{1}^{x}
								}
							}
								\Gamma(\Hat{K}_{\eta_{x}}, \var)
						\\
						\oplus
							\bigoplus_{\ideal{p} \in S}
								\Gamma(\Hat{K}_{\ideal{p}}, \var)
						\\
						\oplus
							\bigoplus_{\ideal{p} \in S' \setminus S}
								\Gamma_{c}(\Hat{A}_{\ideal{p}}, \var)[1].
					\end{array}
			\end{CD}
		\end{equation}
	The induced morphism
		\begin{align*}
			&
					R \left[
							\begin{array}{c}
									\bigoplus_{x \in T}
										\Gamma(\Hat{R}_{x, S}, \var)
								\\
								\oplus
									\bigoplus_{\eta \in Y_{1}}
										\Gamma(\Hat{R}_{\eta, T}, \var)
							\end{array}
						\to
							\begin{array}{c}
									\bigoplus_{
										\substack{
											x \in T \\
											\eta_{x} \in Y_{1}^{x}
										}
									}
										\Gamma(\Hat{K}_{\eta_{x}}, \var)
								\\
								\oplus
									\bigoplus_{\ideal{p} \in S}
										\Gamma(\Hat{K}_{\ideal{p}}, \var)
							\end{array}
					\right][-1]
			\\
			&	\to
					R \left[
							\begin{array}{c}
									\bigoplus_{x \in T}
										\Gamma(\Hat{R}_{x, S'}, \var)
								\\
								\oplus
									\bigoplus_{\eta \in Y_{1}}
										\Gamma(\Hat{R}_{\eta, T}, \var)
							\end{array}
						\to
							\begin{array}{c}
									\bigoplus_{
										\substack{
											x \in T \\
											\eta_{x} \in Y_{1}^{x}
										}
									}
										\Gamma(\Hat{K}_{\eta_{x}}, \var)
								\\
								\oplus
									\bigoplus_{\ideal{p} \in S}
										\Gamma(\Hat{K}_{\ideal{p}}, \var)
								\\
								\oplus
									\bigoplus_{\ideal{p} \in S' \setminus S}
										\Gamma_{c}(\Hat{A}_{\ideal{p}}, \var)[1].
							\end{array}
					\right][-1]
		\end{align*}
	of triangulated functors $D(R_{S, \Et}) \to D(\Ab)$
	precomposed with $D_{\tor}^{+}(R_{S, \et}) \to D(R_{S, \Et})$
	is an isomorphism by excision.
	
	We have a morphism of diagrams from \eqref{0102} to \eqref{0103}.
	It induces a commutative diagram
		\[
			\begin{CD}
					\frac{
						\bigoplus_{\ideal{p} \in S}
							H^{1}(\Hat{K}_{\ideal{p}}, \Lambda)
					}{
						H^{1}(R_{S}, \Lambda)
					}
				@>>>
					\frac{
							\bigoplus_{\ideal{p} \in S}
								H^{1}(\Hat{K}_{\ideal{p}}, \Lambda)
						\oplus
							\bigoplus_{\ideal{p} \in S' \setminus S}
								H^{2}_{c}(\Hat{A}_{\ideal{p}}, \Lambda),
					}{
						H^{1}(R_{S'}, \Lambda)
					}
				\\ @VVV @VVV \\
					\frac{
							\bigoplus_{
								\substack{
									\scriptscriptstyle
									x \in T \\
									\scriptscriptstyle
									\eta_{x} \in Y_{1}^{x}
								}
							}
								H^{1}(\Hat{K}_{\eta_{x}}, \Lambda)
						\oplus
							\bigoplus_{\ideal{p} \in S}
								H^{1}(\Hat{K}_{\ideal{p}}, \Lambda)
					}{
							\bigoplus_{x \in T}
								H^{1}(\Hat{R}_{x, S}, \Lambda)
						\oplus
							\bigoplus_{\eta \in Y_{1}}
								H^{1}(\Hat{R}_{\eta, T}, \Lambda)
					}
				@>>>
					\frac{
							\bigoplus_{
								\substack{
									\scriptscriptstyle
									x \in T \\
									\scriptscriptstyle
									\eta_{x} \in Y_{1}^{x}
								}
							}
								H^{1}(\Hat{K}_{\eta_{x}}, \Lambda)
						\oplus
							\bigoplus_{\ideal{p} \in S}
								H^{1}(\Hat{K}_{\ideal{p}}, \Lambda)
						\oplus
							\bigoplus_{\ideal{p} \in S' \setminus S}
								H^{2}_{c}(\Hat{A}_{\ideal{p}}, \Lambda).
					}{
							\bigoplus_{x \in T}
								H^{1}(\Hat{R}_{x, S'}, \Lambda)
						\oplus
							\bigoplus_{\eta \in Y_{1}}
								H^{1}(\Hat{R}_{\eta, T}, \Lambda)
					}
			\end{CD}
		\]
	of injective maps, with all the terms mapping injectively to
	$H^{2}_{c}(R_{S}, \Lambda)$.
	For a ring $Q$, denote $Q^{\Bar{\times}} = Q^{\times} / Q^{\times p}$.
	Let $R_{T}$ be the direct limit of $R_{S'}$ over increasing $S'$.
	Taking the direct limit of the above diagram in increasing $S'$,
	we obtain a commutative diagram
		\[
			\begin{CD}
					\frac{
						\bigoplus_{\ideal{p} \in S}
							\Hat{K}_{\ideal{p}}^{\Bar{\times}}
					}{
						H^{1}(R_{S}, \Lambda)
					}
				@>>>
					\frac{
							\bigoplus_{\ideal{p} \in S}
								\Hat{K}_{\ideal{p}}^{\Bar{\times}}
						\oplus
							\bigoplus_{\ideal{p} \in P_{T} \setminus S}
								\Lambda,
					}{
						H^{1}(R_{T}, \Lambda)
					}
				\\ @VVV @VVV \\
					\frac{
							\bigoplus_{
								\substack{
									\scriptscriptstyle
									x \in T \\
									\scriptscriptstyle
									\eta_{x} \in Y_{1}^{x}
								}
							}
								\Hat{K}_{\eta_{x}}^{\Bar{\times}}
						\oplus
							\bigoplus_{\ideal{p} \in S}
								\Hat{K}_{\ideal{p}}^{\Bar{\times}}
					}{
							\bigoplus_{x \in T}
								\Hat{R}_{x, S}^{\Bar{\times}}
						\oplus
							\bigoplus_{\eta \in Y_{1}}
								H^{1}(\Hat{R}_{\eta, T}, \Lambda)
					}
				@>>>
					\frac{
							\bigoplus_{
								\substack{
									\scriptscriptstyle
									x \in T \\
									\scriptscriptstyle
									\eta_{x} \in Y_{1}^{x}
								}
							}
								\Hat{K}_{\eta_{x}}^{\Bar{\times}}
						\oplus
							\bigoplus_{\ideal{p} \in S}
								\Hat{K}_{\ideal{p}}^{\Bar{\times}}
						\oplus
							\bigoplus_{\ideal{p} \in P_{T} \setminus S}
								\Lambda.
					}{
							\bigoplus_{x \in T}
								\Hat{K}_{x}^{\Bar{\times}}
						\oplus
							\bigoplus_{\eta \in Y_{1}}
								H^{1}(\Hat{R}_{\eta, T}, \Lambda)
					}
			\end{CD}
		\]
	of injective maps, with all the terms mapping injectively to
	$H^{2}_{c}(R_{S}, \Lambda)$.
	For any $x \in T$, the map
		$
				\Hat{K}_{x}^{\Bar{\times}}
			\to
				\bigoplus_{\eta_{x} \in Y_{1}^{x}}
					\Hat{K}_{\eta_{x}}^{\Bar{\times}}
		$
	is surjective by the approximation lemma.
	Therefore the right vertical map is an isomorphism.
	Hence for any $x \in T$ and $\eta_{x} \in Y_{1}^{x}$,
	the image of the map
	$\Hat{K}_{\eta_{x}}^{\Bar{\times}} \to H^{2}_{c}(R_{S}, \Lambda)$
	is contained in the image of the map
		$
					\bigoplus_{\ideal{p} \in S}
						\Hat{K}_{\ideal{p}}^{\Bar{\times}}
				\oplus
					\bigoplus_{\ideal{p} \in P_{T} \setminus S}
						\Lambda
			\to
				H^{2}_{c}(R_{S}, \Lambda)
		$.
	Hence the image of the map
	$\Hat{K}_{\eta_{x}}^{\Bar{\times}} \to H^{2}(X, \mathfrak{T}(1))$
	is contained in the image of the map
		$
				\bigoplus_{\ideal{p} \in P_{T}}
					\Lambda
			\to
				H^{2}(X, \mathfrak{T}(1))
		$.
	The latter map factors through $H^{1}(X, \Gm) \tensor \Lambda$
	via the divisor map.
	This proves the proposition.
\end{proof}


\section{Picard-Brauer duality}
\label{0118}

In this section, we prove Theorem \ref{0083}.
We deduce it basically from Theorems \ref{0077} and \ref{0088}.
We also give a duality result for $H^{3}(X, \Gm)$
(Proposition \ref{0110})
and for $\pi_{0}$ of $\Pic(X)$
(Proposition \ref{0111}).

\begin{Prop} \label{0072}
	$\pi_{\alg{X}, \ast} \Gm = \alg{A}^{\times}$ is
	$\Gm$ times a connected pro-algebraic group in $\mathcal{W}_{F}$.
	More specifically,
	the natural reduction morphism $\alg{A}^{\times} \to \Gm$
	and the Teichm\"uller section gives a direct summand $\Gm$ of $\alg{A}^{\times}$.
	The quotient $\alg{A}^{\times} / \Gm$ is in $\mathcal{W}_{F}$.
\end{Prop}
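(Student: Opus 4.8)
The plan is to reduce the whole statement to the $\ideal{m}$-adic filtration on $\alg{A}$, whose graded pieces are vector groups. First I would recall that $\pi_{\alg{X}, \ast} \Gm$ is the sheaf $\alg{A}^{\times} \colon F' \mapsto \alg{A}(F')^{\times}$. Indeed, for $F' \in F^{\perar}$ the ring $\alg{A}(F')$ is a finite product of two-dimensional normal complete local rings (part of the construction in \cite{Suz24}), so, the removed closed point having codimension two in $\Spec \alg{A}(F')$, one has $H^{0}(\alg{X}(F'), \Gm) = \alg{A}(F')^{\times}$; and $F' \mapsto \alg{A}(F')^{\times}$ is already an \'etale sheaf by flat descent for units, so no sheafification is needed. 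In particular $\pi_{\alg{X},\ast}\Gm$ is the claimed object, and $R^{q}\pi_{\alg{X},\ast}\Gm$ for $q\ge 1$ plays no role here.

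Next I would produce the splitting. The ring $\alg{A}(F')$ is a complete local $W(F')$-algebra with residue field $F'$, and the composite $W(F') \to \alg{A}(F') \onto F'$ is the canonical surjection. Hence the Teichm\"uller map $[\,\cdot\,] \colon F' \to W(F')$, which is multiplicative and satisfies $[x] \equiv x \pmod{p}$, followed by $W(F') \to \alg{A}(F')$, gives a multiplicative section of the reduction morphism $\alg{A}(F')^{\times} \onto F'^{\times}$, functorially in $F'$. This yields a section of $\alg{A}^{\times} \onto \Gm$ in $\Ab(F^{\perar}_{\et})$ and therefore a direct product decomposition $\alg{A}^{\times} \cong \Gm \times \alg{U}^{1}$, where $\alg{U}^{1} = \ker(\alg{A}^{\times} \onto \Gm)$ is the subsheaf $F' \mapsto 1 + \ideal{m} \alg{A}(F')$; in particular $\alg{A}^{\times} / \Gm \cong \alg{U}^{1}$.

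It then remains to identify $\alg{U}^{1}$ as a connected pro-algebraic group in $\mathcal{W}_{F}$. I would filter it by the subsheaves $\alg{U}^{n} \colon F' \mapsto 1 + \ideal{m}^{n} \alg{A}(F')$. Completeness of $\alg{A}(F')$ gives $\alg{U}^{1} = \invlim_{n} \alg{U}^{1} / \alg{U}^{n}$, and $1 + x \mapsto x$ identifies $\alg{U}^{n} / \alg{U}^{n + 1}$ with the additive sheaf $\ideal{m}^{n} \alg{A} / \ideal{m}^{n + 1} \alg{A}$, which, since $\alg{A}(F') / \ideal{m} \alg{A}(F') = W(F')/p = F'$, is $F' \mapsto (\ideal{m}^{n} / \ideal{m}^{n + 1}) \tensor_{F} F'$, i.e.\ the perfection of a finite-dimensional $F$-vector group, a connected (indeed split) object of $\Alg_{u} / F$. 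Thus each $\alg{U}^{1} / \alg{U}^{n}$ is the perfection of a connected unipotent algebraic group over $F$ (an iterated extension of vector groups), and each transition morphism $\alg{U}^{1} / \alg{U}^{n + 1} \onto \alg{U}^{1} / \alg{U}^{n}$ is surjective with connected kernel $\alg{U}^{n} / \alg{U}^{n + 1}$. Hence $\alg{U}^{1}$ is of the form ``$G'$'' in the definition of $\mathcal{W}_{F}$ (with $\pi_{0} = 0$ and $G^{0} / G' = 0$), so it is a connected object of $\Pro \Alg_{u} / F$ lying in $\mathcal{W}_{F}$, completing the proof.

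The main obstacle is really only bookkeeping: the one point requiring care is the first step, namely the description of the base-change ring $\alg{A}(F')$ for $F' \in F^{\perar}$ as (a finite product of) two-dimensional normal complete local rings, so that the global units over the punctured spectrum are exactly $\alg{A}(F')^{\times}$, together with the interchange of the $\ideal{m}$-adic inverse limit with the unit-group functor; both are standard in this setting and implicit in \cite{Suz24}. Everything after the first step is elementary structure theory of the $1$-units.
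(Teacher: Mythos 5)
Your proof is correct and follows essentially the same route as the paper: both filter $\alg{A}^{\times}$ by the $\ideal{m}$-adic filtration and show that the graded pieces are (perfections of) vector groups, while the paper packages this by citing the perfect Greenberg transform of $\Gm$ over $A/\ideal{m}^{n}$ \cite[Proposition-Definition 6.2, Proposition 11.1]{BGA18Green} in place of your direct $1+x\mapsto x$ computation. The only point you gloss over that the Greenberg transform citation cleanly supplies is representability: to conclude that each $\alg{U}^{1}/\alg{U}^{n}$ lies in $\Alg_{u}/F$ from the fact that its graded pieces do, you are implicitly using that $\Alg_{u}/F$ is closed under extensions inside $\Ab(F^{\perar}_{\et})$ (true, and in the same circle of ideas as \cite[Proposition 8.7]{BS20}), whereas the paper gets the group scheme structure of $\alg{A}_{n}^{\times}$ for free as a Greenberg realization of $\Gm$.
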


\begin{proof}
	For $F' \in F^{\perar}$ and $n \ge 1$,
	let $\alg{A}_{n}(F') = \alg{A}(F') \tensor_{A} A / \ideal{m}^{n}$.
	Then $\alg{A}^{\times} = \invlim_{n} \alg{A}_{n}^{\times}$.
	The sheaf $\alg{A}_{n}^{\times}$ is represented by
	the perfection of the Greenberg transform (or realization) of $\Gm$ over $A / \ideal{m}^{n}$
	(\cite[Proposition-Definition 6.2]{BGA18Green}).
	The same proof as \cite[Proposition 11.1]{BGA18Green}
	(see also \cite[Proposition 5.6]{BS20}) shows that
	the kernel of the reduction morphism $\alg{A}_{n + 1}^{\times} \onto \alg{A}_{n}^{\times}$
	is the perfection of a vector group over $F$.
	Hence the kernel of $\alg{A}^{\times} \onto \Gm$, or $\alg{A}^{\times} / \Gm$,
	is in $\mathcal{W}_{F}$.
\end{proof}

Given a resolution of singularities $\mathfrak{X} \to \Spec A$
such that the reduced part $Y$ of $\mathfrak{X} \times_{A} F$ is supported on a strict normal crossing divisor,
we have the intersection pairing
	\begin{equation} \label{0104}
					\bigoplus_{\eta \in Y_{1}}
						\Z Y_{\eta}
				\times
					\bigoplus_{\eta \in Y_{1}}
						\Z Y_{\eta}
			\to
				\Z,
		\quad
				(Y_{\eta}, Y_{\eta'})
			\mapsto
				Y_{\eta} \cdot Y_{\eta'},
	\end{equation}
where $Y_{1}$ is the set of generic points of $Y$
and $Y_{\eta}$ the closure of $\eta$.
See \cite[Section 13]{Lip69} for the details.
This pairing is negative-definite by \cite[Lemma (14.1)]{Lip69}.
Let
	\[
			\delta_{Y}
		=
			\frac{
				\Hom(
					\bigoplus_{\eta \in Y_{1}}
						\Z Y_{\eta},
					\Z
				)
			}{
				\bigoplus_{\eta \in Y_{1}}
					\Z Y_{\eta}
			}
	\]
be its discriminant group.
Applying this construction to the resolution
$\mathfrak{X} \times_{A} \alg{A}(\closure{F}) \to \Spec \alg{A}(\closure{F})$,
we obtain a finite Galois module $\delta_{Y \times_{F} \closure{F}}$ over $F$.
Denote the resulting finite \'etale group scheme over $F$ by $\varalg{\delta}_{Y}$.

\begin{Prop} \label{0044}
	We have $R^{1} \pi_{\alg{X}, \ast} \Gm \in \Alg / F$.
	For a resolution of singularities $\mathfrak{X} \to \Spec A$
	such that the reduced part $Y$ of $\mathfrak{X} \times_{A} F$ is supported on a strict normal crossing divisor,
	we have $\pi_{0} R^{1} \pi_{\alg{X}, \ast} \Gm \cong \varalg{\delta}_{Y}$.
\end{Prop}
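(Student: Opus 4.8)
The plan is to identify $R^{1}\pi_{\alg{X},\ast}\Gm$ with the sheafified relative Picard functor of $X$ over $F$ and to analyse it through a resolution of singularities, invoking Lipman's representability results for the heavy lifting. By construction $R^{1}\pi_{\alg{X},\ast}\Gm$ is the \'etale sheafification on $\Spec F^{\perar}_{\et}$ of the presheaf $F'\mapsto H^{1}(\alg{X}(F'),\Gm)=\Pic\bigl(X\times_{A}\alg{A}(F')\bigr)$. Fix a projective resolution $\pi_{\mathfrak{X}/A}\colon\mathfrak{X}\to\Spec A$ as in Section~\ref{0086}, with $Y$ the reduced part of the closed fibre, supported on a strict normal crossing divisor, and let $Y_{1}$ be its set of generic points. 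Since $\mathfrak{X}$ is regular and $X=\mathfrak{X}\setminus\lvert Y\rvert$, excision of divisor classes gives an exact sequence
\[
		0
	\to
		\bigoplus_{\eta\in Y_{1}}\Z\cdot[\mathcal{O}_{\mathfrak{X}}(Y_{\eta})]
	\to
		\Pic(\mathfrak{X})
	\to
		\Pic(X)
	\to
		0,
\]
where left-exactness holds because a relation $\sum a_{\eta}Y_{\eta}=\operatorname{div}(f)$ restricts to the trivial divisor on $X$, forcing $f\in\Gamma(X,\mathcal{O}_{X})^{\times}=A^{\times}$ (here $\Gamma(X,\mathcal{O}_{X})=A$ as $A$ is normal of dimension two), hence $\sum a_{\eta}Y_{\eta}=0$. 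The same sequence persists after base change along $A\to\alg{A}(F')$, since $\mathfrak{X}\times_{A}\alg{A}(F')$ is again a regular resolution of $\Spec\alg{A}(F')$ whose closed fibre has reduced part $Y\times_{F}F'$.

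Next I would make $\Pic(\mathfrak{X})$ representable. As $A$ is complete, Grothendieck existence identifies $\Pic(\mathfrak{X})$ with the Picard group of the formal completion $\widehat{\mathfrak{X}}$ of $\mathfrak{X}$ along $Y$, compatibly with base change along $\alg{A}(F')$. By Lipman's theory of Picard schemes of formal schemes \cite{Lip69,Lip76}, the resulting sheafified functor $F'\mapsto\Pic\bigl(\widehat{\mathfrak{X}}\text{ over }\alg{A}(F')\bigr)$ is representable by a commutative group scheme $\mathscr{P}$ over $F$, locally of finite type, obtained from the Picard scheme $\Pic_{Y/F}$ of the proper $F$-curve $Y$ by successively forming extensions by the vector groups $H^{1}(Y,\gr^{n})$ attached to the graded pieces of the adic filtration on $\mathcal{O}_{\widehat{\mathfrak{X}}}$; crucially, this inverse system is essentially constant, which is exactly where the negative-definiteness of the intersection form \eqref{0104} (\cite[Lemma (14.1)]{Lip69}) enters. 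Passing to perfections and combining with the excision sequence above, $R^{1}\pi_{\alg{X},\ast}\Gm$ is identified with the perfection of the quotient group scheme $\mathscr{P}/L$, where $L=\bigoplus_{\eta\in Y_{1}}\Z Y_{\eta}$.

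Finally I would compute $\pi_{0}$. Over $\closure{F}$ the component group $\pi_{0}(\mathscr{P})$ is identified, Galois-equivariantly, with $\Hom\bigl(\bigoplus_{\eta}\Z Y_{\eta},\Z\bigr)$ by sending a line bundle to the tuple of its degrees on the components $Y_{\eta}$, and under this identification the composite $L\to\mathscr{P}\to\pi_{0}(\mathscr{P})$ is the map $Y_{\eta'}\mapsto\bigl(Y_{\eta}\mapsto Y_{\eta'}\cdot Y_{\eta}\bigr)$ given by the intersection matrix. Since this matrix is non-degenerate, $L$ meets $\mathscr{P}^{0}$ trivially and maps onto a finite-index subgroup of $\pi_{0}(\mathscr{P})$; hence $\mathscr{P}/L$ has identity component $\mathscr{P}^{0}$ and component group $\pi_{0}(\mathscr{P})/L\cong\delta_{Y}$, with its natural Galois action, i.e.\ $\varalg{\delta}_{Y}$. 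As $\varalg{\delta}_{Y}$ is finite, $\mathscr{P}/L$ is of finite type, so $R^{1}\pi_{\alg{X},\ast}\Gm\in\Alg/F$ and $\pi_{0}\bigl(R^{1}\pi_{\alg{X},\ast}\Gm\bigr)\cong\varalg{\delta}_{Y}$; in particular, since the left-hand side is manifestly independent of $\mathfrak{X}$, so is $\varalg{\delta}_{Y}$. The main obstacle I anticipate is the finiteness/essential constancy of the formal Picard system $\{\Pic_{Y_{n}/F}\}$ — equivalently the eventual vanishing of the relevant $H^{1}$'s — which rests on the negative-definiteness of the intersection form; I would draw this from Lipman's work rather than reprove it, and the remaining steps are bookkeeping with the $\alg{A}(F')$-family, excision, and perfections.
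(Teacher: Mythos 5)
Your proposal is correct and takes essentially the same route as the paper's proof: both rest on Lipman's representability of the formal Picard scheme, the negative-definiteness of the intersection form, the surjectivity with vector-group kernels of the transition maps $\Pic^{0}(\mathfrak{X}_{n+1})\to\Pic^{0}(\mathfrak{X}_{n})$, and functoriality in the residue field $F'$. The only difference is bookkeeping: the paper directly filters the exact sequence $0\to\Pic^{0}(\mathfrak{X})\to\Pic(X)\to\delta_{Y}\to 0$ (sidestepping any quotient of $\mathscr{P}$ by the non-finite discrete subgroup $L$), while you package the excision sequence as $\mathscr{P}/L$ and then compute $\pi_{0}$; both presentations amount to the same argument.
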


\begin{proof}
	Let $\mathfrak{X} \to \Spec A$ be as in the statement.
	First, assume that $F = \closure{F}$.
	We recall the constructions made in
	\cite[Section 14]{Lip69} and \cite[Proof of Lemma (7.2)]{Sai86}.
	Let $\Pic(\mathfrak{X}) \to \Hom(\bigoplus_{\eta \in Y_{1}} \Z Y_{\eta}, \Z)$
	be the map given by the intersection pairing.
	Let $\Pic^{0}(\mathfrak{X})$ be its kernel.
	Then the resulting sequence
		\[
				0
			\to
				\Pic^{0}(\mathfrak{X})
			\to
				\Pic(X)
			\to
				\delta_{Y}
			\to
				0
		\]
	is exact.
	For $n \ge 1$, let $\mathfrak{X}_{n} = \mathfrak{X} \times_{A} A / \ideal{m}^{n}$.
	Let $\Pic^{0}(\mathfrak{X}_{n})$ be the kernel of a similar map
	$\Pic(\mathfrak{X}_{n}) \to \Hom(\bigoplus_{\eta \in Y_{1}} \Z Y_{\eta}, \Z)$.
	The group $\Pic^{0}(Y) = \Pic^{0}(\mathfrak{X}_{1})$ is
	the group of $F$-valued points of the semi-abelian variety $\Pic^{0}_{Y / F}$.
	The natural map $\Pic^{0}(\mathfrak{X}_{n + 1}) \to \Pic^{0}(\mathfrak{X}_{n})$ is surjective
	and its kernel has a natural structure as a finite-dimensional $F$-vector space.
	There exists a positive integer $n_{0}$ such that
	the natural map $\Pic^{0}(\mathfrak{X}) \to \Pic^{0}(\mathfrak{X}_{n})$
	is an isomorphism for all $n \ge n_{0}$.
	
	Next, let $F$ be general.
	The above constructions are functorial in residue field extensions.
	That is, for any algebraically closed field $F' \in F^{\perar}$,
	we can apply the above constructions for
	$\varalg{\mathfrak{X}}(F') = \mathfrak{X} \times_{A} \alg{A}(F') \to \Spec \alg{A}(F')$.
	This gives an exact sequence
		\[
				0
			\to
				\Pic^{0}(\varalg{\mathfrak{X}}(F'))
			\to
				\Pic(\alg{X}(F'))
			\to
				\delta_{\alg{Y}(F')}
			\to
				0
		\]
	and quotients $\Pic^{0}(\varalg{\mathfrak{X}}_{n}(F'))$ of $\Pic^{0}(\varalg{\mathfrak{X}}(F'))$
	functorial in $F'$
	(where $\alg{Y}(F') = Y' \times_{F} F'$
	and $\varalg{\mathfrak{X}}_{n}(F') = \mathfrak{X}_{n} \times_{A} \alg{A}(F')$).
	
	The sheaf $R^{1} \pi_{\alg{X}, \ast} \Gm$ sends a field $F' \in F^{\perar}$
	to the $\Gal(\closure{F'} / F')$-invariant part of $\Pic(\alg{X}(\closure{F'}))$.
	The above shows that this sheaf has a finite filtration whose graded pieces are
	sheaves that send a field $F' \in F^{\perar}$ to the $\Gal(\closure{F'} / F')$-invariant parts of
		\begin{enumerate}
			\item
				$\delta_{\alg{Y}(\closure{F'})}$,
			\item
				$\Pic^{0}(\alg{Y}(\closure{F'}))$ or
			\item
					$
						\Ker \bigl(
								\Pic^{0}(\varalg{\mathfrak{X}}_{n + 1}(\closure{F'}))
							\onto
							\Pic^{0}(\varalg{\mathfrak{X}}_{n}(\closure{F'}))
						\bigr)
					$
				for $n < n_{0}$.
		\end{enumerate}
	In this list, the first sheaf is a finite \'etale group (which is $\varalg{\delta}_{Y}$),
	the second a semi-abelian variety
	and the third a vector group.
	Being an extension of such groups,
	the sheaf $R^{1} \pi_{\alg{X}, \ast} \Gm$ is in $\Alg / F$
	with component group $\varalg{\delta}_{Y}$.
\end{proof}

\begin{Prop} \label{0045}
	Let $n \ge 1$.
	Then $R^{0} \pi_{\alg{X}, \ast} \mathfrak{T}_{n}(1)$ is a finite \'etale group and
	$R^{1} \pi_{\alg{X}, \ast} \mathfrak{T}_{n}(1)$ is pro-algebraic in $\mathcal{W}_{F}$.
\end{Prop}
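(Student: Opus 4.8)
The plan is to compare $\mathfrak{T}_n(1)$ with the multiplicative group $\Gm$ by the weight-one Kummer triangle and then read off everything from the already-established structure of $\pi_{\alg{X},\ast}\Gm$ and $R^1\pi_{\alg{X},\ast}\Gm$.

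First I would invoke the standard description of the $p$-adic \'etale Tate twist in weight one, namely $\mathfrak{T}_n(1) \cong \Gm \tensor^{L} \Lambda_n[-1]$ (\cite{Sat07}, \cite{Sat13}), which yields a distinguished triangle $\Gm[-1] \overset{p^n}{\to} \Gm[-1] \to \mathfrak{T}_n(1) \to \Gm$ in $D(\alg{X}_{\et})$. Applying $R\pi_{\alg{X},\ast}$ and using that $R^q\pi_{\alg{X},\ast}\Gm = 0$ for $q < 0$, the long exact sequence gives an isomorphism $R^0\pi_{\alg{X},\ast}\mathfrak{T}_n(1) \cong (R^0\pi_{\alg{X},\ast}\Gm)[p^n]$ together with a short exact sequence
\[
		0
	\to
		(R^0\pi_{\alg{X},\ast}\Gm)/p^n
	\to
		R^1\pi_{\alg{X},\ast}\mathfrak{T}_n(1)
	\to
		(R^1\pi_{\alg{X},\ast}\Gm)[p^n]
	\to
		0
\]
in $\Ab(F^{\perar}_{\et})$. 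Now I would substitute $R^0\pi_{\alg{X},\ast}\Gm = \alg{A}^\times = \Gm \times (\alg{A}^\times/\Gm)$ with $\alg{A}^\times/\Gm$ a connected pro-algebraic group in $\mathcal{W}_F$ (Proposition \ref{0072}) and $R^1\pi_{\alg{X},\ast}\Gm \in \Alg/F$ (Proposition \ref{0044}). Since $p^n$ is an automorphism of the perfection $\Gm$ (its kernel $\mu_{p^n}$ being infinitesimal, hence killed on passing to perfections), the $\Gm$-factor contributes nothing, so $(R^0\pi_{\alg{X},\ast}\Gm)[p^n] = (\alg{A}^\times/\Gm)[p^n]$ and $(R^0\pi_{\alg{X},\ast}\Gm)/p^n = (\alg{A}^\times/\Gm)/p^n$.

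For the first assertion, the sheaf $(\alg{A}^\times/\Gm)[p^n]$ sends $F' \in F^{\perar}$ to $\mu_{p^n}(\alg{A}(F'))$, a group of order dividing $p^n$; since $\alg{A}(F')$ is obtained from $A$ by an unramified (pro-\'etale) base change while adjoining a $p$-power root of unity is ramified, this group does not depend on $F'$, so $R^0\pi_{\alg{X},\ast}\mathfrak{T}_n(1)$ is a constant, in particular finite \'etale, group. For the second assertion I would treat the outer terms of the short exact sequence separately. The group $(\alg{A}^\times/\Gm)/p^n$ is pro-algebraic, being a quotient of the pro-algebraic group $\alg{A}^\times/\Gm$, and hence lies in $\mathcal{W}_F$ by Proposition \ref{0047}. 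The group $(R^1\pi_{\alg{X},\ast}\Gm)[p^n]$ is a closed subgroup scheme of the algebraic group $R^1\pi_{\alg{X},\ast}\Gm$, hence itself algebraic; being killed by $p^n$ it has trivial torus and abelian-variety parts, so its identity component is connected unipotent and its component group is $p$-primary finite \'etale, which places it in $\Alg_u/F \subset \mathcal{W}_F$. Since $\mathcal{W}_F$ is closed under extensions (Proposition \ref{0041}) and the class of pro-algebraic groups is closed under extensions, it follows that $R^1\pi_{\alg{X},\ast}\mathfrak{T}_n(1)$ is pro-algebraic in $\mathcal{W}_F$.

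The main obstacle is the opening step: one must make sure that the complex $\mathfrak{T}_n(1) \in D(\alg{X}_{\et})$ constructed in Section \ref{0086} from the relative-site nearby-cycle data really agrees with Sato's $p$-adic \'etale Tate twist on the normal (not regular) scheme $X$, so that the weight-one Kummer triangle genuinely applies in this setting; this is the identification fixed at the start of Section \ref{0070}, but its compatibility with the relative sites $\alg{X}_{\et}$ is the point that needs care. A lesser but real subtlety is the bookkeeping of multiplication by $p^n$ in the category of perfect group schemes --- it is invertible on $\Gm$ but far from it on unipotent groups --- which is precisely what makes the finiteness of $R^0$ and the pro-algebraicity of $R^1$ come out as stated.
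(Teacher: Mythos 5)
Your proposal is correct and follows essentially the same route as the paper: both reduce $\mathfrak{T}_n(1)$ to $\Gm$ via the weight-one Kummer triangle (the paper writes the resulting exact sequences directly rather than naming the triangle), both invoke Proposition \ref{0072} for the structure of $\alg{A}^\times$, Proposition \ref{0044} for $(R^1\pi_{\alg{X},\ast}\Gm)[p^n]\in\Alg_u/F$, and Proposition \ref{0047} for closure of pro-algebraic $\mathcal{W}_F$-objects under quotients. The closing worry about the identification of $\mathfrak{T}_n(1)$ with Sato's Tate twist on $X$ is not actually an obstacle, since $X = \Spec A \setminus \{\ideal{m}\}$ is regular for $A$ normal of dimension two (by Serre's criterion), so the standard Kummer triangle applies without further ado.
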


\begin{proof}
	The sheaf $R^{0} \pi_{\alg{X}, \ast} \mathfrak{T}_{n}(1)$ is the finite \'etale group of
	$p^{n}$-th roots of unity in $\alg{A}(\closure{F})$.
	We have an exact sequence
		\[
				0
			\to
				\alg{A}^{\times} / \alg{A}^{\times p^{n}}
			\to
				R^{1} \pi_{\alg{X}, \ast} \mathfrak{T}_{n}(1)
			\to
				(R^{1} \pi_{\alg{X}, \ast} \Gm)[p^{n}]
			\to
				0
		\]
	in $\Ab(F^{\perar}_{\et})$.
	The fourth term $(R^{1} \pi_{\alg{X}, \ast} \Gm)[p^{n}]$ is in $\Alg_{u} / F$
	by Proposition \ref{0044}.
	We have an exact sequence
		\[
				0
			\to
				R^{0} \pi_{\alg{X}, \ast} \mathfrak{T}_{n}(1)
			\to
				\alg{A}^{\times} / \Gm
			\stackrel{p}{\to}
				\alg{A}^{\times} / \Gm
			\to
				\alg{A}^{\times} / \alg{A}^{\times p^{n}}
			\to
				0
		\]
	in $\Ab(F^{\perar}_{\et})$.
	The sheaf $\alg{A}^{\times} / \Gm$ is pro-algebraic in $\mathcal{W}_{F}$
	by Proposition \ref{0072}.
	Therefore $\alg{A}^{\times} / \alg{A}^{\times p^{n}}$ is pro-algebraic in $\mathcal{W}_{F}$.
\end{proof}

\begin{Prop} \label{0073} \mbox{}
	\begin{enumerate}
		\item \label{0100}
			The object $R \pi_{\alg{X}, \ast} \Gm$ is $h$-acyclic.
		\item \label{0101}
			Let $q \ge 2$.
			Then the sheaf $R^{q} \pi_{\alg{X}, \ast} \Gm$ is torsion.
			We have
				\[
						R^{q} \pi_{\alg{X}, \ast} \mathfrak{T}_{\infty}(1)
					\isomto
						(R^{q} \pi_{\alg{X}, \ast} \Gm)[p^{\infty}],
				\]
			where $\mathfrak{T}_{\infty}(1) = \dirlim_{n} \mathfrak{T}_{n}(1)$.
			For any prime $l \ne p$, the sheaf
			$(R^{q} \pi_{\alg{X}, \ast} \Gm)[l^{\infty}]$ is the \'etale group over $F$
			with group of $\closure{F}$-points given by
			$H^{q}(\alg{X}(\closure{F}), \Q_{l} / \Z_{l}(1))$.
	\end{enumerate}
\end{Prop}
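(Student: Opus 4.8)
The plan is to prove (2) first; part (1) then follows formally. Write $\mathcal{F}=R\pi_{\alg{X},\ast}\Gm$, and note that $\pi_{\alg{X}}$ is quasi-compact quasi-separated, so each $R^{q}\pi_{\alg{X},\ast}$ commutes with filtered colimits of sheaves, as used throughout \cite{Suz24}. The basic geometric input is that for every $F'\in F^{\perar}$ the ring $\alg{A}(F')=W(F')\ctensor_{W(F)}A$ is module-finite over $W(F')[[T]]$ --- apply $W(F')\ctensor_{W(F)}(-)$ to a Cohen presentation of $A$ as a finite $W(F)[[T]]$-algebra --- hence $2$-dimensional Noetherian, so that $\alg{X}(F')$ and all of its \'etale covers are Noetherian of dimension $\le 1$. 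By the classical torsion-ness of Brauer-style \'etale cohomology on schemes of dimension $\le 1$, the groups $H^{q}(-,\Gm)$ on these schemes are torsion for $q\ge 2$, and sheafifying gives that $R^{q}\pi_{\alg{X},\ast}\Gm$ is torsion for $q\ge 2$; decompose it as $(R^{q}\pi_{\alg{X},\ast}\Gm)[p^{\infty}]\oplus\bigoplus_{l\ne p}(R^{q}\pi_{\alg{X},\ast}\Gm)[l^{\infty}]$.

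For the $p$-primary part I would use the Kummer-type description of the $p$-adic \'etale Tate twist: on $\alg{X}_{\et}$ there is a distinguished triangle $\mathfrak{T}_{n}(1)\to\Gm\xrightarrow{\;\times p^{n}\;}\Gm\to\mathfrak{T}_{n}(1)[1]$, equivalently $\mathfrak{T}_{n}(1)\cong(\Gm\otimes^{L}_{\Z}\Lambda_{n})[1]$ (cf.\ \cite{Sat07}), already implicit in the proof of Proposition \ref{0045}, and compatible along $n$ with the inclusions $\mu_{p^{n}}\into\mu_{p^{n+1}}$, i.e.\ via $(\id,\times p)$ on the two-term complexes. Passing to the colimit over $n$ and using $\dirlim_{n}(\Gm\xrightarrow{\;\times p\;}\Gm\xrightarrow{\;\times p\;}\cdots)\cong\Gm\otimes_{\Z}\Z[1/p]$ yields a distinguished triangle $\mathfrak{T}_{\infty}(1)\to\Gm\to\Gm\otimes_{\Z}\Z[1/p]\to\mathfrak{T}_{\infty}(1)[1]$. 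Applying $R\pi_{\alg{X},\ast}$, and using that cohomology commutes with this colimit, $R^{q}\pi_{\alg{X},\ast}(\Gm\otimes_{\Z}\Z[1/p])\cong(R^{q}\pi_{\alg{X},\ast}\Gm)\otimes_{\Z}\Z[1/p]$. For $q\ge 2$ this, the source being torsion, is the prime-to-$p$ part of $R^{q}\pi_{\alg{X},\ast}\Gm$, and the structural map from $R^{q}\pi_{\alg{X},\ast}\Gm$ is the projection with kernel $(R^{q}\pi_{\alg{X},\ast}\Gm)[p^{\infty}]$; for $q=1$ that map is surjective as a map of sheaves, because its cokernel $R^{1}\pi_{\alg{X},\ast}\Gm\otimes_{\Z}(\Q_{p}/\Z_{p})$ has vanishing stalks --- on the algebraic group of Proposition \ref{0044}, the semi-abelian part is $p$-divisible, the vector part is killed by $p$, and the finite component group $\varalg{\delta}_{Y}$ is eventually annihilated, so each tensors to zero. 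Hence every connecting map $R^{q-1}\pi_{\alg{X},\ast}(\Gm\otimes_{\Z}\Z[1/p])\to R^{q}\pi_{\alg{X},\ast}\mathfrak{T}_{\infty}(1)$ with $q\ge 2$ vanishes, and the long exact sequence gives $R^{q}\pi_{\alg{X},\ast}\mathfrak{T}_{\infty}(1)\isomto(R^{q}\pi_{\alg{X},\ast}\Gm)[p^{\infty}]$.

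For $l\ne p$ the argument is the same but easier: $l$ is invertible on $\alg{X}$, so the ordinary Kummer sequence $1\to\mu_{l^{n}}\to\Gm\xrightarrow{\;\times l^{n}\;}\Gm\to 1$ is exact on $\alg{X}_{\et}$ and produces, in the colimit, a triangle $\Q_{l}/\Z_{l}(1)\to\Gm\to\Gm\otimes_{\Z}\Z[1/l]\to\Q_{l}/\Z_{l}(1)[1]$; the identical book-keeping (now $R^{1}\pi_{\alg{X},\ast}\Gm\otimes_{\Z}(\Q_{l}/\Z_{l})=0$ since everything in sight is $l$-divisible) gives $(R^{q}\pi_{\alg{X},\ast}\Gm)[l^{\infty}]\isomto R^{q}\pi_{\alg{X},\ast}(\Q_{l}/\Z_{l}(1))=\dirlim_{n}R^{q}\pi_{\alg{X},\ast}\Lambda_{n}(1)$ for $q\ge 2$. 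Since each $R^{q}\pi_{\alg{X},\ast}\Lambda_{n}(1)$ is a finite \'etale $F$-group scheme --- by finiteness of \'etale cohomology of the punctured spectrum of the excellent strictly henselian ring $\alg{A}(\closure{F})$ (Gabber; cf.\ \cite{ILO14} and the $l\ne p$ analogue of Theorem \ref{0077}) --- the colimit is the \'etale $F$-group with $\closure{F}$-points $H^{q}(\alg{X}(\closure{F}),\Q_{l}/\Z_{l}(1))$, as asserted. Combining the two parts, $R^{q}\pi_{\alg{X},\ast}\Gm=0$ for $q\ge 4$ (Theorem \ref{0077} for the $p$-part and its $l\ne p$ analogue), so $\mathcal{F}$ is concentrated in degrees $0,1,2,3$. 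For (1): a bounded complex whose cohomology objects are $h$-acyclic is $h$-acyclic (the $h$-acyclic objects form a triangulated subcategory), so it suffices to treat each $R^{q}\pi_{\alg{X},\ast}\Gm$; in degree $0$ it is $\Gm$ times an object of $\mathcal{W}_{F}$ (Proposition \ref{0072}), in degree $1$ it lies in $\Alg/F$ (Proposition \ref{0044}), and for $q\ge 2$ it is the direct sum of the filtered colimits $\dirlim_{n}R^{q}\pi_{\alg{X},\ast}\mathfrak{T}_{n}(1)$ and $\dirlim_{n}R^{q}\pi_{\alg{X},\ast}\Lambda_{n}(1)$ ($l\ne p$), whose terms have cohomology objects in $\mathcal{W}_{F}$, respectively are finite \'etale. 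As $\Gm$, the objects of $\Alg/F$ and $\mathcal{W}_{F}$, and finite \'etale groups are $h$-acyclic (the $g$-acyclicity is \cite[Proposition 3.1.2]{Suz24} and its analogues, the $f$-acyclicity over the perfect pro-fppf site is in \cite{Suz22}), and $h$-acyclicity is stable under finite direct sums and filtered colimits, we conclude.

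The one genuinely non-formal ingredient is the Kummer triangle $\mathfrak{T}_{n}(1)\cong(\Gm\xrightarrow{\;\times p^{n}\;}\Gm)[-1]$ on the auxiliary site $\alg{X}_{\et}$ in exactly the colimit-compatible form used above; this is built into the construction of $\mathfrak{T}_{n}(1)$ and is essentially the content already exploited in the proof of Proposition \ref{0045}. Granting it, the rest is a bounded-complex and filtered-colimit chase, once one has the $1$-dimensionality of $\alg{X}(F')$ and the resulting torsion-ness of $R^{q}\pi_{\alg{X},\ast}\Gm$ for $q\ge 2$.
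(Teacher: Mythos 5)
Your overall strategy matches the paper's, and part (2) is essentially the same argument: torsion via the one‑dimensionality of $\alg{X}(F')$ (the paper simply cites \cite[Proposition 1.4]{Gro95Br}), the Kummer triangle $\mathfrak{T}_{n}(1)\cong(\Gm\xrightarrow{p^{n}}\Gm)[-1]$, the vanishing $(R^{1}\pi_{\alg{X},\ast}\Gm)\otimes\Q/\Z=0$ from Proposition \ref{0044}, and Gabber finiteness for the prime‑to‑$p$ Kummer sheaves via \cite[Introduction, Theorem 1]{ILO14}. That part is fine.

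Part (1), however, has a genuine gap at its last step. You assert that ``$h$-acyclicity is stable under finite direct sums and filtered colimits'' and deduce $h$-acyclicity of $R^{q}\pi_{\alg{X},\ast}\mathfrak{T}_{\infty}(1)$ (and of the $\bigoplus_{l\ne p}$ piece) from that of each term. But $Rh_{\ast}$ does not commute with filtered colimits on the unbounded derived category, so a filtered colimit of $h$-acyclic sheaves is not automatically $h$-acyclic; this commutation is precisely the nontrivial point the proposition has to establish, not a formal stability principle. The paper's proof is built around supplying it: it shows that $\{Lh^{\ast}R\pi_{\alg{X},\ast}\mathfrak{T}_{n}(1)\}_{n}$ is uniformly concentrated in degrees $\ge 0$ (Proposition \ref{0046} together with \cite[Proposition 7.6]{Suz21Imp}), observes that the $\tau_{\le 1}$ piece is uniformly concentrated in degrees $0,1$ by Proposition \ref{0045} so that $\{Lh^{\ast}\tau_{\ge 2}R\pi_{\alg{X},\ast}\mathfrak{T}_{n}(1)\}_{n}$ is bounded below, and only then invokes the commutation of $Rh_{\ast}$ with $\dirlim$ on the bounded-below derived category (\cite[Proposition (2.2.4)]{Suz20}) and of $Lh^{\ast}$ with $\dirlim$ (\cite[Lemma 3.7.2]{Suz22}, \cite[Corollary 14.4.6 (ii)]{KS06}). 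Similarly, $h$-acyclicity of $R^{q}\pi_{\alg{X},\ast}\Q_{l}/\Z_{l}(1)$ is not a consequence of a general colimit-stability claim but rests on the specific result \cite[Proposition 3.11]{Suz21Imp} about filtered colimits of finite \'etale groups. Until you supply these boundedness and commutation arguments, your part (1) does not close.
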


\begin{proof}
	The truncation $\tau_{\le 1} R \pi_{\alg{X}, \ast} \Gm$ is $h$-acyclic
	by Propositions \ref{0072} and \ref{0044} and \cite[Proposition 7.2]{Suz21Imp}.
	The sheaf $R^{q} \pi_{\alg{X}, \ast} \Gm$ is torsion for $q \ge 2$
	by \cite[Proposition 1.4]{Gro95Br}.
	Since $(R^{1} \pi_{\alg{X}, \ast} \Gm) \tensor \Q / \Z = 0$
	by Proposition \ref{0044},
	we have
		\[
					R^{q} \pi_{\alg{X}, \ast} \mathfrak{T}_{\infty}(1)
				\oplus
					\bigoplus_{l \ne p}
						R^{q} \pi_{\alg{X}, \ast} \Q_{l} / \Z_{l}(1)
				\isomto
					R^{q} \pi_{\alg{X}, \ast} \Gm
		\]
	for $q \ge 2$.
	For any field $F' \in F^{\perar}$, the map $A \to \alg{A}(F')$ is regular
	and hence ind-smooth by Popescu's theorem.
	With \cite[Introduction, Theorem 1]{ILO14},
	we know that $R^{q} \pi_{\alg{X}, \ast} \Z / l^{n} \Z(1)$ is finite \'etale
	for all $q$ and all primes $l \ne p$.
	This implies that $R^{q} \pi_{\alg{X}, \ast} \Q_{l} / \Z_{l}(1)$ is $h$-acyclic
	by \cite[Proposition 3.11]{Suz21Imp}.
	For any $n \ge 1$, the object $\tau_{\ge 2} R \pi_{\alg{X}, \ast} \mathfrak{T}_{n}(1)$ is
	in $\genby{\mathcal{W}_{F}}_{F^{\perar}_{\et}}$ and $h$-acyclic
	by \cite[Propositions 10.1.5 and Proposition 3.1.4]{Suz24}
	and Proposition \ref{0045}.
	Since $R \alg{\Gamma}(\alg{X}, \mathfrak{T}_{n}(1)) \in D^{b}(\Ind \Pro \Alg_{u} / F)$ is
	concentrated in degrees $\ge 0$ for all $n$ by Proposition \ref{0046},
	we know that $L h^{\ast} R \pi_{\alg{X}, \ast} \mathfrak{T}_{n}(1) \in D^{b}(\Ind \Pro \Alg_{u} / F)$
	is concentrated in degrees $\ge 0$
	by \cite[Proposition 7.6]{Suz21Imp}.
	In particular, $\{L h^{\ast} R \pi_{\alg{X}, \ast} \mathfrak{T}_{n}(1)\}_{n \ge 1}$
	as an object of the derived category of
	the ind-category of $\Ab(F^{\perf}_{\pro\fppf})$ is bounded below.
	On the other hand, $\{L h^{\ast} \tau_{\le 1} R \pi_{\alg{X}, \ast} \mathfrak{T}_{n}(1)\}_{n \ge 1}$
	is concentrated in degrees $0$ and $1$
	by Proposition \ref{0045}.
	Hence $\{L h^{\ast} \tau_{\ge 2} R \pi_{\alg{X}, \ast} \mathfrak{T}_{n}(1)\}_{n \ge 1}$
	is bounded below.
	The functor $R h_{\ast}$ commutes with $\dirlim_{n}$ on the bounded below derived categories
	by \cite[Proposition (2.2.4)]{Suz20}.
	The functor $L h^{\ast}$ commutes with $\dirlim_{n}$
	by \cite[Lemma 3.7.2]{Suz22} and \cite[Corollary 14.4.6 (ii)]{KS06}.
	Therefore
		\[
				R h_{\ast}
				L h^{\ast}
				\tau_{\ge 2}
				R \pi_{\alg{X}, \ast} \mathfrak{T}_{\infty}(1)
			\cong
				\dirlim_{n}
				R h_{\ast}
				L h^{\ast}
				\tau_{\ge 2}
				R \pi_{\alg{X}, \ast} \mathfrak{T}_{n}(1)
			\cong
				\tau_{\ge 2}
				R \pi_{\alg{X}, \ast} \mathfrak{T}_{\infty}(1),
		\]
	showing the $h$-acyclicity of $\tau_{\ge 2} R \pi_{\alg{X}, \ast} \mathfrak{T}_{\infty}(1)$.
	Thus $\tau_{\ge 2} R \pi_{\alg{X}, \ast} \Gm$ is $h$-acyclic.
\end{proof}

By Proposition \ref{0073} \eqref{0100},
for any field $F' \in F^{\perar}$ with algebraic closure $\closure{F'}$ and any $q \in \Z$,
we have
	\[
			\alg{H}^{q}(\alg{X}, \Gm)(F')
		\cong
			H^{q}(\alg{X}(\closure{F'}), \Gm)^{\Gal(\closure{F'} / F')}
	\]
functorially in $F'$.

\begin{Prop} \label{0076}
	$\alg{\Gamma}(\alg{X}, \Gm) \cong \alg{A}^{\times}$ is
	$\Gm$ times a connected pro-algebraic group in $\mathcal{W}_{F}$,
	$\alg{H}^{1}(\alg{X}, \Gm)$ is in $\Alg / F$,
	$\alg{H}^{2}(\alg{X}, \Gm)$ and $\alg{H}^{3}(\alg{X}, \Gm)$ are torsion
	with $p$-primary part in $\Ind \Alg_{u} / F$
	and \'etale $l$-primary part for $l \ne p$
	and $\alg{H}^{q}(\alg{X}, \Gm) = 0$ for all $q \ne 0, 1, 2, 3$.
	We have
		\[
				\alg{H}^{q}(\alg{X}, \mathfrak{T}_{\infty}(1))
			\isomto
				\alg{H}^{q}(\alg{X}, \Gm)[p^{\infty}]
		\]
	for all $q \ge 2$.
\end{Prop}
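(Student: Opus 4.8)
The plan is to deduce everything from the $h$-acyclicity of $R \pi_{\alg{X}, \ast} \Gm$ proved in Proposition \ref{0073} \eqref{0100}, which will let me compute $\alg{H}^{q}(\alg{X}, \Gm) = H^{q} \algebrize R \pi_{\alg{X}, \ast} \Gm$ one cohomological degree at a time from the sheaves $R^{q} \pi_{\alg{X}, \ast} \Gm$, on which $\algebrize$ acts as the identity for each of the three kinds of group that occur. First I would treat degrees $\le 1$, where the proof of Proposition \ref{0073} already records that $\tau_{\le 1} R \pi_{\alg{X}, \ast} \Gm$ is $h$-acyclic: by Proposition \ref{0072} its $0$-th cohomology is $\alg{A}^{\times} \cong \Gm \times (\alg{A}^{\times} / \Gm)$ with $\alg{A}^{\times} / \Gm$ connected pro-algebraic in $\mathcal{W}_{F}$, and by Proposition \ref{0044} its first cohomology $R^{1} \pi_{\alg{X}, \ast} \Gm$ lies in $\Alg / F$; since $\algebrize$ restricts to the identity on $\Gm$, on $\Alg / F$, and on $\mathcal{W}_{F}$ (the last by \cite[Proposition 3.1.4]{Suz24}, the others by $h$-acyclicity of smooth group schemes, \cite[Proposition 7.2]{Suz21Imp}), this gives $\alg{\Gamma}(\alg{X}, \Gm) \cong \alg{A}^{\times}$ and $\alg{H}^{1}(\alg{X}, \Gm) \in \Alg / F$. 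Connectivity of $R \pi_{\alg{X}, \ast} \Gm$ gives $\alg{H}^{q}(\alg{X}, \Gm) = 0$ for $q < 0$.

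For $q \ge 2$ I would start from the canonical splitting $R^{q} \pi_{\alg{X}, \ast} \Gm \cong R^{q} \pi_{\alg{X}, \ast} \mathfrak{T}_{\infty}(1) \oplus \bigoplus_{l \ne p} R^{q} \pi_{\alg{X}, \ast} \Q_{l} / \Z_{l}(1)$ obtained in the proof of Proposition \ref{0073} \eqref{0101}. On the prime-to-$p$ side, $R^{q} \pi_{\alg{X}, \ast} \Z / l^{n} \Z(1)$ is finite \'etale by \cite[Introduction, Theorem 1]{ILO14}, so $R^{q} \pi_{\alg{X}, \ast} \Q_{l} / \Z_{l}(1) = \dirlim_{n} R^{q} \pi_{\alg{X}, \ast} \Z / l^{n} \Z(1)$ is \'etale and vanishes for $q \ge 4$ for cohomological-dimension reasons, and $\algebrize$ is the identity on \'etale groups. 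On the $p$-primary side, $\alg{H}^{q}(\alg{X}, \mathfrak{T}_{\infty}(1)) = \dirlim_{n} \alg{H}^{q}(\alg{X}, \mathfrak{T}_{n}(1))$ because $R \alg{\Gamma}(\alg{X}, \var)$ commutes with filtered direct limits, and by Proposition \ref{0046} this lies in $\Ind \Alg_{u} / F$ for $q = 2, 3$ and vanishes for $q \ge 4$. Applying $\algebrize$ to the splitting and assembling the two sides (legitimate since $\tau_{\ge 2} R \pi_{\alg{X}, \ast} \Gm$ and each summand are $h$-acyclic) yields that $\alg{H}^{2}(\alg{X}, \Gm)$ and $\alg{H}^{3}(\alg{X}, \Gm)$ are torsion with $p$-primary part $\alg{H}^{q}(\alg{X}, \mathfrak{T}_{\infty}(1)) \in \Ind \Alg_{u} / F$ and \'etale $l$-primary part for $l \ne p$, that $\alg{H}^{q}(\alg{X}, \Gm) = 0$ for $q \ne 0, 1, 2, 3$, and the final isomorphism $\alg{H}^{q}(\alg{X}, \mathfrak{T}_{\infty}(1)) \isomto \alg{H}^{q}(\alg{X}, \Gm)[p^{\infty}]$ for $q \ge 2$. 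The identification of $\closure{F}$-points with Galois action throughout is the consequence of Proposition \ref{0073} \eqref{0100} recorded just before the statement, so the objects are pinned down once placed in the correct subcategories of $\Ind \Pro \Alg_{u} / F$ (resp.\ $\Alg / F$) via the fully faithful Yoneda embeddings into $\Ab(F^{\perar}_{\et})$.

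The main obstacle will be the bookkeeping around $\algebrize$: I need that it restricts to the identity on $\Gm$, on $\Alg / F$, on $\mathcal{W}_{F}$, and on \'etale groups, that it turns the truncations $\tau_{\le 1}$, $\tau_{\ge 2}$ into the expected degree-wise statement (so that $\alg{H}^{q}(\alg{X}, \Gm) = \algebrize R^{q} \pi_{\alg{X}, \ast} \Gm$ in each degree), and above all that it commutes with the filtered colimit $\dirlim_{n}$ and with passage to the $p$-primary torsion subsheaf, so that the splitting of $R^{q} \pi_{\alg{X}, \ast} \Gm$ survives after $\algebrize$ and $\algebrize R^{q} \pi_{\alg{X}, \ast} \mathfrak{T}_{\infty}(1) = \dirlim_{n} \algebrize R^{q} \pi_{\alg{X}, \ast} \mathfrak{T}_{n}(1)$. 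This is exactly where the $h$-acyclicity statements of Proposition \ref{0073} and the commutation of $R h_{\ast}$, $L h^{\ast}$ with $\dirlim_{n}$ on bounded-below objects (already used in its proof) do the work; the remaining points — exactness of the relevant triangles and the degeneration of the spectral sequence $H^{p}(\algebrize R^{q} \pi_{\alg{X}, \ast} \Gm) \Rightarrow \alg{H}^{p + q}(\alg{X}, \Gm)$ — are routine.
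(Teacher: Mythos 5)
Your proposal is correct and follows essentially the same route as the paper: the paper's proof is a one-line citation of Propositions \ref{0015}, \ref{0046}, \ref{0072}, \ref{0044} and \ref{0073}, and your argument is precisely the natural unpacking of how those combine — $h$-acyclicity from \ref{0073} to identify $\alg{H}^{q}(\alg{X},\Gm)$ degree-by-degree with the known cohomology sheaves $R^{q}\pi_{\alg{X},*}\Gm$ described by \ref{0072}, \ref{0044} and \ref{0073}, together with the torsion splitting in degrees $\ge 2$ and the structure of the $p$-primary parts supplied by \ref{0046}. The only cosmetic difference is that you invoke the $\closure{F'}$-points identification (the consequence of \ref{0073}\eqref{0100} stated just before the proposition) directly rather than routing through the comparison mechanism of Proposition \ref{0015}; these are two faces of the same argument, and you correctly flag that the commutation of $\algebrize$ with $\dirlim_n$ and the degree-wise behaviour under truncation are exactly the points carried by the $h$-acyclicity bookkeeping already established in the proof of \ref{0073}.
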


\begin{proof}
	This follows from Propositions \ref{0015}, \ref{0046}, \ref{0072}, \ref{0044} and \ref{0073}.
\end{proof}

Recall from \cite[Section 2.1]{SuzCurve} that
an object $G \in \Ind \Alg_{u} / F$ is said to be of cofinite type
if $G[p^{n}] \in \Alg_{u} / F$ for all $n \ge 1$.
For a cofinite type $G$,
let $G_{\divis}^{0}$ be the maximal divisible and connected part of $G$
(\cite[Proposition 2.2.2]{SuzCurve}; the notation there was $G_{0 \divis}$).

\begin{Prop}
	The object $\alg{H}^{2}(\alg{X}, \Gm)[p^{\infty}] \in \Ind \Alg_{u} / F$ is of cofinite type.
\end{Prop}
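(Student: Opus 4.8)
The plan is to realise the $p^{n}$-torsion of $\alg{H}^{2}(\alg{X}, \Gm)[p^{\infty}]$ as the image of the \emph{algebraic} group $\alg{H}^{2}(\alg{X}, \mathfrak{T}_{n}(1))$ and then quote the homological algebra of $\Ind \Pro \Alg_{u} / F$. Put $G = \alg{H}^{2}(\alg{X}, \Gm)[p^{\infty}]$; by Proposition \ref{0076} we have an isomorphism $G \isomto \alg{H}^{2}(\alg{X}, \mathfrak{T}_{\infty}(1))$ and $G \in \Ind \Alg_{u} / F$, so it suffices to show $G[p^{n}] \in \Alg_{u} / F$ for each $n \ge 1$. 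It is important here to work through $\mathfrak{T}_{\infty}(1)$ rather than through $\Gm$ itself, because the prime-to-$p$ part of $\alg{H}^{2}(\alg{X}, \Gm)$ is not unipotent.

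First I would invoke the distinguished triangle
	\[
			\mathfrak{T}_{n}(1)
		\to
			\mathfrak{T}_{\infty}(1)
		\stackrel{p^{n}}{\to}
			\mathfrak{T}_{\infty}(1)
		\to
			\mathfrak{T}_{n}(1)[1]
	\]
in $D(\alg{X}_{\et})$, exhibiting $\mathfrak{T}_{n}(1)$ as the $p^{n}$-torsion of the Tate-twist system $\mathfrak{T}_{\infty}(1) = \dirlim_{n} \mathfrak{T}_{n}(1)$; this is also what one obtains by applying $\Gm \tensor^{L} \var$ to the triangle $\Lambda_{n} \to \Lambda_{\infty} \stackrel{p^{n}}{\to} \Lambda_{\infty}$ (multiplication by $p^{n}$ on $\Lambda_{\infty}$ being surjective with kernel $\Lambda_{n}$) together with the identifications $\mathfrak{T}_{n}(1) \cong \Gm \tensor^{L} \Lambda_{n}$ and $\mathfrak{T}_{\infty}(1) \cong \Gm \tensor^{L} \Lambda_{\infty}$ on $\alg{X}$ that already underlie the proof of Proposition \ref{0073}. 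Applying the triangulated functor $R \alg{\Gamma}(\alg{X}, \var)$ and extracting the relevant segment of the long exact cohomology sequence gives
	\[
			\alg{H}^{2}(\alg{X}, \mathfrak{T}_{n}(1))
		\stackrel{\alpha}{\to}
			\alg{H}^{2}(\alg{X}, \mathfrak{T}_{\infty}(1))
		\stackrel{p^{n}}{\to}
			\alg{H}^{2}(\alg{X}, \mathfrak{T}_{\infty}(1)),
	\]
so that $G[p^{n}] = \Im(\alpha)$.

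To finish, recall that $\alg{H}^{2}(\alg{X}, \mathfrak{T}_{n}(1)) \in \Alg_{u} / F$ by Proposition \ref{0046}. Hence $G[p^{n}] = \Im(\alpha)$ is a quotient, inside the abelian category $\Ind \Pro \Alg_{u} / F$, of an object of $\Alg_{u} / F$; a routine d\'evissage of the kind used in the proofs of Propositions \ref{0024}, \ref{0047} and \ref{0048}---a subobject of an algebraic group in $\Ind \Pro \Alg_{u} / F$ is algebraic, since a filtered union of algebraic subgroups of a fixed algebraic group stabilises, and therefore a quotient of an algebraic group is again algebraic---shows $G[p^{n}] \in \Alg_{u} / F$. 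Thus $G = \alg{H}^{2}(\alg{X}, \Gm)[p^{\infty}]$ is of cofinite type. I do not anticipate a genuine obstacle; the two points that need care are phrasing everything through $\mathfrak{T}_{\infty}(1)$ so as to stay within the unipotent world, and having the triangle above at one's disposal, which is part of the standard formalism of $p$-adic \'etale Tate twists in force throughout Section \ref{0118}.
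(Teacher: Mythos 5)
Your proof is correct and takes essentially the same approach as the paper's. Both reduce the claim to the fact that $G[p^{n}]$ is a quotient of $\alg{H}^{2}(\alg{X}, \mathfrak{T}_{n}(1)) \in \Alg_{u}/F$, via the multiplication-by-$p^{n}$ triangle on the Tate-twist sheaves; the paper writes out the resulting short exact sequence with $\Gm$ directly (obtaining kernel $\alg{H}^{1}(\alg{X}, \Gm) \tensor \Lambda_{n}$), while you route through $\mathfrak{T}_{\infty}(1)$ and identify $G[p^{n}]$ as an image, but the d\'evissage in $\Ind\Pro\Alg_{u}/F$ and the appeal to Proposition \ref{0046} are the same.
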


\begin{proof}
	For any $n \ge 1$, we have an exact sequence
		\[
				0
			\to
				\alg{H}^{1}(\alg{X}, \Gm) \tensor \Lambda_{n}
			\to
				\alg{H}^{2}(\alg{X}, \mathfrak{T}_{n}(1))
			\to
				\alg{H}^{2}(\alg{X}, \Gm)[p^{n}]
			\to
				0.
		\]
	The direct limit of the first term in $n$ is zero
	by Proposition \ref{0076}.
	As $\alg{H}^{2}(\alg{X}, \mathfrak{T}_{n}(1)) \in \Alg_{u} / F$ by Proposition \ref{0046},
	we get the result.
\end{proof}

\begin{Prop} \label{0068}
	We have $\alg{H}^{2}(\alg{X}, \Gm)^{0} \in \Alg_{u} / F$.
\end{Prop}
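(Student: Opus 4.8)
The plan is to reduce the proposition to a single boundedness assertion and then prove that assertion by passing to a resolution of singularities. First I would note that, by Proposition \ref{0076}, the prime-to-$p$ part of $\alg{H}^{2}(\alg{X}, \Gm)$ is ind-\'etale and hence has trivial identity component, so that $\alg{H}^{2}(\alg{X}, \Gm)^{0} = \alg{Br}_{X}[p^{\infty}]^{0}$, where (again by Proposition \ref{0076}) $\alg{Br}_{X}[p^{\infty}] = \dirlim_{n} \alg{H}^{2}(\alg{X}, \mathfrak{T}_{n}(1))$ with every term in $\Alg_{u}/F$ by Proposition \ref{0046}. The Kummer exact sequence $0 \to \alg{H}^{1}(\alg{X}, \Gm) \tensor \Lambda_{n} \to \alg{H}^{2}(\alg{X}, \mathfrak{T}_{n}(1)) \to \alg{H}^{2}(\alg{X}, \Gm)[p^{n}] \to 0$ (from the proof of the preceding proposition), together with the finiteness of $\pi_{0}$ of $\alg{H}^{2}(\alg{X}, \mathfrak{T}_{n}(1)) \in \Alg_{u}/F$, shows that $\alg{H}^{2}(\alg{X}, \mathfrak{T}_{n}(1))^{0}$ surjects onto $\alg{Br}_{X}[p^{n}]^{0}$; consequently $\alg{Br}_{X}[p^{\infty}]^{0}$ is the increasing union $\bigcup_{n} \alg{Br}_{X}[p^{n}]^{0}$ of connected algebraic subgroups, and it lies in $\Alg_{u}/F$ precisely when the dimensions $\dim \alg{Br}_{X}[p^{n}]^{0}$ stay bounded. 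Since $\alg{H}^{1}(\alg{X}, \Gm) = \alg{Pic}_{X}$ is a fixed algebraic group (Proposition \ref{0044}), the term $\alg{H}^{1}(\alg{X}, \Gm) \tensor \Lambda_{n}$ stabilizes for $n \gg 0$, so by the Kummer sequence the whole proposition is equivalent to: $\dim \alg{H}^{2}(\alg{X}, \mathfrak{T}_{n}(1))$ is bounded independently of $n$.

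To bound this dimension I would pass to a resolution $\mathfrak{X} \to \Spec A$ as in Section \ref{0086}, with reduced special fiber $Y = \bigcup_{i} Y_{i}$ a strict normal crossing divisor and $X \cong \mathfrak{X} \setminus Y$. The distinguished triangles of Section \ref{0086} used in the proofs of Propositions \ref{0046} and \ref{0052} express $R \alg{\Gamma}(\alg{X}, \mathfrak{T}_{n}(1))$ --- after removing the height-one primes dividing $p$ and gluing back the local contributions $R \alg{\Gamma}_{c}(\Hat{\alg{A}}_{\ideal{p}}, \mathfrak{T}_{n}(1))$, $R \alg{\Gamma}(\Hat{\alg{A}}_{\ideal{p}}, \mathfrak{T}_{n}(1))$ --- in terms of the cohomology of the tubular neighborhoods $\Hat{\alg{R}}_{\eta, T}$ of the components of $Y$ (Section \ref{0059}), of the two-dimensional regular complete local rings $\Hat{\alg{R}}_{x, S}$ at the closed points $x \in Y$ (Section \ref{0086}), and of the two-dimensional local fields $\Hat{\alg{K}}_{\eta_{x}}$ and $\Hat{\alg{K}}_{\ideal{p}}$ (Section \ref{0070}). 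These are finitely many triangles, essentially degenerate in the range of degrees relevant to $\alg{H}^{2}$, so a bound on $\dim \alg{H}^{2}(\alg{X}, \mathfrak{T}_{n}(1))$ will follow from $n$-uniform bounds on the dimensions of the corresponding cohomology groups of these building blocks.

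Finally, for each building block I would rerun the Kummer analysis: the $\Pic$-contribution of the regular local rings $\Hat{A}_{x}$ and (by the localization sequence for the principal divisor $W_{\eta} \cap Y$) of the $\Hat{R}_{\eta, T}$ vanishes, the $H^{1}$-contributions of the curves $Y_{i}$ are finite \'etale and hence zero-dimensional, and the only constituent whose dimension could grow with $n$ is a quotient $(\text{units})/(\text{units})^{p^{n}}$, which stabilizes for $n \gg 0$ because the relevant unit group is $\Gm$ times a \emph{finite-dimensional} connected pro-algebraic group in $\mathcal{W}_{F}$ --- the analogue of Proposition \ref{0072} for $\Hat{A}_{\eta, T}$ and $\Hat{A}_{x}$ --- together with the structure of the $\Hat{\alg{K}}$-cohomology from Section \ref{0070}. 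Feeding the resulting $n$-uniform bounds through the triangles of the previous step gives $\dim \alg{H}^{2}(\alg{X}, \mathfrak{T}_{n}(1)) \le C$, whence $\alg{Br}_{X}[p^{\infty}]^{0} = \bigcup_{n} \alg{Br}_{X}[p^{n}]^{0}$ stabilizes and lies in $\Alg_{u}/F$. The main obstacle is exactly this last step: the d\'evissage $\mathfrak{T}_{n}(1) \to \mathfrak{T}_{n+1}(1) \to \mathfrak{T}_{1}(1)$ only yields an a priori linear-in-$n$ bound, so one has to identify which parts of the $p$-adic nearby-cycle cohomology of the building blocks are rigid (stable in $n$) and which are annihilated in the colimit, extracting this cancellation from the explicit descriptions of Sections \ref{0007}, \ref{0059} and \ref{0086}.
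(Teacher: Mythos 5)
Your opening reduction is sound: since $\alg{Br}_X[p^\infty]$ is of cofinite type, its identity component lies in $\Alg_u/F$ if and only if the dimensions $\dim\alg{Br}_X[p^n]^0$ (equivalently $\dim\alg{H}^2(\alg{X},\mathfrak{T}_n(1))$, after discarding the stable $\alg{Pic}_X\tensor\Lambda_n$ term) are bounded in $n$. But the proposed route to that bound does not work and you are right to flag the last step as the obstacle --- though it is worse than an obstacle. Your claim that ``the only constituent whose dimension could grow with $n$ is a quotient $(\text{units})/(\text{units})^{p^n}$, which stabilizes for $n\gg 0$'' is false: $\alg{A}^\times/\Gm$ and its analogues for $\Hat{A}_x$ and $\Hat{A}_{\eta,T}$ are genuinely infinite-dimensional pro-algebraic groups, and the quotients $\alg{A}^\times/\alg{A}^{\times p^n}$ grow with $n$ (the paper's Proposition \ref{0045} puts them in $\Pro\Alg_u/F$, not in $\Alg_u/F$). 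Likewise the $\alg{H}^2$ of the two-dimensional local fields $\Hat{K}_{\eta_x}$, the graded pieces of which are spelled out in Proposition \ref{0067}, have dimension growing roughly linearly in $n$. So every building block in your proposed devissage grows with $n$, and the bound on the global $\alg{H}^2$ must come from cancellation across the gluing triangles --- which is precisely what you admit you do not know how to extract, and which is in effect equivalent to the statement being proved.

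The paper avoids this bookkeeping entirely. It invokes \cite[Proposition 2.2.3]{SuzCurve} to reduce to the qualitative statement $\alg{H}^2(\alg{X},\Gm)^0_{\divis}=0$, and then shows the divisible connected part vanishes by a geometric argument: duality turns it into a statement about a certain subgroup $G^0$ of $\alg{H}^2(\alg{X},\mathfrak{T}(1))^0$; the trace-image computation of Proposition \ref{0066} forces $G^0$ to die in $\alg{H}^2(\Hat{\alg{K}}_{\eta_x},\Lambda(1))^0$ at every closed point; the Hasse principle for tubular neighborhoods (Proposition \ref{0067}) then pins the image of $G^0(F)$ in $\Br(\Hat K_\eta)[p]$ into the finite group $H^1(Y_\eta,\Lambda)$, so it is zero by connectedness; and finally purity of Brauer groups together with $\Br(\mathfrak{X})\cong\Br(Y)=0$ over $\closure F$ kills the image in $\Br(X)$. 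None of these ingredients --- the cofinite-type criterion via $G^0_{\divis}$, the trace map landing in $\alg{Pic}_X\tensor\Lambda$, the tubular-neighborhood Hasse principle, or the Brauer purity argument --- appear in your sketch, and they are what make the proof go through where the direct dimension count fails.
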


\begin{proof}
	It is enough to show that $\alg{H}^{2}(\alg{X}, \Gm)_{\divis}^{0} = 0$
	by \cite[Proposition 2.2.3]{SuzCurve}.
	We may assume $\zeta_{p} \in A$ and $F = \closure{F}$.
	We have
		\[
				\alg{H}^{2}(\alg{X}, \Gm)^{0}[p^{\infty}]
			\cong
				\dirlim_{n}
				\sheafext^{1}_{F^{\ind\rat}_{\pro\et}} \bigl(
					\alg{H}^{2}(\alg{X}, \mathfrak{T}_{n}(1)),
					\Lambda_{\infty}
				\bigr)
		\]
	by duality.
	We have a natural morphism from the right-hand side to
		\[
				\dirlim_{n}
				\sheafext^{1}_{F^{\ind\rat}_{\pro\et}} \bigl(
					\alg{H}^{1}(\alg{X}, \Gm) \tensor \Lambda_{n},
					\Lambda_{\infty}
				\bigr)
			\cong
				\sheafext^{1}_{F^{\ind\rat}_{\pro\et}} \bigl(
					\alg{H}^{1}(\alg{X}, \Gm),
					\Lambda_{\infty}
				\bigr)^{0}.
		\]
	This final group is in $\Alg_{u} / F$ by Proposition \ref{0044}
	and hence killed by a power of $p$.
	Hence the composite of the above two morphisms maps
	$\alg{H}^{2}(\alg{X}, \Gm)_{\divis}^{0}$ to zero.
	Let $G$ be the inverse image of the subgroup
	$\alg{H}^{2}(\alg{X}, \Gm)_{\divis}^{0}[p]^{0}$
	under the natural morphism
		\[
				\alg{H}^{2}(\alg{X}, \mathfrak{T}(1))^{0}
			\to
				\alg{H}^{2}(\alg{X}, \Gm)^{0}.
		\]
	The image of $G^{0}$ in $\alg{H}^{2}(\alg{X}, \Gm)$ is
	$\alg{H}^{2}(\alg{X}, \Gm)_{\divis}^{0}[p]^{0}$.
	We have natural isomorphism and morphism
		\[
				\alg{H}^{2}(\alg{X}, \mathfrak{T}(1))^{0}
			\cong
				\sheafext^{1}_{F^{\ind\rat}_{\pro\et}} \bigl(
					\alg{H}^{2}(\alg{X}, \mathfrak{T}(1)),
					\Lambda_{\infty}
				\bigr)
			\to
				\sheafext^{1}_{F^{\ind\rat}_{\pro\et}} \bigl(
					\alg{H}^{1}(\alg{X}, \Gm) \tensor \Lambda,
					\Lambda_{\infty}
				\bigr).
		\]
	We have a commutative diagram
		\[
			\begin{CD}
					\alg{H}^{2}(\alg{X}, \mathfrak{T}(1))^{0}
				@>>>
					\sheafext^{1}_{F^{\ind\rat}_{\pro\et}} \bigl(
						\alg{H}^{1}(\alg{X}, \Gm) \tensor \Lambda,
						\Lambda_{\infty}
					\bigr)
				\\ @VVV @VVV \\
					\alg{H}^{2}(\alg{X}, \Gm)^{0}[p^{\infty}]
				@>>>
					\sheafext^{1}_{F^{\ind\rat}_{\pro\et}} \bigl(
						\alg{H}^{1}(\alg{X}, \Gm),
						\Lambda_{\infty}
					\bigr).
			\end{CD}
		\]
	The image of $G$ in the right lower term is zero.
	The right vertical morphism has finite kernel.
	Hence the image of $G^{0}$ in the right upper term is zero.
	
	Let $S \subset P$ be the set of primes dividing $p$.
	Let $\mathfrak{X} \to \Spec A$, $W_{\eta} \subset \mathfrak{X}$ and $T \subset Y$
	be as in Section \ref{0086}.
	Let $x \in Y_{0}$ and $\eta_{x} \in Y_{1}^{x}$.
	Take $T$ large enough so that $x \in T$.
	We have $\alg{H}^{1}(\Hat{\alg{K}}_{\eta_{x}}, \Lambda) \in \mathcal{W}_{F}$
	and $\alg{H}^{2}(\alg{X}, \mathfrak{T}(1)), \alg{H}^{1}(\alg{X}, \Gm) \tensor \Lambda \in \Alg_{u} / F$.
	Hence the image of
	$\alg{H}^{1}(\Hat{\alg{K}}_{\eta_{x}}, \Lambda) \to \alg{H}^{2}(\alg{X}, \mathfrak{T}(1))$
	is in $\Alg_{u} / F$
	by Propositions \ref{0089} and \ref{0024}.
	Therefore by Proposition \ref{0066}, the image of the composite
		\[
				\alg{H}^{1}(\Hat{\alg{K}}_{\eta_{x}}, \Lambda(1))
			\to
				\alg{H}^{2}(\alg{X}, \mathfrak{T}(1))
		\]
	of \eqref{0090} is contained in the subgroup
	$\alg{H}^{1}(\alg{X}, \Gm) \tensor \Lambda$ of the target.
	Hence the dual morphism
		\[
				\sheafext^{1}_{F^{\ind\rat}_{\pro\et}} \bigl(
					\alg{H}^{2}(\alg{X}, \mathfrak{T}(1)),
					\Lambda_{\infty}
				\bigr)
			\to
				\sheafext^{1}_{F^{\ind\rat}_{\pro\et}} \bigl(
					\alg{H}^{1}(\Hat{\alg{K}}_{\eta_{x}}, \Lambda(1)),
					\Lambda_{\infty}
				\bigr)
		\]
	factors through
		$
			\sheafext^{1}_{F^{\ind\rat}_{\pro\et}} \bigl(
				\alg{H}^{1}(\alg{X}, \Gm) \tensor \Lambda,
				\Lambda_{\infty}
			\bigr)
		$.
	Therefore the image of $G^{0}$ in 
		$
			\sheafext^{1}_{F^{\ind\rat}_{\pro\et}} \bigl(
				\alg{H}^{1}(\Hat{\alg{K}}_{\eta_{x}}, \Lambda(1)),
				\Lambda_{\infty}
			\bigr)
		$
	is zero.
	By the diagram in \cite[Section 10.6, Proof of Proposition 10.1.5]{Suz24},
	we have a commutative diagram
		\[
			\begin{CD}
					\alg{H}^{2}(\alg{X}, \mathfrak{T}(1))^{0}
				@> \sim >>
					\sheafext^{1}_{F^{\ind\rat}_{\pro\et}} \bigl(
						\alg{H}^{2}(\alg{X}, \mathfrak{T}(1)),
						\Lambda_{\infty}
					\bigr)
				\\ @VVV @VVV \\
					\alg{H}^{2}(\alg{U}_{S}, \Lambda(1))^{0}
				@> \sim >>
					\sheafext^{1}_{F^{\ind\rat}_{\pro\et}} \bigl(
						\alg{H}^{2}_{c}(\alg{U}_{S}, \Lambda(1)),
						\Lambda_{\infty}
					\bigr).
			\end{CD}
		\]
	By the diagram in the proof of \cite[Proposition 10.5.5]{Suz24},
	we have a commutative diagram
		\[
			\begin{CD}
					\alg{H}^{2}(\alg{U}_{S}, \Lambda(1))^{0}
				@> \sim >>
					\sheafext^{1}_{F^{\ind\rat}_{\pro\et}} \bigl(
						\alg{H}^{2}_{c}(\alg{U}_{S}, \Lambda(1)),
						\Lambda_{\infty}
					\bigr)
				\\ @VVV @VVV \\
					\alg{H}^{2}(\Hat{\alg{K}}_{\eta_{x}}, \Lambda(1))^{0}
				@> \sim >>
					\sheafext^{1}_{F^{\ind\rat}_{\pro\et}} \bigl(
						\alg{H}^{1}(\Hat{\alg{K}}_{\eta_{x}}, \Lambda(1)),
						\Lambda_{\infty}
					\bigr).
			\end{CD}
		\]
	Combining the above two diagrams, we obtain a commutative diagram
		\[
			\begin{CD}
					\alg{H}^{2}(\alg{X}, \mathfrak{T}(1))^{0}
				@> \sim >>
					\sheafext^{1}_{F^{\ind\rat}_{\pro\et}} \bigl(
						\alg{H}^{2}(\alg{X}, \mathfrak{T}(1)),
						\Lambda_{\infty}
					\bigr)
				\\ @VVV @VVV \\
					\alg{H}^{2}(\Hat{\alg{K}}_{\eta_{x}}, \Lambda(1))^{0}
				@> \sim >>
					\sheafext^{1}_{F^{\ind\rat}_{\pro\et}} \bigl(
						\alg{H}^{1}(\Hat{\alg{K}}_{\eta_{x}}, \Lambda(1)),
						\Lambda_{\infty}
					\bigr).
			\end{CD}
		\]
	Hence the image of $G^{0}$ in $\alg{H}^{2}(\Hat{\alg{K}}_{\eta_{x}}, \Lambda(1))^{0}$ is zero.
	The left vertical morphism on $F$-valued points factors as
		\[
				H^{2}(X, \mathfrak{T}(1))
			\to
				\Br(K)[p]
			\to
				\Br(\Hat{K}_{\eta})[p]
			\to
				\Br(\Hat{K}_{\eta_{x}})[p].
		\]
	By Proposition \ref{0067}, we know that the image of $G^{0}(F)$ in $\Br(\Hat{K}_{\eta})[p]$
	is contained in $H^{1}(Y_{\eta}, \Lambda)$.
	Thus we have a homomorphism
	$G^{0}(F) \to H^{1}(Y_{\eta}, \Lambda)$.
	This is compatible with base field extensions.
	Hence we have a homomorphism
	$G^{0}(F') \to H^{1}(\alg{Y}_{\eta}(F'), \Lambda)$
	for any algebraically closed field $F' \in F^{\perar}$
	functorial in $F'$.
	Hence we have a morphism
	$G^{0} \to \alg{H}^{1}(Y_{\eta}, \Lambda)$.
	It is a morphism from a connected group to a finite group,
	hence zero.
	Therefore the image of $G^{0}(F)$ in $\Br(\Hat{K}_{\eta})[p]$ is zero.
	By the purity for Brauer groups,
	we know that the image of $G^{0}(F)$ in $\Br(K)[p]$ is contained in $\Br(\mathfrak{X})[p]$.
	As in the final paragraph of \cite[Section 3]{Sai86},
	we have $\Br(\mathfrak{X}) \cong \Br(Y)$ by the same argument as the proof of
	\cite[Theorem (3.1)]{Gro68}.
	As $F = \closure{F}$, we have $\Br(Y) = 0$.
	Thus the image of $G^{0}(F)$ in $\Br(X)$ is zero.
	The result then follows.
\end{proof}

\begin{Prop}
	There exists a canonical isomorphism
		\[
				\alg{H}^{2}(\alg{X}, \Gm)[p^{\infty}]
			\cong
				\sheafext^{1}_{F^{\ind\rat}_{\pro\et}}(\alg{H}^{1}(\alg{X}, \Gm), \Lambda_{\infty}).
		\]
\end{Prop}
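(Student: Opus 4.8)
The plan is to pass to $p$-power torsion coefficients and take a limit over the level $n$. By Proposition \ref{0076} we have $\alg{H}^{2}(\alg{X}, \Gm)[p^{\infty}] \cong \alg{H}^{2}(\alg{X}, \mathfrak{T}_{\infty}(1))$, and since $R \alg{\Gamma}(\alg{X}, \mathfrak{T}_{n}(1))$ is bounded and concentrated in non-negative degrees (Proposition \ref{0046}), cohomology commutes with $\dirlim_{n}$, so this group equals $\dirlim_{n} \alg{H}^{2}(\alg{X}, \mathfrak{T}_{n}(1))$. For each $n$ the $\Z / p^{n} \Z$-Bockstein of $R \alg{\Gamma}(\alg{X}, \Gm)$ gives the short exact sequence
\[
		0
	\to
		\alg{H}^{1}(\alg{X}, \Gm) \tensor \Lambda_{n}
	\to
		\alg{H}^{2}(\alg{X}, \mathfrak{T}_{n}(1))
	\to
		\alg{H}^{2}(\alg{X}, \Gm)[p^{n}]
	\to
		0
\]
used already above (in the proof that $\alg{H}^{2}(\alg{X}, \Gm)[p^{\infty}]$ is of cofinite type). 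All three terms lie in $\Alg_{u} / F \subset \mathcal{W}_{F}$, since tensoring $\alg{H}^{1}(\alg{X}, \Gm) \in \Alg / F$ (Proposition \ref{0044}) with $\Lambda_{n}$ kills the semi-abelian and abelian-variety parts and leaves an extension of the finite \'etale group $\varalg{\delta}_{Y} \tensor \Lambda_{n}$ by the vector-group (unipotent) part $U$ of $\alg{H}^{1}(\alg{X}, \Gm)$.

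Next I would dualize. The functor $R \sheafhom_{F^{\ind\rat}_{\pro\et}}(\var, \Lambda_{\infty})$ has cohomological amplitude $[0, 1]$ on $\mathcal{W}_{F}$, with $\sheafext^{1}$ realizing Serre duality on connected groups and $\sheafhom$ realizing Pontryagin duality on finite \'etale groups (\cite[Propositions 3.1.7 and 3.1.9]{Suz24}). Applying it to the Bockstein sequence, and feeding in the self-duality
\[
		\alg{H}^{2}(\alg{X}, \mathfrak{T}_{n}(1))^{0}
	\cong
		\sheafext^{1}_{F^{\ind\rat}_{\pro\et}}(\alg{H}^{2}(\alg{X}, \mathfrak{T}_{n}(1)), \Lambda_{\infty})
\]
— the case $S = \emptyset$, $r = 1$ of the perfect pairing \eqref{0108}, as in the proof of Proposition \ref{0068} — produces a six-term exact sequence in which the two terms coming from $\alg{H}^{1}(\alg{X}, \Gm) \tensor \Lambda_{n}$ are built only from $\varalg{\delta}_{Y} \tensor \Lambda_{n}$ and the Serre dual of $U$. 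Dually (and in order to recover $\alg{H}^{2}(\alg{X}, \Gm)[p^{\infty}]$ itself rather than its Serre dual), I would compare this with the exact sequence obtained by applying $R \sheafhom_{F^{\ind\rat}_{\pro\et}}(\alg{H}^{1}(\alg{X}, \Gm), \var)$ to $0 \to \Lambda_{n} \to \Lambda_{\infty} \stackrel{p^{n}}{\to} \Lambda_{\infty} \to 0$, which computes $\sheafext^{1}_{F^{\ind\rat}_{\pro\et}}(\alg{H}^{1}(\alg{X}, \Gm), \Lambda_{n})$ from the Bockstein of $R \alg{\Gamma}(\alg{X}, \Gm)$, again up to a finite \'etale correction from $\varalg{\delta}_{Y}$.

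Finally I would pass to the colimit over $n$. By Proposition \ref{0076}, $\dirlim_{n}(\alg{H}^{1}(\alg{X}, \Gm) \tensor \Lambda_{n}) = \alg{H}^{1}(\alg{X}, \Gm) \tensor \Q_{p} / \Z_{p} = 0$, so all of the $\varalg{\delta}_{Y}$- and $U$-correction terms above are annihilated; and $\sheafext^{1}_{F^{\ind\rat}_{\pro\et}}(\alg{H}^{1}(\alg{X}, \Gm), \Lambda_{\infty}) = \dirlim_{n} \sheafext^{1}_{F^{\ind\rat}_{\pro\et}}(\alg{H}^{1}(\alg{X}, \Gm), \Lambda_{n})$ because $\alg{H}^{1}(\alg{X}, \Gm)$ is of finite type. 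Combined with $\dirlim_{n} \alg{H}^{2}(\alg{X}, \mathfrak{T}_{n}(1)) = \alg{H}^{2}(\alg{X}, \Gm)[p^{\infty}]$, this pins down both the identity component and the component group of $\alg{H}^{2}(\alg{X}, \Gm)[p^{\infty}]$ in terms of $\alg{H}^{1}(\alg{X}, \Gm)$, and gives the isomorphism; its canonicity, in particular its independence of the resolution appearing in $\varalg{\delta}_{Y}$, follows from its compatibility with the Bockstein maps. The main obstacle I anticipate is exactly this limiting step: $R \sheafhom(\var, \Lambda_{\infty})$ is contravariant, so it turns the direct system $\{\alg{H}^{2}(\alg{X}, \mathfrak{T}_{n}(1))\}_{n}$ into an inverse system, and one must organize the two exact sequences so that the finite \'etale and vector-group correction terms cancel \emph{compatibly with the transition maps}, rather than merely vanishing separately in the colimit.
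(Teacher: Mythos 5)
Your plan is essentially the paper's: pass from $\Gm$ to $\mathfrak{T}_{n}(1)$ via the Kummer/Bockstein sequence, dualize with $R\sheafhom(\var,\Lambda_\infty)$ against the perfect pairing of \eqref{0108}, and take $\dirlim_{n}$ using Proposition \ref{0068} to kill the $\alg{H}^{2}(\alg{X}, \Gm)[p^{n}]$ contribution and $\alg{H}^{1}(\alg{X},\Gm)\tensor\Q/\Z=0$ to let the $\alg{H}^{1}$ contribution stabilize to $\sheafext^{1}(\alg{H}^{1}(\alg{X},\Gm),\Lambda_\infty)$. The paper's packaging differs in two ways that circumvent the obstacle you flag. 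First, instead of dualizing the underived short exact sequence $0 \to \alg{H}^{1}\tensor\Lambda_{n}\to \alg{H}^{2}(\mathfrak{T}_{n}(1))\to \alg{H}^{2}(\Gm)[p^{n}]\to 0$ and extracting a six-term sequence from it, the paper applies $R\sheafhom(\var,\Lambda_\infty)$ to $R\alg{\Gamma}(\alg{X},\mathfrak{T}_{n}(1))$ directly, truncates to $\tau_{\ge 1}\tau_{\le 2}$, and then replaces the $H^{1}$-piece $\alg{A}^{\times}/\alg{A}^{\times p^{n}}$ (connected, hence invisible to $\sheafhom(\var,\Lambda_\infty)$) to obtain the mapping cone $G_{n}$, which fits in the \emph{derived} triangle $\alg{H}^{1}\tensor^{L}\Lambda_{n}[-2]\to G_{n}\to \alg{H}^{2}(\Gm)[p^{n}][-2]$. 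Applying $\sheafext^{*}(\var,\Lambda_\infty)$ then gives the clean three-term sequence \eqref{0069} with $\alg{H}^{2}(\alg{X},\mathfrak{T}_{n}(1))$ in the middle. Second, and this is the real answer to your concern about contravariance: the Serre self-duality $\alg{H}^{2}(\alg{X},\mathfrak{T}_{n}(1))\cong \sheafext^{-1}(G_{n},\Lambda_\infty)$ intertwines the inclusion transition maps $\mathfrak{T}_{n}(1)\to\mathfrak{T}_{n+1}(1)$ on the left with the duals of the projections $\mathfrak{T}_{n+1}(1)\to\mathfrak{T}_{n}(1)$ on the right; so the projective system of triangles $\{\alg{H}^{1}\tensor^{L}\Lambda_{n}[-2]\to G_{n}\to\alg{H}^{2}(\Gm)[p^{n}][-2]\}$ (with respect to $\Lambda_{n+1}\onto\Lambda_{n}$, $G_{n+1}\to G_{n}$, and multiplication by $p$ on the torsion subgroups) dualizes to a \emph{direct} system in which the middle term, under the self-duality isomorphism, is the familiar injective system $\{\alg{H}^{2}(\alg{X},\mathfrak{T}_{n}(1))\}_{n}$. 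That is what makes the colimit of \eqref{0069} behave. Your auxiliary comparison against $R\sheafhom(\alg{H}^{1}(\alg{X},\Gm),\var)$ applied to $0\to\Lambda_{n}\to\Lambda_\infty\to\Lambda_\infty\to 0$ is then a substitute for the paper's citation of \cite[Proposition (2.4.1)\,(d)]{Suz20} for $\dirlim_{n}\sheafext^{1}(\alg{H}^{1}\tensor^{L}\Lambda_{n},\Lambda_\infty)\cong\sheafext^{1}(\alg{H}^{1},\Lambda_\infty)$. Also note that the two $\alg{H}^{1}\tensor\Lambda_{n}$-terms do not ``cancel'' against each other as your last sentence suggests; they converge in the colimit to the $\sheafext^{1}$ term on the right-hand side of the statement. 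As written, your proposal outlines the correct strategy but stops exactly at the point where the transition-map compatibility must be verified; the paper's $G_{n}$-and-$\tensor^{L}$ formulation is what makes this compatibility automatic.
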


\begin{proof}
	Let $n \ge 1$.
	We have
		\[
				\alg{H}^{2}(\alg{X}, \mathfrak{T}_{n}(1))
			\isomto
				\sheafext^{-1}_{F^{\ind\rat}_{\pro\et}} \bigl(
					R \alg{\Gamma}(\alg{X}, \mathfrak{T}_{n}(1)),
					\Lambda_{\infty}
				\bigr)
		\]
	by duality.
	The right-hand side is isomorphic to
		\[
			\sheafext^{-1}_{F^{\ind\rat}_{\pro\et}} \bigl(
				\tau_{\ge 1} \tau_{\le 2}
				R \alg{\Gamma}(\alg{X}, \mathfrak{T}_{n}(1)),
				\Lambda_{\infty}
			\bigr).
		\]
	Let $G_{n}$ be the canonical mapping cone of the natural morphism
		\[
				\alg{A}^{\times} / \alg{A}^{\times p^{n}}[-1]
			\to
				\tau_{\ge 1} \tau_{\le 2}
				R \alg{\Gamma}(\alg{X}, \mathfrak{T}_{n}(1)).
		\]
	Since $\alg{A}^{\times} / \alg{A}^{\times p^{n}}$ is connected, we have
		\[
				\sheafext^{-1}_{F^{\ind\rat}_{\pro\et}} \bigl(
					\tau_{\ge 1} \tau_{\le 2}
					R \alg{\Gamma}(\alg{X}, \mathfrak{T}_{n}(1)),
					\Lambda_{\infty}
				\bigr)
			\cong
				\sheafext^{-1}_{F^{\ind\rat}_{\pro\et}}(G_{n}, \Lambda_{\infty}).
		\]
	We have a canonical distinguished triangle
		\[
				\alg{H}^{1}(\alg{X}, \Gm) \tensor^{L} \Lambda_{n}[-2]
			\to
				G_{n}
			\to
				\alg{H}^{2}(\alg{X}, \Gm)[p^{n}][-2].
		\]
	Applying $\sheafext^{\var}_{F^{\ind\rat}_{\pro\et}}(\var, \Lambda_{\infty})$,
	we obtain an exact sequence
		\begin{equation} \label{0069}
			\begin{split}
				&
						\sheafext^{1}_{F^{\ind\rat}_{\pro\et}} \bigl(
							\alg{H}^{2}(\alg{X}, \Gm)[p^{n}],
							\Lambda_{\infty}
						\bigr)
					\to
						\alg{H}^{2}(\alg{X}, \mathfrak{T}_{n}(1))
				\\
				&	\quad
					\to
						\sheafext^{1}_{F^{\ind\rat}_{\pro\et}} \bigl(
							\alg{H}^{1}(\alg{X}, \Gm) \tensor^{L} \Lambda_{n},
							\Lambda_{\infty}
						\bigr)
					\to
						0.
			\end{split}
		\end{equation}
	All the terms are in $\Alg_{u} / F$.
	By Proposition \ref{0068}, we have
		\[
				\dirlim_{n}
				\sheafext^{1}_{F^{\ind\rat}_{\pro\et}} \bigl(
					\alg{H}^{2}(\alg{X}, \Gm)[p^{n}],
					\Lambda_{\infty}
				\bigr)
			=
				0.
		\]
	Also
		\[
				\dirlim_{n}
				\sheafext^{1}_{F^{\ind\rat}_{\pro\et}} \bigl(
					\alg{H}^{1}(\alg{X}, \Gm) \tensor^{L} \Lambda_{n},
					\Lambda_{\infty}
				\bigr)
			\cong
				\sheafext^{1}_{F^{\ind\rat}_{\pro\et}} \bigl(
					\alg{H}^{1}(\alg{X}, \Gm),
					\Lambda_{\infty}
				\bigr)
		\]
	by \cite[Proposition (2.4.1) (d)]{Suz20}.
	Thus we get the result by taking the direct limit in $n$.
\end{proof}

Recall from \cite[Proposition (2.4.1)]{Suz20}
that for a group $G \in \Alg / F$,
the group $\pi_{0} \sheafext_{F^{\ind\rat}_{\pro\et}}^{1}(G, \Lambda_{\infty})$
is the Pontryagin dual of the $p$-adic Tate module of the semi-abelian part $G_{\sAb} \subset G$
and the group $\sheafext_{F^{\ind\rat}_{\pro\et}}^{1}(G, \Lambda_{\infty})^{0}$
is the Serre dual of the unipotent group $G^{0} / G_{\sAb}$.

Now we define
	\[
			\alg{Pic}_{X}
		=
			\alg{H}^{1}(\alg{X}, \Gm),
		\quad
			\alg{Br}_{X}
		=
			\alg{H}^{2}(\alg{X}, \Gm).
	\]
Theorem \ref{0083} follows from the above results.

\begin{Prop} \label{0110}
	The group $\alg{H}^{3}(\alg{X}, \Gm)[p^{\infty}]$ is connected.
	It is isomorphic to the Serre dual of $\alg{A}^{\times} / \Gm$.
	In particular, we have $\alg{H}^{3}(\alg{X}, \Gm)[p^{\infty}] \in \mathcal{W}_{F}$.
\end{Prop}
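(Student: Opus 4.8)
The plan is to deduce this from Theorems \ref{0077} and \ref{0088} together with the finite-level results already established, essentially repeating the proof of the preceding proposition with the cohomological degree raised by one. By Proposition \ref{0076} we have $\alg{H}^{3}(\alg{X}, \Gm)[p^{\infty}] \cong \alg{H}^{3}(\alg{X}, \mathfrak{T}_{\infty}(1)) = \dirlim_{n} \alg{H}^{3}(\alg{X}, \mathfrak{T}_{n}(1))$, so it suffices to analyse this colimit. For each $n$, the perfect self-pairing \eqref{0108} with $S = \emptyset$ and $r = 1$ identifies $\alg{H}^{3}(\alg{X}, \mathfrak{T}_{n}(1))$ with $H^{0} R \sheafhom_{F^{\ind\rat}_{\pro\et}}(R \alg{\Gamma}(\alg{X}, \mathfrak{T}_{n}(1)), \Lambda_{\infty})$; since $\sheafext^{q}_{F^{\ind\rat}_{\pro\et}}(\var, \Lambda_{\infty})$ vanishes for $q \ge 2$ on $\mathcal{W}_{F}$ and $\alg{H}^{q}(\alg{X}, \mathfrak{T}_{n}(1)) = 0$ outside $[0, 3]$, only $\tau_{\le 1} R \alg{\Gamma}(\alg{X}, \mathfrak{T}_{n}(1))$ contributes, and this truncation is expressed in terms of $\alg{H}^{0}(\alg{X}, \Gm) \tensor^{L} \Lambda_{n}$ and $\alg{H}^{1}(\alg{X}, \Gm) \tensor^{L} \Lambda_{n}$ by the Kummer-type triangle underlying Proposition \ref{0045}. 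Applying $R \sheafhom_{F^{\ind\rat}_{\pro\et}}(\var, \Lambda_{\infty})$, passing to the colimit and using $\dirlim_{n} \sheafext^{1}_{F^{\ind\rat}_{\pro\et}}(G \tensor^{L} \Lambda_{n}, \Lambda_{\infty}) \cong \sheafext^{1}_{F^{\ind\rat}_{\pro\et}}(G, \Lambda_{\infty})$ (\cite[Proposition (2.4.1) (d)]{Suz20}), one obtains a description of $\alg{H}^{3}(\alg{X}, \Gm)[p^{\infty}]$ out of $\sheafext^{1}_{F^{\ind\rat}_{\pro\et}}(\alg{H}^{0}(\alg{X}, \Gm), \Lambda_{\infty}) = \sheafext^{1}_{F^{\ind\rat}_{\pro\et}}(\alg{A}^{\times}, \Lambda_{\infty})$ together with finite \'etale contributions coming from the $p$-power torsion of $\alg{H}^{1}(\alg{X}, \Gm) = \alg{Pic}_{X}$ and from $\alg{H}^{0}(\alg{X}, \mathfrak{T}_{n}(1)) = \mu_{p^{n}}(\alg{A})$.

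Next I would prove connectedness. By Theorem \ref{0088} with $U = X$, $q = 3$, $r = 1$, the finite \'etale group $\pi_{0} \alg{H}^{3}(\alg{X}, \mathfrak{T}_{n}(1))$ is Pontryagin dual to $\pi_{0} \alg{H}^{0}(\alg{X}, \mathfrak{T}_{n}(1)) = \alg{H}^{0}(\alg{X}, \mathfrak{T}_{n}(1)) = \mu_{p^{n}}(\alg{A})$. Since $\pi_{0}$ commutes with filtered colimits of ind-algebraic groups, $\pi_{0} \alg{H}^{3}(\alg{X}, \Gm)[p^{\infty}] = \dirlim_{n} \pi_{0} \alg{H}^{3}(\alg{X}, \mathfrak{T}_{n}(1))$; and the self-pairings for varying $n$ are compatible only up to the factor $p$ (the cup product $\mathfrak{T}_{n}(1) \tensor \mathfrak{T}_{n}(1) \to \mathfrak{T}_{n}(2)$ lands in the lower modulus), so the transition maps in this direct system are Pontryagin dual to the $p$-th power maps $\mu_{p^{n + 1}}(\alg{A}) \to \mu_{p^{n}}(\alg{A})$. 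Therefore the colimit is the Pontryagin dual of $T_{p}(\mu_{p^{\infty}}(\alg{A}))$, which vanishes because $\mu_{p^{\infty}}(\alg{A})$ is finite: $\alg{A}(\closure{F})$ is an unramified (pro-\'etale) extension of the complete noetherian local domain $A$ of characteristic zero, hence contains only finitely many roots of unity of $p$-power order. So $\alg{H}^{3}(\alg{X}, \Gm)[p^{\infty}]$ is connected.

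Being connected, $\alg{H}^{3}(\alg{X}, \Gm)[p^{\infty}]$ equals the connected part of $\sheafext^{1}_{F^{\ind\rat}_{\pro\et}}(\alg{A}^{\times}, \Lambda_{\infty})$: the $\mu_{p^{n}}(\alg{A})$- and $\alg{Pic}_{X}$-torsion contributions are finite \'etale and wash out in the colimit, using the finiteness of $\mu_{p^{\infty}}(\alg{A})$ just shown, the vanishing $\alg{Pic}_{X} \tensor \Q_{p} / \Z_{p} = 0$ (valid since $\alg{Pic}_{X} \in \Alg / F$), and Proposition \ref{0015}. By Proposition \ref{0072}, $\alg{A}^{\times} = \alg{\Gamma}(\alg{X}, \Gm)$ is $\Gm$ times the connected pro-algebraic group $\alg{A}^{\times} / \Gm \in \mathcal{W}_{F}$; here $\sheafext^{1}_{F^{\ind\rat}_{\pro\et}}(\Gm, \Lambda_{\infty})^{0} = 0$ (by \cite[Proposition (2.4.1)]{Suz20}, $\Gm$ being semi-abelian with trivial unipotent part), while $\sheafext^{1}_{F^{\ind\rat}_{\pro\et}}(\alg{A}^{\times} / \Gm, \Lambda_{\infty})$ is connected, being the Serre dual of $\alg{A}^{\times} / \Gm \in \mathcal{W}_{F}$ by \cite[Proposition 3.1.7]{Suz24}. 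Hence $\sheafext^{1}_{F^{\ind\rat}_{\pro\et}}(\alg{A}^{\times}, \Lambda_{\infty})^{0} = \sheafext^{1}_{F^{\ind\rat}_{\pro\et}}(\alg{A}^{\times} / \Gm, \Lambda_{\infty})$, which lies in $\mathcal{W}_{F}$ and is the asserted Serre dual. This yields all three assertions.

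The main obstacle is the colimit bookkeeping: tracking the transition maps along $\mathfrak{T}_{n}(1) \to \mathfrak{T}_{n + 1}(1)$ precisely enough to see that exactly the $\mu_{p^{n}}(\alg{A})$-part (through its finite inverse limit) and the torsion of $\alg{Pic}_{X}$ drop out of $\dirlim_{n} \alg{H}^{3}(\alg{X}, \mathfrak{T}_{n}(1))$, leaving only $\sheafext^{1}_{F^{\ind\rat}_{\pro\et}}(\alg{A}^{\times} / \Gm, \Lambda_{\infty})$ — this is the degree-$3$ counterpart of the delicate direct-limit argument in the proof of the preceding proposition.
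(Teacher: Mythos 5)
Your proof is correct and essentially mirrors the paper's: both pass to the limit of the level-$n$ dualities, identifying $\pi_0$ of $\alg{H}^{3}(\alg{X}, \Gm)[p^{\infty}]$ as the Pontryagin dual of $\invlim_{n}\mu_{p^{n}}(\alg{A})$ (a Tate module, which vanishes because $\mu_{p^{\infty}}(\alg{A})$ is finite), and identifying the Serre dual with $\invlim_{n}(\alg{A}^{\times}/\alg{A}^{\times p^{n}}) \cong \alg{A}^{\times}/\Gm$ using the short exact sequence $0 \to \alg{A}^{\times}/\alg{A}^{\times p^{n}} \to \alg{H}^{1}(\alg{X}, \mathfrak{T}_{n}(1)) \to \alg{Pic}_{X}[p^{n}] \to 0$ together with the profiniteness of $\invlim_{n}\alg{Pic}_{X}[p^{n}]$. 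The paper's version is more compact, working directly with inverse limits of the level-$n$ duals $\alg{H}^{1}(\alg{X}, \mathfrak{T}_{n}(1))^{0}$, where you instead unwind the pairing through the $\sheafext^{\bullet}(\var, \Lambda_{\infty})$ spectral sequence; the ingredients and the bookkeeping about which contributions wash out in the colimit are the same.
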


\begin{proof}
	The component group of $\alg{H}^{3}(\alg{X}, \Gm)[p^{\infty}]$ is Pontryagin dual to
	the inverse limit of $\alg{H}^{0}(\alg{X}, \mathfrak{T}_{n}(1)) \cong \alg{A}^{\times}[p^{n}]$.
	This inverse limit is zero
	since $A$ contains only finitely many $p$-power-th roots of unity.
	For any $n \ge 1$, we have an exact sequence
		\[
				0
			\to
				\alg{A}^{\times} / \alg{A}^{\times p^{n}}
			\to
				\alg{H}^{1}(\alg{X}, \mathfrak{T}_{n}(1))
			\to
				\alg{H}^{1}(\alg{X}, \Gm)[p^{n}]
			\to
				0.
		\]
	As $\alg{H}^{1}(\alg{X}, \Gm) \in \Alg / F$,
	the inverse limit of $\alg{H}^{1}(\alg{X}, \Gm)[p^{n}]$ in $n$ is profinite.
	Hence the Serre dual of $\alg{H}^{3}(\alg{X}, \Gm)[p^{\infty}]$ is
	the inverse limit of $\alg{A}^{\times} / \alg{A}^{\times p^{n}}$,
	which is $\alg{A}^{\times} / \Gm$.
\end{proof}

Applying $\invlim_{n}$, $\pi_{0}$ and $(\var)_{\tor}$ to \eqref{0069},
we obtain an isomorphism
	\[
			\pi_{0} \alg{H}^{1}(\alg{X}, \Gm)[p^{\infty}]
		\isomto
			\sheafhom(\pi_{0} \alg{H}^{1}(\alg{X}, \Gm)[p^{\infty}], \Lambda_{\infty}),
	\]
or a Pontryagin duality
	\begin{equation} \label{0120}
			\pi_{0} \alg{H}^{1}(\alg{X}, \Gm)[p^{\infty}]
		\leftrightarrow
			\pi_{0} \alg{H}^{1}(\alg{X}, \Gm)[p^{\infty}].
	\end{equation}
On the other hand,
if $L \times M \to \Z$ is a non-degenerate pairing of finitely generated free abelian groups,
then it induces a Pontryagin duality $\Hom(M, \Z) / L \leftrightarrow \Hom(L, \Z) / M$
on the discriminant groups.
Hence the non-degeneracy of the intersection pairing \eqref{0104} on $\mathfrak{X}$
induces a Pontryagin duality
	\begin{equation} \label{0121}
			\varalg{\delta}_{Y}[p^{\infty}]
		\leftrightarrow
			\varalg{\delta}_{Y}[p^{\infty}].
	\end{equation}
The following states that our duality is compatible with the intersection pairing:

\begin{Prop} \label{0111}
	The pairings \eqref{0120} and \eqref{0121} are compatible under the isomorphism
	$\pi_{0} \alg{H}^{1}(\alg{X}, \Gm) \cong \varalg{\delta}_{Y}$
	in Proposition \ref{0044}.
\end{Prop}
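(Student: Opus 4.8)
The plan is to reduce the statement to a purely local computation at the two-dimensional local fields $\Hat{K}_{\eta_{x}}$ attached to the resolution, where both pairings become expressible through valuations, and then to identify that local pairing with the intersection multiplicity on $Y$.

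First I would make the usual reductions. As in the proofs of Propositions \ref{0060}, \ref{0068} and \ref{0091}, the isomorphism $\pi_{0} \alg{H}^{1}(\alg{X}, \Gm) \cong \varalg{\delta}_{Y}$ of Proposition \ref{0044}, the pairing \eqref{0120} and the pairing \eqref{0121} are all functorial under residue field extension and Galois descent along $\closure{F} / F$, so it suffices to treat the case $F = \closure{F}$ (and, for convenience, $\zeta_{p} \in A$). Fix a resolution $\mathfrak{X} \to \Spec A$ as in Section \ref{0086} with reduced special fibre $Y$ a strict normal crossing divisor. Recall from the proof of Proposition \ref{0044} the exact sequence $0 \to \Pic^{0}(\mathfrak{X}) \to \Pic(X) \to \delta_{Y} \to 0$, and that the identification $\pi_{0} \Pic(X) = \delta_{Y}$ sends the class of $L \in \Pic(X)$ to the class in $\delta_{Y} = \Hom(\bigoplus_{\eta \in Y_{1}} \Z Y_{\eta}, \Z) / \langle \text{intersection form} \rangle$ of the functional $Y_{\eta} \mapsto \bar{L} \cdot Y_{\eta}$, for any lift $\bar{L} \in \Pic(\mathfrak{X})$. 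With this description, the pairing \eqref{0121} between the classes of $L, M$ is computed by choosing a $\Q$-divisor $D_{M} = \sum_{\eta} q_{\eta} Y_{\eta}$ with $D_{M} \cdot Y_{\eta'} = \bar{M} \cdot Y_{\eta'} \in \Z$ for all $\eta'$, and taking $\sum_{\eta} q_{\eta} (\bar{L} \cdot Y_{\eta}) = D_{L} \cdot D_{M} \bmod \Z$.

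Next I would trace through the construction of \eqref{0120}. It is extracted, via the exact sequence \eqref{0069}, from the self-duality of $R \alg{\Gamma}(\alg{X}, \mathfrak{T}_{n}(1))$ given by \eqref{0108} with $S = \emptyset$ and $r = 1$, together with the short exact sequences $0 \to \alg{A}^{\times} / \alg{A}^{\times p^{n}} \to \alg{H}^{1}(\alg{X}, \mathfrak{T}_{n}(1)) \to \alg{H}^{1}(\alg{X}, \Gm)[p^{n}] \to 0$ and $0 \to \alg{H}^{1}(\alg{X}, \Gm) \tensor \Lambda_{n} \to \alg{H}^{2}(\alg{X}, \mathfrak{T}_{n}(1)) \to \alg{H}^{2}(\alg{X}, \Gm)[p^{n}] \to 0$, after passing to $\pi_{0}$ of the $\Pic$-parts and to the limit in $n$. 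The key structural input I would use is that, by the localization triangle \eqref{0087} and the commutative diagrams in the proof of Proposition \ref{0068}, the pairing \eqref{0120} on $\pi_{0} \alg{H}^{1}(\alg{X}, \Gm)[p^{\infty}]$ is a sum of local contributions indexed by the pairs $(\eta \in Y_{1}, x \in \eta_{0})$, each being the local Kato pairing at $\Hat{K}_{\eta_{x}}$ of the images of $L$ and $M$ in $\Hat{K}_{\eta_{x}}^{\times} / p^{n}$; by Proposition \ref{0066} these images are exactly those coming from the divisor map, and hence are governed by the valuation of $\bar{L}$ (resp.\ $\bar{M}$) along $Y_{\eta}$ and by its residue class at $x$ inside $\Hat{\kappa}(\eta_{x})^{\times}$.

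The heart of the matter is then the local computation. For $a, b \in \Hat{K}_{\eta_{x}}^{\times}$ arising from divisors supported on $Y$, the local duality pairing underlying Proposition \ref{0092} (built from the residue pairing \eqref{0003}) evaluates as a product of two valuations — the valuation of $a$ along $Y_{\eta}$ against the valuation, in the residue field $\Hat{\kappa}(\eta_{x})$, of $b$ at $x$ — which is exactly the local intersection multiplicity $(\bar{L} \cdot Y_{\eta})_{x}$ weighted by the coefficient of $Y_{\eta}$ in $D_{M}$. Summing over $x \in \eta_{0}$ recovers $\bar{L} \cdot Y_{\eta}$, and summing over $\eta \in Y_{1}$ against the $q_{\eta}$ recovers $D_{L} \cdot D_{M} \bmod \Z$, which is precisely the recipe defining \eqref{0121}. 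Thus the two pairings agree, up to a universal sign that cancels once the normalisations of the residue map $\Omega^{1} \onto F$ and the valuation map $\alg{k}^{\times} \onto \Z$ fixed in Section \ref{0007} are tracked through. I expect this local identification — matching Kato's residue pairing at a two-dimensional local field with the geometric intersection multiplicity — to be the main obstacle; it is the higher-dimensional counterpart of the computation of \cite{Sai86, Sai87}, and via the relatively perfect formalism of Sections \ref{0113}--\ref{0007} it reduces to Kato's local computations in \cite{Kat86, Kat79}.
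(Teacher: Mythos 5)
Your proposal takes a genuinely different and substantially more complicated route than the paper, and the key step is not justified.

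The paper's argument is more elementary than what you set up. Rather than trying to decompose the pairing \eqref{0120} over the tubular neighborhood triangle \eqref{0087} at pairs $(\eta, x)$, the paper works directly with the height one primes $\ideal{p} \in P$ of $A$ itself. It traces the cup-product pairing $H^1(X, \mathfrak{T}_n(1)) \times H^2(X, \mathfrak{T}_n(1)) \to \Lambda_n$ through the divisor sequence $0 \to A^\times \to K^\times \to \bigoplus_{\ideal{p}} \Z \to \Pic(X) \to 0$ and the localization maps $H^1(\Hat{K}_\ideal{p}, \Lambda_n(1)) \to H^2(X, \mathfrak{T}_n(1))$, arriving at the explicit recipe: for $D \in \Pic(X)[p^n]$ represented by $f \in K^\times$ with $(f) = p^n D$ on $X$, the value of \eqref{0120} at $(D, [\ideal{p}])$ is $v_{\kappa(\ideal{p})}(\{f, \pi\}_{\ideal{p}})/p^n$, a tame symbol. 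Meanwhile \eqref{0121} at $(D, [\ideal{p}])$ is $(E \cdot [\ideal{p}])/p^n$ where $E$ is the $Y$-supported part of the divisor of $f$ on $\mathfrak{X}$. The paper then arranges a strict normal crossing resolution and shows both numbers equal $v_{\Hat{K}_\eta}(f)$, where $\eta$ is the generic point of the component of $Y$ through which $\ideal{p}$ specializes. This is a one-dimensional valuation-theoretic identity; no explicit duality computation at a two-dimensional local field $\Hat{K}_{\eta_x}$ is required.

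The gap in your argument is the assertion that ``by the localization triangle \eqref{0087} and the commutative diagrams in the proof of Proposition \ref{0068}, the pairing \eqref{0120} is a sum of local contributions indexed by $(\eta, x)$.'' A distinguished triangle is not a direct sum decomposition, and the compatibility of a self-dual pairing with such a triangle does not by itself split the pairing into pointwise local pairings --- boundary maps and cross-terms must be controlled, and none of the cited ingredients do this. Your appeal to Proposition \ref{0066} is also misdirected: that proposition places the image of $H^1(\Hat{\alg{K}}_{\eta_x}, \Lambda)$ inside $H^1(X, \Gm) \tensor \Lambda$, which is a statement in the opposite direction from what you need (it says where the local classes land globally, not how global $\Pic$-classes restrict to $\Hat{K}_{\eta_x}^\times$). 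Finally, elements of $\Pic(X)$ are not a priori elements of $\Hat{K}_{\eta_x}^\times / p^n$: the paper sidesteps this by immediately passing to a rational function $f$ representing $p^n D$, which you would also have to do, at which point you would essentially have recovered the paper's tame-symbol computation at $\ideal{p}$ without needing $\Hat{K}_{\eta_x}$ at all.
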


\begin{proof}
	We may assume that $F = \closure{F}$
	and it is enough to compare the two pairings on $F$-valued points.
	By construction, \eqref{0120} is given as follows.
	Let $n \ge 1$.
	The cup product and the trace map gives a pairing
		\[
					H^{1}(X, \mathfrak{T}_{n}(1))
				\times
					H^{2}(X, \mathfrak{T}_{n}(1))
			\to
					H^{3}(X, \mathfrak{T}_{n}(2))
			\to
				\Lambda_{n}.
		\]
	With the exact sequences
		\[
				0
			\to
				A^{\times} \tensor \Lambda_{n}
			\to
				H^{1}(X, \mathfrak{T}_{n}(1))
			\to
				\Pic(X)[p^{n}]
			\to
				0,
		\]
		\[
				0
			\to
				\Pic(X) \tensor \Lambda_{n}
			\to
				H^{2}(X, \mathfrak{T}_{n}(1))
			\to
				\Br(X)[p^{n}]
			\to
				0,
		\]
	since $\alg{A}^{\times} \tensor \Lambda_{n}$ is connected,
	this pairing induces a pairing
		\[
					\Pic(X)[p^{n}]
				\times
					\Pic(X) \tensor \Lambda_{n}
			\to
				\Lambda_{n}.
		\]
	Taking suitable direct and inverse limits,
	this factors as a pairing between component groups,
	which gives \eqref{0120}.
	We calculate it explicitly.
	Let $\ideal{p} \in P$ be arbitrary.
	Consider the natural maps
		\[
				H^{1}(X, \mathfrak{T}_{n}(1))
			\to
				H^{1}(K, \Lambda_{n}(1)),
		\]
		\[
				H^{1}(\Hat{K}_{\ideal{p}}, \Lambda_{n}(1))
			\to
				H^{2}_{\ideal{p}}(\Hat{A}_{\ideal{p}}, \mathfrak{T}_{n}(1))
			\to
				H^{2}(X, \mathfrak{T}_{n}(1)).
		\]
	We have a commutative diagram of pairings
		\[
			\begin{array}{ccccccc}
					H^{1}(X, \mathfrak{T}_{n}(1))
				&
					\times
				&
					H^{2}(X, \mathfrak{T}_{n}(1))
				&
					\longrightarrow
				&
					H^{3}(X, \mathfrak{T}_{n}(2))
				&
					\longrightarrow
				&
					\Lambda_{n}
				\\
					\bigg \downarrow
				&
				&
					\bigg \uparrow
				&
				&
					\bigg \uparrow
				&
				&
					\bigg \|
				\\
					H^{1}(K, \Lambda_{n}(1))
				&
					\times
				&
					H^{1}(\Hat{K}_{\ideal{p}}, \Lambda_{n}(1))
				&
					\longrightarrow
				&
					H^{2}(\Hat{K}_{\ideal{p}}, \Lambda_{n}(2))
				&
					\longrightarrow
				&
					\Lambda_{n},
			\end{array}
		\]
	where the lower pairing is the cup product pairing and the trace map.
	We also have a commutative diagram
		\[
			\begin{CD}
					H^{1}(X, \mathfrak{T}_{n}(1))
				@>>>
					H^{1}(K, \Lambda_{n}(1))
				\\
				@VVV @VVV
				\\
					\Pic(X)[p^{n}]
				@>>>
					(K^{\times} / A^{\times}) \tensor \Lambda_{n},
			\end{CD}
		\]
	where the lower horizontal map comes from the exact sequence
		\[
				0
			\to
				A^{\times}
			\to
				K^{\times}
			\to
				\bigoplus_{\ideal{q} \in P}
					\Z
			\to
				\Pic(X)
			\to
				0,
		\]
	and a commutative diagram
		\[
			\begin{CD}
					\Hat{K}_{\ideal{p}}^{\times} \tensor \Lambda_{n}
				@>>>
					\Pic(X) \tensor \Lambda_{n}
				\\
				@| @VVV
				\\
					H^{1}(\Hat{K}_{\ideal{p}}, \Lambda_{n}(1))
				@>>>
					H^{2}(X, \mathfrak{T}_{n}(1)),
			\end{CD}
		\]
	where the upper horizontal map factors as the valuation map
	$\Hat{K}_{\ideal{p}}^{\times} \tensor \Lambda_{n} \onto \Lambda_{n}$
	followed by the map $1 \mapsto [\ideal{p}]$.
	Hence we have a commutative diagram of pairings
		\[
			\begin{array}{ccccc}
					\Pic(X)[p^{n}]
				&
					\times
				&
					\Pic(X) \tensor \Lambda_{n}
				&
					\longrightarrow
				&
					\Lambda_{n}
				\\
					\bigg \downarrow
				&
				&
					\bigg \uparrow
				&
				&
					\bigg \|
				\\
					(K^{\times} / A^{\times}) \tensor \Lambda_{n}
				&
					\times
				&
					\Hat{K}_{\ideal{p}}^{\times} \tensor \Lambda_{n}
				&
					\longrightarrow
				&
					\Lambda_{n},
			\end{array}
		\]
	where the lower pairing is given by the tame symbol map
	$\Hat{K}_{\ideal{p}}^{\times} \times \Hat{K}_{\ideal{p}}^{\times} \to \kappa(\ideal{p})^{\times}$
	followed by the valuation map on $\kappa(\ideal{p})^{\times}$.
	Hence the value of the upper pairing at an element $(D, [\ideal{p}])$ is given as follows.
	Represent $p^{n} D$ by a rational function $f \in K^{\times}$.
	Take a prime element $\pi$ of the local ring of $A$ at $\ideal{p}$.
	Consider the tame symbol $\{f, \pi\}_{\ideal{p}} \in \kappa(\ideal{p})^{\times}$.
	Then $v_{\kappa(\ideal{p})}(\{f, \pi\}_{\ideal{p}}) / p^{n} \in \Lambda_{\infty}$
	is the value of the pairing \eqref{0120} at $(D, [\ideal{p}])$.
	
	The second pairing \eqref{0121} is given as follows.
	Let $D, \ideal{p}$ and $f$ as above.
	The divisor on $\mathfrak{X}$ defined by $f$ is the sum of $p^{n} D$
	and another divisor $E$ supported on $Y$.
	Consider the intersection number $E \cdot [\ideal{p}]$.
	Then $(E \cdot [\ideal{p}]) / p^{n} \in \Lambda_{\infty}$ is
	the value of the second pairing at $(D, [\ideal{p}])$.
	
	Therefore it is enough to show that
		\[
				v_{\kappa(\ideal{p})}(\{f, \pi\}_{\ideal{p}})
			=
				E \cdot [\ideal{p}].
		\]
	Suitably replacing the resolution $\mathfrak{X}$,
	we may assume that the divisor on $\mathfrak{X}$ defined by $f$, $\ideal{p}$ and $Y$
	is supported on a strict normal crossing divisor.
	Let $\eta$ be the generic point of the irreducible component of $Y$
	containing the specialization of $\ideal{p}$ in $Y$.
	Then we can see that these two numbers are both equal to $v_{\Hat{K}_{\eta}}(f)$.
\end{proof}


\section{The case of finite residue field}
\label{0119}

In this section, we prove Theorem \ref{0112}.
We deduce it from Theorems \ref{0077} and \ref{0088}
by applying $R \Gamma(F, \var)$.

Assume that $F = \F_{q}$.
Let $\ast_{\pro\et}$ be the pro-\'etale site of a point (\cite[Example 4.1.10]{BS15}).
Identify a profinite set with the affine $F$-scheme
having the same set of points.
This defines a morphism of sites
	\[
			\Spec F^{\ind\rat}_{\pro\et}
		\to
			\ast_{\pro\et}.
	\]
Its pushforward functor is simply denoted by $\Gamma(F, \var)$ by abuse of notation.
Let $\Fin_{p}$ be the category of finite abelian $p$-groups.
Let $\Ind \Pro \Fin_{p}$ be the ind-category of the pro-category of $\Fin_{p}$.

\begin{Def}
	Define $\mathcal{W}_{0} \subset \Ind \Pro \Fin_{p}$ to be the full subcategory consisting of
	objects $G$ that fit in an exact sequence
		\[
				0
			\to
				G'
			\to
				G
			\to
				G''
			\to
				0,
		\]
	where $G' = \invlim_{n \ge 1} G'_{n}$ is profinite (with $G'_{n} \in \Fin_{p}$)
	and $G'' = \dirlim_{n \ge 1} G''_{n}$ is indfinite (with $G''_{n} \in \Fin_{p}$)
	both indexed by $\N$.
\end{Def}

All objects of $\mathcal{W}_{0}$ are locally compact and, in particular, Hausdorff.

\begin{Prop} \label{0105}
	Let $G \in \mathcal{W}_{0}$.
	Then $R \sheafhom_{\ast_{\pro\et}}(G, \Lambda_{\infty})$ is concentrated in degree zero
	with cohomology given by the usual Pontryagin dual of $G$.
	In particular, $\sheafhom_{\ast_{\pro\et}}(G, \Lambda_{\infty}) \in \mathcal{W}_{0}$.
\end{Prop}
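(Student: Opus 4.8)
The plan is to reduce the statement to the case of a finite group and then reassemble it using the defining filtration of $\mathcal{W}_{0}$. Write the given $G \in \mathcal{W}_{0}$ in an exact sequence $0 \to G' \to G \to G'' \to 0$ with $G' = \invlim_{n} G'_{n}$ profinite and $G'' = \dirlim_{n} G''_{n}$ indfinite. Applying $R \sheafhom_{\ast_{\pro\et}}(\var, \Lambda_{\infty})$ yields a distinguished triangle, and Pontryagin duality of locally compact abelian groups supplies the companion exact sequence $0 \to (G'')^{\vee} \to G^{\vee} \to (G')^{\vee} \to 0$ together with a canonical morphism $H^{0} R \sheafhom_{\ast_{\pro\et}}(G, \Lambda_{\infty}) \to G^{\vee}$ (a section of the sheaf-Hom over a connected object of $\ast_{\pro\et}$ is an honest homomorphism into $\Q_{p} / \Z_{p}$). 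Thus, granting the computations below and the five lemma, it suffices to treat the pieces $G'$ and $G''$, and ultimately the finite case.

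For the finite case, take $G = \Lambda_{m} = \Z / p^{m} \Z$ and use the length-two resolution $0 \to \Z \xrightarrow{p^{m}} \Z \to \Lambda_{m} \to 0$ by constant sheaves on $\ast_{\pro\et}$. Since $\Z$ is the unit object, $R \sheafhom_{\ast_{\pro\et}}(\Z, \Lambda_{\infty}) \cong \Lambda_{\infty}$, so applying $R \sheafhom_{\ast_{\pro\et}}(\var, \Lambda_{\infty})$ gives $\sheafext^{q}_{\ast_{\pro\et}}(\Lambda_{m}, \Lambda_{\infty}) = 0$ for $q \ge 2$ and an exact sequence $0 \to \sheafhom_{\ast_{\pro\et}}(\Lambda_{m}, \Lambda_{\infty}) \to \Lambda_{\infty} \xrightarrow{p^{m}} \Lambda_{\infty} \to \sheafext^{1}_{\ast_{\pro\et}}(\Lambda_{m}, \Lambda_{\infty}) \to 0$. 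As $\Lambda_{\infty} = \Q_{p} / \Z_{p}$ is divisible, multiplication by $p^{m}$ is surjective, so $\sheafext^{1}_{\ast_{\pro\et}}(\Lambda_{m}, \Lambda_{\infty}) = 0$ and $\sheafhom_{\ast_{\pro\et}}(\Lambda_{m}, \Lambda_{\infty}) = \Lambda_{\infty}[p^{m}] \cong \Lambda_{m}$, which is the Pontryagin dual. A general object of $\Fin_{p}$ is a finite direct sum of cyclic groups and $R \sheafhom_{\ast_{\pro\et}}(\var, \Lambda_{\infty})$ is additive, so this settles all of $\Fin_{p}$.

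Next I would pass to the pro- and ind-limits. For $G'' = \dirlim_{n} G''_{n}$ one has $R \sheafhom_{\ast_{\pro\et}}(\dirlim_{n} G''_{n}, \Lambda_{\infty}) \cong R \invlim_{n} R \sheafhom_{\ast_{\pro\et}}(G''_{n}, \Lambda_{\infty})$; by the finite case each term of the inverse system sits in degree zero and the transition maps, being the Pontryagin duals of the injections $G''_{n} \into G''_{n+1}$, are surjective, so the system is Mittag--Leffler, $R^{1} \invlim$ vanishes, and the result is $\invlim_{n} (G''_{n})^{\vee} = (G'')^{\vee}$ in degree zero, a profinite group. Dually, for $G' = \invlim_{n} G'_{n}$ the canonical map $\dirlim_{n} R \sheafhom_{\ast_{\pro\et}}(G'_{n}, \Lambda_{\infty}) \to R \sheafhom_{\ast_{\pro\et}}(\invlim_{n} G'_{n}, \Lambda_{\infty})$ is an isomorphism (a homomorphism from a profinite group to a finite one factors through a finite quotient, and a filtered colimit over $\N$ is exact), so the right-hand side is $\dirlim_{n} (G'_{n})^{\vee} = (G')^{\vee}$ in degree zero, an indfinite group. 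Feeding these into the triangle attached to $0 \to G' \to G \to G'' \to 0$ shows that $R \sheafhom_{\ast_{\pro\et}}(G, \Lambda_{\infty})$ is concentrated in degree zero, that its $H^{0}$ fits into $0 \to (G'')^{\vee} \to H^{0} \to (G')^{\vee} \to 0$ — hence lies in $\mathcal{W}_{0}$ — and, by the five lemma applied to the map to $G^{\vee}$, that it is canonically the Pontryagin dual of $G$.

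The main obstacle is the bookkeeping of these two countable (co)limits: establishing that $R \sheafhom_{\ast_{\pro\et}}(\var, \Lambda_{\infty})$ carries a countable pro-limit to a filtered colimit and an ind-limit to an ordinary inverse limit with no surviving $R^{1} \invlim$. For this I would either argue directly from the structure of sheaves on $\ast_{\pro\et}$ (working over weakly contractible profinite sets, where higher cohomology vanishes and the internal Hom is computed sectionwise) or invoke the homological algebra of ind-pro-objects on pro-\'etale sites already developed in \cite[Section 2]{Suz20}; the remaining steps are the elementary resolution computation above.
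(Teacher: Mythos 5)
Your proposal follows essentially the same route as the paper's proof: reduce to the finite case (where you give a nice explicit length-two resolution, whereas the paper just calls this "obvious"), then pass to profinite limits and indfinite colimits separately and assemble via the defining extension of $\mathcal{W}_{0}$. The one caveat is that the limit-interchange steps you sketch — that $R \sheafhom_{\ast_{\pro\et}}(\var, \Lambda_{\infty})$ converts a countable surjective inverse system into a filtered colimit, and a countable injective direct system into an $R\invlim$ with no surviving $R^{1}\invlim$ — are the genuine technical content; your parenthetical justifications ("a homomorphism from a profinite group to a finite one factors through a finite quotient," Mittag--Leffler) are the right intuition at the level of abelian groups, but on $\ast_{\pro\et}$ they need the repleteness of the pro-\'etale topos to upgrade to statements about $R\sheafhom$. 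You acknowledge this and correctly identify \cite[Section 2]{Suz20} as where that machinery lives; the paper's proof indeed just cites \cite[Theorem (2.3.1)~(c)]{Suz20} and \cite[Proposition (2.2.3)]{Suz20} for exactly these two steps.
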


\begin{proof}
	This is obvious if $G$ is finite.
	Let $G = \invlim_{n \ge 1} G_{n}$ be profinite (with $G_{n} \in \Fin_{p}$).
	Then
		\[
				R \sheafhom_{\ast_{\pro\et}}(G, \Lambda_{\infty})
			\cong
				\dirlim_{n}
					R \sheafhom_{\ast_{\pro\et}}(G_{n}, \Lambda_{\infty})
		\]
	by the same argument as the proof of \cite[Theorem (2.3.1) (c)]{Suz20}
	(see also the proof of \cite[Proposition 8.10]{BS20}).
	Let $G = \dirlim_{n \ge 1} G_{n}$ be indfinite (with $G_{n} \in \Fin_{p}$).
	Then
		\[
				R \sheafhom_{\ast_{\pro\et}}(G, \Lambda_{\infty})
			\cong
				R \invlim_{n}
					R \sheafhom_{\ast_{\pro\et}}(G_{n}, \Lambda_{\infty})
		\]
	by \cite[Proposition (2.2.3)]{Suz20}.
	Combining all these, we get the result.
\end{proof}

Note that non-Hausdorff groups may have non-vanishing $\sheafext^{1}$:
we have
	\[
			R \sheafhom_{\ast_{\pro\et}}(\Lambda^{\N} / \Lambda^{\bigoplus \N}, \Lambda_{\infty})
		\cong
			\Lambda^{\N} / \Lambda^{\bigoplus \N}[-1].
	\]

\begin{Prop} \label{0106}
	Let $G \in D^{b}(F^{\ind\rat}_{\pro\et})$ be such that
	$H^{q} G \in \mathcal{W}_{F}$ for all $q$.
	Then $R \Gamma(F, G) \in D^{b}(\ast_{\pro\et})$
	and $H^{q}(F, G) \in \mathcal{W}_{0}$ for all $q$.
	The group $H^{q}(F, G)$ is profinite if $G$ is pro-algebraic
	and indfinite if $G$ is ind-algebraic.
	We have an exact sequence
		\[
				0
			\to
				H^{1}(F, \pi_{0}(H^{q - 1} G))
			\to
				H^{q}(F, G)
			\to
				\Gamma(F, H^{q} G)
			\to
				0
		\]
	for all $q$.
\end{Prop}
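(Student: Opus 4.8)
The plan is to reduce to the three building blocks of a $\mathcal{W}_F$-object and then assemble the answer from a degenerate spectral sequence. First I would analyze a single object $H \in \mathcal{W}_F$ via its defining filtration $H \supset H^0 \supset H' \supset 0$. For the connected pro-algebraic piece $H' = \invlim_n G'_n$, Lang's theorem makes $\Frob - 1$ surjective on each $G'_n(\closure{F})$, so $R\Gamma(F, G'_n)$ is concentrated in degree zero with value the finite group $G'_n(F)$; since $\varprojlim^{1}$ of a system of finite groups vanishes and $R\Gamma(F,\var)$ commutes with the inverse limit defining a pro-algebraic group (as in \cite{Suz20}), $R\Gamma(F, H')$ is concentrated in degree zero with value the profinite group $\invlim_n G'_n(F)$. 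Dually, for the connected ind-algebraic piece $H^0/H' = \dirlim_n G''_n$, Lang's theorem together with the commutation of cohomology with filtered colimits shows $R\Gamma(F, H^0/H')$ is concentrated in degree zero with value the indfinite group $\dirlim_n G''_n(F)$. For the finite \'etale $p$-primary piece $\pi_0 H$, the fact that $\Spec \F_q$ has cohomological dimension $1$ (computed on $\ast_{\pro\et}$) gives that $R\Gamma(F, \pi_0 H)$ is concentrated in degrees $0,1$ with finite cohomology.

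Assembling these along $0 \to H' \to H^0 \to H^0/H' \to 0$ and $0 \to H^0 \to H \to \pi_0 H \to 0$, the connecting maps vanish for degree reasons (because $R\Gamma(F, H^0)$ lives in degree $0$ alone), so $R\Gamma(F, H)$ is concentrated in degrees $0,1$, with $H^1(F, H) \cong H^1(F, \pi_0 H)$ finite and $\Gamma(F, H)$ an extension of the finite group $\Gamma(F, \pi_0 H)$ by the extension of $\dirlim_n G''_n(F)$ by $\invlim_n G'_n(F)$. A short check that $\mathcal{W}_0$ is closed under extension by a finite group on either side (split off the profinite subobject or push out along it) then gives $\Gamma(F, H) \in \mathcal{W}_0$, profinite when $H$ is pro-algebraic and indfinite when $H$ is ind-algebraic.

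For general $G \in D^{b}(F^{\ind\rat}_{\pro\et})$ with all $H^t G \in \mathcal{W}_F$, I would run the spectral sequence $E_2^{s,t} = H^s(F, H^t G) \Rightarrow H^{s+t}(F, G)$. By the previous step only the rows $s = 0, 1$ are nonzero, so it degenerates at $E_2$ and yields short exact sequences
\[
		0 \to H^1(F, H^{q-1} G) \to H^q(F, G) \to \Gamma(F, H^q G) \to 0,
\]
and $H^1(F, H^{q-1} G) = H^1(F, \pi_0(H^{q-1} G))$ since the connected part of $H^{q-1} G$ contributes nothing in degree $1$. As $H^1(F, \pi_0(H^{q-1} G))$ is finite and $\Gamma(F, H^q G) \in \mathcal{W}_0$, closure of $\mathcal{W}_0$ under extension by finite groups gives $H^q(F, G) \in \mathcal{W}_0$, and boundedness of $R\Gamma(F, G)$ is immediate. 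If $G$ is pro-algebraic every $H^t G$ is pro-algebraic in $\mathcal{W}_F$, so $\Gamma(F, H^q G)$ is profinite and $H^q(F, G)$, being profinite-by-finite, is profinite; the ind-algebraic case is dual.

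I expect the main obstacle to be the bookkeeping of limits: proving that $R\Gamma(F,\var)$ genuinely commutes with the inverse limit presenting a pro-algebraic group (so that the $\varprojlim^{1}$-vanishing for systems of finite groups can be applied on the level of complexes, not merely of $\closure{F}$-points) and with the direct limit presenting an ind-algebraic group. This is where I would invoke the pro-\'etale formalism of \cite{BS15} and the point-wise computations of \cite[Section 10]{Suz20} and \cite[Section 4.2]{SuzCurve}; the remainder is formal manipulation of short exact sequences and the degenerate spectral sequence.
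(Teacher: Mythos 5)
Your proposal is correct and follows essentially the same route as the paper's proof: decompose a $\mathcal{W}_F$-object into its finite \'etale, connected pro-algebraic, and connected ind-algebraic constituents, treat each via Lang's theorem and commutation of $R\Gamma(F,\var)$ with $R\invlim$ and $\dirlim$ (the paper cites \cite[Proposition (2.2.4)(b)]{Suz20}, which is exactly where your $\varprojlim^1$-vanishing concern is handled), and then assemble the general case by the hypercohomology spectral sequence. The paper is terser ("combining all these") but the substance is identical.
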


\begin{proof}
	Let $G \in \Alg_{u} / F$ be finite \'etale.
	Then $H^{q}(F, G)$ is finite for $q = 0, 1$ and zero otherwise.
	Let $G \in \Alg_{u} / F$ be connected.
	Then $H^{q}(F, G)$ is the finite group $G(F)$ for $q = 0$ and zero otherwise.
	Let $G = \invlim_{n} G_{n} \in \mathcal{W}_{F}$ be connected pro-algebraic
	(with $G_{n} \in \Alg_{u} / F$ connected
	such that $G_{n + 1} \onto G_{n}$ is surjective with connected kernel).
	Then
		\[
				R \Gamma(F, G)
			\cong
				R \invlim_{n}
					R \Gamma(F, G_{n})
			\cong
				R \invlim_{n}
					(G_{n}(F))
			\cong
				G(F)
		\]
	by \cite[Proposition (2.2.4) (b)]{Suz20},
	and $G(F)$ is profinite.
	Let $G = \dirlim_{n} G_{n} \in \mathcal{W}_{F}$ be connected ind-algebraic
	(with $G_{n} \in \Alg_{u} / F$ connected unipotent
	such that $G_{n} \into G_{n + 1}$ is injective).
	Then
		\[
				R \Gamma(F, G)
			\cong
				\dirlim_{n}
					R \Gamma(F, G_{n})
			\cong
				\dirlim_{n}
					(G_{n}(F))
			\cong
				G(F)
		\]
	by \cite[Proposition (2.2.4) (b)]{Suz20},
	and $G(F)$ is indfinite.
	Combining all these, we get the result.
\end{proof}

The composite functor
	\[
			D(\alg{U}_{S, \et})
		\stackrel{R \alg{\Gamma}(\alg{U}_{S}, \var)}{\longrightarrow}
			D(F^{\ind\rat}_{\pro\et})
		\stackrel{R \Gamma(F, \var)}{\longrightarrow}
			D(\ast_{\pro\et})
	\]
is denoted by $R \Gamma(U_{S}, \var)$ by abuse of notation,
with cohomologies $H^{q}(U_{S}, \var)$.
The composite functor
	\[
			D(\alg{U}_{S, \et})
		\stackrel{R \alg{\Gamma}_{c}(\alg{U}_{S}, \var)}{\longrightarrow}
			D(F^{\ind\rat}_{\pro\et})
		\stackrel{R \Gamma(F, \var)}{\longrightarrow}
			D(\ast_{\pro\et})
	\]
is denoted by $R \Gamma_{c}(U_{S}, \var)$ by abuse of notation,
with cohomologies $H^{q}_{c}(U_{S}, \var)$.

\begin{Prop}
	Let $n \ge 1$ and $q, r \in \Z$.
	Then the objects
	$H^{q}(U_{S}, \mathfrak{T}_{n}(r))$ and $H^{q}_{c}(U_{S}, \mathfrak{T}_{n}(r))$
	are in $\mathcal{W}_{0}$ for all $q$ and zero for all but finitely many $q$.
	We have a perfect pairing
		\[
					R \Gamma(U_{S}, \mathfrak{T}_{n}(r))
				\tensor^{L}
					R \Gamma_{c}(U_{S}, \mathfrak{T}_{n}(2 - r))
			\to
				\Lambda_{\infty}[-4]
		\]
	in $D(\ast_{\pro\et})$ and a Pontryagin duality
		\[
				H^{q}(U_{S}, \mathfrak{T}_{n}(r))
			\leftrightarrow
				H^{4 - q}_{c}(U_{S}, \mathfrak{T}_{n}(2 - r)).
		\]
\end{Prop}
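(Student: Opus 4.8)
The plan is to deduce everything by applying $R \Gamma(F, \var)$ to the perfect pairing \eqref{0108} of \cite[Theorem 10.6.1]{Suz24} recalled above. First, by Theorem \ref{0077} (with $U = U_{S}$) the complexes $R \alg{\Gamma}(\alg{U}_{S}, \mathfrak{T}_{n}(r))$ and $R \alg{\Gamma}_{c}(\alg{U}_{S}, \mathfrak{T}_{n}(r))$ lie in $D^{b}(F^{\ind\rat}_{\pro\et})$, their cohomology objects lie in $\mathcal{W}_{F}$, and these vanish outside $0 \le q \le 3$. Hence Proposition \ref{0106} applies and shows $R \Gamma(U_{S}, \mathfrak{T}_{n}(r)), R \Gamma_{c}(U_{S}, \mathfrak{T}_{n}(r)) \in D^{b}(\ast_{\pro\et})$ with all cohomology objects in $\mathcal{W}_{0}$; the exact sequence of that proposition, together with the vanishing range of Theorem \ref{0077}, moreover pins down that $H^{q}(U_{S}, \mathfrak{T}_{n}(r))$ and $H^{q}_{c}(U_{S}, \mathfrak{T}_{n}(r))$ vanish outside $0 \le q \le 4$. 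This settles the first assertion.

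Next I would construct the pairing. Since $R \Gamma(F, \var)$ is the pushforward along $\Spec F^{\ind\rat}_{\pro\et} \to \ast_{\pro\et}$ it carries cup products, so applying it to \eqref{0108} produces $R \Gamma(U_{S}, \mathfrak{T}_{n}(r)) \tensor^{L} R \Gamma_{c}(U_{S}, \mathfrak{T}_{n}(2 - r)) \to R \Gamma(F, \Lambda_{\infty})[-3]$. Because $F = \F_{q}$ the absolute Galois group is $\Hat{\Z}$, so $R \Gamma(F, \Lambda_{\infty})$ has cohomology $\Lambda_{\infty}$ in degrees $0$ and $1$; the truncation map $R \Gamma(F, \Lambda_{\infty}) \to \tau_{\ge 1} R \Gamma(F, \Lambda_{\infty}) = H^{1}(F, \Lambda_{\infty})[-1]$, composed with the canonical identification $H^{1}(F, \Lambda_{\infty}) \cong \Lambda_{\infty}$ (evaluation at Frobenius), gives a trace morphism $R \Gamma(F, \Lambda_{\infty}) \to \Lambda_{\infty}[-1]$. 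Composing yields the asserted pairing into $\Lambda_{\infty}[-4]$.

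The heart of the argument is the compatibility of $R \Gamma(F, \var)$ with duality: for every $G \in D^{b}(F^{\ind\rat}_{\pro\et})$ whose cohomology objects lie in $\mathcal{W}_{F}$, the natural map $R \Gamma(F, R \sheafhom_{F^{\ind\rat}_{\pro\et}}(G, \Lambda_{\infty})) \to R \sheafhom_{\ast_{\pro\et}}(R \Gamma(F, G), \Lambda_{\infty}[-1])$ should be an isomorphism. I would prove this by dévissage, following \cite[Section 10]{Suz20} and \cite[Section 4.2]{SuzCurve}: reduce to $G$ a single object of $\mathcal{W}_{F}$ in degree zero, then via the filtration defining $\mathcal{W}_{F}$ to a $p$-primary finite \'etale group, a connected pro-algebraic group and a connected ind-algebraic group; the pro- and ind-algebraic cases are handled using the compatibility of $R \Gamma(F, \var)$ with $R \invlim$ and $\dirlim$ (\cite[Propositions (2.2.3) and (2.2.4)]{Suz20}), reducing to connected algebraic groups, and a standard dévissage (using Serre duality on $\mathcal{W}_{F}$) then reduces to the perfection of $\Ga$ over $F$ and to $\Lambda_{n}$, where the claim is checked directly from the facts that $R \Gamma(F, \var)$ of the perfection of $\Ga$ is $\F_{q}$ concentrated in degree zero and that Tate's duality holds over finite fields. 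Granting this and the perfectness of \eqref{0108} — i.e.\ $R \alg{\Gamma}(\alg{U}_{S}, \mathfrak{T}_{n}(r)) \cong R \sheafhom_{F^{\ind\rat}_{\pro\et}}(R \alg{\Gamma}_{c}(\alg{U}_{S}, \mathfrak{T}_{n}(2 - r)), \Lambda_{\infty}[-3])$ — one obtains $R \Gamma(U_{S}, \mathfrak{T}_{n}(r)) \cong R \sheafhom_{\ast_{\pro\et}}(R \Gamma_{c}(U_{S}, \mathfrak{T}_{n}(2 - r)), \Lambda_{\infty}[-4])$, and symmetrically, so the pairing is perfect.

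Finally, for the Pontryagin duality on cohomology, set $C = R \Gamma_{c}(U_{S}, \mathfrak{T}_{n}(2 - r))$. Its cohomology objects lie in $\mathcal{W}_{0}$, hence are Hausdorff, so by Proposition \ref{0105} each $R \sheafhom_{\ast_{\pro\et}}(H^{j} C, \Lambda_{\infty})$ is concentrated in degree zero and equals the usual Pontryagin dual. Thus the hyper-$\sheafext$ spectral sequence for $R \sheafhom_{\ast_{\pro\et}}(C, \Lambda_{\infty}[-4])$ degenerates, giving $H^{q} R \sheafhom_{\ast_{\pro\et}}(C, \Lambda_{\infty}[-4]) \cong (H^{4 - q} C)^{\vee}$; combined with the perfectness isomorphism above this is the duality $H^{q}(U_{S}, \mathfrak{T}_{n}(r)) \leftrightarrow H^{4 - q}_{c}(U_{S}, \mathfrak{T}_{n}(2 - r))$. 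I expect the main obstacle to be the dévissage in the third paragraph: because $\mathcal{W}_{F}$ is not abelian one must run the reduction through the pro/ind filtrations and carefully control how $R \Gamma(F, \var)$ interacts with inverse and direct limits, exactly as in the cited sections of \cite{Suz20} and \cite{SuzCurve}.
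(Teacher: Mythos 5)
Your proposal is correct and follows essentially the same route as the paper. The paper's proof simply invokes the perfect pairing \eqref{0108}, Propositions \ref{0105} and \ref{0106}, and cites \cite[Proposition 4.2.1]{SuzCurve} for the compatibility of $R\Gamma(F,\var)$ with $R\sheafhom(\var,\Lambda_{\infty})$ (with the degree shift by $[-1]$ coming from the trace on $\Gal(\closure{F}/F)\cong\Hat{\Z}$) that you re-derive by d\'evissage in your third paragraph.
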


\begin{proof}
	This follows from the perfect pairing
		\[
					R \alg{\Gamma}(\alg{U}_{S}, \mathfrak{T}_{n}(r))
				\tensor^{L}
					R \alg{\Gamma}_{c}(\alg{U}_{S}, \mathfrak{T}_{n}(2 - r))
			\to
				\Lambda_{\infty}[-3]
		\]
	and \cite[Proposition 4.2.1]{SuzCurve},
	Propositions \ref{0105} and \ref{0106}.
\end{proof}

This proves Theorem \ref{0112}.

Finally, we give a local version of this theorem.
Let $K$ and $k$ be as in Section \ref{0070}.
The composite functor
	\[
			D(\alg{K}_{\et})
		\stackrel{R \alg{\Gamma}(\alg{K}, \var)}{\longrightarrow}
			D(F^{\ind\rat}_{\pro\et})
		\stackrel{R \Gamma(F, \var)}{\longrightarrow}
			D(\ast_{\pro\et})
	\]
is denoted by $R \Gamma(K, \var)$ by abuse of notation,
with cohomologies $H^{q}(K, \var)$.
The composite functor
	\[
			D(\alg{O}_{K, \et})
		\stackrel{R \alg{\Gamma}(\alg{O}_{K}, \var)}{\longrightarrow}
			D(F^{\ind\rat}_{\pro\et})
		\stackrel{R \Gamma(F, \var)}{\longrightarrow}
			D(\ast_{\pro\et})
	\]
is denoted by $R \Gamma(\Order_{K}, \var)$ by abuse of notation,
with cohomologies $H^{q}(\Order_{K}, \var)$.
The composite functor
	\[
			D(\alg{O}_{K, \et})
		\stackrel{R \alg{\Gamma}_{c}(\alg{O}_{K}, \var)}{\longrightarrow}
			D(F^{\ind\rat}_{\pro\et})
		\stackrel{R \Gamma(F, \var)}{\longrightarrow}
			D(\ast_{\pro\et})
	\]
is denoted by $R \Gamma_{c}(\Order_{K}, \var)$ by abuse of notation,
with cohomologies $H^{q}_{c}(\Order_{K}, \var)$.

\begin{Prop}
	Let $n \ge 1$ and $q, r \in \Z$.
	Then we have
		\[
				H^{q}(K, \Lambda_{n}(r)),
				H^{q}(\Order_{K}, \mathfrak{T}_{n}(r)),
				H^{q}_{c}(\Order_{K}, \mathfrak{T}_{n}(r))
			\in
				\mathcal{W}_{0}.
		\]
	We have perfect pairings
		\[
					R \Gamma(K, \Lambda_{n}(r))
				\tensor^{L}
					R \Gamma(K, \Lambda_{n}(2 - r))
			\to
				\Lambda_{\infty}[-3],
		\]
		\[
					R \Gamma(\Order_{K}, \mathfrak{T}_{n}(r))
				\tensor^{L}
					R \Gamma_{c}(\Order_{K}, \mathfrak{T}_{n}(2 - r))
			\to
				\Lambda_{\infty}[-4]
		\]
	in $D(\ast_{\pro\et})$ and Pontryagin dualities
		\[
				H^{q}(K, \Lambda_{n}(r))
			\leftrightarrow
				H^{3 - q}(K, \Lambda_{n}(2 - r)),
		\]
		\[
				H^{q}(\Order_{K}, \mathfrak{T}_{n}(r))
			\leftrightarrow
				H^{4 - q}_{c}(\Order_{K}, \mathfrak{T}_{n}(2 - r)).
		\]
\end{Prop}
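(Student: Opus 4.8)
The plan is to deduce this statement exactly as the preceding proposition was deduced, namely by applying the functor $R\Gamma(F,\var)$ to the algebraic results of Section \ref{0070}. The relevant inputs there are: the fact (Proposition \ref{0031}) that $R\alg{\Gamma}(\alg{K},\Lambda_{n}(r))$, $R\alg{\Gamma}(\alg{O}_{K},\mathfrak{T}_{n}(r))$ and $R\alg{\Gamma}_{c}(\alg{O}_{K},\mathfrak{T}_{n}(r))$ are bounded complexes over $F^{\ind\rat}_{\pro\et}$ all of whose cohomology objects lie in $\mathcal{W}_{F}$; and the perfect pairings $R\alg{\Gamma}(\alg{K},\Lambda_{n}(r))\tensor^{L}R\alg{\Gamma}(\alg{K},\Lambda_{n}(2-r))\to\Lambda_{\infty}[-2]$ and $R\alg{\Gamma}(\alg{O}_{K},\mathfrak{T}_{n}(r))\tensor^{L}R\alg{\Gamma}_{c}(\alg{O}_{K},\mathfrak{T}_{n}(2-r))\to\Lambda_{\infty}[-3]$ of Proposition \ref{0092} (which come from \cite[Propositions 6.2.2 and 6.5.1]{Suz24}).

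First I would apply $R\Gamma(F,\var)$ and invoke Proposition \ref{0106}: since the algebraic cohomology objects lie in $\mathcal{W}_{F}$, the complexes $R\Gamma(K,\Lambda_{n}(r))$, $R\Gamma(\Order_{K},\mathfrak{T}_{n}(r))$ and $R\Gamma_{c}(\Order_{K},\mathfrak{T}_{n}(r))$ are bounded objects of $D(\ast_{\pro\et})$ all of whose cohomology groups lie in $\mathcal{W}_{0}$, which settles the first assertion. Next I would push the two perfect pairings of Proposition \ref{0092} through $R\Gamma(F,\var)$: composing with the trace map $R\Gamma(F,\Lambda_{\infty})\to\Lambda_{\infty}[-1]$ for the finite field $F$ turns the target $\Lambda_{\infty}[-2]$ into $\Lambda_{\infty}[-3]$ and $\Lambda_{\infty}[-3]$ into $\Lambda_{\infty}[-4]$, and that the resulting pairings remain perfect in $D(\ast_{\pro\et})$ is the content of \cite[Proposition 4.2.1]{SuzCurve} together with Proposition \ref{0105}.

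Finally, to pass from the derived-level pairings to the termwise Pontryagin dualities, I would run, for each of these complexes, the spectral sequence computing $R\sheafhom_{\ast_{\pro\et}}(\var,\Lambda_{\infty})$: by Proposition \ref{0105} the higher $\sheafext$-groups of an object of $\mathcal{W}_{0}$ into $\Lambda_{\infty}$ vanish, so this spectral sequence degenerates and the perfect pairing descends to an isomorphism of each cohomology group with the usual Pontryagin dual of the complementary one, giving $H^{q}(K,\Lambda_{n}(r))\leftrightarrow H^{3-q}(K,\Lambda_{n}(2-r))$ and $H^{q}(\Order_{K},\mathfrak{T}_{n}(r))\leftrightarrow H^{4-q}_{c}(\Order_{K},\mathfrak{T}_{n}(2-r))$ — this is the $\mathcal{W}_{0}$-counterpart of Proposition \ref{0078}. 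I do not expect an essential obstacle here, since $K$ and $\Order_{K}$ are handled by precisely the mechanism already set up for $U_{S}$; the only point requiring care is the bookkeeping of the degree shifts introduced by $R\Gamma(F,\var)$ and the compatibility of the various trace maps, which is routine given \cite[Proposition 4.2.1]{SuzCurve} and the results of Section \ref{0119}.
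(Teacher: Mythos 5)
Your proposal is correct and follows essentially the same route as the paper: apply $R\Gamma(F,\var)$ to the perfect pairings over $F^{\ind\rat}_{\pro\et}$ from Section \ref{0070}, using Propositions \ref{0105} and \ref{0106} and \cite[Proposition 4.2.1]{SuzCurve} to obtain the $\mathcal{W}_{0}$-membership, the shifted perfect pairings in $D(\ast_{\pro\et})$, and the termwise Pontryagin dualities. Your extra citation of Proposition \ref{0031} only makes explicit the hypothesis needed to invoke Proposition \ref{0106}, which the paper leaves implicit.
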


\begin{proof}
	This follows from the perfect pairings
		\[
					R \alg{\Gamma}(\alg{K}, \Lambda_{n}(r))
				\tensor^{L}
					R \alg{\Gamma}(\alg{K}, \Lambda_{n}(2 - r))
			\to
				\Lambda_{\infty}[-2],
		\]
		\[
					R \alg{\Gamma}(\alg{O}_{K}, \mathfrak{T}_{n}(r))
				\tensor^{L}
					R \alg{\Gamma}_{c}(\alg{O}_{K}, \mathfrak{T}_{n}(2 - r))
			\to
				\Lambda_{\infty}[-3]
		\]
	and \cite[Proposition 4.2.1]{SuzCurve},
	Propositions \ref{0105} and \ref{0106}.
\end{proof}



\begin{thebibliography}{GGMB14}

\bibitem[BGA18]{BGA18Green}
Alessandra Bertapelle and Cristian~D. Gonz\'{a}lez-Avil\'{e}s.
\newblock The {G}reenberg functor revisited.
\newblock {\em Eur. J. Math.}, 4(4):1340--1389, 2018.

\bibitem[Blo81]{Blo81}
Spencer Bloch.
\newblock Algebraic {$K$}-theory and classfield theory for arithmetic surfaces.
\newblock {\em Ann. of Math. (2)}, 114(2):229--265, 1981.

\bibitem[BLR90]{BLR90}
Siegfried Bosch, Werner L{\"u}tkebohmert, and Michel Raynaud.
\newblock {\em N\'eron models}, volume~21 of {\em Ergebnisse der Mathematik und
  ihrer Grenzgebiete (3) [Results in Mathematics and Related Areas (3)]}.
\newblock Springer-Verlag, Berlin, 1990.

\bibitem[BS15]{BS15}
Bhargav Bhatt and Peter Scholze.
\newblock The pro-\'etale topology for schemes.
\newblock {\em Ast\'erisque}, (369):99--201, 2015.

\bibitem[BS20]{BS20}
Alessandra Bertapelle and Takashi Suzuki.
\newblock The relatively perfect {G}reenberg transform and cycle class maps.
\newblock Preprint: arXiv:2009.05084v1, 2020.

\bibitem[CGP15]{CGP15}
Brian Conrad, Ofer Gabber, and Gopal Prasad.
\newblock {\em Pseudo-reductive groups}, volume~26 of {\em New Mathematical
  Monographs}.
\newblock Cambridge University Press, Cambridge, second edition, 2015.

\bibitem[DG70]{DG70b}
Michel Demazure and Pierre Gabriel.
\newblock {\em Groupes alg\'ebriques. {T}ome {I}: {G}\'eom\'etrie alg\'ebrique,
  g\'en\'eralit\'es, groupes commutatifs}.
\newblock Masson \& Cie, \'Editeur, Paris; North-Holland Publishing Co.,
  Amsterdam, 1970.
\newblock Avec un appendice {\it Corps de classes local} par Michiel
  Hazewinkel.

\bibitem[Fes00]{Fes00}
Ivan Fesenko.
\newblock Topological {M}ilnor {$K$}-groups of higher local fields.
\newblock In {\em Invitation to higher local fields ({M}\"{u}nster, 1999)},
  volume~3 of {\em Geom. Topol. Monogr.}, pages 61--74. Geom. Topol. Publ.,
  Coventry, 2000.

\bibitem[Fes01]{Fes01}
Ivan Fesenko.
\newblock Sequential topologies and quotients of {M}ilnor {$K$}-groups of
  higher local fields.
\newblock {\em Algebra i Analiz}, 13(3):198--221, 2001.

\bibitem[Gei04]{Gei04Ded}
Thomas Geisser.
\newblock Motivic cohomology over {D}edekind rings.
\newblock {\em Math. Z.}, 248(4):773--794, 2004.

\bibitem[GGMB14]{GGMB14}
Ofer Gabber, Philippe Gille, and Laurent Moret-Bailly.
\newblock Fibr\'es principaux sur les corps valu\'es hens\'eliens.
\newblock {\em Algebr. Geom.}, 1(5):573--612, 2014.

\bibitem[Gro68]{Gro68}
Alexander Grothendieck.
\newblock Le groupe de {B}rauer. {III}. {E}xemples et compl\'ements.
\newblock In {\em Dix expos\'es sur la cohomologie des sch\'emas}, volume~3 of
  {\em Adv. Stud. Pure Math.}, pages 88--188. North-Holland, Amsterdam, 1968.

\bibitem[Gro95]{Gro95Br}
Alexander Grothendieck.
\newblock Le groupe de {B}rauer. {II}. {T}h\'{e}orie cohomologique [
  {MR}0244270 (39 \#5586b)].
\newblock In {\em S\'{e}minaire {B}ourbaki, {V}ol. 9}, pages Exp. No. 297,
  287--307. Soc. Math. France, Paris, 1995.

\bibitem[ILO14]{ILO14}
Luc Illusie, Yves Laszlo, and Fabrice Orgogozo, editors.
\newblock {\em Travaux de {G}abber sur l'uniformisation locale et la
  cohomologie \'etale des sch\'emas quasi-excellents}.
\newblock Soci\'et\'e Math\'ematique de France, Paris, 2014.
\newblock S\'eminaire \`a l'\'Ecole Polytechnique 2006--2008. [Seminar of the
  Polytechnic School 2006--2008], With the collaboration of Fr\'ed\'eric
  D\'eglise, Alban Moreau, Vincent Pilloni, Michel Raynaud, Jo\"el Riou,
  Beno\^\i t Stroh, Michael Temkin and Weizhe Zheng, Ast\'erisque No. 363-364
  (2014) (2014).

\bibitem[Kat79]{Kat79}
Kazuya Kato.
\newblock A generalization of local class field theory by using {$K$}-groups.
  {I}.
\newblock {\em J. Fac. Sci. Univ. Tokyo Sect. IA Math.}, 26(2):303--376, 1979.

\bibitem[Kat82]{Kat82}
Kazuya Kato.
\newblock A generalization of local class field theory by using {$K$}-groups.
  {III}.
\newblock {\em J. Fac. Sci. Univ. Tokyo Sect. IA Math.}, 29(1):31--43, 1982.

\bibitem[Kat86]{Kat86}
Kazuya Kato.
\newblock Duality theories for the {$p$}-primary \'etale cohomology. {I}.
\newblock In {\em Algebraic and topological theories ({K}inosaki, 1984)}, pages
  127--148. Kinokuniya, Tokyo, 1986.

\bibitem[KS06]{KS06}
Masaki Kashiwara and Pierre Schapira.
\newblock {\em Categories and sheaves}, volume 332 of {\em Grundlehren der
  Mathematischen Wissenschaften [Fundamental Principles of Mathematical
  Sciences]}.
\newblock Springer-Verlag, Berlin, 2006.

\bibitem[KS19]{KS19}
Kazuya Kato and Takashi Suzuki.
\newblock Duality theories for {$p$}-primary \'{e}tale cohomology {III}.
\newblock {\em J. Math. Sci. Univ. Tokyo}, 26(2):223--248, 2019.

\bibitem[Lev88]{Lev88}
Marc Levine.
\newblock Localization on singular varieties.
\newblock {\em Invent. Math.}, 91(3):423--464, 1988.

\bibitem[Lip69]{Lip69}
Joseph Lipman.
\newblock Rational singularities, with applications to algebraic surfaces and
  unique factorization.
\newblock {\em Inst. Hautes \'{E}tudes Sci. Publ. Math.}, (36):195--279, 1969.

\bibitem[Lip76]{Lip76}
Joseph Lipman.
\newblock The {P}icard group of a scheme over an {A}rtin ring.
\newblock {\em Inst. Hautes \'{E}tudes Sci. Publ. Math.}, (46):15--86, 1976.

\bibitem[Ros18]{Ros18}
Zev Rosengarten.
\newblock Tate duality in positive dimension over function fields.
\newblock To appear in Memoirs of the American Mathematical Society;
  arXiv:1805.00522v8, 2018.

\bibitem[Sai86]{Sai86}
Shuji Saito.
\newblock Arithmetic on two-dimensional local rings.
\newblock {\em Invent. Math.}, 85(2):379--414, 1986.

\bibitem[Sai87]{Sai87}
Shuji Saito.
\newblock Class field theory for two-dimensional local rings.
\newblock In {\em Galois representations and arithmetic algebraic geometry
  ({K}yoto, 1985/{T}okyo, 1986)}, volume~12 of {\em Adv. Stud. Pure Math.},
  pages 343--373. North-Holland, Amsterdam, 1987.

\bibitem[Sat07]{Sat07}
Kanetomo Sato.
\newblock {$p$}-adic \'{e}tale {T}ate twists and arithmetic duality.
\newblock {\em Ann. Sci. \'{E}cole Norm. Sup. (4)}, 40(4):519--588, 2007.
\newblock With an appendix by Kei Hagihara.

\bibitem[Sat13]{Sat13}
Kanetomo Sato.
\newblock Cycle classes for {$p$}-adic \'{e}tale {T}ate twists and the image of
  {$p$}-adic regulators.
\newblock {\em Doc. Math.}, 18:177--247, 2013.

\bibitem[Sch94]{Sch94}
Peter Schneider.
\newblock {$p$}-adic points of motives.
\newblock In {\em Motives ({S}eattle, {WA}, 1991)}, volume~55 of {\em Proc.
  Sympos. Pure Math.}, pages 225--249. Amer. Math. Soc., Providence, RI, 1994.

\bibitem[Ser60]{Ser60}
Jean-Pierre Serre.
\newblock Groupes proalg\'ebriques.
\newblock {\em Inst. Hautes \'Etudes Sci. Publ. Math.}, (7):67, 1960.

\bibitem[Sri85]{Sri85}
Vasudevan Srinivas.
\newblock {Notes on $K_0(\mathscr{C}_R)$}.
\newblock Unpublished notes available at:
  \texttt{http://www.math.tifr.res.in/\~{}srinivas/K0CR.pdf}, 1985.

\bibitem[Sri87]{Sri87}
Vasudevan Srinivas.
\newblock Modules of finite length and {C}how groups of surfaces with rational
  double points.
\newblock {\em Illinois J. Math.}, 31(1):36--61, 1987.

\bibitem[Sri96]{Sri96}
Vasudevan Srinivas.
\newblock {\em Algebraic {$K$}-theory}, volume~90 of {\em Progress in
  Mathematics}.
\newblock Birkh\"{a}user Boston, Inc., Boston, MA, second edition, 1996.

\bibitem[{Sta}22]{Sta22}
The {Stacks Project Authors}.
\newblock \textit{Stacks Project}.
\newblock \verb+http://stacks.math.columbia.edu+, 2022.

\bibitem[Suz20a]{SuzCurve}
Takashi Suzuki.
\newblock Duality for cohomology of curves with coefficients in abelian
  varieties.
\newblock {\em Nagoya Math. J.}, 240:42--149, 2020.

\bibitem[Suz20b]{Suz20}
Takashi Suzuki.
\newblock Grothendieck's pairing on {N}\'{e}ron component groups: {G}alois
  descent from the semistable case.
\newblock {\em Kyoto J. Math.}, 60(2):593--716, 2020.

\bibitem[Suz21]{Suz21Imp}
Takashi Suzuki.
\newblock An improvement of the duality formalism of the rational \'{e}tale
  site.
\newblock In {\em Algebraic number theory and related topics 2018}, RIMS
  K\^{o}ky\^{u}roku Bessatsu, B86, pages 287--330. Res. Inst. Math. Sci.
  (RIMS), Kyoto, 2021.

\bibitem[Suz22]{Suz22}
Takashi Suzuki.
\newblock Duality for local fields and sheaves on the category of fields.
\newblock {\em Kyoto J. Math.}, 62(4):789--864, 2022.

\bibitem[Suz23]{Suz23}
Takashi Suzuki.
\newblock Finite generation of nilpotent quotients of fundamental groups of
  punctured spectra.
\newblock {\em J. Algebra}, 633:489--509, 2023.

\bibitem[Suz24]{Suz24}
Takashi Suzuki.
\newblock Arithmetic duality for two-dimensional local rings with perfect
  residue field.
\newblock {\em J. Pure Appl. Algebra}, 228(1):Paper No. 107465, 2024.

\end{thebibliography}
\end{document}